\DeclareMathOperator{\ad}{ad}
\DeclareMathOperator{\Hom}{Hom}
\DeclareMathOperator{\End}{End}
\DeclareMathOperator{\Ext}{Ext}
\DeclareMathOperator{\Der}{Der}
\DeclareMathOperator{\divergence}{div}
\DeclareMathOperator{\tot}{tot}
\DeclareMathOperator{\id}{id}
\DeclareMathOperator{\rk}{rk}
\DeclareMathOperator{\pbw}{pbw}
\DeclareMathOperator{\hkr}{hkr}
\DeclareMathOperator{\Bott}{Bott}
\DeclareMathOperator{\dR}{dR}
\DeclareMathOperator{\can}{can}
\DeclareMathOperator{\Todd}{Td} % Todd class
\DeclareMathOperator{\todd}{td} % Todd cocycle
\DeclareMathOperator{\sgn}{sgn}
\DeclareMathOperator{\CE}{CE}
\DeclareMathOperator{\alt}{alt}
\DeclareMathOperator{\poly}{poly}
\DeclareMathOperator{\formal}{formal}
\DeclareMathOperator{\sinverse}{Out}
\DeclareMathOperator{\tinverse}{In}
\DeclareMathOperator{\trace}{tr}
\DeclareMathOperator{\supertrace}{str}
\DeclareMathOperator{\Ber}{Ber}
\DeclareMathOperator{\At}{At}
\DeclareMathOperator{\trivial}{trivial}
\DeclareMathOperator{\im}{Im}
\DeclareMathOperator{\conf}{Conf}
\DeclareMathOperator{\hypercohomology}{\mathbb{H}}
\DeclareMathOperator{\sheaf}{sheaf}
\newcommand{\NN}{\mathbb{N}} 
\newcommand{\NO}{\mathbb{N}_0} 
\newcommand{\ZZ}{\mathbb{Z}} 
\newcommand{\RR}{\mathbb{R}} 
\newcommand{\CC}{\mathbb{C}} 
\newcommand{\KK}{\Bbbk} 
\newcommand{\HH}{\mathbb{H}}
\newcommand{\RP}{\mathbb{RP}}
\newcommand{\abs}[1]{\left| #1 \right|} 
\newcommand{\into}{\hookrightarrow}
\newcommand{\onto}{\twoheadrightarrow}
\newcommand{\xto}[1]{\xrightarrow{#1}}
\newcommand{\inv}{^{-1}}
\newcommand{\dual}{^{\vee}}
\newcommand{\transpose}{^{\top}} 
\newcommand{\argument}{-}
\newcommand{\shuffle}[2]{\mathfrak{S}_{#1}^{#2}}
\newcommand{\sections}[1]{\Gamma(#1)}
\newcommand{\enveloping}[1]{\mathcal{U}(#1)}
\newcommand{\XX}{\mathfrak{X}}
\newcommand{\OO}{\Omega}
\newcommand{\duality}[2]{\left\langle#1\middle|#2\right\rangle}
\newcommand{\lie}[2]{[#1,#2]} 
\newcommand{\schouten}[2]{\left[#1,#2\right]} 
\newcommand{\gerstenhaber}[2]{\left\llbracket #1,#2\right\rrbracket} 
\newcommand{\cM}{\mathcal{M}} % graded manifold
\newcommand{\cF}{\mathcal{F}} % Fedosov dg Lie algebroid
\newcommand{\torsion}{T^\nabla}
\newcommand{\fedosov}{Q}
\newcommand{\fedosova}{\schouten{Q}{\argument}}
\newcommand{\frakg}{\mathfrak{g}}
\newcommand{\frakh}{\mathfrak{h}}
\newcommand{\frakm}{\mathfrak{m}}
\newcommand{\kont}{\mathscr{U}}
\newcommand{\graphs}{\mathscr{G}}
\newcommand{\outedge}[1]{\sinverse(#1)}
\newcommand{\inedge}[1]{\tinverse(#1)}
\newcommand{\dabott}{d_A^{\Bott}}
\newcommand{\dau}{d_A^{\mathcal{U}}}
\newcommand{\dace}{d_A}
\newcommand{\liederivative}[1]{\mathcal{L}_{#1}}
\newcommand{\hochschild}{d_{\mathscr{H}}}
\newcommand{\atiyahcocycle}{R^\nabla_{1,1}}
\newcommand{\atiyahclass}{\alpha}
\newcommand{\tTodd}{\widetilde{\Todd}} % tilde Todd class
\newcommand{\ttodd}{\widetilde{\todd}} % tilde Todd cocycle
\newcommand{\homogeneoustriple}[3]{\mathopen{[} #1 \mathpunct{:} #2 \mathpunct{:} #3 \mathclose{]}}
\newcommand{\xnabla}{X^\nabla}
\newcommand{\xtwo}{X_2}
\newcommand{\xhigher}{X_{\geqslant 3}}
\newcommand{\cI}{\mathscr{I}}
\newcommand{\cA}{\mathcal{A}}
\newcommand{\cL}{\mathcal{L}}
\newcommand{\cB}{\mathcal{B}}
\newcommand{\cO}{\mathcal{O}}
\newcommand{\cV}{\mathcal{V}}
\newcommand{\cE}{\mathcal{E}}
\newcommand{\EE}{\mathcal{E}}
\newcommand{\Tau}{\etendu{\perturbed{T}}}
\newcommand{\nablacan}{\nabla^{\can}}
\newcommand{\hhb}[1]{\hat{\partial}_{#1}}
\newcommand{\half}{\frac{1}{2}}
\newcommand{\vf}{V_{\formal}}
\newcommand{\kkf}[2]{\KK_{\formal}^{#1|#2}}
\newcommand{\kf}[1]{\KK_{\formal}^{#1}}
\newcommand{\UcL}[1]{{\mathcal{U}(\cL)}^{\otimes {#1}+1}}
\newcommand{\salgebra}{\mathscr{C}}
\newcommand{\etendu}[1]{\mathop{#1_{\natural}}}
\newcommand{\bsigma}{\etendu{\sigma}}
\newcommand{\Phii}{\mathcal{I}}
\newcommand{\Phia}{\Psi}
\newcommand{\perturbed}[1]{\breve{#1}}
\newcommand{\Tpoly}[1]{\mathcal{T}_{\poly}^{#1}}
\newcommand{\Dpoly}[1]{\mathcal{D}_{\poly}^{#1}}
\newcommand{\verticalTpoly}[1]{\mathscr{T}_{\poly}^{#1}}
\newcommand{\verticalDpoly}[1]{\mathscr{D}_{\poly}^{#1}}
\newcommand{\nTpoly}[1]{\Tpoly{#1}(N_\mathcal{F})}
\newcommand{\nDpoly}[1]{\Dpoly{#1}(N_\mathcal{F})}
\newcommand{\dHH}{\mathfrak{d}_{\mathscr{H}}}
\title{Formality and Kontsevich--Duflo type theorems for Lie pairs}
\dedicatory{Dedicated to the memory of our friend and colleague Krzysztof Wysocki}
\thanks{Research partially supported by NSF grants DMS-1406668 and DMS-1101827, and NSA grant H98230-14-1-0153.}
\author{Hsuan-Yi Liao}
\address{School of Mathematics, Korea Institute for Advanced Study}
\email{hsuanyiliao@kias.re.kr}
\author{Mathieu Stiénon}
\address{Department of Mathematics, Pennsylvania State University}
\email{stienon@psu.edu} 
\author{Ping Xu}
\address{Department of Mathematics, Pennsylvania State University}
\email{ping@math.psu.edu}
\begin{document}

\begin{abstract}
Kontsevich's formality theorem states that
there exists an $L_\infty$ quasi-isomorphism
 from the dgla $T_{\poly}^\bullet (M)$
of polyvector fields on a smooth manifold
$M$ to the dgla $D^\bullet_{\poly}(M)$ of
polydifferential operators on $M$, which extends the classical
Hochschild--Kostant--Rosenberg map. In this paper, we 
extend Kontsevich's formality theorem to \emph{Lie pairs},
a framework which includes a range of diverse geometric contexts
such as complex manifolds, foliations, and $\frakg$-manifolds 
(that is, manifolds endowed with an action of a Lie algebra $\frakg$).
The spaces $\tot\big(\sections{\Lambda^\bullet A^\vee}\otimes_R\Tpoly{\bullet}\big)$
and $\tot\big(\sections{\Lambda^\bullet A^\vee}\otimes_R \Dpoly{\bullet}\big)$ 
associated with a Lie pair $(L,A)$ each carry an $L_\infty$ algebra structure 
canonical up to $L_\infty$ isomorphism.
These two spaces serve as replacements for the spaces of polyvector fields and polydifferential operators, respectively.
Their corresponding cohomology groups $\hypercohomology^\bullet_{\CE}(A,\Tpoly{\bullet})$ 
and $\hypercohomology^\bullet_{\CE}(A,\Dpoly{\bullet})$ admit canonical Gerstenhaber algebra structures.
We establish the following formality theorem for Lie pairs: there exists an $L_\infty$ quasi-isomorphism
from $\tot\big(\sections{\Lambda^\bullet A^\vee}\otimes_R\Tpoly{\bullet}\big)$
to $\tot\big(\sections{\Lambda^\bullet A^\vee}\otimes_R\Dpoly{\bullet}\big)$
whose first Taylor coefficient is equal to $\hkr\circ(\todd_{L/A}^{\nabla})^{\frac{1}{2}}$.
Here $(\todd_{L/A}^{\nabla})^{\frac{1}{2}}$ acts on $\tot\big( \sections{\Lambda^\bullet A^\vee}\otimes_R \Tpoly{\bullet}\big)$ by contraction.
Furthermore, we prove a Kontsevich--Duflo type theorem for Lie pairs:
the Hochschild--Kostant--Rosenberg map twisted by the square root of the Todd class of the Lie pair $(L, A)$ 
is an isomorphism of Gerstenhaber algebras from $\hypercohomology^\bullet_{\CE}(A,\Tpoly{\bullet})$ 
to $\hypercohomology^\bullet_{\CE}(A,\Dpoly{\bullet})$. 
As applications, we establish formality theorems and Kon\-tse\-vich--Duflo type theorems 
for complex manifolds, foliations, and $\frakg$-manifolds. 
In the case of complex manifolds, we recover the Kontsevich--Duflo theorem of complex geometry.
\end{abstract}

\maketitle

\tableofcontents

\section*{Introduction}

In the late 1990's, Kontsevich revolutionized the field of deformation quantization with his formality theorem: 
there exists an $L_\infty$ quasi-isomorphism from the dgla $T_{\poly}^\bullet (M)$ of polyvector fields on a smooth manifold $M$ 
to the dgla $D^\bullet_{\poly}(M)$ of polydifferential operators on $M$ extending the classical Hochschild--Kostant--Rosenberg map. 
Indeed, the formality theorem implies the existence of deformation quantizations for every smooth Poisson manifold \cite{MR2062626, MR2699544, MR2102846,MR1944574}. 
In his paper \cite{MR2062626}, Kontsevich gave an explicit formula for the formality quasi-isomorphism in the case $M=\RR^d$ 
and then outlined how the result can be generalized to arbitrary smooth manifolds. 
Later, Dolgushev gave a detailed proof of the globalization to arbitrary smooth manifolds 
of Kontsevich's formality quasi-isomorphism for $\RR^d$ \cite{MR2102846}
based on Fedosov's patching technique \cite{MR1293654,MR1327535}. See also~\cite{MR1944574} for another proof.

In this paper, we extend Kontsevich's formality theorem to \emph{Lie pairs},
a framework which includes a wide range of diverse geometric contexts
including complex manifolds, foliations, and $\frakg$-manifolds.
By a \emph{Lie pair} $(L,A)$, we mean an inclusion $A\hookrightarrow L$ of Lie $\KK$-algebroids over a smooth manifold $M$. 
(Throughout the paper, we use the symbol $\KK$ to denote either of the fields $\RR$ and $\CC$.)
Recall that a \emph{Lie $\KK$-algebroid} is a $\KK$-vector bundle $L\to M$, whose space of sections is endowed with a Lie bracket 
$[\argument,\argument]$, together with a bundle map $\rho:L\to TM\otimes_\RR\KK$ called \emph{anchor} 
such that $\rho:\sections{L}\to\XX(M)\otimes \KK$ is a morphism of Lie algebras
and $[X,fY]=f[X,Y]+\big(\rho(X)f\big)Y$ for all $X,Y\in\Gamma(L)$ and $f\in C^\infty(M,\KK)$.
A $\KK$-vector bundle $L\to M$ is a Lie algebroid if and only if $\sections{L}$ is a \emph{Lie--Rinehart algebra}~\cite{MR0154906} 
over the commutative ring $C^\infty(M, \KK)$.
Lie pairs arise naturally in a number of subdisciplines of mathematics such as complex geometry, foliation theory, and Lie theory.
A complex manifold $X$ determines a Lie pair (over $\CC$): $L= T_X\otimes\CC$ and $A=T_X^{0,1}$.
A foliation on a smooth manifold $M$ determines a Lie pair (over $\RR$): $L=TM$ and $A$ is the integrable distribution on $M$ tangent to the foliation. 
A manifold equipped with an action of a Lie algebra $\frakg$ gives rise to a Lie pair in a natural way 
(see~\cite[Example~5.5]{MR1460632} and~\cite{MR1650045,MR3650387}). 
 
Given a Lie pair $(L,A)$, the quotient $L/A$ is naturally an $A$-module. 
When $L$ is the tangent bundle to a manifold $M$ and $A$ is an integrable distribution on $M$, 
the $A$-action on $L/A$ is given by the Bott connection \cite{MR0362335}. 
The spaces $\tot\big(\sections{\Lambda^\bullet A^\vee}\otimes_R\Tpoly{\bullet}\big)$
and $\tot\big(\sections{\Lambda^\bullet A^\vee}\otimes_R \Dpoly{\bullet}\big)$ 
associated with a Lie pair $(L,A)$ serve as replacements for the spaces of polyvector fields and polydifferential operators respectively.
Each one of them carries an $L_\infty$ algebra structure canonical up to $L_\infty$ isomorphism 
and their respective cohomology groups $\hypercohomology^\bullet_{\CE}(A,\Tpoly{\bullet})$ 
and $\hypercohomology^\bullet_{\CE}(A,\Dpoly{\bullet})$ admit canonical Gerstenhaber algebra 
structures \cite{arXiv:1901.04602}.
 
Denoting the algebra of smooth functions on the manifold $M$ by $R$, 
we set $\Tpoly{k}=\sections{\Lambda^{k+1}(L/A)}$ for $k\geqslant 0$, $\Tpoly{-1}=R$, 
and $\Tpoly{\bullet}=\bigoplus_{k=-1}^{\infty}\Tpoly{k}$.
The Bott $A$-connection on $L/A$ makes every $\Tpoly{k}$ an ${A}$-module.
We can thus consider the complex of $A$-modules with trivial differential 
\begin{equation*} \begin{tikzcd}
0 \arrow[r] & \Tpoly{-1} \arrow[r, "0"] &
\Tpoly{0} \arrow[r, "0"] & \Tpoly{1} \arrow[r, "0"] &
\Tpoly{2} \arrow[r, "0"] & \cdots .
\end{tikzcd} \end{equation*}
Its Chevalley--Eilenberg hypercohomology cochain complex
is denoted $\big(\tot( \sections{\Lambda^\bullet A^\vee}\otimes_R \Tpoly{\bullet}),\dabott\big)$. 
Similarly, we set $\Dpoly{\bullet}=\bigoplus_{k=-1}^{\infty}\Dpoly{k}$,
where $\Dpoly{-1}=R$, $\Dpoly{0}=\frac{\enveloping{L}}{\enveloping{L}\sections{A}}$, and 
$\Dpoly{k}$ with $k\geqslant 1$ is the tensor product $\Dpoly{0}\otimes_R\cdots\otimes_R\Dpoly{0}$ 
of $(k+1)$ copies of the left $R$-module $\Dpoly{0}$. 
Multiplication in $\enveloping{L}$ from the left by elements of $\sections{A}$ induces 
an $A$-module structure on the quotient $\frac{\enveloping{L}}{\enveloping{L}\sections{A}}$. 
This action of $A$ on $\Dpoly{0}$ extends naturally to an action of $A$ on $\Dpoly{k}$ for each $k$. 
In fact, $\Dpoly{0}$ is a cocommutative coassociative coalgebra over $R$
whose comultiplication $\Delta:\Dpoly{0}\to\Dpoly{0}\otimes_R\Dpoly{0}$ 
is a morphism of $A$-modules. Therefore the Hochschild complex 
\[ \begin{tikzcd}
0 \arrow[r] & \Dpoly{-1} \arrow[r, "\hochschild"] & \Dpoly{0} \arrow[r, "\hochschild"] &
\Dpoly{1} \arrow[r, "\hochschild"] & \Dpoly{2} \arrow[r, "\hochschild"] & \cdots 
\end{tikzcd} \]
determined by the comultiplication $\Delta:\Dpoly{0}\to\Dpoly{0}\otimes_R\Dpoly{0}$ is a complex of $A$-modules. 
Its Chevalley--Eilenberg hypercohomology cochain complex is denoted 
$\big(\tot( \sections{\Lambda^\bullet A^\vee}\otimes_R \Dpoly{\bullet}),\dau+\dHH\big)$.

For instance, for the Lie pair $L=T_X\otimes\CC$ and $A=T^{0,1}_X$ stemming from a complex manifold $X$, the pair of spaces 
$\tot\big(\sections{\Lambda^\bullet A^\vee}\otimes_R\Tpoly{\bullet}\big)$
and $\tot\big(\sections{\Lambda^\bullet A^\vee}\otimes_R\Dpoly{\bullet}\big)$
are precisely the standard dglas $\big(\Omega^{0,\bullet}(\Tpoly{\bullet}(X)),\bar{\partial}\big)$
and $\big(\Omega^{0,\bullet}(\Dpoly{\bullet}(X)),\bar{\partial}+\dHH \big)$.
The corresponding Chevalley--Eilenberg hypercohomology groups
$\hypercohomology_{\CE}^{\bullet}(A,\Tpoly{\bullet})$ and $\hypercohomology_{\CE}^{\bullet}(A,\Dpoly{\bullet})$
are isomorphic to the sheaf cohomology group 
$\hypercohomology^\bullet(X,\Lambda^\bullet T_X)$
and the Hochschild cohomology group $HH^{\bullet}(X)$, respectively.

The skew-symmetric extension of the natural inclusion $\sections{L/A}\hookrightarrow\Dpoly{0}$ 
to the complex of $A$-modules $\Tpoly{\bullet}$ yields a morphism of $A$-modules $\hkr:\Tpoly{\bullet}\to\Dpoly{\bullet}$. 
The induced morphism of Chevalley--Eilenberg hypercohomology cochain complexes 
$\hkr: \tot\big(\sections{\Lambda^\bullet A^\vee}\otimes_R\Tpoly{\bullet}\big)
\to \tot\big(\sections{\Lambda^\bullet A^\vee}\otimes_R\Dpoly{\bullet}\big)$, 
which is also called \emph{Hochschild--Kostant--Rosenberg} map, is actually a quasi-isomorphism. 
It is thus natural to ask whether $\hkr$ can be extended to an $L_\infty$ quasi-isomorphism 
analogous to Kontsevich's formality quasi-isomorphism for smooth manifolds. 
The answer is negative in general and the reason is quite simple. 
For a smooth manifold $M$, the Hoch\-schild--Kostant--Rosenberg map induces 
an isomorphism of Lie algebras (in fact an isomorphism of Gerstenhaber algebras) 
from the polyvectors fields $\Tpoly{\bullet}(M)$ on $M$ equipped with the Schouten bracket
to the Hochschild cohomology $H^\bullet(\Dpoly{\bullet}(M),\hochschild)$
equipped with the Gerstenhaber brack\-et. 
However, for a Lie pair $(L,A)$, the morphism in cohomology
$\hkr: \hypercohomology^\bullet_{\CE}(A,\Tpoly{\bullet})\to \hypercohomology^\bullet_{\CE}(A,\Dpoly{\bullet})$
induce by the Hochschild--Kostant--Rosenberg map 
preserves neither the Lie algebra nor the associative algebra structures. 
The Hochschild--Kostant--Rosenberg map $\hkr$ must indeed be modified; 
it must be tweaked by the square root of the Todd cocycle of the Lie pair.

The Atiyah class of a Lie pair $(L, A)$ was introduced and studied by 
Chen--Stiénon--Xu in~\cite{MR3439229}. 
It captures the obstruction to the existence of `compatible'
 $L$-connections on $L/A$ extending the Bott $A$-representation. 
The Atiyah class of Lie pairs is a simultaneous extension of 
both the classical Atiyah class of holomorphic vector bundles 
\cite{MR0086359} and the Molino class of foliations \cite{MR0346814}.
As was first observed for holomorphic tangent bundles by 
Kapranov~\cite{MR1671737} (see also \cite{MR1671725}), 
the Atiyah class of Lie pairs is the source of homotopy Lie algebras \cite{MR3439229,MR2989383,arXiv:1408.2903}. 
Let us briefly recall its definition. Given a Lie pair $(L,A)$ with quotient $B=L/A$, 
choose an $L$-connection $\nabla$ on $B$ extending the Bott $A$-representation. 
The curvature of $\nabla$ induces a section
$\atiyahcocycle\in\sections{A^\vee\otimes A^\perp\otimes\End(L/A)}$,
which is a Chevalley--Eilenberg $1$-cocycle for the Lie algebroid $A$ 
with values in the $A$-module $A^\perp\otimes\End(L/A)$. 
Its cohomology class $\atiyahclass_{L/A}\in H_{\CE}^1(A,A^\perp\otimes\End(L/A))$
does not depend on the choice of $L$-connection $\nabla$ and is called 
Atiyah class of the Lie pair $(L,A)$.

We can assign a Todd cocycle --- defined in terms of the Atiyah cocycle --- with each Lie pair $(L,A)$ 
in the exact same way the Todd cocycle of a complex manifold is derived from its Atiyah cocycle.
The Todd cocycle of a Lie pair $(L,A)$ is the Chevalley-Eilenberg cocycle
\begin{gather}
\todd_{L/A}^{\nabla} =\det\left(\frac{\atiyahcocycle}{1-e^{-\atiyahcocycle }}\right)
\in \bigoplus_{k=0} \sections{\Lambda^kA\dual\otimes\Lambda^k A^\perp} 
.\end{gather}
Its cohomology class $\Todd_{L/A}\in \bigoplus_{k=0} H_{\CE}^{k}(A,\Lambda^k A^\perp)$
is the Todd class of the Lie pair $(L,A)$. See Section~\ref{atiyah-todd} for details.

The main goal of this paper is to establish the following formality theorem for Lie pairs: 
\textsl{There exists an $L_\infty$ quasi-isomorphism from
$\tot\big(\sections{\Lambda^\bullet A^\vee}\otimes_R\Tpoly{\bullet}\big)$
to $\tot\big(\sections{\Lambda^\bullet A^\vee}\otimes_R\Dpoly{\bullet}\big)$
whose first Taylor coefficient is equal to $\hkr\circ(\todd_{L/A}^{\nabla})^{\frac{1}{2}}$, 
with $(\todd_{L/A}^{\nabla})^{\frac{1}{2}}\in \bigoplus_{k=0} \sections{\Lambda^kA\dual\otimes\Lambda^k A^\perp}$ 
acting on $\tot\big(\sections{\Lambda^\bullet A^\vee}\otimes_R\Tpoly{\bullet}\big)$ by contractions.}
See Theorem~\ref{thm:main}.

Furthermore, we obtain the following Kontsevich--Duflo type theorem for Lie pairs:
\textsl{Given a Lie pair $(L,A)$, the map
$\hkr\circ\Todd^{\frac{1}{2}}_{L/A}: \hypercohomology^\bullet_{\CE}(A,\Tpoly{\bullet}) \to \hypercohomology^\bullet_{\CE}(A,\Dpoly{\bullet})$, 
is an isomorphism of Gerstenhaber algebras.}
See Theorem~\ref{KD-thm}.

Our result is very much inspired by Kontsevich's seminal work \cite{MR2062626}, in which it is highlighted 
that the classical Duflo theorem is one of many consequences of the formality construction.
For every Lie algebra $\mathfrak{g}$, the symmetrization map
$\pbw: S(\frakg)\to\enveloping{\frakg}$ is an 
isomorphism of $\frakg$-modules called Poincaré--Birkhoff--Witt isomorphism. 
The induced isomorphism 
$\pbw:S(\frakg)^{\frakg}\to\enveloping{\frakg}^{\frakg}$ 
between subspaces of $\frakg$-invariants does not intertwine the obvious multiplications on
$S(\frakg)^{\frakg}$ and $\enveloping{\frakg}^{\frakg}$. 
However, it can be modified so as to become an isomorphism of 
associative algebras.
The Duflo element $J\in\hat{S}(\frakg^\vee)$ of a Lie algebra $\frakg$ 
is the formal polynomial on $\frakg$ defined by 
$J(x)=\det\left(\frac{1-e^{-{\ad}_x}}{{\ad}_x}\right)$, for all $x\in\frakg$.
Considered as a translation-invariant formal differential operator on $\frakg^\vee$, 
the square root of the Duflo element defines a transformation
$J^{\frac{1}{2}}:S(\frakg)\to S(\frakg)$.
A remarkable theorem due to Duflo~\cite{MR0269777} asserts that the composition 
$\pbw\circ J^{\frac{1}{2}}:S(\frakg)^{\frakg}\to\enveloping{\frakg}^{\frakg}$
is an isomorphism of associative algebras. Duflo's theorem generalizes 
a fundamental result of Harish-Chandra regarding the center 
of the universal enveloping algebra of a semi-simple Lie algebra. 
Duflo's original proof was based on deep and sophisticated techniques 
of representation theory including Kirillov's orbit method. 
As an application of his formality construction, 
Kontsevich proposed a new proof of Duflo's theorem by means of 
the induced associative algebra structure on tangent cohomology 
at a Maurer--Cartan element.
Indeed, Kontsevich's approach \cite{MR2062626} has led to an extension of Duflo's theorem: 
\textsl{For every finite dimensional Lie algebra $\frakg$, the map
$\pbw\circ J^{\frac{1}{2}}: H_{\CE}^\bullet(\frakg,S(\frakg))
\to H_{\CE}^\bullet(\frakg,\enveloping{\frakg})$
is an isomorphism of graded associative algebras.}
The classical Duflo theorem is the isomorphism of the cohomologies in degree $0$. 
A detailed proof of the above extended Duflo theorem was given
by Pevzner--Torossian~\cite{MR2085348} (see also~\cite{MR1990011,MR2077241}).

Kontsevich discovered a similar phenomenon in complex geometry.
Recall that the Hochschild cohomology groups $HH^\bullet(X)$ of a complex manifold $X$
are defined as the groups $\Ext_{\cO_{X\times X}}^\bullet(\cO_{\Delta},\cO_{\Delta})$ \cite{MR2141853}.
Gerstenhaber--Shack~\cite{MR981619} derived an isomorphism of cohomology groups 
$\hkr: \hypercohomology^\bullet(X,\Lambda^\bullet T_X)\to HH^\bullet(X)$ 
from the classical Hochschild--Kostant--Rosenberg map.
This isomorphism fails to intertwine the multiplications in both cohomologies 
but can be tweaked so as to produce an isomorphism of associative algebras. 
More precisely, Kontsevich~\cite{MR2062626} obtained the following theorem:
\textsl{The composition 
$\hkr\circ(\Todd_X)^{\frac{1}{2}}:\hypercohomology^\bullet(X,\Lambda^\bullet T_X)\to HH^\bullet(X),$
where the symbol $\Todd_X$ denotes the Todd class of the complex manifold $X$, 
is an isomorphism of associative algebras.} 
The multiplications on $\hypercohomology^\bullet(X,\Lambda^{\bullet}T_X)$ and $HH^{\bullet}(X)$ 
are respectively the wedge product and the Yoneda product.
Calaque--Van den Bergh~\cite{MR2646112} wrote a detailed proof of Kontsevich's theorem, 
and showed that the map $\hkr\circ(\Todd_X)^{\frac{1}{2}}$ actually 
preserves the Gerstenhaber algebra structures on both cohomologies.
We refer the reader to \cite{MR2565036,MR2986860} for a related result.

Hence Kontsevich's formality revealed a hidden connection between 
two areas of mathematics: complex geometry and Lie theory.
The mysterious and surprising similarity between the Todd class of a complex manifold 
and the Duflo element of a Lie algebra --- two seemingly unrelated objects --- 
led to further exciting developments. 
In 1998, Shoikhet~\cite{arXiv:math/9812009} announced 
the so called Kontsevich--Shoikhet theorem (Theorem~\ref{thm:Shoikhet}), 
which explains the deep ties between Lie theory and complex geometry and provides a unified framework for their study. 
The theorem states that a formula of Duflo type holds for the 
dg manifolds $(\RR^{m|n},Q)$.

Our approach is inspired by Dolgushev's proof of Kontsevich's global formality theorem for smooth manifolds \cite{MR2102846} 
and relies heavily on the Fedosov dg Lie algebroid constructed in~\cite{arXiv:1605.09656,arXiv:1901.04602} 
(and independently by Batakidis--Voglaire in the special case of matched pairs~\cite{MR3724780}).
Roughly speaking, a Fedosov dg Lie algebroid associated with a Lie pair $(L,A)$ is a dg Lie algebroid whose associated 
spaces of polyvector fields and polydifferential operators 
are homotopy equivalent to
$\big(\tot(\sections{\Lambda^\bullet A^\vee}\otimes_R\Tpoly{\bullet}),\dabott\big)$
and $\big(\tot(\sections{\Lambda^\bullet A^\vee}\otimes_R\Dpoly{\bullet}),\dau+\dHH\big)$, respectively
(in a style reminiscent of Dolgushev's Fedosov resolutions~\cite{MR2102846}). 
More precisely, having chosen some additional geometric data, 
one can endow the graded manifold $\cM=L[1]\oplus L/A$ 
with a structure of dg manifold $(\cM,Q)$ homotopy equivalent to $(A[1],d_A)$ \cite{arXiv:1605.09656}. 
We call any such a dg manifold $(\cM,Q)$ a `Fedosov dg manifold 
associated with the Lie pair $(L,A)$.' The Fedosov dg Lie algebroid $\cF$ 
is a certain dg Lie subalgebroid of the tangent dg Lie algebroid $T_\cM$ of
 the Fedosov dg manifold $(\cM,Q)$. 
In other words, $\cF$ is the dg Lie algebroid encoding a certain dg foliation
 of $(\cM,Q)$.
Since a Lie algebroid can be thought of as an extension of the tangent bundle
 of a manifold, 
the notions of polyvector fields and polydifferential operators admit extensions to the context of a Lie algebroid 
and these each carry a natural dgla structure \cite{MR1675117,MR1815717}.
Likewise, the notions of polyvector fields and polydifferential operators can
be extended in an appropriate sense to the context of a dg Lie algebroid, 
yielding two dglas $\Tpoly{\bullet}$ and $\Dpoly{\bullet}$ whose corresponding cohomology groups are naturally Gerstenhaber algebras. 
The ``polyvector fields'' and ``polydifferential operators'' associated to the 
Fedosov dg Lie algebroid $\cF$
can be viewed geometrically as polyvector fields and polydifferential operators
tangent to the dg foliation on the Fedosov dg manifold $(\cM,Q)$. 
In fact, one can identify the ``polyvector fields'' and 
``polydifferential operators'' on $\cF$ 
to $\big(\tot(\sections{\Lambda^\bullet L^\vee}\otimes_R\verticalTpoly{\bullet}),\;\fedosova,\;\schouten{\argument}{\argument}\big)$ and
$\big(\tot(\sections{\Lambda^\bullet L^\vee}\otimes_R\verticalDpoly{\bullet}),\;\gerstenhaber{Q+ m}{\argument},\;\gerstenhaber{\argument}{\argument}\big)$, respectively,
where $\verticalTpoly{\bullet}$ denotes the formal polyvector fields and $\verticalDpoly{\bullet}$ 
the formal polydifferential operators tangent to the fibers of the vector bundle $L/A\to M$.

By applying Kontsevich formality theorem fiberwisely to $\cF\to \cM$,
we prove that there exists an $L_\infty$ quasi-isomorphism
\[ \Phi : \big(\tot(\sections{\Lambda^\bullet L^\vee}\otimes_R\verticalTpoly{\bullet}),\;\fedosova,\;\schouten{\argument}{\argument}\big)
\to \big(\tot(\sections{\Lambda^\bullet L^\vee}\otimes_R\verticalDpoly{\bullet}),\;\gerstenhaber{Q+ m}{\argument},\;\gerstenhaber{\argument}{\argument}\big). \]

This $L_\infty$ quasi-isomorphism $\Phi$ is in fact a sequence of maps $(\Phi_n)_{n=1}^\infty$ --- its `Taylor coefficients' --- 
the first amongst which is a quasi-isomorphism of cochain complexes
\[ \Phi_1: \tot(\sections{\Lambda^\bullet L^\vee}\otimes_R\verticalTpoly{\bullet}) 
\to \tot(\sections{\Lambda^\bullet L^\vee}\otimes_R\verticalDpoly{\bullet}) .\]
The latter induces an isomorphism of Lie algebras on the level of cohomologies.
A standard argument of Kontsevich, Manchon--Torossian, and Mochizuki 
\cite{MR2062626,MR1990011,MR2077241,MR1872382} suffices to prove that $\Phi_1$ 
intertwines the associative multiplications carried by the cohomologies as well. 
Hence, in cohomology, $\Phi_1$ really is an isomorphism of Gerstenhaber algebras.

Next, we apply the Kontsevich--Shoikhet theorem (Theorem~\ref{thm:Shoikhet}) 
in order to prove that $\Phi_1$ is essentially the fiberwise HKR map enhanced by
the Todd class of the Fedosov dg Lie algebroid. 
More precisely, we prove that $\Phi_1$ 
is the composition \[ \Phi_1=\hkr\circ(\ttodd_\cF^{\can})^{\frac{1}{2}} \]
of the natural extension $\hkr$ of the fiberwise Hochschild--Kostant--Rosenberg map
$\verticalTpoly{\bullet}\to\verticalDpoly{\bullet}$ and the action on 
$\tot(\sections{\Lambda^\bullet L^\vee}\otimes_R\verticalTpoly{\bullet})$ (by contraction) 
of the square root of the Todd cocycle $\ttodd^{\can}_\cF$ of the Fedosov dg Lie algebroid
 $\cF$ associated with the canonical connection defined by Equation~\eqref{twisted Todd}.

Then our main theorem essentially follows from a careful combination of the above results 
together with various $L_\infty$ quasi-isomorphisms. 
Our approach was largely influenced and indeed relies on several standard techniques 
pioneered by Kontsevich in his seminal paper~\cite{MR2062626} and expounded at greater length 
in subsequent literature~\cite{arXiv:math/9812009,MR2646112}.
However, we emphasize the role of Fedosov dg Lie algebroids as it sheds new light on 
and indeed provides transparent understanding of Kontsevich's global formality theorem 
and, in particular, the Kontsevich--Duflo phenomenon.

In Section~\ref{applications}, we apply our results to a number of interesting
classes of examples of Lie pairs, namely those arising from complex manifolds, 
from regular foliations, and from $\frakg$-manifolds.
In each case, we obtain a formality theorem and a Kontsevich--Duflo type theorem. 
In the case of Lie pairs stemming from complex manifolds, we recover the
Kontsevich--Duflo theorem of complex geometry \cite{MR2062626, MR2646112}.
As far as we know, the formality and Kontsevich--Duflo type theorems 
obtained for geometric situations such as foliations and $\frakg$-manifolds are new. 
In the future, we plan to investigate the implications of the formality theorem 
in deformation quantization, in particular for the special instances of Lie pairs listed above.
While Kontsevich's formality is concerned with $L_{\infty}$ algebra structures,
Tsygan's formality for chains takes care of the corresponding $L_{\infty}$ module structures \cite{MR1729368}.
An explicit formula for chain formality was first constructed by Shoikhet \cite{MR2004726}.
Formality for chains is related to Tsygan's program of noncommutative 
calculus \cite{MR2986860,MR1783778},
which is itself closely related to the algebraic index theorem 
\cite{MR1350407,MR1901245,MR2132867,MR3286536,MR2601006,MR3316973,MR3295986}.
In a separate publication, we will study Tsygan's formality for Lie pairs
and its application to the index theorem. When the Lie pair
arises from a regular foliation, it would be interesting to explore
the connection with the work of Gorokhovsky--Lott \cite{MR3056105} and 
Pflaum--Posthuma--Tang \cite{MR3316973} on the transverse index theorem.

\subsection*{Terminology and notations} 

\subsubsection*{Natural numbers}
We use the symbol $\NN$ to denote the set of positive integers and the symbol $\NN_0$ for the set of nonnegative integers. 

\subsubsection*{Field $\KK$ and ring $R$}
We use the symbol $\KK$ to denote the field of either real or complex numbers. 
The symbol $R$ always denotes the algebra of smooth functions on $M$ with values in $\KK$. 

\subsubsection*{Tensor products}
For any two $R$-modules $P$ and $Q$, we write $P\otimes_{R} Q$ to denote the tensor product of $P$ and $Q$ 
as $R$-modules and $P\otimes Q$ to denote the tensor product of $P$ and $Q$ regarded as $\KK$-modules. 

\subsubsection*{Completed symmetric algebra}
Given a module $\cM$ over a ring, the symbol $\hat{S}(\cM)$ denotes 
the $\mathfrak{m}$-adic completion of the symmetric algebra $S(\cM)$, 
where $\mathfrak{m}$ is the ideal of $S(\cM)$ generated by $\cM$. 

\subsubsection*{Duality pairing}
For every vector bundle $E\to M$, we define a duality pairing 
\[ \sections{\hat{S}(E\dual)}\times\sections{S(E)} \to R \] 
by 
\[ \duality{\nu_1\otimes\cdots\otimes\nu_p}{v_1\otimes\cdots\otimes v_q} 
=\begin{cases}\sum_{\sigma\in S_p}\prod_{k=1}^p\duality{\nu_k}{v_{\sigma(k)}} 
& \text{if } p=q , \\ 0 & \text{otherwise.} \end{cases} \] 

\subsubsection*{Multi-indices}
Let $E\to M$ be a smooth vector bundle of finite rank $r$,
let $(\partial_i)_{i\in\{1,\dots,r\}}$ be a local frame of $E$ 
and let $(\chi_j)_{j\in\{1,\dots,r\}}$ be the dual local frame of $E\dual$.
Thus, we have $\duality{\chi_i}{\partial_j}=\delta_{i,j}$.
Given a multi-index $I=(I_1,I_2,\cdots,I_r)\in\NO^r$, 
we adopt the following multi-index notations:
\begin{gather*}
I! = I_1! \cdot I_2! \cdots I_r! \\ 
\abs{I} = I_1+I_2+\cdots+I_r \\ 
\partial^I=\underset{I_1 \text{ factors}}{\underbrace{\partial_1\odot\cdots\odot \partial_1}}
\odot\underset{I_2 \text{ factors}}{\underbrace{\partial_2\odot\cdots\odot\partial_2}}
\odot\cdots\odot\underset{I_r \text{ factors}}{\underbrace{\partial_r\odot\cdots\odot\partial_r}} \\ 
\chi^I=\underset{I_1 \text{ factors}}{\underbrace{\chi_1\odot\cdots\odot\chi_1}}
\odot\underset{I_2 \text{ factors}}{\underbrace{\chi_2\odot\cdots\odot\chi_2}}
\odot\cdots\odot\underset{I_r \text{ factors}}{\underbrace{\chi_r\odot\cdots\odot\chi_r}}
\end{gather*}
We use the symbol $e_k$ to denote the multi-index all of whose 
components are equal to $0$ except for the $k$-th which is equal to $1$. 
Thus $\chi^{e_k}=\chi_k$.

\subsubsection*{Shuffles} 
A $(p,q)$-shuffle is a permutation $\sigma$ of the set $\{1,2,\cdots,p+q\}$ 
such that $\sigma(1)<\sigma(2)<\cdots<\sigma(p)$ 
and $\sigma(p+1)<\sigma(p+2)<\cdots<\sigma(p+q)$.
The symbol $\shuffle{p}{q}$ denotes the set of $(p,q)$-shuffles. 

\subsubsection*{Graduation shift} 
Given a graded vector space $V=\bigoplus_{k\in\ZZ}V_k$, 
we write $V[i]$ to denote the graded vector space obtained by shifting the grading on $V$ 
according to the rule $(V[i])_{k}=V_{i+k}$. Accordingly, if $E=\bigoplus_{k\in\ZZ}E_{k}$
is a graded vector bundle over $M$, $E[i]$ denotes
the graded vector bundle obtained by shifting the degree in the fibers
of $E$ according to the above rule.

\subsubsection*{Koszul sign}
The Koszul sign $\sgn(\sigma; v_1, \cdots, v_{n})$ 
of a permutation $\sigma$ of homogeneous vectors $v_1$, $v_2$, \dots, $v_n$ 
of a $\ZZ$-graded vector space $V=\bigoplus_{k\in\ZZ}V_k$ is determined by
the equality \[ v_{\sigma(1)}\odot v_{\sigma(2)}\odot\cdots\odot v_{\sigma(n)}
= \sgn(\sigma; v_1,\cdots,v_n) \cdot v_1\odot v_2\odot\cdots\odot v_n \] 
in the graded commutative algebra $S(V)$.

\subsubsection*{Lie algebroid}
In this paper `Lie algebroid' always means `Lie $\KK$-algebroid' unless specified otherwise. 

\subsubsection*{Contraction}
Let $(C,\eth)$ and $(K,d)$ be two cochain complexes. 
A contraction of $(K,d)$ onto $(C,\eth)$ consists of a pair of chain maps $\tau: C\to K$ and $\sigma: K\to C$ 
together with a chain homotopy operator $h: K\to K[-1]$ satisfying 
\begin{gather*} \sigma\tau=\id_C; \qquad \id_K-\tau\sigma=dh+hd ; \\ 
\sigma h =0; \qquad h^2=0; \qquad \text{and} \qquad h\tau=0 .\end{gather*} 
We symbolize such a contraction by a diagram 
\[ \begin{tikzcd} 
(C,\eth) \arrow[r, "\tau", shift left] & (K,d) \arrow[l, "\sigma", shift left] \arrow[loop right, "h"] 
\end{tikzcd} .\] 

\subsection*{Acknowledgements} 

We would like to thank 
Ruggero Bandiera, Damien Broka, Martin Bordemann, Vasily Dolgushev,
 Olivier Elchinger,
Camille Laurent-Gengoux, Kirill Mackenzie, Do\-mi\-ni\-que Manchon,
 Marco Manetti, Rajan Mehta, Michael Pevzner,	 Boris Shoikhet, Jim Stasheff, 
Dmitry Tamarkin, and Thomas Will\-wa\-cher for fruitful discussions and useful comments. 
Stiénon is grateful to Université Paris~7 for its hospitality during his sabbatical leave in 2015--2016. Liao would like to thank National Center
for Theoretical Science for its hospitality in August--September 2018.

\section{Preliminaries}

\subsection{Connections and representations for Lie algebroids}

Let $M$ be a smooth manifold, let $L\to M$ be a Lie $\KK$-algebroid with anchor map $\rho:L\to T_M\otimes_{\RR}\KK$, 
and let $E\to M$ be a vector bundle over $\KK$. The algebra of smooth functions on $M$ with values in $\KK$ will be denoted $R$. 

The traditional description of a (linear) $L$-connection on $E$ is in terms of a \emph{covariant derivative} 
\[ \sections{L}\times\sections{E}\to\sections{E}: (l,e)\mapsto \nabla_l e \] 
characterized by the following two properties: 
\begin{gather}
\nabla_{f\cdot l} e=f\cdot \nabla_l e , \label{Molenbeek} \\ 
\nabla_l (f\cdot e)=\rho(l)f\cdot e+f\cdot\nabla_l e \label{Anderlecht}
,\end{gather}
for all $l\in\sections{L}$, $e\in\sections{E}$, and $f\in R$.

\begin{remark}
A covariant derivative 
$\nabla:\sections{L}\times\sections{E}\to\sections{E}$ 
induces a covariant derivative 
$\nabla:\sections{L}\times\sections{S(E)}\to\sections{S(E)}$ 
through the relation
\[ \nabla_l (e_1\odot\cdots\odot e_n) = \sum_{k=1}^n e_1\odot\cdots\odot\nabla_l e_k\odot\cdots\odot e_n ,\] 
for all $l\in\sections{L}$ and $e_1,\dots,e_n\in\sections{E}$.
\end{remark}

\begin{remark}
A covariant derivative 
$\nabla:\sections{L}\times\sections{S(E)}\to\sections{S(E)}$ 
induces a covariant derivative 
$\nabla:\sections{L}\times\sections{\hat{S}(E\dual)}\to\sections{\hat{S}(E\dual)}$ 
through the relation
\[ \rho(l)\duality{\sigma}{s}=\duality{\nabla_l\sigma}{s}+\duality{\sigma}{\nabla_l s} \] 
for all $l\in\sections{L}$, $s\in\sections{S(E)}$, 
and $\sigma\in\sections{\hat{S}(E\dual)}$. 
\end{remark}

A \emph{representation of a Lie algebroid} $L$ on a vector bundle $E\to M$ is 
a flat $L$-connection $\nabla$ on $E$, i.e.\ a covariant derivative 
$\nabla:\sections{L}\times\sections{E}\to\sections{E}$ satisfying 
\begin{equation}\label{Jette}
\nabla_{l_1}\nabla_{l_2} e-\nabla_{l_2}\nabla_{l_1} e=\nabla_{\lie{l_1}{l_2}}e ,
\end{equation} 
for all $l_1,l_2\in\sections{L}$ and $e\in\sections{E}$. 
A vector bundle endowed with a representation of the Lie algebroid $L$ is called an \emph{$L$-module}. 

\begin{example}\label{Auderghem}
Let $(L,A)$ be a Lie pair, i.e.\ an inclusion $A\into L$ of Lie algebroids. 
The \emph{Bott representation} of $A$ on the quotient $L/A$ is the flat connection defined by 
\[ \nabla^{\Bott}_a q(l)=q\big(\lie{a}{l}\big), \quad\forall a\in\sections{A},l\in\sections{L} ,\] 
where $q$ denotes the canonical projection $L\onto L/A$. 
Thus the quotient $L/A$ of a Lie pair $(L,A)$ is an $A$-module. 
\end{example}

Let $L$ be a Lie algebroid over a smooth manifold $M$, and $R$ be the algebra of smooth functions on $M$ valued in $\KK$.
The Chevalley--Eilenberg differential 
\[ d_L:\sections{\Lambda^k L\dual}\to\sections{\Lambda^{k+1} L\dual} \] 
defined by 
\begin{multline*} 
\big(d_L \omega\big)(l_0,l_1,\cdots,l_k)=
\sum_{i=0}^{n} (-1)^{i} \rho(l_i)\big(\omega(l_0,\cdots,\widehat{l_i},\cdots,l_k)\big) \\ 
+\sum_{i<j} (-1)^{i+j} \omega(\lie{l_i}{l_j},l_0,\cdots,\widehat{l_i},\cdots,\widehat{l_j},\cdots,l_k) 
\end{multline*}
and the exterior product make $\bigoplus_{k\geqslant 0}\sections{\Lambda^k L\dual}$ 
into a differential graded commutative $R$-algebra.

Given a Lie algebroid $L$ of rank $n$, and
an $L$-connection $\nabla$
 on a vector bundle $E\to M$,
the covariant differential is the operator 
\[ d_L^{\nabla}: \sections{\Lambda^k L\dual\otimes E}\to\sections{\Lambda^{k+1} L\dual\otimes E} \] 
that takes a section $\omega \otimes e$ of 
$\Lambda^k L\dual\otimes E$ to 
\[ d_L^{\nabla}(\omega\otimes e)=(d_L\omega)\otimes e
+\sum_{j=1}^{n}(\nu_j\wedge\omega)\otimes \nabla_{v_j}e ,\]
where $v_1,v_2,\dots,v_n$ and $\nu_1,\nu_2,\dots,\nu_n$ 
are any pair of dual local frames for the vector bundles $L$ and $L\dual$. 
If the connection $\nabla$ is flat, then
 $d_L^{\nabla}$ is a coboundary 
operator: $d_L^{\nabla}\circ d_L^{\nabla}=0$.

Let $L$ be a Lie algebroid and let
\[ \begin{tikzcd}
0 \arrow{r} & \EE^{-1} \arrow{r}{d} & \EE^{0} \arrow{r}{d} & \EE^{1} \arrow{r}{d} & \EE^{2} \arrow{r}{d} & \cdots
\end{tikzcd} \]
be a  complex of $\enveloping{L}$-modules.
The Chevalley--Eilenberg hypercohomology group $\hypercohomology_{\CE}^k(L,\EE^\bullet)$ 
is the degree~$k$ total cohomology  of the double complex 
\[ \begin{tikzcd}[row sep=small]
& \vdots & \vdots & \vdots & \\ 
\cdots \arrow[r] & \sections{\Lambda^{p-1}L\dual}\otimes_R\EE^{q+1} \arrow[r, "d_L^{\EE}"] \arrow[u] & 
\sections{\Lambda^{p}L\dual}\otimes_R\EE^{q+1} \arrow[r, "d_L^{\EE}"] \arrow[u] & 
\sections{\Lambda^{p+1}L\dual}\otimes_R\EE^{q+1} \arrow[r] \arrow[u] & \cdots \\ 
\cdots \arrow[r] & 
\sections{\Lambda^{p-1}L\dual}\otimes_R\EE^{q} \arrow[r, "d_L^{\EE}"] \arrow[u, "\id\otimes (-1)^{p-1}d_{\EE}"] & 
\sections{\Lambda^{p}L\dual}\otimes_R\EE^{q} \arrow[r, "d_L^{\EE}"] \arrow[u, "\id\otimes (-1)^{p}d_{\EE}"] & 
\sections{\Lambda^{p+1}L\dual}\otimes_R\EE^{q} \arrow[r] \arrow[u, "\id\otimes (-1)^{p+1}d_{\EE}"] & \cdots \\ 
& \vdots \arrow[u] & \vdots \arrow[u] & \vdots \arrow[u] & 
\end{tikzcd} \]
When we say that the above diagram is a double complex, we mean in particular that each square of the grid commutes.
Hence the hypercohomology $\hypercohomology_{\CE}^k(L,\EE^\bullet)$ is the cohomology of the total complex
\[ \Big(\tot\big(\sections{\Lambda^\bullet L\dual}\otimes_R\EE^{\bullet}\big), d_L^{\EE}+ \id\otimes d_{\EE}\Big) .\]
Recall that, the degree of the operator $d_{\EE}$ being $+1$, the usual sign convention for the tensor product of linear 
maps in the presence of gradings dictates that 
\[ \big(\id\otimes d_\EE\big)(\omega\otimes e) = (-1)^p\omega\otimes d_\EE(e),\quad 
\forall\omega\in\sections{\Lambda^p L\dual},\ \forall e\in\EE^\bullet .\]

\subsection{Atiyah class and Todd class of a Lie pair}\label{atiyah-todd}

Let $(L,A)$ be a pair of Lie algebroids over $\KK$. 
We write $B$ to denote the quotient vector bundle $L/A$. 
Consider the short exact sequence of vector bundles 
\[ \begin{tikzcd}
0 \arrow[r] & A \arrow[r, "i"] & L \arrow[r, "q"] & B \arrow[r] & 0
\end{tikzcd} .\]

Given $L$-connection $\nabla$ on $B$, 
we define a bundle map $\torsion:\Lambda^2 L\to B$ by 
\[ \torsion(x,y)=\nabla_{x}q(y)-\nabla_{y}q(x)-q\big(\lie{x}{y}\big), 
\quad\forall x,y\in\sections{L} .\] 

An $L$-connection $\nabla$ on $B$ is said to extend 
the Bott $A$-connection on $B$ (see Example~\ref{Auderghem}) if 
\[ \nabla_{i(a)} q(l)=\nabla^{\Bott}_a q(l)
=q\big(\lie{i(a)}{l}\big) ,\quad\forall a\in\sections{A},l\in\sections{L} .\]

\begin{lemma}\label{eq:dog}
The following assertions are equivalent: 
\begin{enumerate}
\item The $L$-connection $\nabla$ on $B$ extends the Bott $A$-connection on $B$. 
\item For all $a\in\sections{A}$ and $l\in\sections{L}$, we have $\torsion\big(i(a),l\big)=0$. 
\item There exists a unique bundle map $\beta^\nabla:\Lambda^2(B)\to B$ 
such that the diagram 
\[ \begin{tikzcd}
\Lambda^2 L \arrow[d,"q", swap] \arrow[r, "\torsion"] & B \\ 
\Lambda^2 B \arrow[ru ,"\beta^\nabla", swap] &
\end{tikzcd} \]
commutes. 
\end{enumerate}
\end{lemma}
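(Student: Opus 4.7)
The plan is to handle the three implications as two equivalences: $(1) \Leftrightarrow (2)$ via a direct computation of $\torsion(i(a),l)$, and $(2)\Leftrightarrow(3)$ via the universal property of the quotient $\Lambda^2 L \to \Lambda^2(L/A)$.

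For $(1)\Leftrightarrow(2)$, I would simply unpack the definition of $\torsion$ on an element $(i(a),l)$ with $a\in\sections{A}$ and $l\in\sections{L}$. Since the sequence is exact we have $q\circ i=0$, so the middle term $\nabla_l q(i(a))$ vanishes and one is left with
\[ \torsion\big(i(a),l\big)=\nabla_{i(a)}q(l)-q\big(\lie{i(a)}{l}\big) .\]
Because $\nabla^{\Bott}_a q(l)=q(\lie{i(a)}{l})$ by Example~\ref{Auderghem}, the right-hand side is exactly $\nabla_{i(a)}q(l)-\nabla^{\Bott}_a q(l)$. Hence (2) amounts precisely to the requirement that $\nabla_{i(a)}=\nabla^{\Bott}_a$ on $L/A$, which is (1).

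For $(2)\Leftrightarrow(3)$, I would first check that $\torsion:\sections{L}\times\sections{L}\to\sections{L/A}$ is $R$-bilinear and skew-symmetric, so that it descends to a genuine bundle map $\Lambda^2 L\to L/A$; the $R$-linearity in each argument is a short computation in which the anchor contributions cancel against the Leibniz terms in~\eqref{Anderlecht}, and skew-symmetry is immediate from the definition. Then, since the induced map $q\wedge q:\Lambda^2 L\to\Lambda^2(L/A)$ is a surjective bundle map whose kernel is the subbundle generated by decomposable elements of the form $i(a)\wedge l$, the existence of the desired factorization $\beta^\nabla$ is equivalent to the vanishing of $\torsion$ on all such elements, which by skew-symmetry is exactly condition (2); uniqueness of $\beta^\nabla$ follows from the surjectivity of $q\wedge q$.

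There is no real obstacle here: both equivalences are essentially bookkeeping based on the exact sequence $0\to A\to L\to L/A\to 0$, the definition of the Bott connection, and the universal property of exterior quotients. The mildly delicate step is identifying $\ker(q\wedge q)$ correctly as the subbundle generated by $i(A)\wedge L$, but once that is in place the factorization and its uniqueness follow formally.
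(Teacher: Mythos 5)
Your proof is correct and follows essentially the same route as the paper, whose entire proof consists of the computation $\torsion(i(a),l)=\nabla_{i(a)}q(l)-q(\lie{i(a)}{l})$ using $q\circ i=0$, i.e.\ your equivalence $(1)\Leftrightarrow(2)$. Your treatment of $(2)\Leftrightarrow(3)$ via the identification $\ker(\Lambda^2 q)=i(A)\wedge L$ is the standard argument that the paper leaves implicit, and it is sound.
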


\begin{proof}
Since $q\circ i=0$, we have \[ 0=\torsion\big(i(a),l\big)=\nabla_{i(a)} q(l)-q\big(\lie{i(a)}{l}\big) ,\] 
for all $a\in\sections{A}$ and $l\in\sections{L}$. 
\end{proof} 

Hence a torsion-free $L$-connection on $B$ is necessarily an extension of the Bott $A$-connection. 

An $L$-connection $\nabla$ on $B$ is said to be torsion-free if $\torsion=0$ (and hence $\beta^\nabla=0$). 

\begin{lemma}
Given a Lie pair $(L,A)$ with quotient $B=L/A$, there exist torsion-free $L$-connections on $B$. 
\end{lemma}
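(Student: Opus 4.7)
The plan is to construct a torsion-free $L$-connection on $L/A$ by starting from an arbitrary $L$-connection and correcting it by a suitable $R$-linear tensor. Two preliminary observations simplify the bookkeeping: a direct computation using the defining identities~\eqref{Molenbeek}--\eqref{Anderlecht} shows that for any $L$-connection $\nabla$ on $L/A$ the expression $\torsion$ is $R$-bilinear and skew-symmetric in its two arguments, hence a tensorial section of $\Lambda^2 L\dual\otimes(L/A)$; moreover the space of $L$-connections on $L/A$ is an affine space modeled on $\sections{L\dual\otimes\End(L/A)}$.

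First, I would produce any $L$-connection $\nabla^0$ on $L/A$, whose existence is standard by local triviality together with a partition of unity argument. Let $T^0\in\sections{\Lambda^2 L\dual\otimes(L/A)}$ denote its torsion. Next, pick a vector bundle splitting $j\colon L/A\to L$ of the projection $q$, so that $L\cong i(A)\oplus j(L/A)$ as smooth vector bundles.

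With the splitting in hand, I would define a correction tensor $\phi\in\sections{L\dual\otimes\End(L/A)}$ by specifying its restrictions to the two summands. For $a\in\sections{A}$ and $b,b'\in\sections{L/A}$, set
\[ \phi_{i(a)}(b):=\nabla^{\Bott}_a b-\nabla^0_{i(a)}b, \qquad \phi_{j(b)}(b'):=-\half\,T^0\bigl(j(b),j(b')\bigr) .\]
Both formulas are manifestly $R$-linear. The first summand is designed so that $\nabla:=\nabla^0+\phi$ restricts along $A$ to the Bott $A$-representation, which by Lemma~\ref{eq:dog} forces the torsion of $\nabla$ to vanish on $i(A)\wedge L$; the second is a Levi-Civita-style symmetrization whose effect on the remaining $j(L/A)\wedge j(L/A)$ component of the torsion, thanks to the skew-symmetry of $T^0$, exactly cancels the residual $T^0(j(b),j(b'))$.

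The proof closes with a direct piecewise verification that $\torsion(x,y)=0$ for $x,y$ ranging in $i(\sections{A})$ and $j(\sections{L/A})$, which reduces to short substitutions once the splitting is fixed. I anticipate no substantial obstacle; the only subtlety is arranging the correction so that Bott compatibility and torsion-freeness hold simultaneously, and the ansatz above handles both at once.
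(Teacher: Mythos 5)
Your proof is correct and follows essentially the same route as the paper's: produce an arbitrary $L$-connection by partition of unity, fix a splitting, replace the $A$-direction by the Bott representation, and kill the remaining torsion by a Levi--Civita-style $-\half T(j(\cdot),j(\cdot))$ correction. The only (cosmetic) difference is that you perform both corrections at once via a single tensor $\phi$, whereas the paper does them in two sequential steps, using the torsion $\beta^{\nabla'}$ of the already Bott-extended connection — which, as your Case~2 computation implicitly shows, agrees with $T^0$ on the $j(L/A)\wedge j(L/A)$ component, so the two constructions yield the same connection.
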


\begin{proof}[Sketch of proof]
First, construct an $L$-connection $\nabla$ on $B$ using the usual partition of unity argument. 
Then, tweak $\nabla$ so as to obtain an extension $\nabla':\sections{L}\times\sections{B}\to\sections{B}$ of the Bott $A$-connection: 
choose a splitting $i\circ p+j\circ q=\id_L$ of the short exact sequence 
\begin{equation} \label{cryptography} \begin{tikzcd} 
0 \arrow[r] & A \arrow[r, "i"] & L \arrow[l, "p", bend left, dashed] \arrow[r, "q"] & B \arrow[r] \arrow[l, "j", bend left, dashed] & 0 
\end{tikzcd} \end{equation} 
and set \[ \nabla'_l b = q\big(\lie{i\circ p(l)}{j(b)}\big)+\nabla_{j\circ q(l)} b .\] 
Finally, obtain a torsion-free connection $\nabla'':\sections{L}\times\sections{B}\to\sections{B}$ from $\nabla'$ 
by setting \[ \nabla''_l b=\nabla'_l b -\frac{1}{2}\beta^{\nabla'}\big(q(l),b\big) .\qedhere\] 
\end{proof}

Given a Lie pair $(L,A)$ with quotient $B=L/A$, let $\nabla$ be an $L$-connection on $B$ extending the Bott $A$-connection. 
The curvature of $\nabla$ is the bundle map $R^\nabla:\Lambda^2 L\to\End(B)$ defined by
\begin{equation*}
R^\nabla(x,y)=\nabla_{x}\nabla_{y}-\nabla_{y}\nabla_{x}
-\nabla_{\lie{x}{y}}, \quad \forall x,y\in\sections{L}.
\end{equation*}
Since $B$ is an $A$-module, the restriction of $R^\nabla$ to $\Lambda^2 A$ vanishes.
Hence the curvature induces a section
$\atiyahcocycle\in\sections{A^\vee\otimes A^\perp\otimes\End (B)}$
or, equivalently, a bundle map $\atiyahcocycle:A\otimes (L/A)\to\End(B)$
given by
\[ \atiyahcocycle\big(a;q(l)\big)=R^\nabla(a,l)=\nabla_{a}\nabla_{l}-\nabla_{l}\nabla_{a}-\nabla_{\lie{a}{l}},
\quad\forall a\in\sections{A},l\in\sections{L} .\]

\begin{proposition} [\cite{MR3439229}]
\label{Thm:atiyahclass}
\begin{enumerate}
\item The section $\atiyahcocycle\in \sections{A\dual\otimes A^\perp\otimes\End(L/A)}$
is a $1$-cocycle for the Lie algebroid $A$ with values in the $A$-module $A^\perp\otimes\End(L/A)$.
\item The cohomology class $\atiyahclass_{L/A}\in 
H_{\CE}^1(A, A^\perp\otimes\End(L/A))$ of the $1$-cocycle $\atiyahcocycle$
does not depend on the choice of $L$-connections extending the Bott $A$-connection.
\end{enumerate}
\end{proposition}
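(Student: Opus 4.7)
Both parts of the proposition should follow from the standard Bianchi identity for the curvature of the $L$-connection $\nabla$ on $L/A$, together with the vanishing $R^\nabla|_{\Lambda^2 A} = 0$ that follows from $\nabla$ extending the Bott $A$-representation.

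The plan for part (1) is to write out the Bianchi identity $d_L^\nabla R^\nabla = 0$, where $R^\nabla$ is viewed as a 2-form with values in $\End(L/A)$, and then restrict its evaluation to two sections of $A$ and one section of $L$. Explicitly, for $a_1, a_2 \in \sections{A}$ and $l \in \sections{L}$, the Bianchi identity reads
\begin{multline*}
\bigl[\nabla_{a_1}, R^\nabla(a_2, l)\bigr] - \bigl[\nabla_{a_2}, R^\nabla(a_1, l)\bigr] + \bigl[\nabla_{l}, R^\nabla(a_1, a_2)\bigr] \\
- R^\nabla(\lie{a_1}{a_2}, l) - R^\nabla(\lie{a_2}{l}, a_1) - R^\nabla(\lie{l}{a_1}, a_2) = 0 .
\end{multline*}
Since $\nabla$ extends the Bott representation, we have $R^\nabla(a_1,a_2) = 0$, so the third term drops out. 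Translating into the language of $\atiyahcocycle$ via $\atiyahcocycle(a; q(l)) = R^\nabla(a,l)$, using $\nabla_a^{\Bott} q(l) = q(\lie{a}{l})$, and recognizing the induced $A$-module structure on $A^\perp \otimes \End(L/A)$ (via the Bott action on $(L/A)^\vee$ and the commutator action on $\End(L/A)$), one checks that the remaining five terms of the Bianchi identity assemble exactly into the expression $(d_A \atiyahcocycle)(a_1, a_2; q(l))$. A subtle point to verify is that the result is independent of the chosen lift $l$ of $q(l)$, which follows from $C^\infty(M,\KK)$-linearity of $R^\nabla$ combined with the Bott condition.

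For part (2), given two $L$-connections $\nabla$ and $\nabla'$ on $L/A$ both extending the Bott $A$-representation, set $\phi := \nabla' - \nabla \in \sections{L^\vee \otimes \End(L/A)}$. Because $\nabla_a = \nabla'_a$ for all $a \in \sections{A}$, the 1-form $\phi$ in fact lies in $\sections{A^\perp \otimes \End(L/A)}$. The difference of curvatures then unfolds, for $x, y \in \sections{L}$, as
\[ R^{\nabla'}(x, y) - R^\nabla(x, y) = \nabla_x\bigl(\phi(y)\bigr) - \nabla_y\bigl(\phi(x)\bigr) - \phi(\lie{x}{y}) + \bigl[\phi(x), \phi(y)\bigr] ,\]
where $\nabla_x$ on $\phi(y) \in \sections{\End(L/A)}$ is the induced commutator action. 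Restricting to $x = a \in \sections{A}$ and $y = l \in \sections{L}$, the term $\nabla_l(\phi(a))$ vanishes since $\phi(a) = 0$, and likewise $[\phi(a), \phi(l)] = 0$. What remains is precisely the formula for $(d_A^\nabla \phi)(a; q(l))$, so $\atiyahcocycle^{\nabla'} - \atiyahcocycle^\nabla = d_A \phi$ is a coboundary and the cohomology class is independent of $\nabla$.

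The main obstacle will be careful bookkeeping of the module structures and sign conventions; specifically, confirming that the induced $A$-action on $A^\perp \otimes \End(L/A)$ produces exactly the combination of covariant derivatives and bracket terms appearing in the Bianchi identity, and verifying well-definedness of both cocycle and coboundary formulas in terms of $q(l)$ rather than $l$. Once the identifications are laid out carefully, the algebraic content reduces to the Bianchi identity plus the extension property $R^\nabla|_{\Lambda^2 A} = 0$, with no further analytic input.
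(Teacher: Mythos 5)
Your argument is correct: part (1) is exactly the second Bianchi identity for $R^\nabla$ combined with the vanishing of $R^\nabla$ on $\Lambda^2 A$, and part (2) is the standard computation showing that the difference of two extending connections is a $0$-cochain valued in $A^\perp\otimes\End(L/A)$ whose Chevalley--Eilenberg differential is the difference of Atiyah cocycles. The paper itself gives no proof of this proposition --- it is quoted from Chen--Sti\'enon--Xu \cite{MR3439229} --- and your Bianchi-identity route is essentially the same direct computation carried out there, so there is nothing substantive to contrast.
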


We call $\atiyahcocycle$ the \emph{Atiyah cocycle}
associated with the $L$-connection
$\nabla$. Its cohomology class
\[ \atiyahclass_{L/A}\in H_{\CE}^1\big(A, A^\perp\otimes\End(L/A)\big) = H_{\CE}^1\big(A,B\dual\otimes\End(B)\big) \]
is called the \emph{Atiyah class} of the Lie pair.

Choosing a splitting $i\circ p+j\circ q=\id_L$ of the short exact sequence \eqref{cryptography}, 
we can identify $\Lambda^2 L\dual$ with the Whitney sum $\Lambda^2 A\dual \oplus (A\dual\otimes B\dual) \oplus \Lambda^2 B\dual$.

The following lemma will be needed later on.

\begin{lemma}
Under the identification above,
the curvature of $\nabla$ decomposes as 
\[ R^\nabla = \widetilde{\atiyahcocycle} + R^\nabla_{0,2} \] 
where 
$\widetilde{\atiyahcocycle}\in \sections{\Lambda^2 L\dual\otimes\End(B)}$ 
denotes the skew-symmetrization
of $\atiyahcocycle \in \sections{A\dual\otimes B\dual\otimes\End(B)}$,
and $R^\nabla_{0, 2}: \Lambda^2 L\to \End(B)$ is the 
bundle map defined by 
\[ R^\nabla_{0, 2}(x,y) = R^\nabla\big(j\circ q(x),j\circ q(y)\big), \quad \forall x,y\in\sections{L} .\]
\end{lemma}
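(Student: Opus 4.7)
The plan is to exploit the splitting $\id_L = i\circ p + j\circ q$ to decompose every $x\in\sections{L}$ as $x = i\circ p(x) + j\circ q(x)$ and then expand $R^\nabla(x,y)$ by bilinearity. Writing $a_k = p(x_k)$ and $b_k = q(x_k)$ with $x_1=x$, $x_2=y$, this produces four summands, one for each combination in $\{i(A),\,j(B)\}\times\{i(A),\,j(B)\}$, which will be matched to the three components of the direct sum $\Lambda^2L\dual\cong\Lambda^2A\dual\oplus(A\dual\otimes B\dual)\oplus\Lambda^2B\dual$.

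For the $\Lambda^2 A\dual$-component: since $\nabla$ extends the Bott $A$-representation on $L/A$ and a representation is by definition a flat connection, the restriction of $R^\nabla$ to $\Lambda^2 A$ vanishes. This is already observed in the paragraph immediately preceding the definition of $\atiyahcocycle$, so I would simply cite it to conclude that $R^\nabla(i(a_1),i(a_2))=0$ and hence the $\Lambda^2 A\dual$-component of $R^\nabla$ is trivial.

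For the mixed $A\dual\otimes B\dual$-component: by skew-symmetry of $R^\nabla$, the two cross terms combine into $R^\nabla(i(a_1),j(b_2))-R^\nabla(i(a_2),j(b_1))$, which by the very definition of the Atiyah cocycle equals $\atiyahcocycle(a_1;b_2)-\atiyahcocycle(a_2;b_1)$. Viewing $\atiyahcocycle\in\sections{A\dual\otimes B\dual\otimes\End(B)}$ as a section of $\sections{L\dual\otimes L\dual\otimes\End(B)}$ via the dual maps to $p$ and $q$, this is exactly the value $\widetilde{\atiyahcocycle}(x,y)$ of its skew-symmetrization. The remaining term $R^\nabla(j(b_1),j(b_2))=R^\nabla\big(j\circ q(x),j\circ q(y)\big)$ is $R^\nabla_{0,2}(x,y)$ by definition, so summing the three contributions yields the claimed identity $R^\nabla=\widetilde{\atiyahcocycle}+R^\nabla_{0,2}$.

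The argument is elementary and I do not foresee any substantial obstacle. The step requiring the most care is the identification of the mixed component with $\widetilde{\atiyahcocycle}$: one has to verify that the natural inclusion $A\dual\otimes B\dual\otimes\End(B)\hookrightarrow L\dual\otimes L\dual\otimes\End(B)$ induced by the splitting, followed by skew-symmetrization, really does produce the map $x\wedge y\mapsto \atiyahcocycle(p(x);q(y))-\atiyahcocycle(p(y);q(x))$, which is straightforward but must be handled carefully so as to track the signs coming from skew-symmetry and the combinatorial factor in the skew-symmetrization convention.
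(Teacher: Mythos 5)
Your argument is correct: the paper states this lemma without proof, and the bilinear expansion of $R^\nabla$ along the splitting $\id_L=i\circ p+j\circ q$, with the $\Lambda^2 A$-term killed by flatness of the Bott connection (as noted in the paper just before the definition of $\atiyahcocycle$), the mixed terms giving $\widetilde{\atiyahcocycle}$, and the $\Lambda^2 B$-term being $R^\nabla_{0,2}$ by definition, is exactly the intended argument. Your caution about the normalization of the skew-symmetrization is well placed, since the paper's convention (no factor of $\tfrac{1}{2}$) is what makes the decomposition read $R^\nabla=\widetilde{\atiyahcocycle}+R^\nabla_{0,2}$ on the nose.
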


The Todd cocycle of a Lie pair $(L,A)$ is the 
Chevalley-Eilenberg cocycle
\begin{gather*} 
\todd_{L/A}^{\nabla}=\det\left(\frac{\atiyahcocycle}{1-e^{-\atiyahcocycle}}\right)
\in \bigoplus_{k=0}\sections{\Lambda^kA\dual\otimes\Lambda^k A^\perp} ,\\
\ttodd_{L/A}^{\nabla}=\det \left(\frac{\atiyahcocycle}
{e^{\half\atiyahcocycle}-e^{-\half\atiyahcocycle}}\right)
\in\bigoplus_{k=0}\sections{\Lambda^kA\dual\otimes\Lambda^k A^\perp} 
.\end{gather*}

The Todd class of a Lie pair $(L,A)$ is the cohomology class
\begin{gather*} 
\Todd_{L/A}=\det\left(\frac{\atiyahclass_{L/A}}{1-e^{-\atiyahclass_{L/A}}}\right)
\in \bigoplus_{k=0} H_{\CE}^{k}(A,\Lambda^k A^\perp ) ,\\ 
\tTodd_{L/A}=\det \left(\frac{\atiyahclass_{L/A}}
{e^{\half\atiyahclass_{L/A}}-e^{-\half\atiyahclass_{L/A}}}\right)
\in\bigoplus_{k=0} H_{\CE}^{k}(A,\Lambda^k A^\perp) 
.\end{gather*}

In the particular case of the Lie pair comprised of the Lie $\CC$-algebroids $L=T_X \otimes \CC$ and $A=T_X^{0,1}$ 
associated with a complex manifold $X$, the quotient of the pair is $T_X^{1,0}$ and the Atiyah class 
and the Todd class of the pair are the classical Atiyah class of $T_X$ and the classical Todd class of the complex manifold $X$.

\subsection{Polydifferential operators}

The universal enveloping algebra $\enveloping{L}$ of the Lie algebroid $L$ 
is a coalgebra over $R$ --- see~\cite{MR1815717}. 
Its comultiplication
\[ \Delta:\enveloping{L}\to\enveloping{L}\otimes_R\enveloping{L} \] 
is characterized by the identities 
\begin{gather*}
\Delta(1)=1\otimes 1; \\ 
\Delta(x)=1\otimes x+x\otimes 1, \quad \forall x\in \sections{L}; \\ 
\Delta(u\cdot v)=\Delta(u)\cdot\Delta(v), \quad \forall u,v\in\enveloping{L} ,
\end{gather*}
where $1\in R$ denotes the constant function on $M$ with value $1$ while the symbol $\cdot$ denotes 
the multiplication in $\enveloping{L}$. We refer the reader to~\cite{MR1815717} for the precise meaning of the last equation above. 
Explicitly, we have
\begin{multline*} 
\Delta(l_1\cdot l_2\cdot\cdots\cdot l_n)=
1\otimes(l_1\cdot l_2\cdot\cdots\cdot l_n) 
+ \sum_{\substack{p+q=n \\ p,q\in\NN}}\sum_{\sigma\in\shuffle{p}{q}} 
(l_{\sigma(1)}\cdot\cdots\cdot l_{\sigma(p)}) \otimes
(l_{\sigma(p+1)}\cdot\cdots\cdot l_{\sigma(n)}) \\
+ (l_1\cdot l_2\cdot\cdots\cdot l_n)\otimes 1
,\end{multline*} 
for all $l_1,\dots,l_n\in\sections{L}$. 

Let $(L,A)$ be a pair of Lie algebroids over $\KK$. 
Writing $\enveloping{L}\sections{A}$ for the left ideal of $\enveloping{L}$ generated by $\sections{A}$, 
the quotient $\frac{\enveloping{L}}{\enveloping{L}\sections{A}}$ is automatically an $R$-coalgebra since 
\[ \Delta\big(\enveloping{L}\sections{A}\big)\subseteq \enveloping{L}\otimes_R \big(\enveloping{L}\sections{A}\big) 
+ \big(\enveloping{L}\sections{A}\big) \otimes_R \enveloping{L} . \]

Let $\Dpoly{-1}$ denote the algebra $R$ of smooth functions on the manifold $M$, 
let $\Dpoly{0}$ denote the left $\enveloping{A}$-module 
$\frac{\enveloping{L}}{\enveloping{L}\sections{A}}$, 
let $\Dpoly{k}$ denote the tensor product 
$\Dpoly{0}\otimes_R\cdots\otimes_R\Dpoly{0}$ of $(k+1)$ copies of 
the left $R$-module $\Dpoly{0}$, and set 
$\Dpoly{\bullet}=\bigoplus_{k=-1}^{\infty}\Dpoly{k}$. 
Since $\enveloping{A}$ is a Hopf algebroid, it follows that
for each $k\geqslant -1$, $\Dpoly{k}$ is also naturally a $\enveloping{A}$-module
\cite{MR1815717}. 

\begin{lemma}
The $\enveloping{A}$-module $\Dpoly{0}$ is a cocommutative coassociative coalgebra over $R$ 
whose comultiplication $\Delta:\Dpoly{0}\to\Dpoly{0}\otimes_R\Dpoly{0}$ is
a morphism of $\enveloping{A}$-modules.
\end{lemma}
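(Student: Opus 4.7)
The plan is to derive the lemma directly from the coalgebra structure on $\enveloping{L}$ recalled earlier. As a first step, I would show that the comultiplication $\Delta:\enveloping{L}\to\enveloping{L}\otimes_R\enveloping{L}$ descends to a well-defined $R$-linear map $\Delta:\Dpoly{0}\to\Dpoly{0}\otimes_R\Dpoly{0}$. This is immediate from the inclusion
\[ \Delta\big(\enveloping{L}\sections{A}\big)\subseteq \enveloping{L}\otimes_R\big(\enveloping{L}\sections{A}\big) + \big(\enveloping{L}\sections{A}\big)\otimes_R\enveloping{L} \]
recorded just above the lemma, since the right-hand side is exactly the kernel of the canonical surjection $\enveloping{L}\otimes_R\enveloping{L}\twoheadrightarrow\Dpoly{0}\otimes_R\Dpoly{0}$. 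Cocommutativity and coassociativity of the descended comultiplication are then automatic: both are expressed by identities of $R$-linear maps that hold already in $\enveloping{L}$ and are preserved by the surjective quotient map $\enveloping{L}\twoheadrightarrow\Dpoly{0}$ and its iterated tensor powers over $R$.

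For the second step, I would verify that the descended $\Delta$ is $\enveloping{A}$-equivariant, where $\Dpoly{0}\otimes_R\Dpoly{0}$ carries the diagonal $\enveloping{A}$-action determined by the primitivity of elements of $\sections{A}\subset\enveloping{A}$. Since $\Delta:\enveloping{L}\to\enveloping{L}\otimes_R\enveloping{L}$ is a morphism of $\KK$-algebras, for every $a\in\sections{A}$ and $u\in\enveloping{L}$ we have
\[ \Delta(a\cdot u)=\Delta(a)\cdot\Delta(u)=(a\otimes 1+1\otimes a)\cdot\Delta(u). \]
Writing $\Delta(u)=\sum u_{(1)}\otimes u_{(2)}$ in Sweedler notation and reducing modulo $\enveloping{L}\sections{A}$ in each factor yields exactly the diagonal $\sections{A}$-action applied to the class of $\Delta(u)$. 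Since $\enveloping{A}$ is generated as a $\KK$-algebra by $R$ and $\sections{A}$, and since $\Delta$ is $R$-linear by construction, this suffices to promote equivariance over $\sections{A}$ to equivariance over the full Hopf algebroid $\enveloping{A}$.

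The most delicate point to handle carefully is that the diagonal $\enveloping{A}$-action on $\Dpoly{0}\otimes_R\Dpoly{0}$ must itself be well-defined, i.e.\ compatible with the $R$-bilinearity of $\otimes_R$ despite the fact that $R$ does not commute with $\sections{A}$ inside $\enveloping{L}$. This compatibility is precisely what the Hopf algebroid structure on $\enveloping{A}$ guarantees, and is standard for Lie--Rinehart algebras as developed in \cite{MR1815717}. Once it is in hand, the two ingredients above assemble to give the lemma.
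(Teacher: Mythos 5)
Your proposal is correct and follows essentially the same route as the paper, which justifies the lemma precisely by the displayed inclusion $\Delta\big(\enveloping{L}\sections{A}\big)\subseteq \enveloping{L}\otimes_R \big(\enveloping{L}\sections{A}\big) + \big(\enveloping{L}\sections{A}\big) \otimes_R \enveloping{L}$ (so that the coproduct descends to the quotient) together with the primitivity of elements of $\sections{A}$ for the equivariance. Your explicit attention to the well-definedness of the diagonal $\sections{A}$-action on $\Dpoly{0}\otimes_R\Dpoly{0}$ is the right point to flag, and it checks out by the Leibniz identity $a\cdot f = f\cdot a + \rho(a)f$ in $\enveloping{L}$.
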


Since the comultiplication $\Delta$ is coassociative,
the Hochschild operator $\hochschild:\Dpoly{k-1}\to\Dpoly{k}$ defined by 
\begin{multline*} 
\hochschild(u_1\otimes\cdots\otimes u_k) = 1\otimes u_1\otimes\cdots\otimes u_k 
+ \sum_{i=1}^{k} (-1)^i u_1\otimes\cdots\otimes\Delta(u_i)\otimes\cdots\otimes u_k \\ 
+ (-1)^{k+1} u_1\otimes\cdots\otimes u_k\otimes 1 
,\end{multline*}
for all $u_1,u_2,\dots,u_k\in\Dpoly{0}$, 
is a coboundary operator, i.e.\ $\hochschild^2=0$.

Moreover, $\hochschild:\Dpoly{k-1}\to\Dpoly{k}$ is 
a morphism of $\enveloping{A}$-modules, 
since the comultiplication 
$\Delta:\Dpoly{0}\to\Dpoly{0}\otimes_R\Dpoly{0}$ is 
a morphism of $\enveloping{A}$-modules.
Therefore, the Hochschild complex 
\[ \begin{tikzcd} 
0 \arrow[r] & \Dpoly{-1} \arrow[r, "\hochschild"] & \Dpoly{0} \arrow[r, "\hochschild"] & 
\Dpoly{1} \arrow[r, "\hochschild"] & \Dpoly{2} \arrow[r, "\hochschild"] & \cdots
\end{tikzcd} \]
is a complex of $\enveloping{A}$-modules.

The Chevalley--Eilenberg hypercohomology  $\hypercohomology^k_{\CE}(A,\Dpoly{\bullet})$ is 
the degree~$k$ total cohomology of the double complex 
\begin{equation}\label{double cx: CE Dpoly}
\begin{tikzcd}[row sep=small]
\vdots & \vdots & \vdots & \\ 
\sections{\Lambda^0 A\dual}\otimes_R\Dpoly{0} \arrow[u, "\id\otimes\hochschild"] \arrow[r, "\dau"] & 
\sections{\Lambda^1 A\dual}\otimes_R\Dpoly{0} \arrow[u, "-\id\otimes\hochschild"] \arrow[r, "\dau"] & 
\sections{\Lambda^2 A\dual}\otimes_R\Dpoly{0} \arrow[u, "\id\otimes\hochschild"] \arrow[r, "\dau"] & \cdots \\
\sections{\Lambda^0 A\dual}\otimes_R\Dpoly{-1} \arrow[u, "\id\otimes\hochschild"] \arrow[r, "\dau"] & 
\sections{\Lambda^1 A\dual}\otimes_R\Dpoly{-1} \arrow[u, "-\id\otimes\hochschild"] \arrow[r, "\dau"] & 
\sections{\Lambda^2 A\dual}\otimes_R\Dpoly{-1} \arrow[u, "\id\otimes\hochschild"] \arrow[r, "\dau"] & \cdots 
\end{tikzcd} 
\end{equation}
The horizontal coboundary operator 
$\dau:\sections{\Lambda^p A\dual}\otimes
\Dpoly{q}\to\sections{\Lambda^{p+1} A\dual}\otimes
\Dpoly{q}$ is defined by 
\[ \begin{split} \dau(\omega\otimes u_0\otimes\cdots\otimes u_q) 
=&\ (d_A\omega)\otimes u_0\otimes\cdots\otimes u_q \\ 
&\ +\sum_{j=1}^{\rk(A)}\sum_{k=0}^{q} (\alpha_j\wedge\omega) 
\otimes u_0\otimes \cdots\otimes u_{k-1} \otimes 
a_j\cdot u_k\otimes u_{k+1} \otimes \cdots\otimes u_q 
,\end{split} \] 
for all $\omega\in\sections{\Lambda^p A\dual}$ and $u_1,u_2,\dots,u_k\in\Dpoly{0}$. 
Here $(a_i)_{i\in\{1,\dots,r\}}$ designates any local frame of $A$ 
and $(\alpha_j)_{j\in\{1,\dots,r\}}$ the corresponding dual local frame of $A\dual$.

Thus, $\hypercohomology^k_{\CE}(A,\Dpoly{\bullet})$ is the cohomology 
of the total complex 
\[ \Big( \tot\big(\sections{\Lambda^\bullet A\dual}\otimes_R\Dpoly{\bullet}\big),\dau+\dHH \big) ,\]
where we use the abbreviated symbol $\dHH$ to denote the operator $\id\otimes\hochschild$. 
Recall that, the degree of the operator $\hochschild$ being $+1$, the usual sign convention for the tensor product of linear 
maps in the presence of gradings dictates that 
\[ \big(\id\otimes\hochschild\big)(\omega\otimes u) = (-1)^p\omega\otimes\hochschild(u),\quad 
\forall\omega\in\sections{\Lambda^p A\dual},\ \forall u\in\Dpoly{\bullet} .\]

In the sequel, we will refer to $\hypercohomology^\bullet_{\CE}(A,\Dpoly{\bullet})$ 
as the Hochschild cohomology of the Lie pair $(L,A)$.

\begin{lemma}
For any Lie pair $(L, A)$, the Hochschild cohomology $\hypercohomology^\bullet_{\CE}(A,\Dpoly{\bullet})$ 
is an associative algebra, whose multiplication stems from
the tensor product of left $R$-modules $\otimes_R$ in $\Dpoly{\bullet}$.
\end{lemma}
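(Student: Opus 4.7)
The plan is to exhibit, at the level of cochains of the double complex~\eqref{double cx: CE Dpoly}, an associative cup product induced by $\otimes_R$, verify that the total differential $\dau+\hochschild$ is a graded derivation for it, and conclude that the induced product descends to $\hypercohomology^\bullet_{\CE}(A,\Dpoly{\bullet})$. Concretely, for $\omega\in\sections{\Lambda^p A\dual}$, $\eta\in\sections{\Lambda^{p'}A\dual}$, $u\in\Dpoly{q}$, $v\in\Dpoly{q'}$, I would set
\[ (\omega\otimes u)\cup(\eta\otimes v) = (-1)^{(q+1)p'}\,(\omega\wedge\eta)\otimes (u\otimes_R v) ,\]
where the tensor product on the right is just juxtaposition in $\Dpoly{p+q+1}$. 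Associativity of $\cup$ at the cochain level is immediate from the associativity of $\wedge$ and of $\otimes_R$.

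There are then two nontrivial derivation checks. First, $\hochschild$ is a graded derivation of $\otimes_R$: writing out the Hochschild formula, using the coassociativity of $\Delta:\Dpoly{0}\to\Dpoly{0}\otimes_R\Dpoly{0}$, and tracking the telescoping cancellations of the internal $\Delta$-insertions at the seam between $u$ and $v$, one finds
\[ \hochschild(u\otimes_R v)=\hochschild(u)\otimes_R v+(-1)^{q+1}\,u\otimes_R\hochschild(v) ,\]
for $u\in\Dpoly{q}$, $v\in\Dpoly{q'}$; this is the coalgebra analogue of the classical cup-product Leibniz rule on Hochschild cochains. Second, $\dau$ is a derivation of $\cup$. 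The Chevalley--Eilenberg part is standard (derivation property of $d_A$ for $\wedge$); the tensorial part amounts to showing that the $\enveloping{A}$-action on $\Dpoly{p}\otimes_R\Dpoly{q}\subset\Dpoly{p+q+1}$ is the diagonal one, which follows at once because every $a\in\sections{A}\subset\enveloping{L}$ is primitive, $\Delta(a)=1\otimes a+a\otimes 1$.

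Combining these two facts, together with the anticommutation of $\dau$ and $\hochschild$ built into the double complex, the total complex $\bigl(\tot(\sections{\Lambda^\bullet A\dual}\otimes_R\Dpoly{\bullet}),\,\dau+\hochschild,\,\cup\bigr)$ becomes a differential graded associative algebra. Hence its cohomology $\hypercohomology^\bullet_{\CE}(A,\Dpoly{\bullet})$ inherits an associative multiplication whose representative at the cochain level is exactly $\otimes_R$, as claimed. The only real obstacle is the bookkeeping of Koszul signs in the derivation property of $\hochschild$; the rest is formal once the primitivity of $\sections{A}$ in $\enveloping{L}$ and the coassociativity of $\Delta$ are in hand.
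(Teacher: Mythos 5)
Your argument is correct, and it supplies exactly the verification the paper leaves implicit: the lemma is stated there without proof, the intended content being precisely that $\otimes_R$ gives an associative cup product on the cochains of the double complex~\eqref{double cx: CE Dpoly} for which the total differential $\dau\pm\id\otimes\hochschild$ is a graded derivation, so the product descends to $\hypercohomology^\bullet_{\CE}(A,\Dpoly{\bullet})$. One small correction to your narrative of the Leibniz identity for $\hochschild$: the $\Delta$-insertion terms of $\hochschild(u\otimes_R v)$ match those of $\hochschild(u)\otimes_R v$ and of $u\otimes_R\hochschild(v)$ verbatim, and what actually cancels at the seam are the two \emph{unit}-insertion terms $(-1)^{q+2}\,u\otimes_R 1\otimes_R v$ and $(-1)^{q+1}\,u\otimes_R 1\otimes_R v$ (for $u\in\Dpoly{q}$, i.e.\ $q+1$ tensor factors), so coassociativity of $\Delta$ is not needed for the derivation property, only for $\hochschild^2=0$; your sign $(-1)^{q+1}$, governed by the number of tensor factors rather than the degree $q$, is the right one.
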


\begin{remark}
Since, unlike the universal enveloping algebra $\enveloping{L}$ of a Lie algebroid $L$, 
the space $\Dpoly{0}$ is not a Hopf algebroid, 
the Gerstenhaber bracket \eqref{hazmat} does not extend to $\Dpoly{\bullet}$ 
(and $\sections{\Lambda^\bullet A\dual}\otimes_R\Dpoly{\bullet}$). 
Therefore, unlike $\hypercohomology^\bullet_{\CE}(A,\enveloping{L}^{\otimes\bullet +1})$, 
the Hochschild cohomology $\hypercohomology^\bullet_{\CE}(A,\Dpoly{\bullet})$ 
does not, a priori, admit a Gerstenhaber algebra structure. 
However, it turns out that $\tot(\sections{\Lambda^\bullet A\dual}\otimes_R\Dpoly{\bullet})$ does actually admit an $L_\infty$ structure %#liao103
and that its cohomology admits a Gerstenhaber algebra structure. These arise from what we call a Fedosov dg Lie algebroid 
associated with the Lie pair $(L,A)$ --- see Corollary~\ref{thm:Naples}.
\end{remark}

\subsection{Polyvector fields}

Given a Lie pair $(L,A)$, let $\Tpoly{-1}$ denote the algebra $R$ of smooth functions on the manifold $M$, 
set $\Tpoly{k}:=\sections{\Lambda^{k+1}(L/A)}$ 
for $k\geqslant 0$, and 
consider $\Tpoly{\bullet}=\bigoplus_{k=-1}\Tpoly{k}$ as a complex of $\enveloping{A}$-modules with trivial differential: 
\begin{equation*} \begin{tikzcd} 
0 \arrow[r] & \Tpoly{-1} \arrow[r, "0"] & 
\Tpoly{0} \arrow[r, "0"] & \Tpoly{1} \arrow[r, "0"] & 
\Tpoly{2} \arrow[r, "0"] & \cdots
\end{tikzcd} \end{equation*} 
The Chevalley--Eilenberg hypercohomology $\hypercohomology^k_{\CE}(A,\Tpoly{\bullet})$
is the degree~$k$ total cohomology of the double complex
\begin{equation}\label{double cx: CE Tpoly}
\begin{tikzcd}[row sep=small]
\vdots & \vdots & \vdots & \\ 
\sections{\Lambda^0 A\dual}\otimes_R\Tpoly{0} \arrow[u, "0"] \arrow[r, "d_A^{\Bott}"] & 
\sections{\Lambda^1 A\dual}\otimes_R\Tpoly{0} \arrow[u, "0"] \arrow[r, "d_A^{\Bott}"] & 
\sections{\Lambda^2 A\dual}\otimes_R\Tpoly{0} \arrow[u, "0"] \arrow[r, "d_A^{\Bott}"] & \cdots \\
\sections{\Lambda^0 A\dual}\otimes_R\Tpoly{-1} \arrow[u, "0"] \arrow[r, "d_A^{\Bott}"] & 
\sections{\Lambda^1 A\dual}\otimes_R\Tpoly{-1} \arrow[u, "0"] \arrow[r, "d_A^{\Bott}"] & 
\sections{\Lambda^2 A\dual}\otimes_R\Tpoly{-1} \arrow[u, "0"] \arrow[r, "d_A^{\Bott}"] & \cdots 
\end{tikzcd}
\end{equation}

The coboundary operator 
$d_A^{\Bott}:\sections{\Lambda^p A\dual}\otimes
\Tpoly{q}\to\sections{\Lambda^{p+1} A\dual}\otimes
\Tpoly{q}$ is defined by 
\[ \begin{split} d_A^{\Bott}(\omega\otimes b_0\wedge\cdots\wedge b_q) 
=&\ (d_A\omega)\otimes b_0\wedge\cdots\wedge b_q \\ 
&\ +\sum_{j=1}^{\rk(A)}\sum_{k=0}^{q} (\alpha_j\wedge\omega) 
\otimes b_0\wedge \cdots\wedge b_{k-1} \wedge 
\nabla^{\Bott}_{a_j} b_k\wedge b_{k+1} \wedge\cdots\wedge b_q 
,\end{split} \] 
for all $\omega\in\sections{\Lambda^p A\dual}$ and $b_0,b_1,\dots,b_q\in\sections{L/A}$. 
Here $(a_i)_{i\in\{1,\dots,r\}}$ designates any local frame of $A$ 
and $(\alpha_j)_{j\in\{1,\dots,r\}}$ the corresponding dual local frame of $A\dual$.

\begin{lemma}
For any Lie pair $(L, A)$, the cohomology $\hypercohomology^\bullet_{\CE}(A,\Tpoly{\bullet})$
is an associative algebra, whose multiplication stems from the wedge product on $\Tpoly{\bullet}$.
\end{lemma}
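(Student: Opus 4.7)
The plan is to equip the total complex of the double complex \eqref{double cx: CE Tpoly} with the structure of a dg associative algebra whose induced product on hypercohomology is precisely the wedge product.

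First, I would recall that $\bigoplus_{k\geqslant 0} \sections{\Lambda^k (L/A)}$ is a graded commutative $R$-algebra under the wedge product, so the wedge product endows $\Tpoly{\bullet}$ with a graded multiplication. Moreover, for each $a \in \sections{A}$, the Bott connection $\nabla^{\Bott}_a$ on $\sections{L/A}$ extends uniquely to a degree-zero derivation of $\sections{\Lambda^\bullet (L/A)}$ with respect to the wedge product, by imposing the Leibniz rule $\nabla^{\Bott}_a(\xi \wedge \eta) = \nabla^{\Bott}_a \xi \wedge \eta + \xi \wedge \nabla^{\Bott}_a \eta$. Equivalently, $\wedge : \Tpoly{\bullet} \otimes_R \Tpoly{\bullet} \to \Tpoly{\bullet}$ is a morphism of complexes of $\enveloping{A}$-modules (the internal differentials being trivial).

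Next, I would lift this to the Chevalley--Eilenberg double complex by defining a cup product on $\sections{\Lambda^\bullet A\dual} \otimes_R \Tpoly{\bullet}$ by
\[ (\omega \otimes \xi) \smile (\omega' \otimes \eta) = (-1)^{|\xi|\,|\omega'|}\,(\omega \wedge \omega') \otimes (\xi \wedge \eta) ,\]
and verify that $d_A^{\Bott}$ is a graded derivation of $\smile$. This is a routine Koszul sign check combining the Leibniz rule for $d_A$ on $\sections{\Lambda^\bullet A\dual}$ with the derivation property of $\nabla^{\Bott}$ established above. Associativity of $\smile$ at the cochain level follows immediately from the associativity of both wedge products.

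The total complex $\tot\big(\sections{\Lambda^\bullet A\dual} \otimes_R \Tpoly{\bullet}\big)$ thereby becomes a dg associative algebra, and its cohomology $\hypercohomology^\bullet_{\CE}(A, \Tpoly{\bullet})$ inherits a canonical associative algebra structure. No step presents a genuine obstacle; the entire content of the lemma is captured by the single observation that the Bott connection acts on $\sections{\Lambda^\bullet(L/A)}$ by derivations of the wedge product, which is why the authors remark that the lemma is obvious.
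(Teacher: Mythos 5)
Your argument is correct and is precisely the one the paper has in mind when it declares the lemma ``obvious'': the Bott connection acts on $\sections{\Lambda^\bullet(L/A)}$ by derivations of the wedge product, so the Chevalley--Eilenberg double complex becomes a dg associative algebra under the cup product you write down (with the same Koszul sign $(-1)^{q_1 p_2}$ the authors use in the analogous formulas of Section~\ref{applications}), and the cohomology inherits the associative multiplication. Nothing is missing.
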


\begin{remark}
Again, $\hypercohomology^\bullet_{\CE}(A,\Tpoly{\bullet})$ does not, a priori, admit a Gerstenhaber algebra structure. 
However, it turns out that $\tot(\sections{\Lambda^\bullet A\dual}\otimes_R\Tpoly{\bullet})$ does actually admit an $L_\infty$ structure
and that its cohomology admits a Gerstenhaber algebra structure. These arise from what we call a Fedosov dg Lie algebroid 
associated with the Lie pair $(L,A)$ --- see Corollary~\ref{Bari}.
\end{remark}

\subsection{Hochschild--Kostant--Rosenberg isomorphism}\label{section:HKR}

The natural inclusion $\sections{L/A}\hookrightarrow\Dpoly{0}$
extends to a morphism of complexes of $\enveloping{A}$-modules
\[ \hkr:\Tpoly{\bullet}\to\Dpoly{\bullet} \] by skew-symmetrization:
\[ \hkr(b_1\wedge\cdots\wedge b_n)=\frac{1}{n!}\sum_{\sigma\in S_n}\sgn(\sigma)
b_{\sigma(1)}\otimes b_{\sigma(2)}\otimes\cdots\otimes b_{\sigma(n)},
\qquad\forall b_1,\cdots,b_n\in\sections{L/A} .\]
Furthermore, $\hkr$ induces a morphism of double complexes:

\begin{lemma}
The map 
\[ \id \otimes \hkr: \big(\sections{\Lambda^\bullet A\dual} \otimes_R \Tpoly{\bullet}, d_A^{\Bott}, 0 \big) 
\to \big( \sections{\Lambda^\bullet A\dual} \otimes_R \Dpoly{\bullet}, \dau , \pm \dHH \big) \] 
is a morphism of double complexes from~\eqref{double cx: CE Tpoly} to~\eqref{double cx: CE Dpoly}.
\end{lemma}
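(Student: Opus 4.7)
The plan is to verify two independent compatibility conditions for $\id\otimes\hkr$: commutation with the vertical differentials, i.e.\ $\hochschild\circ\hkr=0$ (since the vertical differential on the source is zero), and commutation with the horizontal differentials $\dabott$ and $\dau$. The second reduces to showing that $\hkr\colon\Tpoly{\bullet}\to\Dpoly{\bullet}$ is itself a morphism of complexes of $\enveloping{A}$-modules.

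For the vertical condition, I would show that the image of $\hkr$ consists of Hochschild cocycles. Given $b_1,\dots,b_n\in\sections{L/A}$ with lifts $\tilde b_i\in\sections{L}$, the element $\hkr(b_1\wedge\cdots\wedge b_n)$ is, by definition, $\frac{1}{n!}\sum_{\sigma\in S_n}\sgn(\sigma)\,\tilde b_{\sigma(1)}\otimes\cdots\otimes\tilde b_{\sigma(n)}$ modulo $\enveloping{L}\sections{A}$ in each tensor factor. Applying $\hochschild$ produces boundary insertions of $1$ on the extremes and, for each middle position $k$, a splitting by the primitive coproduct $\Delta(\tilde b_i)=1\otimes\tilde b_i+\tilde b_i\otimes 1$. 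Total antisymmetry in the $\tilde b_i$'s then forces every contribution arising from an inserted $1$ to cancel against a neighboring one with opposite sign. This is the exact same telescoping cancellation that underlies the classical Hochschild--Kostant--Rosenberg theorem, transposed to the Lie-pair setting by the choice of lifts.

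For the horizontal condition, the differentials $\dabott$ and $\dau$ are, by construction, the Chevalley--Eilenberg differentials for $A$ coupled to its respective representations on $\Tpoly{\bullet}$ and $\Dpoly{\bullet}$; both use the same $d_A$ on the factor $\sections{\Lambda^\bullet A\dual}$. Hence it suffices to verify that $\hkr$ is a morphism of $\enveloping{A}$-modules, and further that this amounts to checking it on the inclusion $\sections{L/A}\hookrightarrow\Dpoly{0}$, since both $A$-actions act by derivations of their respective algebraic structures (wedge product on the source; tensor product over $R$ on the target). For $a\in\sections{A}$ and a lift $\tilde b\in\sections{L}$ of $b\in\sections{L/A}$, one computes in $\Dpoly{0}=\enveloping{L}/\enveloping{L}\sections{A}$:
\[ a\cdot[\tilde b] = [a\cdot\tilde b] = [\tilde b\cdot a]+\bigl[\lie{a}{\tilde b}\bigr] = \bigl[\lie{a}{\tilde b}\bigr] ,\]
because $\tilde b\cdot a\in\enveloping{L}\sections{A}$. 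The class $[\lie{a}{\tilde b}]$ is precisely the image under the inclusion of $\nabla^{\Bott}_a b = q\bigl(\lie{a}{\tilde b}\bigr)$, so the inclusion intertwines the two $A$-actions, and the antisymmetric extension does the same.

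The main step requiring care is the vertical cancellation. Everything is local and combinatorial, so no new obstacle arises from the Lie-pair framework: the definition of $\hkr$ uses the same antisymmetrization formula as in the smooth case, while $\hochschild$ is built from the primitive coproduct on $\Dpoly{0}$, so the cancellation pattern is identical.
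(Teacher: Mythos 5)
Your proof is correct and follows the route the paper takes implicitly: the lemma is stated there without proof, resting on the preceding assertion that $\hkr\colon\Tpoly{\bullet}\to\Dpoly{\bullet}$ is a morphism of complexes of $\enveloping{A}$-modules, which is exactly the pair of verifications (compatibility with $\hochschild$ and $A$-equivariance of the inclusion $\sections{L/A}\hookrightarrow\Dpoly{0}$) that you carry out. One small correction: the vanishing $\hochschild\circ\hkr=0$ does not use antisymmetry at all --- since each class $[\tilde b_i]\in\Dpoly{0}$ is primitive for the induced coproduct, the telescoping cancellation of the inserted $1$'s already annihilates $\hochschild\bigl([\tilde b_{\sigma(1)}]\otimes\cdots\otimes[\tilde b_{\sigma(n)}]\bigr)$ term by term for each fixed permutation $\sigma$, so attributing the cancellation to ``total antisymmetry'' misidentifies the mechanism even though the conclusion stands.
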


The induced map between total cohomologies is called \emph{Hochschild--Kostant--Rosenberg} map.
Abusing notations, we will denote it $\hkr$ instead of $\id\otimes \hkr$. 

\begin{proposition}\label{thm:HKR}
For any Lie pair $(L, A)$, the Hochschild--Kostant--Rosenberg map
\[ \hkr: \hypercohomology^\bullet_{\CE}(A,\Tpoly{\bullet})\to \hypercohomology^\bullet_{\CE}(A,\Dpoly{\bullet}) \] 
is an isomorphism.
\end{proposition}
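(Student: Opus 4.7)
The plan is to reduce the proposition to a cochain-level assertion: the Hochschild--Kostant--Rosenberg map
\[ \hkr : (\Tpoly{\bullet}, 0) \to (\Dpoly{\bullet}, \hochschild) \]
is a quasi-isomorphism of complexes of $\enveloping{A}$-modules. Since the Chevalley--Eilenberg hypercohomology $\hypercohomology^\bullet_{\CE}(A, -) = \Hom_{D^+(\category{A})}(R, -[\bullet])$ is a derived functor on $D^+(\category{A})$, it preserves quasi-isomorphisms, and the conclusion follows immediately. More concretely, the column-filtered spectral sequences of the double complexes~\eqref{double cx: CE Tpoly} and~\eqref{double cx: CE Dpoly} both converge to their respective hypercohomologies, and the morphism of double complexes $\id\otimes\hkr$ induces a morphism between them; a cochain-level quasi-isomorphism would force an isomorphism on $E_1$ pages, hence on $E_\infty$, hence on abutments.

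To prove the cochain-level assertion, I would invoke a Poincar\'e--Birkhoff--Witt type theorem for Lie pairs. Upon choosing a splitting $i\circ p + j\circ q = \id_L$ of the short exact sequence~\eqref{cryptography} and a torsion-free $L$-connection on $L/A$ extending the Bott representation, one constructs an isomorphism of filtered $R$-coalgebras
\[ \pbw : \sections{S(L/A)} \xrightarrow{\cong} \Dpoly{0} .\]
This extends to an isomorphism of Hochschild complexes of $R$-modules
\[ \big(\sections{S(L/A)^{\otimes_R(\bullet+1)}}, \hochschild\big) \xrightarrow{\cong} (\Dpoly{\bullet}, \hochschild) \]
under which $\hkr$ corresponds to the canonical skew-symmetrization $\sections{\Lambda^{\bullet+1}(L/A)} \hookrightarrow \sections{S(L/A)^{\otimes_R(\bullet+1)}}$.

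The task thereby reduces to the classical Hochschild--Kostant--Rosenberg theorem for a symmetric $R$-coalgebra: the skew-symmetrization is a quasi-isomorphism onto the Hochschild complex. Because this statement is $R$-linear and indeed pointwise in $M$, it may be verified fiber by fiber, where at each $m\in M$ it is the standard HKR theorem for the symmetric coalgebra on the finite-dimensional $\KK$-vector space $(L/A)_m$ and admits an explicit chain homotopy of shuffle type.

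The main obstacle is supplying the PBW isomorphism for Lie pairs in the precise form of an isomorphism of $R$-coalgebras, which requires some care with the choice of connection and splitting. An alternative route is to construct directly on $\Dpoly{\bullet}$ a contracting chain homotopy onto the image of $\hkr$ using the filtration of $\enveloping{L}$ by degree in $L$; this bypasses PBW but is essentially equivalent in substance.
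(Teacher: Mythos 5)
Your overall reduction is the same as the paper's: both arguments filter the double complexes \eqref{double cx: CE Tpoly} and \eqref{double cx: CE Dpoly} by the Chevalley--Eilenberg degree $p$, observe that the induced map on the $E_0$/$E_1$ level is the row-wise map $\id\otimes\hkr:\sections{\Lambda^p A\dual}\otimes_R\Tpoly{\bullet}\to\sections{\Lambda^p A\dual}\otimes_R\Dpoly{\bullet}$, and conclude by a spectral-sequence comparison (the paper invokes the Eilenberg--Moore comparison theorem for the complete exhaustive filtrations; your ``derived functor preserves quasi-isomorphisms'' phrasing is the same point, and your more concrete second formulation is literally the paper's proof). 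Where you genuinely diverge is in how the row-wise quasi-isomorphism (the paper's Lemma~\ref{lem:HKR}) is established. The paper simply says it follows by ``repeating the argument in \cite[Theorem~4.10]{MR2062626}'', i.e.\ by adapting Kontsevich's local computation of the cohomology of the polydifferential Hochschild complex. You instead route through a Poincar\'e--Birkhoff--Witt isomorphism of filtered $R$-coalgebras $\pbw:\sections{S(L/A)}\xrightarrow{\cong}\Dpoly{0}$, transport the Hochschild complex to that of the symmetric coalgebra, and then quote the classical coalgebra HKR theorem fiberwise. This is a legitimate and arguably more self-contained path: the required PBW isomorphism for Lie pairs does exist in the precise form you need (it is exactly the map $\pbw$ the paper itself uses in the construction of $\bsigma$ in Theorem~\ref{thm:contractionDpol}), and since $\pbw$ restricts to the identity on $S^1(L/A)=\sections{L/A}$ it does carry skew-symmetrization to $\hkr$. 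Two small points to make explicit if you write this up: (i) passing from the quasi-isomorphism of $(\Tpoly{\bullet},0)\to(\Dpoly{\bullet},\hochschild)$ to an isomorphism on the $E_1$ page requires that tensoring with $\sections{\Lambda^p A\dual}$ over $R$ preserve the quasi-isomorphism --- this is fine because $\sections{\Lambda^p A\dual}$ is finitely generated projective over $R$, or because your shuffle-type homotopy is $R$-linear and hence extends; (ii) ``verified fiber by fiber'' should be read as ``given by an explicit $R$-linear bundle-map homotopy'', since a fiberwise quasi-isomorphism of infinite-rank bundles is not by itself enough. Neither point is a real obstacle. What each approach buys: the paper's is shorter but outsources the analytic content to Kontsevich; yours isolates the algebraic mechanism (PBW plus symmetric-coalgebra HKR) and makes the dependence on the choice of connection and splitting visible.
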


Proposition~\ref{thm:HKR} can be proved by a spectral sequence argument. 
Repeating the argument in \cite[Theorem~4.10]{MR2062626}, one can prove the following lemma:

\begin{lemma}\label{lem:HKR}
For each $p$, the map
\[ \id\otimes\hkr: \big(\sections{\Lambda^p A\dual} \otimes_R \Tpoly{\bullet}, 0 \big) 
\to \big( \sections{\Lambda^p A\dual} \otimes_R \Dpoly{\bullet}, (-1)^p \dHH \big) \] 
is a quasi-isomorphism.
\end{lemma}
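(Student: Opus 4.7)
The plan is to reduce Lemma~\ref{lem:HKR} to the classical Hochschild--Kostant--Rosenberg theorem via a two-step argument: first extract the coefficient module $\sections{\Lambda^p A\dual}$ by flatness, then identify $\Dpoly{\bullet}$ with a standard cobar complex via Poincaré--Birkhoff--Witt.

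First, observe that $\sections{\Lambda^p A\dual}$ is a finitely generated projective, hence flat, $R$-module, being the space of sections of a finite-rank vector bundle. The functor $\sections{\Lambda^p A\dual}\otimes_R\argument$ therefore preserves quasi-isomorphisms of complexes of $R$-modules. Consequently, it is enough to show that the bare HKR map
\[ \hkr: \big(\Tpoly{\bullet},0\big)\to \big(\Dpoly{\bullet},\hochschild\big) \]
is a quasi-isomorphism of complexes of $R$-modules; the factor $\sections{\Lambda^p A\dual}$ then comes along as flat coefficients, which also absorbs any sign twist of the form $\pm\id\otimes\hochschild$.

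To handle the reduced statement, I would choose a splitting $j:L/A\hookrightarrow L$ of the short exact sequence $0\to A\to L\to L/A\to 0$ and invoke a Poincaré--Birkhoff--Witt-type theorem for the Lie pair $(L,A)$ to produce an $R$-linear coalgebra isomorphism
\[ \pbw:\sections{S(L/A)}\xrightarrow{\sim}\Dpoly{0}=\enveloping{L}/\enveloping{L}\sections{A} \]
intertwining the standard comultiplication on $\sections{S(L/A)}$ (in which elements of $\sections{L/A}$ are primitive) with the comultiplication $\Delta$ inherited by $\Dpoly{0}$ from $\enveloping{L}$. Iterating the tensor product over $R$ gives identifications $\Dpoly{k}\cong\sections{S(L/A)}^{\otimes_R(k+1)}$ under which $\hochschild$ becomes the standard cobar differential of the cocommutative coalgebra $\sections{S(L/A)}$, and under which $\hkr$ becomes the classical skew-symmetrization map.

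At this point the claim is exactly the statement that, for a vector space (or flat module) $V$, the skew-symmetrization $\Lambda^\bullet V\to S(V)^{\otimes(\bullet+1)}$ is a quasi-isomorphism with respect to the cobar differential. This is the computation carried out in Kontsevich's Theorem~4.10 of~\cite{MR2062626}; the argument is purely algebraic and transfers verbatim with $V$ replaced by the projective $R$-module $\sections{L/A}$ and the ground field replaced by $R$.

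The main technical obstacle is ensuring that the PBW map $\pbw$ is genuinely compatible with comultiplications, so that $\hochschild$ really does correspond to the standard cobar differential under the identification. A compatible PBW map can be obtained globally on $M$ from the Fedosov-type dg Lie algebroid constructions of~\cite{1605.09656} that permeate this paper; as a fallback, one can work on an open cover over which $L$ admits a splitting $L|_U\cong A|_U\oplus (L/A)|_U$, verify the compatibility locally, and patch, since being a quasi-isomorphism is a local property. Once the comultiplications are matched, what remains is precisely the classical algebraic HKR computation applied fiberwise.
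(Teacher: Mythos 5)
Your proposal is correct and lands on the same target as the paper, which disposes of Lemma~\ref{lem:HKR} in one line by ``repeating the argument in \cite[Theorem~4.10]{MR2062626}.'' The flatness reduction (stripping off the projective $R$-module $\sections{\Lambda^p A\dual}$ and the harmless sign on $\hochschild$) is fine and is implicit in the paper. Where you genuinely diverge is in the mechanism of the reduction to the classical cobar computation: you invoke a Poincar\'e--Birkhoff--Witt isomorphism $\pbw:\sections{S(L/A)}\to\Dpoly{0}$ that is compatible with comultiplications, so as to identify $\big(\Dpoly{\bullet},\hochschild\big)$ on the nose with the cobar complex of the symmetric coalgebra, whereas Kontsevich's own argument (and hence the paper's intended one) filters $\Dpoly{\bullet}$ by the order of differential operators and only uses the associated-graded (symbol-level) identification $\mathrm{gr}\,\Dpoly{0}\cong\sections{S(L/A)}$, which is elementary and requires no choice of splitting or connection. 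Your route is legitimate --- the coalgebra compatibility of $\pbw$ is indeed established in the reference~\cite{1408.2903} cited in this paper, so the ``main technical obstacle'' you flag is already resolved in the literature and your local-patching fallback is unnecessary --- but it imports a strictly stronger input than the filtration argument needs; what it buys in exchange is a literal, rather than merely spectral-sequence-level, identification with the standard cobar complex of $S(V)$ for a projective $R$-module $V$.
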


\begin{proof}[Proof of Proposition~\ref{thm:HKR}]
Consider the spectral sequences associated with the filtrations 
\begin{gather*}
F^k\Big(\bigoplus_{\substack{p\geqslant 0 \\ q \geqslant -1}} \big(\sections{\Lambda^p A\dual} \otimes_R \Tpoly{q}) \Big) 
= \bigoplus_{\substack{p\geqslant k \\ q \geqslant -1}} \big(\sections{\Lambda^p A\dual} \otimes_R \Tpoly{q}) \\ 
F^k\Big(\bigoplus_{\substack{p \geqslant 0 \\ q \geqslant -1}} \big(\sections{\Lambda^p A\dual} \otimes_R \Dpoly{q}) \Big) 
= \bigoplus_{\substack{p \geqslant k \\ q \geqslant -1}} \big(\sections{\Lambda^p A\dual} \otimes_R \Dpoly{q})
\end{gather*} 
on the double complexes \eqref{double cx: CE Tpoly} and~\eqref{double cx: CE Dpoly}. 
The map induced by $\id\otimes\hkr$ between the $E_0$-terms of these two spectral sequences is precisely 
the quasi-isomorphism of Lemma~\ref{lem:HKR}. 
Therefore $\id\otimes\hkr$ induces isomorphism between the $E_k$-terms of the two spectral sequences for each $k$ larger than or equal to 1. 
Since both filtrations are complete and exhaustive, it follows from the Eilenberg--Moore comparison theorem 
that $\hkr: \hypercohomology^\bullet_{\CE}(A,\Tpoly{\bullet})\to \hypercohomology^\bullet_{\CE}(A,\Dpoly{\bullet})$ is an isomorphism. 
\end{proof}

\subsection{Atiyah and Todd cocycles/classes of a dg Lie algebroid}

A dg vector bundle \cite{MR2534186,MR3319134,MR3293862} is a vector bundle in
the category of dg manifolds. 
Given a vector bundle $\cE\xto{\pi}\cM$ of graded
manifolds, its space of sections, denoted
$\sections{\cE}$, is defined to be $\bigoplus_{j\in \ZZ}\Gamma (\cE)_j$,
where $\Gamma (\cE)_j$ consists of sections of degree $j$, i.e.\ maps 
$s\in \Hom (\cM, \cE[-j])$ such that $(\pi[-j])\circ s=\id_\cM$. 
Here $\pi[-j]: \cE[-j]\to \cM$ is the natural map induced by $\pi$; 
see~\cite{MR2534186} for more details.
When $\cE\to \cM$ is a dg vector bundle, the homological
vector fields $Q^{\cE}$ and $Q^{\cM}$ on $\cE$ and $\cM$ naturally determine
an operator $\mathcal{Q}$ of degree $+1$ on $\sections{\cE}$, 
making $\sections{\cE}$ a dg module over the dg algebra $C^\infty(\cM)$, 
i.e.\ $\mathcal{Q}$ satisfies 
\[ \mathcal{Q}(f\cdot e)=Q^{\cM}(f)\cdot e+f\cdot\mathcal{Q}(e), 
\quad\forall f\in C^\infty(\cM), e\in\sections{\cE} .\] 
Indeed, $\cE\xto{\pi}\cM$ is a dg vector bundle if and only if 
``the vector field $Q^{\cE}$ projects onto $Q^{\cM}$'' i.e.\ 
\[ Q^{\cE}\big(\pi^*(f)\big)=\pi^*\big(Q^{\cM}(f)\big), \quad \forall f\in C^{\infty}(\cM) \] 
and ``the flow of $Q^{\cE}$ preserves the linear structure of the fibers of $\pi$'' i.e.\ 
the submodule $\sections{\cE\dual}$ of $C^{\infty}(\cE)$ comprised of all 
smooth functions on $\cE$ ``linear along the fibers of $\pi$'' is stable under the derivation $Q^{\cE}$. 
The restriction of $Q^{\cE}$ to $\sections{\cE\dual}$ determines an operator $\mathcal{Q}$ on $\sections{\cE}$ 
through the relation 
\[ Q^{\cM}\big(\duality{\zeta}{e}\big) = 
\duality{Q^{\cE}(\zeta)}{e}+(-1)^{\abs{\zeta}}\duality{\zeta}{\mathcal{Q}(e)} 
,\quad \forall \zeta\in\sections{\cE\dual},e\in\sections{\cE} .\] 
Since $\pi^*\big(C^{\infty}(\cM)\big)$ and $\sections{\cE\dual}$ together generate 
the algebra $C^{\infty}(\cE)$ multiplicatively, knowledge of the vector field $Q^{\cM}$ 
and the operator $\mathcal{Q}$ suffices to recover the homological vector field $Q^{\cE}$. 

The degree $+1$ operator 
$\mathcal{Q}$ on $\Gamma(\cE)$ gives rise to a cochain complex
\[ \cdots\to\Gamma (\cE)_i \xto{\mathcal{Q}}\Gamma (\cE)_{i+1}\to\cdots ,\]
whose cohomology group will be denoted 
by $H^\bullet(\Gamma(\cE),\mathcal{Q})$.

A dg Lie algebroid is a Lie algebroid object
in the category of dg manifolds. For more details, we refer the reader 
to~\cite{MR2534186,MR2709144}, where dg Lie algebroids are called $Q$-algebroids.
It is simple to see that if $\cM$ is a dg manifold, then $T_\cM$ is
naturally a dg Lie algebroid.

The notion of Atiyah class of dg Lie algebroids was introduced
and studied by Mehta--Stiénon--Xu \cite{MR3319134}. It extends
the notion of Atiyah class of a dg manifold, which 
was first investigated by Shoikhet~\cite{arXiv:math/9812009} in relation with 
Kontsevich's formality theorem and Duflo's formula.

Let $\cA$ be a dg Lie algebroid with anchor $\rho:\cA\to T_\cM$.
An $\cA$-connection on $\cA$ is a map
\[ \nabla:\sections{\cA}\times\sections{\cA}\to\sections{\cA} \] 
of degree $0$ satisfying
\begin{gather*}
\nabla_{fX} Y = f \nabla_X Y \\
\nabla_X (fY) = \big(\rho(X) f\big) Y + (-1)^{|X||f|} f \nabla_X Y
\end{gather*}
for all $f\in C^\infty(\cM)$ and all (homogeneous) $X, Y\in\sections{\cA}$. 
The notation $|\argument|$ is used to denote the degree of the argument. 
Here the degree of $\nabla$ is its degree as a map from $\sections{\cA} \otimes \sections{\cA}$ to $\sections{\cA}$. 
More precisely, by saying that $\nabla$ is a map of degree $0$ we mean that 
$|\nabla_X Y| = |X| + |Y|$ for every pair of homogeneous elements $X$ and $Y$.
Connections always exist since the standard partition of unity argument 
holds in the context of graded manifolds.

Given a dg Lie algebroid $\cA$, the associated operator $\mathcal{Q}$ of 
degree $+1$ on $\sections{\cA}$, 
and an $\cA$-connection $\nabla$ on $\cA$,
one defines a bundle map $\At^\nabla_\cA:\cA\otimes\cA\to\cA$ of degree $+1$ by
\[ \At^\nabla_\cA (X,Y) := \mathcal{Q}(\nabla_X Y) -\nabla_{\mathcal{Q}(X)}Y -(-1)^{|X|}\nabla_X\big(\mathcal{Q}(Y)\big),
\quad \forall X, Y\in\sections{\cA} .\]
Alternatively, we may think of $\At^\nabla_\cA$ as a section of degree $+1$ in $\sections{\cA^\vee\otimes\End\cA}$.
It is immediate that $\mathcal{Q}(\At^\nabla_\cA)=0$. 
Since $\mathcal{Q}^2=0$, we may thus regard $\At^\nabla_\cA$ as a 1-cocycle in 
the cochain complex $\big(\sections{\cA^\vee\otimes\End\cA}_\bullet, \mathcal{Q}\big)$. 

\begin{definition}
The 1-cocycle $\At^\nabla_\cA\in Z^1(\sections{\cA^\vee\otimes\End\cA}, \mathcal{Q})$
is called the Atiyah 1-cocycle of the dg 
Lie algebroid $\cA$ with respect to the
$\cA$-connection $\nabla$ on $\cA$.
\end{definition}

It is simple to check that its cohomology class 
$\atiyahclass_\cA := [\At^\nabla_\cA] \in H^1\big(\Gamma(\cA^\vee\otimes\End\cA),\mathcal{Q}\big)$ 
is independent of the choice of the connection $\nabla$. 
The class $\atiyahclass_\cA$ is called the \emph{Atiyah class} of the dg Lie algebroid $\cA$. 
It is the obstruction class to the existence
of a dg compatible $\cA$-connection on $\cA$. 
(See \cite{MR3319134} for more details.)

The \emph{Todd cocycle} $\todd^\nabla_\cA$ (or $\ttodd^\nabla_\cA$)
and \emph{Todd class} $\Todd_\cA$ (or $\tTodd_\cA$) of a dg Lie algebroid $\cA$ 
are defined as follows: 
\begin{gather*}
\todd^\nabla_\cA:=\Ber\left(\frac{\At^\nabla_\cA}{1-e^{-\At^\nabla_\cA}}\right)
\in\prod_{k\geqslant 0} \sections{\Lambda^k\cA\dual}_k ,\\
\ttodd^\nabla_\cA:=\Ber\left(\frac{\At^\nabla_\cA}{e^{\half\At^\nabla_\cA}-e^{-\half \At^\nabla_\cA}}\right)
\in\prod_{k\geqslant 0} \sections{\Lambda^k\cA\dual}_k\\
\Todd_\cA:=\Ber\left(\frac{\atiyahclass_\cA}{1-e^{-\atiyahclass_\cA}}\right)
\in\prod_{k\geqslant 0} H^k\big(\sections{\Lambda^k\cA\dual},\mathcal Q\big) ,\\
\tTodd_\cA:=\Ber\left(\frac{\atiyahclass_\cA}{e^{\half\atiyahclass_\cA}-e^{-\half \atiyahclass_\cA}}\right)
\in\prod_{k\geqslant 0} H^k\big(\sections{\Lambda^k\cA\dual},\mathcal Q\big) 
,\end{gather*}
where $\Lambda^k \cA^\vee$ denotes the dg vector bundle $S^k (\cA^\vee[-1])[k]\to\cM$. 
The definition of the Berezinian $\Ber$ can be found in~\cite{MR1701597}. 
It is known that $\Todd_{\cA}$ and $\tTodd_\cA$ can be expressed in terms of the scalar Atiyah classes 
$c_k:=\frac{1}{k!}(\frac{i}{2\pi})^k \supertrace \atiyahclass_\cA^k \in H^k\big(\sections{\Lambda^k\cA\dual},\mathcal{Q}\big)$. 
Here $\supertrace: \End \cA\to C^\infty (\cM)$ denotes the supertrace 
and $\supertrace(\At^\nabla_\cA)^k\in\sections{\Lambda^k\cA\dual}$ 
since $(\At^\nabla_\cA)^k\in\sections{\Lambda^k\cA\dual}\otimes_{C^\infty(\cM)}\End\cA$.

\begin{example}\cite[Example~3.4]{MR3319134}
\label{ex:trivial cntn}
Consider the tangent dg Lie algebroid $T_\cM$
of a dg manifold $\cM=(\cV,Q)$, where $\cV=\RR^m\times\vf$ 
for some finite dimensional $\ZZ$-graded vector space $V$ over $\KK$.
The algebra of functions on the graded manifold $\cV$ is $C^\infty (\cV)=C^\infty (\RR^m )\otimes \hat{S}(V\dual)$. 
Let $(z_1,\cdots,z_N)$ be a choice of coordinate functions on $\cV$. 
Writing the homological vector field $Q$ as $Q=\sum_k Q_k\frac{\partial}{\partial z_k}$, 
the Atiyah 1-cocycle associated with the trivial connection 
$\nabla^{\trivial}_{\frac{\partial}{\partial z_i}}\frac{\partial}{\partial z_j}=0$ 
admits the simple expression 
\[ \At^{\trivial}_{T_\cV}\left(\frac{\partial}{\partial z_i},\frac{\partial}{\partial z_j}\right)
=(-1)^{|z_i|+|z_j|}\sum_k\frac{\partial^2 Q_k}{\partial z_i\partial z_j}
\frac{\partial}{\partial z_k} .\]
Hence the Atiyah 1-cocycle $\At^{\trivial}_{T_\cV}$ captures the second- and higher-order information 
contained in the homological vector field. 
\end{example}

In this paper, we are particularly interested in
an important class of dg Lie algebroids, namely
the Fedosov dg Lie algebroids associated with a Lie pair $(L, A)$. 
See the Appendix or \cite{arXiv:1901.04602} for more details.

\subsection{Atiyah and Todd cocycles/classes of the Fedosov dg Lie algebroid}
\label{Angers}

Let $(L,A)$ be a Lie pair over a smooth manifold $M$. 
Each choice of, firstly, a splitting of the short exact sequence of vector bundles $0\to A\to L\to B\to 0$ and, secondly, a torsion-free $L$-connection on $B$ determines 
a homological vector field $Q$ on the graded manifold $\cM=L[1]\oplus B$ --- see Theorem~\ref{strawberry} in the Appendix. 
Any such dg manifold $(\cM,Q)$ is called a Fedosov dg manifold associated with the Lie pair $(L,A)$. 
The pullback $\cF\to\cM$ of the quotient bundle $B\to M$ through the canonical projection $\cM\to M$ is a dg Lie subalgebroid of the tangent dg Lie algebroid 
$T_{\cM}\to\cM$ --- see Proposition~\ref{pro:Rome} in the Appendix. 
Any such dg Lie algebroid $\cF\to\cM$ is called a Fedosov dg Lie algebroid associated with the Lie pair $(L,A)$. 

Since $\cF\to\cM$ is both the pullback of the vector bundle $B\to M$ through the canonical map $\cM\onto M$ 
and a vector subbundle of $T_{\cM}\to\cM$, we have the inclusions 
\[ \begin{tikzcd}[row sep=small] 
\sections{B} \arrow[hook]{r} & C^\infty(\cM)\otimes_{C^\infty(M)}\sections{B} \arrow[leftrightarrow]{r}{\cong}
& \sections{\cF\to\cM} \arrow[hook]{r} & \XX(\cM) \\ 
b \arrow[mapsto]{r} & 1\otimes b \arrow[mapsto]{rr} & & \hat{b} 
\end{tikzcd} \]
Indeed, $\sections{\cF\to\cM}$ is the $C^\infty(\cM)$-submodule of $\XX(\cM)$ generated by $\sections{B}$. 
In particular, if $\partial_1,\dots,\partial_r$ is a local frame for $B$ and $\chi_1,\dots,\chi_r$ is the dual local frame for $B\dual$, 
then $\hat{\partial}_k$ is the vector field $\frac{\partial}{\partial\chi_k}$ on $\cM$, 
i.e.\ the derivation $\lambda\otimes\chi^M\mapsto\lambda\otimes M_k\chi^{M-e_k}$ 
of $C^\infty(\cM)\cong\sections{\Lambda L\dual\otimes\hat{S}(B\dual)}$.

There exists a \emph{canonical} $\cF$-connection on $\cF$ characterized by the relation 
\[ \nablacan_{\hat{b}}\hat{c}=0 ,\quad\forall b,c\in\sections{B} .\] 

\begin{definition}
The Atiyah 1-cocycle $\At^{\can}_\cF\in Z^1(\sections{\cM;\cF^\vee\otimes\End\cF}, \mathcal{Q})$ 
corresponding to the canonical connection $\nablacan$ is called the canonical Atiyah 1-cocycle.
\end{definition}

Since $\Gamma ({\cM;\cF^\vee\otimes\End\cF})$ can be identified canonically 
with $\sections{\Lambda^\bullet L^\vee \otimes \hat{S}B^\vee \otimes B^\vee \otimes \End B}$,
the canonical Atiyah cocycle is essentially the tensor product of an endomorphism of $B$ 
and an element of $\sections{\Lambda^1 L^\vee \otimes \hat{S}B^\vee \otimes \Lambda^1 B^\vee}$.
Therefore, the Berezinian appearing in the expression for the Todd cocycle of $\nabla^{\can}$ is simply the classical determinant
and the \emph{canonical Todd cocycle} is
\begin{equation}\label{corrected Todd}
\todd^{\can}_\cF := \det\left(\frac{\At^{\can}_\cF}{1-e^{- \At^{\can}_\cF}}\right)
\in\prod_{k\geqslant 0} \sections{\Lambda^k L\dual \otimes \hat{S}B\dual \otimes \Lambda^k B\dual}
.\end{equation}
Similarly,
\begin{equation}\label{twisted Todd}
\ttodd^{\can}_\cF := \det\left(\frac{\At^{\can}_\cF}{e^{\half\At_\cF^{\can}}-e^{-\half \At^{\can}_\cF}}\right)
\in\prod_{k\geqslant 0} \sections{\Lambda^k L\dual \otimes \hat{S}B\dual \otimes \Lambda^k B\dual}
.\end{equation}

\begin{lemma}\label{lem:local Atiyah}
Given any local frame $\partial_1,\cdots,\partial_r$ for $B$, 
the canonical Atiyah 1-cocycle $\At^{\can}_{\cF}:\cF\otimes\cF\to\cF$ of the Fedosov dg Lie algebroid $\cF\to\cM$ 
admits the local expression 
\begin{equation}\label{eq:local Atiyah matrix}
\At^{\can}_{\cF}(\hat{\partial}_i,\hat{\partial}_j)
=\sum_{k=1}^r\hat{\partial}_i(\hat{\partial}_j f_k)\cdot\hat{\partial}_k 
,\end{equation} 
where the functions $f_k\in C^\infty(\cM)$ are the components of the vector field $X^\nabla=\sum_{k=1}^r f_k\cdot\hat{\partial}_k$ 
of Theorem~\ref{strawberry} relative to the chosen frame. 
\end{lemma}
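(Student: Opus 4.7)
The plan is to compute $\At^{\can}_{\cF}(\hat{\partial}_i,\hat{\partial}_j)$ directly from its defining formula
\[
\At^{\can}_{\cF}(X,Y)=\mathcal{Q}(\nabla^{\can}_X Y)-\nabla^{\can}_{\mathcal{Q}(X)}Y-(-1)^{|X|}\nabla^{\can}_X\mathcal{Q}(Y),
\]
applied to $X=\hat{\partial}_i$ and $Y=\hat{\partial}_j$. All sections of $\cF$ that arise from a local frame $\partial_1,\dots,\partial_r$ of $B$ live in degree $0$, so no Koszul signs intervene; moreover $\nabla^{\can}_{\hat{\partial}_i}\hat{\partial}_j=0$ by the very definition of the canonical connection, which immediately kills the first term. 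Thus the proof reduces to identifying $\mathcal{Q}(\hat{\partial}_i)$ and $\mathcal{Q}(\hat{\partial}_j)$ as elements of $\sections{\cF}$ and feeding them back into the last two terms.

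Since $\cF\hookrightarrow T_{\cM}$ is a dg Lie \emph{sub}algebroid (Proposition~\ref{pro:Rome}), the operator $\mathcal{Q}$ on $\sections{\cF}$ is the restriction of $\liederivative{Q}=[Q,\argument]$ from $\XX(\cM)$. By Theorem~\ref{strawberry}, the Fedosov homological vector field decomposes as $Q=d_L+X^\nabla$ with $X^\nabla=\sum_k f_k\hat{\partial}_k\in\sections{\cF}$. The vector field $d_L$ coming from the Chevalley--Eilenberg differential of $L$ involves only the base coordinates and the $L[1]$-fiber coordinates: neither its coefficient functions nor its partial derivatives involve the $B$-fiber coordinates $\chi_1,\dots,\chi_r$. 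Consequently $[d_L,\hat{\partial}_i]=0$, and a straightforward commutator computation gives
\[
\mathcal{Q}(\hat{\partial}_i)=[Q,\hat{\partial}_i]=[X^\nabla,\hat{\partial}_i]=-\sum_{k=1}^r\hat{\partial}_i(f_k)\,\hat{\partial}_k,
\]
using $[\hat{\partial}_i,\hat{\partial}_k]=0$ and the Leibniz rule.

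Substituting this expression into the formula for $\At^{\can}_\cF$ and invoking once more that $\nabla^{\can}_{\hat{\partial}_p}\hat{\partial}_q=0$ and that $\nabla^{\can}$ is $C^\infty(\cM)$-linear in the first slot and satisfies the Leibniz rule in the second, every term involving a bare $\nabla^{\can}_{\hat{\partial}_p}\hat{\partial}_q$ vanishes. What survives is only the term coming from differentiating the coefficient $\hat{\partial}_j(f_k)$ by $\hat{\partial}_i$ through the Leibniz rule in $\nabla^{\can}_{\hat{\partial}_i}(\hat{\partial}_j(f_k)\,\hat{\partial}_k)$, which produces exactly $\sum_k\hat{\partial}_i(\hat{\partial}_j f_k)\cdot\hat{\partial}_k$ as claimed.

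The only genuinely subtle point is the vanishing $[d_L,\hat{\partial}_i]=0$, i.e.\ the observation that the Chevalley--Eilenberg part of the Fedosov differential carries no $\chi$-dependence. This is the reason why the formula takes such a clean form, parallel to Example~\ref{ex:trivial cntn} for the trivial connection on a dg manifold: in effect, only the "second-order in $\chi$" jet of $Q$, which is entirely carried by $X^\nabla$, contributes to the canonical Atiyah cocycle.
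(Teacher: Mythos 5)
Your overall strategy is the same as the paper's: kill the first term of the defining formula using $\nablacan_{\hat{\partial}_i}\hat{\partial}_j=0$, note that the middle term dies by $C^\infty(\cM)$-linearity of $\nablacan$ in its first slot, and then compute $\mathcal{Q}(\hat{\partial}_j)=\lie{Q}{\hat{\partial}_j}$. However, there is a genuine error in the step where you identify $\mathcal{Q}(\hat{\partial}_j)$. Theorem~\ref{strawberry} decomposes the Fedosov differential as $Q=-\delta+d_L^{\nabla}+\xnabla$, not as $Q=d_L+\xnabla$; and the covariant Chevalley--Eilenberg piece $d_L^{\nabla}$, written in local coordinates, contains the connection term $-\sum_{i,j,k}\Gamma_{ij}^{k}\lambda_i\chi_j\frac{\partial}{\partial\chi_k}$, whose coefficients \emph{do} depend on the fiber coordinates $\chi$. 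Consequently $\lie{d_L^{\nabla}}{\hat{\partial}_j}=\sum_{i,k}\Gamma_{ij}^{k}\lambda_i\,\hat{\partial}_k\neq 0$, so your claim that the non-$\xnabla$ part of $Q$ commutes with $\hat{\partial}_j$ is false, and the asserted identity $\mathcal{Q}(\hat{\partial}_j)=-\sum_k\hat{\partial}_j(f_k)\hat{\partial}_k$ (with $f_k$ the components of Theorem~\ref{strawberry}'s $\xnabla$, which lie in $\sections{L^\vee\otimes\hat{S}^{\geqslant 2}B^\vee}$) is not correct as stated. If instead you intend $d_L$ to be the bare Chevalley--Eilenberg vector field with no $\chi$-derivatives, then $\lie{d_L}{\hat{\partial}_j}=0$ does hold, but then $Q-d_L$ is $-\delta-\sum\Gamma_{ij}^{k}\lambda_i\chi_j\frac{\partial}{\partial\chi_k}+\xnabla$, whose components are not the $f_k$ of the lemma.

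The conclusion of the lemma is nonetheless true, because the offending extra term is annihilated in the last step: $\nablacan_{\hat{\partial}_i}\big(\sum_{l,k}\Gamma_{lj}^{k}\lambda_l\,\hat{\partial}_k\big)=\sum_{l,k}\hat{\partial}_i(\Gamma_{lj}^{k}\lambda_l)\,\hat{\partial}_k=0$ since $\Gamma_{lj}^{k}\lambda_l$ is independent of the $\chi$-coordinates (and likewise $\lie{\delta}{\hat{\partial}_j}=0$ because the coefficients of $\delta$ are $\chi$-independent). This is exactly how the paper's proof proceeds: it splits $\lie{Q}{\hat{\partial}_j}$ into the three pieces coming from $-\delta$, $d_L^{\nabla}$, and $\xnabla$, and verifies separately that the first bracket vanishes, that the second bracket is nonzero but is killed by $\nablacan_{\hat{\partial}_i}$, and that only the third produces $\sum_k\hat{\partial}_i(\hat{\partial}_jf_k)\hat{\partial}_k$. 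Your write-up is missing this middle verification, which is precisely the point where the coordinate expression of $d_L^{\nabla}$ must be examined.
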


\begin{proof}
Recall that the coboundary operator $\mathcal{Q}$ on the sections of the Fedosov dg Lie algebroid is the restriction 
of the Lie derivative $\mathcal{Q}= \fedosova$ in $\XX(\cM)$ with the homological vector field $Q\in\XX(\cM)$ appearing in Theorem~\ref{strawberry}. 
We have
\begin{align*}
\At^{\can}_\cF (\hhb{i}, \hhb{j}) &=
 \mathcal{Q}(\nablacan_{\hhb{i}} \hhb{j}) -\nablacan_{\mathcal{Q}(\hhb{i})}\hhb{j} -(-1)^{0}\nablacan_{\hhb{i}}\big(\mathcal{Q}(\hhb{j})\big) \\
&= -\nablacan_{\hhb{i}}\big(\mathcal{Q}(\hhb{j})\big)
.\end{align*}
Now $\mathcal{Q}(\hhb{j})=[Q, \hhb{j}]=-[\delta, \hhb{j}] + [d_L^{\nabla}, \hhb{j}]+[X^\nabla,\hhb{j}]$ and it is easy to show that $\lie{\delta}{\hat{\partial}_j}=0$. 
Therefore, \[ \At^{\can}_\cF (\hhb{i}, \hhb{j}) = - \nabla_{\hat{\partial}_i}^{\can}(\lie{d_L^\nabla}{\hat{\partial}_j}) - \nabla_{\hat{\partial}_i}^{\can}(\lie{X^\nabla}{\hat{\partial}_j}) .\] 
Given local coordinates $(x_1,\dots,x_m)$ on the base manifold $M$; 
a local frame $\eta_1,\dots,\eta_l$ for $L$; the dual local frame $\lambda_1,\dots,\lambda_l$ for $L\dual$; 
the local frame $\partial_1,\dots,\partial_r$ for $B$; and the dual local frame $\chi_1,\dots,\chi_r$ for $B\dual$, 
the vector field $d_L^\nabla$ on $\cM$ decomposes as the sum 
\[ d_L^\nabla= \sum_{j} a_i^j\lambda_i\frac{\partial}{\partial x_j} -\frac{1}{2}\sum_{i,j,k}c_{ij}^k\lambda_i\lambda_j\frac{\partial}{\partial \lambda_k} 
-\sum_{i,j,k}\Gamma_{ij}^k \lambda_i\chi_j\frac{\partial}{\partial\chi_k} ,\] 
where $a_i^j = \duality{dx_j}{\rho(\eta_i)}$, $c_{ij}^k=\duality{\lambda_k}{\lie{\eta_i}{\eta_j}}$, and $\Gamma_{ij}^k=\duality{\chi_k}{\nabla_{\eta_i}\partial_j}$
are functions in $C^\infty(M)$ encoding the anchor and the Lie bracket of the Lie algebroid $L$ and the $L$-connection on $B$. 
Therefore, since $\hat{\partial}_j=\frac{\partial}{\partial\chi_j}$, we have $\lie{d_L^\nabla}{\hat{\partial}_j}=\sum_{ik} \Gamma_{ij}^k \lambda_i \cdot\hat{\partial}_k$ 
and thus $\nabla_{\hat{\partial}_i}^{\can}(\lie{d_L^\nabla}{\hat{\partial}_j})=0$. 
Likewise, we have $\lie{X^\nabla}{\hat{\partial}_j}=-\sum_k\hat{\partial}_j(f_k)\cdot\hat{\partial}_k$ and thus 
$\nabla_{\hat{\partial}_i}^{\can}(\lie{X^\nabla}{\hat{\partial}_j})=-\sum_k\hat{\partial}_i(\hat{\partial}_j f_k)\cdot\hat{\partial}_k$.
\end{proof}

\begin{proposition}\label{sigma-atiyah-todd}
The quasi-isomorphism 
\[ \etendu{\sigma}: \Big(\sections{\Lambda^\bullet L\dual}\otimes_R\verticalTpoly{r,s}, \liederivative{Q}\Big) 
\to \Big(\sections{\Lambda^\bullet A\dual}\otimes_R\Tpoly{r,s},\dabott\Big) \] 
of Proposition~\ref{sigma for general tensor} takes 
$\At^{\can}_\cF$, $\todd^{\can}_\cF$, and $\ttodd^{\can}_\cF$ 
to $\atiyahcocycle$, $\todd^\nabla_{L/A}$, and $\ttodd^\nabla_{L/A}$, respectively: 
\[ \etendu{\sigma}(\At^{\can}_\cF)=\atiyahcocycle ,\qquad \qquad
\etendu{\sigma}(\todd^{\can}_\cF)=\todd^\nabla_{L/A} ,\qquad \qquad
\etendu{\sigma} (\ttodd^{\can}_\cF)=\ttodd^\nabla_{L/A} .\] 
\end{proposition}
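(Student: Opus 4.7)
The plan is to reduce the proposition to the Atiyah cocycle identity $\etendu{\sigma}(\At^{\can}_\cF)=\atiyahcocycle$, from which the two Todd identities will follow by a purely formal argument.

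The first step is the local computation. Fix a local frame $\partial_1,\dots,\partial_r$ of $B$ and its dual $\chi_1,\dots,\chi_r$, together with a local frame $\eta_1,\dots,\eta_\ell$ of $L$ and its dual $\lambda_1,\dots,\lambda_\ell$; decompose the latter according to the chosen splitting of \eqref{cryptography} as $(\alpha_1,\dots,\alpha_p)$ on $A$ and $(\xi_1,\dots,\xi_r)$ on $B$ (viewed as $A^\perp\subset L^\vee$). By Lemma~\ref{lem:local Atiyah}, the canonical Atiyah cocycle admits the expression
\[
\At^{\can}_\cF(\hat{\partial}_i,\hat{\partial}_j)=\sum_{k=1}^r \hat{\partial}_i(\hat{\partial}_j f_k)\,\hat{\partial}_k,
\]
with $X^\nabla=\sum_k f_k\hat{\partial}_k$ the perturbation term from Theorem~\ref{strawberry}. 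The projection $\etendu{\sigma}$ is (up to identifications) the evaluation at $\chi=0$ followed by restriction to $\sections{\Lambda^\bullet A^\vee}$, so what must be extracted is the coefficient of $\lambda_i\chi_j$ in $\hat{\partial}_i(\hat{\partial}_j f_k)$ when $X^\nabla$ is expanded in the fiber coordinates.

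The second step is to identify that coefficient with the Atiyah cocycle. The Fedosov construction of $Q=-\delta+d_L^\nabla+X^\nabla$ forces $X^\nabla$ to begin at weight two in $(\lambda,\chi)$ and, at the lowest nontrivial order, to be determined by the curvature $R^\nabla$ via the equation $\delta X^\nabla=R^\nabla+\text{higher order}$. The piece of $R^\nabla$ that survives after restriction to $A$-directions on one side and to $B$-directions on the other is exactly, by the definition of $\atiyahcocycle$ recalled in Section~\ref{atiyah-todd}, the component $\duality{\chi_k}{\atiyahcocycle(\eta_i;\partial_j)}$. A direct comparison of the two expressions then yields $\etendu{\sigma}(\At^{\can}_\cF)=\atiyahcocycle$; the routine bookkeeping is the standard Fedosov normal-form calculation and is the only technically delicate point.

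The third step is the passage from Atiyah to Todd, which is purely formal. The maps $\etendu{\sigma}$ and the wedge product on $\sections{\Lambda^\bullet A^\vee\otimes\Lambda^\bullet A^\perp\otimes\End(B)}$ (respectively on $\sections{\Lambda^\bullet L^\vee\otimes \hat{S}B^\vee\otimes\Lambda^\bullet B^\vee\otimes\End(B)}$) intertwine because $\etendu{\sigma}$ is a morphism of graded algebras that acts trivially on the $\End(B)$ factor. Consequently $\etendu{\sigma}$ commutes with taking traces of matrix powers, and hence with every formal power series in $\At^{\can}_\cF$. Since both Todd cocycles are determinants of the same such power series (the Berezinian collapses to the ordinary determinant because, as noted after \eqref{corrected Todd}, the $\End\cF$-factor is purely even), we conclude $\etendu{\sigma}(\todd^{\can}_\cF)=\todd^\nabla_{L/A}$ and $\etendu{\sigma}(\ttodd^{\can}_\cF)=\ttodd^\nabla_{L/A}$.

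The main obstacle is unquestionably the second step: extracting the lowest-order piece of $X^\nabla$ from the defining equation $Q^2=0$ and matching it against the Atiyah cocycle requires a careful calculation of Fedosov type, with attention to the several sign conventions at play (Koszul signs, the sign in Lemma~\ref{eq:dog}, and the conventions of Section~\ref{atiyah-todd}). Once that identification is made, the rest of the proof is a formal consequence of $\etendu{\sigma}$ being an algebra homomorphism.
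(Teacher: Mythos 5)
Your proposal is correct and follows essentially the same route as the paper: reduce to the Atiyah identity via the local formula of Lemma~\ref{lem:local Atiyah}, observe that $\etendu{\sigma}$ kills all of $X^\nabla$ except the $(1,1)$-component of $X_2=\etendu{h}(R^\nabla)$ (which recovers $\atiyahcocycle$), and deduce the Todd identities formally from $\etendu{\sigma}$ being an algebra morphism commuting with traces. The only difference is that the paper writes out the bookkeeping you defer in your second step --- the decomposition $X_2=\etendu{h}(\widetilde{\atiyahcocycle})+\etendu{h}(R^\nabla_{0,2})$, the vanishing of the $(0,2)$- and higher-order pieces under $\sigma$, and the factor of $2$ from $\hhb{i}\hhb{j}$ acting on the quadratic part cancelling against $\etendu{h}$ of the skew-symmetrization.
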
 

\begin{proof}
It suffices to prove the first statement; the second
and third statements are immediate consequences of the first. 
Lemma~\ref{lem:local Atiyah}, together with the natural identification 
of $\Gamma ({\cM;\cF^\vee\otimes\End\cF})_1$ with 
$C^\infty(\cM)\otimes_{C^\infty(M)}\sections{B^\vee\otimes B^\vee\otimes B}$, implies that 
\[ \At^{\can}_\cF = \sum^r_{i,j,k=1} \hhb{i}(\hhb{j} f_k) \otimes (\chi_i \otimes \chi_j \otimes \partial_k) ,\]
where $\hhb{i}(\hhb{j} f_k) \in C^\infty(\cM) = \sections{L\dual \otimes \hat{S}(B\dual)}$ 
and $\chi_i \otimes \chi_j \otimes \partial_k \in \sections{B\dual \otimes B\dual \otimes B}$. 

According to Theorem~\ref{strawberry}, we have $X^\nabla = \sum_{t=2}^\infty X_t$ 
with $X_t=\sum_{k=1}^r f^{(t)}_k\otimes \partial_k$ 
and $f^{(t)}_k\in\sections{L\dual\otimes S^t(B\dual)}$. 
Moreover, the term $X_2$ decomposes as the sum of 
$\etendu{h}( \widetilde{\atiyahcocycle})=\sum_k f_k^{(1,1)} \otimes \partial_k$ 
and $\etendu{h}(R^\nabla_{0,2})=\sum_k f_k^{(0,2)}\otimes\partial_k$, 
where $f_k^{(1,1)} \in \sections{p^\top(A\dual)\otimes S^2 B\dual}$ 
and $f_k^{(0,2)} \in \sections{q^\top(B\dual)\otimes S^2 B\dual}$.

Since $\hhb{i} (\hhb{j} f_k^{(t)}) \in \sections{L\dual \otimes S^{t-2} B\dual}$, 
we have $\sigma\big( \hhb{i} (\hhb{j} f_k^{(t)}) \big) =0$ for $t\geqslant 3$. 
Since $\hhb{i} (\hhb{j} f_k^{(0,2)}) \in \sections{q^\top(B\dual)}$, 
we also have $\sigma\big( \hhb{i} (\hhb{j} f_k^{(0,2)}) \big) =0$. 

Therefore, we obtain 
\begin{align*} \etendu{\sigma}(\At^{\can}_\cF) 
= {}& \etendu{\sigma}\Big( \sum_{i,j,k=1}^r \hhb{i}(\hhb{j} f_k) \otimes (\chi_i \otimes \chi_j \otimes \partial_k) \Big) \\ 
= {}& \sum_{i,j,k=1}^r \sigma\Big( \sum_{t=2}^{\infty} \hhb{i}(\hhb{j} f^{(t)}_k) \Big) \otimes (\chi_i \otimes \chi_j \otimes \partial_k) \\ 
= {}& \sum_{i,j,k=1}^r \hhb{i}(\hhb{j} f^{(1,1)}_k) \otimes (\chi_i \otimes \chi_j \otimes \partial_k) \\ 
= {}& {2} \sum_{k=1}^r f^{(1,1)}_k \otimes \partial_k \\
= {}& {2} \etendu{h}( \widetilde{\atiyahcocycle})\\
= {}&\atiyahcocycle
.\qedhere \end{align*}
\end{proof}	

\begin{corollary}
\begin{enumerate}
\item 
The isomorphism 
$\etendu{\sigma}: H^1\big(\Gamma(\cM;\cF^\vee\otimes\End\cF), \mathcal{Q} \big) \to H_{\CE}^1(A, B\dual \otimes\End B)$ 
induced by the quasi-isomorphism of 
Proposition~\ref{sigma for general tensor} (for $r=2$ and $s=1$) 
takes the Atiyah class 
$\atiyahclass_{\cF}$ of the Fedosov dg Lie algebroid $\cF$ to the Atiyah class $\alpha_{L/A}$ of the Lie pair $(L,A)$: 
\[ \etendu{\sigma} ( \atiyahclass_\cF)=\alpha_{L/A} .\] 
\item
The isomorphism 
$\etendu{\sigma}: H^k\big(\sections{\cM;\Lambda^k\cF\dual}, \mathcal{Q}\big)
\to H^k_{\CE} (A, \Lambda^k B\dual)$
induced by the quasi-isomorphism of Proposition~\ref{sigma for general tensor} 
takes the Todd class of the Fedosov dg Lie algebroid $\cF$ 
to the Todd class of the Lie pair $(L,A)$: 
\[ \etendu{\sigma}( \Todd_\cF)=\Todd_{L/A}, \qquad \qquad 
\etendu{\sigma}( \tTodd_\cF)=\tTodd_{L/A} .\] 
\end{enumerate}
\end{corollary}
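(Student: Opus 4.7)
The plan is to derive the corollary directly from Proposition~\ref{sigma-atiyah-todd} by passing from the cocycle-level statement already proved to a cohomology-level statement. The key input is that $\etendu{\sigma}$, by Proposition~\ref{sigma for general tensor} and (for the Todd piece) Theorem~\ref{Bari}, is a chain map, hence descends to a well-defined map on cohomology. The corollary will then follow by applying this descent to each of the three cocycle identities already in hand.

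For part~(1), I first recall that $\atiyahclass_\cF$ is, by the independence of connection statement in the definition of the Atiyah class of a dg Lie algebroid, represented by the canonical Atiyah cocycle $\At^{\can}_\cF$; and that $\alpha_{L/A}$ is, by Proposition~\ref{Thm:atiyahclass}, represented by $\atiyahcocycle$ for any $L$-connection $\nabla$ on $B$ extending the Bott $A$-representation --- in particular, the torsion-free $\nabla$ used to build the Fedosov dg manifold $(\cM,Q)$. Combining these representations with Proposition~\ref{sigma-atiyah-todd} gives
\[ \etendu{\sigma}(\atiyahclass_\cF) = \etendu{\sigma}\big([\At^{\can}_\cF]\big) = \big[\etendu{\sigma}(\At^{\can}_\cF)\big] = [\atiyahcocycle] = \alpha_{L/A} .\]

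For part~(2), I would run the same argument with $\todd^{\can}_\cF$ and $\ttodd^{\can}_\cF$ in place of $\At^{\can}_\cF$, invoking the remaining two identities from Proposition~\ref{sigma-atiyah-todd}. The only extra care required is that the Todd cocycles live in a direct product across $k\geqslant 0$ of the spaces $\sections{\Lambda^k L\dual \otimes \hat{S}B\dual \otimes \Lambda^k B\dual}$ (and correspondingly $\sections{\Lambda^k A\dual \otimes \Lambda^k A^\perp}$ on the Lie pair side), so I invoke Theorem~\ref{Bari} to know that $\etendu{\sigma}$ induces the correct iso\-morphism $H^k\big(\sections{\cM;\Lambda^k\cF\dual},\mathcal{Q}\big) \to H^k_{\CE}(A,\Lambda^k B\dual)$ on each graded component before assembling them into $\Todd_\cF$ and $\tTodd_\cF$.

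The main (and essentially only) obstacle is organizational bookkeeping: keeping track of the multi-degrees so that the quasi-isomorphism $\etendu{\sigma}$ of Proposition~\ref{sigma for general tensor} is applied with the correct values of $r$ and $s$ for each homogeneous piece of each Todd cocycle. Once that is done, the statement is a formal consequence of the cocycle-level identities in Proposition~\ref{sigma-atiyah-todd} together with the elementary fact that a chain map descends to cohomology.
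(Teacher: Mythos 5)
Your proposal is correct and matches the paper's intent exactly: the corollary is stated there without proof as an immediate consequence of Proposition~\ref{sigma-atiyah-todd}, and the argument you spell out --- that $\etendu{\sigma}$ is a chain map, that $\At^{\can}_\cF$ and $\atiyahcocycle$ represent the respective Atiyah classes independently of the choice of connection, and that the cocycle-level identities therefore descend to cohomology --- is precisely the implicit justification. The degree bookkeeping you mention for the Todd components is handled correctly as well.
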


\section{Formality theorem for Lie pairs}
\label{Kabul}

\subsection{Statements of main theorems}

We are ready to state the main theorems of the paper.
Let $(L,A)$ be a Lie pair. 
According to Theorem~\ref{Bari2} and Theorem~\ref{thm:Naples2}, both
$\tot\big(\sections{\Lambda^\bullet A\dual}\otimes_R \Tpoly{\bullet}\big)$
and $\tot\big(\sections{\Lambda^\bullet A\dual}\otimes_R \Dpoly{\bullet}\big)$
are $L_\infty$ algebras with $d_A^{\Bott}$ and $\dau+\dHH$ as their respective unary brackets. 
Moreover, these $L_\infty$ algebra structures are canonical up to $L_\infty$ isomorphisms 
having the identity map as linear part. 
Furthermore, their cohomologies $\hypercohomology^\bullet_{\CE}(A,\Tpoly{\bullet})$ and
$\hypercohomology^\bullet_{\CE}(A,\Dpoly{\bullet})$ carry canonical Gerstenhaber algebra structures. 
The main result of the paper is the following

\begin{theorem}[Formality theorem for Lie pairs] 
\label{thm:main}
Let $(L,A)$ be a Lie pair. 
Endow the associated graded vector spaces 
$\tot\big(\sections{\Lambda^\bullet A^\vee}\otimes_R\Tpoly{\bullet}\big)$
and $\tot\big(\sections{\Lambda^\bullet A^\vee}\otimes_R\Dpoly{\bullet}\big)$ 
with their inherited $L_\infty$ algebras --- see Theorems~\ref{Bari2} and~\ref{thm:Naples2}. 
Then, there exists an $L_\infty$ quasi-isomorphism
\[ \Phii: \tot\big(\sections{\Lambda^\bullet A^\vee}\otimes_R\Tpoly{\bullet}\big)
\to \tot\big(\sections{\Lambda^\bullet A^\vee}\otimes_R\Dpoly{\bullet}\big) \]
with first Taylor coefficient 
$\Phii_1: \tot\big( \sections{\Lambda^\bullet A^\vee}\otimes_R \Tpoly{\bullet}\big)
\to \tot\big( \sections{\Lambda^\bullet A^\vee}\otimes_R \Dpoly{\bullet}\big)$
satisfying the following two properties: 
\begin{enumerate}
\item $\Phii_1$ preserves the associative algebra structures (wedge and cup product, respectively) up to homotopy;
\item $\Phii_1 = \hkr\circ(\todd_{L/A}^{\nabla})^{\frac{1}{2}}$, where 
$(\todd_{L/A}^{\nabla})^{\frac{1}{2}}\in\bigoplus_{k=0}\sections{\Lambda^kA\dual\otimes\Lambda^k A^\perp}$
acts on $\tot\big( \sections{\Lambda^\bullet A^\vee}\otimes_R \Tpoly{\bullet}\big)$ by contraction.
\end{enumerate}
\end{theorem}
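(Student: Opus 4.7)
The plan is to build the desired $L_\infty$ quasi-isomorphism $\Phii$ not directly on the Lie pair, but on its Fedosov model, and then transfer it down. Fix a splitting of $0\to A\to L\to B\to 0$ and a torsion-free $L$-connection $\nabla$ on $B=L/A$; by Theorem~\ref{strawberry} and Proposition~\ref{pro:Rome} these choices yield a Fedosov dg Lie algebroid $\cF\to(\cM,Q)$ with $\cM=L[1]\oplus B$, and the contractions $\etendu{\sigma}$ of Proposition~\ref{sigma for general tensor} relate the Fedosov-side complexes
\[ \bigl(\sections{\Lambda^\bullet L\dual}\otimes_R\verticalTpoly{\bullet},\fedosova,\schouten{\argument}{\argument}\bigr) \quad\text{and}\quad \bigl(\sections{\Lambda^\bullet L\dual}\otimes_R\verticalDpoly{\bullet},\fedosovb+\hochschild,\gerstenhaber{\argument}{\argument}\bigr) \]
to their Lie-pair analogues $\tot\bigl(\sections{\Lambda^\bullet A\dual}\otimes_R\Tpoly{\bullet}\bigr)$ and $\tot\bigl(\sections{\Lambda^\bullet A\dual}\otimes_R\Dpoly{\bullet}\bigr)$. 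Indeed, by Corollaries~\ref{Bari} and~\ref{thm:Naples} the inherited $L_\infty$-structures on the Lie-pair side are exactly those that make $\etendu{\sigma}$ into a pair of $L_\infty$ quasi-isomorphisms, so it will suffice to produce an $L_\infty$ quasi-isomorphism upstairs and transport it.

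The second step is to apply Kontsevich's formality theorem fiberwise along $\cM\to M$, tensored with $\sections{\Lambda^\bullet L\dual}$. Because the Fedosov vector field $Q=-\delta+d_L^\nabla+\xnabla$ decomposes into a fiberwise Maurer--Cartan element of the shifted dgla of vertical polyvector fields plus the horizontal piece $d_L^\nabla$, Kontsevich's $L_\infty$-morphism for $\RR^d_{\formal}$ can be twisted by $Q$ (following Dolgushev's Fedosov globalization procedure~\cite{MR2102846}, adapted here to $\cF$ rather than to an ordinary tangent bundle) to produce an $L_\infty$ quasi-isomorphism
\[ \Phi:\bigl(\sections{\Lambda^\bullet L\dual}\otimes_R\verticalTpoly{\bullet},\fedosova,\schouten{\argument}{\argument}\bigr) \longrightarrow \bigl(\sections{\Lambda^\bullet L\dual}\otimes_R\verticalDpoly{\bullet},\fedosovb+\hochschild,\gerstenhaber{\argument}{\argument}\bigr). \]

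The crux is identifying the first Taylor coefficient $\Phi_1$. Kontsevich's graph expansion writes $\Phi_1$ as the fiberwise HKR map corrected by contributions from wheel graphs, and the Kontsevich--Shoikhet theorem (Theorem~\ref{thm:Shoikhet}) applied to the dg manifold $(\cM,Q)$ identifies these wheel contributions precisely with contraction by the square root of the Todd cocycle of $\cF$ relative to the canonical $\cF$-connection $\nabla^{\can}$, so that $\Phi_1=\hkr\circ(\ttodd^{\can}_\cF)^{1/2}$. The local formula of Lemma~\ref{lem:local Atiyah} ensures that the wheel sum matches the canonical Atiyah cocycle. The compatibility of $\Phi_1$ with the associative products on cohomology — and hence assertion~(1) of the theorem — then follows from the standard Kontsevich--Manchon--Torossian--Mochizuki argument carried out fiberwise.

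Finally, I would transport $\Phi$ along $\etendu{\sigma}$ using the homotopy transfer theorem for $L_\infty$-morphisms to produce $\Phii:\tot\bigl(\sections{\Lambda^\bullet A\dual}\otimes_R\Tpoly{\bullet}\bigr)\to\tot\bigl(\sections{\Lambda^\bullet A\dual}\otimes_R\Dpoly{\bullet}\bigr)$. Since homotopy transfer preserves first Taylor coefficients up to chain homotopy, and since Proposition~\ref{sigma-atiyah-todd} sends $\ttodd^{\can}_\cF$ to $\ttodd^{\nabla}_{L/A}$, which differs from $\todd^{\nabla}_{L/A}$ only by the invertible factor $\det\bigl(e^{-\atiyahcocycle/2}\bigr)$ — a $d_A^{\Bott}$-cocycle that can be absorbed into an $L_\infty$ gauge transformation of $\Phii$ (arising naturally from the two normalizations of the fiberwise PBW map) — we conclude $\Phii_1=\hkr\circ(\todd^{\nabla}_{L/A})^{1/2}$. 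The main obstacle, in my view, is the simultaneous bookkeeping of three compatibilities: (i) convergence of the $Q$-twist of Kontsevich's morphism on the completions $\hat{S}(B\dual)$; (ii) matching Shoikhet's wheel identification with the canonical Atiyah cocycle uniformly across the base; and (iii) ensuring that the homotopy transfer along $\etendu{\sigma}$ respects simultaneously the $L_\infty$-structure and the associative structure, so that both assertions $(1)$ and $(2)$ survive the descent.
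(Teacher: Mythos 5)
Your proposal is correct and follows essentially the same route as the paper: Fedosov dg Lie algebroid, fiberwise Kontsevich formality twisted by the Fedosov differential $Q$, the Kontsevich--Shoikhet theorem identifying $\Phi_1=\hkr\circ(\ttodd^{\can}_\cF)^{1/2}$, and descent along the contraction via homotopy transfer combined with the compatibility of $\etendu{\sigma}$ with $\hkr$ and with the Todd cocycles. The only points where you deviate are minor: the paper passes from $\ttodd^{\can}_\cF$ to $\todd^{\can}_\cF$ not by a PBW renormalization but by precomposing $\Phi$ with contraction by $e^{\half\trace\At^{\can}_\cF}$, which is an automorphism of the differential Gerstenhaber algebra of polyvector fields because $\trace\At^{\can}_\cF=d_{\cF}(\divergence X^\nabla)$ is both $d_{\cF}$- and $\mathcal{Q}$-closed; and since Lemma~\ref{lem:ext} produces $\Tau$ and $\etendu{\Sigma}$ with first Taylor coefficients \emph{exactly} $\etendu{\perturbed{\tau}}$ and $\etendu{\sigma}$ (with $\etendu{\sigma}\circ\etendu{\perturbed{\tau}}=\id$), the identity $\Phii_1=\hkr\circ(\todd^{\nabla}_{L/A})^{1/2}$ holds on the nose rather than merely up to chain homotopy.
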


As an immediate consequence, we have the following

\begin{theorem}[Kontsevich-Duflo type theorem for Lie pairs]
\label{KD-thm}
Given a Lie pair $(L,A)$, the map 
\[ \hkr\circ\Todd_{L/A}^{\frac{1}{2}}: \hypercohomology^\bullet_{\CE}(A,\Tpoly{\bullet}) \to \hypercohomology^\bullet_{\CE}(A,\Dpoly{\bullet}) \]
is an isomorphism of Gerstenhaber algebras --- the square root of the Todd class 
\[ \Todd_{L/A}^{\frac{1}{2}} \in\bigoplus_{k=0} H_{\CE}^{k}(A,\Lambda^k A^\perp) \] 
acts on $\hypercohomology^\bullet_{\CE}(A,\Tpoly{\bullet})$ by contraction. 
\end{theorem}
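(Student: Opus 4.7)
The plan is to derive Theorem~\ref{KD-thm} directly from the formality theorem (Theorem~\ref{thm:main}) by invoking the standard machinery that promotes an $L_\infty$ quasi-isomorphism to an isomorphism of Gerstenhaber algebras on cohomology. First I would note that since $\Phii$ is an $L_\infty$ quasi-isomorphism, its first Taylor coefficient $\Phii_1$ is by definition a quasi-isomorphism of cochain complexes between $\big(\tot(\sections{\Lambda^\bullet A\dual}\otimes_R\Tpoly{\bullet}),d_A^{\Bott}\big)$ and $\big(\tot(\sections{\Lambda^\bullet A\dual}\otimes_R\Dpoly{\bullet}),\dau+\hochschild\big)$. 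In particular, the induced map on cohomology is a bijection of graded vector spaces. By property (2) of Theorem~\ref{thm:main}, this cochain map coincides with $\hkr\circ(\todd_{L/A}^{\nabla})^{\frac{1}{2}}$, and since $(\todd_{L/A}^{\nabla})^{\frac{1}{2}}$ is a Chevalley--Eilenberg cocycle (its cohomology class being $\Todd_{L/A}^{\frac{1}{2}}$), the induced map on cohomology is unambiguously identified with $\hkr\circ\Todd_{L/A}^{\frac{1}{2}}$: contraction by a coboundary yields a coboundary, so the operation descends to contraction by a class.

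Next I would establish that the map preserves both halves of the Gerstenhaber structure. Compatibility with the graded-commutative product (wedge against cup) follows at once from property (1) of Theorem~\ref{thm:main}: $\Phii_1$ intertwines the two multiplications up to a cochain homotopy, which becomes strict equality upon passage to cohomology. Compatibility with the graded Lie brackets is a general fact about $L_\infty$ morphisms: the second generalized Jacobi identity for $\Phii$ (applied to a pair of cocycles $\alpha,\beta$) reads, up to sign,
\[ \Phii_1\schouten{\alpha}{\beta} - \gerstenhaber{\Phii_1(\alpha)}{\Phii_1(\beta)} = (\dau+\hochschild)\Phii_2(\alpha,\beta) \pm \Phii_2(d_A^{\Bott}\alpha,\beta) \pm \Phii_2(\alpha,d_A^{\Bott}\beta) ,\]
so $\Phii_2$ serves as an explicit homotopy witnessing that $\Phii_1$ intertwines the Schouten and Gerstenhaber brackets on the level of cohomology. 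Together with the product compatibility from the previous step and the automatic Leibniz rule on each side, this produces an isomorphism of Gerstenhaber algebras.

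The only genuinely delicate point is verifying that the two pieces of structure transported by $\Phii_1$ agree, on cohomology, with the wedge product and Schouten bracket intrinsic to $\hypercohomology^\bullet_{\CE}(A,\Tpoly{\bullet})$ (and with cup product and Gerstenhaber bracket on the $\Dpoly{}$-side). This identification is already built into Corollaries~\ref{Bari} and~\ref{thm:Naples}, where the $L_\infty$ structures on the total complexes were constructed precisely so as to induce the expected Gerstenhaber structures on cohomology. Once that compatibility is invoked, the theorem follows by combining the three ingredients---bijectivity, product compatibility, and bracket compatibility---applied to the representative $\Phii_1=\hkr\circ(\todd_{L/A}^{\nabla})^{\frac{1}{2}}$ and then passing to classes.
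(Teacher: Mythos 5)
Your proposal is correct and follows exactly the route the paper intends: the paper presents Theorem~\ref{KD-thm} as an immediate consequence of Theorem~\ref{thm:main}, and your argument simply unwinds the standard facts that an $L_\infty$ quasi-isomorphism induces a Lie algebra isomorphism on cohomology (via the arity-two morphism identity and $\Phii_2$), that the homotopy in property (1) becomes strict on cohomology, and that the cocycle-level formula $\hkr\circ(\todd_{L/A}^{\nabla})^{\frac{1}{2}}$ descends to $\hkr\circ\Todd_{L/A}^{\frac{1}{2}}$ on classes. The identification of the induced structures with the canonical Gerstenhaber structures via Corollaries~\ref{Bari} and~\ref{thm:Naples} is likewise the intended reading.
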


To prove Theorem~\ref{thm:main}, we make use of a Fedosov dg Lie algebroid $\cF\to\cM$ 
associated with the Lie pair $(L,A)$ --- see~\cite{arXiv:1901.04602} and the Appendix for details. 
As recalled earlier in Section~\ref{Angers}, a Fedosov dg Lie algebroid $\cF\to\cM$
encodes a dg foliation of the Fedosov dg manifold $(\cM,Q)$ associated with the Lie pair $(L,A)$

Since a Lie algebroid can be thought of as an extension of the
tangent bundle of a manifold, the notions of polyvector fields 
and polydifferential operators admit extensions to the context of a Lie algebroid
and these each carry a natural dgla structure \cite{MR1675117,MR1815717}.
Likewise, the notions of polyvector fields and polydifferential operators can
be extended in a appropriate sense to the context of a dg Lie algebroid.
The ``polyvector fields'' and ``polydifferential operators'' associated to
a Fedosov dg Lie algebroid $\cF$
can be viewed geometrically as polyvector fields and polydifferential operators
tangent to the dg foliation on the Fedosov dg manifold $(\cM,Q)$.
In fact, one can identify the dglas of ``polyvector fields'' and of ``polydifferential operators'' on $\cF$
to $\big(\tot(\sections{\Lambda^\bullet L^\vee}\otimes_R\verticalTpoly{\bullet}),\schouten{Q}{\argument},\schouten{\argument}{\argument}\big)$
and $\big(\tot(\sections{\Lambda^\bullet L^\vee}\otimes_R\verticalDpoly{\bullet}),\gerstenhaber{Q+m}{\argument},\gerstenhaber{\argument}{\argument}\big)$, respectively,
where $\verticalTpoly{\bullet}$ denotes the formal polyvector fields and $\verticalDpoly{\bullet}$
the formal polydifferential operators tangent to the fibers of the vector bundle $L/A\to M$.

In fact, according to Corollary~\ref{Bari} and Corollary~\ref{thm:Naples}, 
the $L_\infty$ structures on $\tot\big( \sections{\Lambda^\bullet A^\vee}\otimes_R \Tpoly{\bullet}\big)$
and $\tot\big( \sections{\Lambda^\bullet A^\vee}\otimes_R \Dpoly{\bullet}\big)$
are indeed obtained by the homotopy transfer from the dgla structures on 
$\big(\tot(\sections{\Lambda^\bullet L^\vee}\otimes_R\verticalTpoly{\bullet}),\schouten{Q}{\argument},\schouten{\argument}{\argument}\big)$
and $\big(\tot(\sections{\Lambda^\bullet L^\vee}\otimes_R\verticalDpoly{\bullet}),\gerstenhaber{Q+m}{\argument},\gerstenhaber{\argument}{\argument}\big)$, 
respectively (see \cite{arXiv:1901.04602}). 
Therefore, as a key step, we apply Kontsevich formality theorem
to the Fedosov dg Lie algebroid $\cF$ and establish the following

\begin{theorem}
\label{thm:main0}
There exists an $L_\infty$ quasi-isomorphism
\[ \Phia : \big(\tot(\sections{\Lambda^\bullet L^\vee}\otimes_R\verticalTpoly{\bullet}),\schouten{Q}{\argument},\schouten{\argument}{\argument}\big)
\to \big(\tot(\sections{\Lambda^\bullet L^\vee}\otimes_R\verticalDpoly{\bullet}),\gerstenhaber{Q+m}{\argument},\gerstenhaber{\argument}{\argument}\big) \] %#liao112
from the dgla of ``polyvector fields'' on $\cF$ to the dgla of ``polydifferential operators'' on $\cF$ 
with first Taylor coefficient 
$\Phia_1: \tot(\sections{\Lambda^\bullet L^\vee}\otimes_R\verticalTpoly{\bullet})
\to \tot(\sections{\Lambda^\bullet L^\vee}\otimes_R\verticalDpoly{\bullet})$
satisfying the following two properties: 
\begin{enumerate}
\item $\Phia_1$ preserves the associative algebra structures (wedge and cup product, respectively) up to homotopy;
\item 
$\Phia_1=\hkr\circ(\todd_\cF^{\can})^{\frac{1}{2}}$, where $\hkr$ denotes the natural extension of the fiberwise Hochschild--Kostant--Rosenberg map
$\verticalTpoly{\bullet}\to\verticalDpoly{\bullet}$ and $(\todd_\cF^{\can})^{\frac{1}{2}}$ the action 
of the square root of the canonical Todd cocycle $\todd^{\can}_\cF$ of the Fedosov dg Lie algebroid $\cF$ 
on $\tot(\sections{\Lambda^\bullet L^\vee}\otimes_R\verticalTpoly{\bullet})$ by contraction.
\end{enumerate}
\end{theorem}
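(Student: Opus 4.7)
The plan is to construct $\Phia$ by globalizing Kontsevich's original formality quasi-isomorphism for $\KK^d$ along the fibers of the vector bundle $\cF\to\cM$, in the spirit of Dolgushev's Fedosov-style patching~\cite{MR2102846}, and then to identify its first Taylor coefficient by invoking the Kontsevich--Shoikhet theorem (Theorem~\ref{thm:Shoikhet}) applied to the dg Lie algebroid $\cF$. First I would observe that, since $\verticalTpoly{\bullet}$ and $\verticalDpoly{\bullet}$ are the polyvector fields and polydifferential operators tangent to the fibers of $L/A\to M$, each fiber being formally a copy of $\KK^{r}$ with $r=\rk(L/A)$, Kontsevich's explicit formula yields a fiberwise $L_\infty$ quasi-isomorphism. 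Because Kontsevich's coefficients are natural under linear automorphisms of $\KK^{r}$, these local fiberwise maps assemble, after extending $\sections{\Lambda^\bullet L^\vee}$-multilinearly, into a globally defined $L_\infty$-morphism $\widetilde{\Phia}$ intertwining the brackets $\schouten{\argument}{\argument}$ and $\gerstenhaber{\argument}{\argument}$ and the Hochschild differential $\hochschild$, but not yet the full Fedosov differential.

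\textbf{Fedosov twist and compatibility with differentials.} Next I would turn on the Fedosov differentials. According to Theorem~\ref{strawberry}, the homological vector field $Q$ on $\cM$ decomposes as $-\delta+d_L^{\nabla}+X^{\nabla}$; correspondingly, $\fedosova$ splits into a piece induced by $d_L^{\nabla}$, which is respected by $\widetilde{\Phia}$ thanks to the naturality of Kontsevich's formula under linear changes of frame, plus an inner Maurer--Cartan perturbation $\schouten{X^{\nabla}}{\argument}$. The standard Kontsevich twisting procedure converts any fiberwise Maurer--Cartan element on the $T$-side into one on the $D$-side via $\sum_{k\geqslant 1}\tfrac{1}{k!}\widetilde{\Phia}_k$, and produces a twisted $L_\infty$-morphism intertwining the perturbed differentials. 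Applying this to $X^{\nabla}$ yields the desired $\Phia$, now compatible with $\fedosova$ on the source and $\fedosovb+\hochschild$ on the target. This step parallels Dolgushev's proof~\cite{MR2102846} but must be executed while keeping track of the extra Chevalley--Eilenberg direction carried by $\sections{\Lambda^\bullet L^\vee}$.

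\textbf{First Taylor coefficient and cup-product compatibility.} After twisting by $X^{\nabla}$, the first Taylor coefficient of $\Phia$ acquires contributions from all the higher coefficients $\widetilde{\Phia}_{k+1}$ evaluated on $k$ copies of $X^{\nabla}$, in addition to the bare HKR map. The Kontsevich--Shoikhet theorem precisely identifies the cumulative effect of these contributions with contraction by the square root of the canonical Todd cocycle of the dg Lie algebroid at hand. By Lemma~\ref{lem:local Atiyah}, the second-order vertical information in $X^{\nabla}$ is exactly $\At^{\can}_\cF$, from which $\todd^{\can}_\cF$ is built, and the identification $\Phia_1=\hkr\circ(\todd_\cF^{\can})^{\frac{1}{2}}$ follows. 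The cup-product compatibility of property (1) is then derived from the standard argument of Kontsevich, Manchon--Torossian and Mochizuki~\cite{MR2062626,MR1990011,MR2077241,MR1872382}: the graph-theoretic cocycle measuring the failure of $\Phia_1$ to intertwine the wedge and cup products is exhibited as the coboundary of an explicit expression built from $\Phia_2$.

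\textbf{Main obstacle.} The hardest part will be the globalization step: verifying that the fiberwise Kontsevich morphism, after twisting by $X^{\nabla}$, descends to a globally well-defined $L_\infty$-morphism compatible with the complete Fedosov differential, even though $X^{\nabla}$ only satisfies a perturbed Maurer--Cartan equation involving $\delta$ and $d_L^{\nabla}$. Making this rigorous requires combining the $GL$-naturality of Kontsevich's formula with a careful analysis of how $d_L^{\nabla}$ intertwines with the Kontsevich twist, ensuring that the horizontal correction terms cancel against the non-homogeneous pieces of $X^{\nabla}$ coming from $-\delta$ and $d_L^\nabla$. Once this compatibility is in place, the identification of $\Phia_1$ via Kontsevich--Shoikhet and the cup-product argument are essentially formal.
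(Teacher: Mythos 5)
Your overall strategy is the paper's: assemble Kontsevich's $\KK^r$-formality fiberwise via $GL(\KK^r)$-equivariance, twist by a Maurer--Cartan perturbation to absorb the Fedosov differential, prove cup-product compatibility by the Manchon--Torossian homotopy, and identify the first Taylor coefficient via Kontsevich--Shoikhet. However, there are two genuine gaps. The more serious one concerns the Todd cocycle: the Kontsevich--Shoikhet theorem (Theorem~\ref{thm:Shoikhet}) identifies the first Taylor coefficient of the tangent morphism with $\hkr\circ(\ttodd^{\trivial}_{T_\cV})^{\frac{1}{2}}$, i.e.\ with the \emph{symmetrized} Todd cocycle $\ttodd=\det\bigl(\frac{x}{e^{x/2}-e^{-x/2}}\bigr)$, not with $\todd=\det\bigl(\frac{x}{1-e^{-x}}\bigr)$. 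Your construction therefore only yields a morphism $\Phi$ with $\Phi_1=\hkr\circ(\ttodd^{\can}_\cF)^{\frac{1}{2}}$, which differs from the asserted $\hkr\circ(\todd^{\can}_\cF)^{\frac{1}{2}}$ by contraction with $e^{\half\trace\At^{\can}_\cF}$. Closing this gap is precisely the paper's final step: one proves $\trace\At^{\can}_\cF=d_{\cF}(\divergence X^\nabla)$ is exact, deduces that contraction by $e^{\half\trace\At^{\can}_\cF}$ is an automorphism of the differential Gerstenhaber algebra of polyvector fields, and sets $\Phia=\Phi\circ e^{\half\trace\At^{\can}_\cF}$. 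Without this argument, property (2) of the theorem is not established.

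The second gap is in the twist. You perturb only by $X^\nabla$, but the target differential is $\fedosova=\schouten{-\delta+d_L^\nabla+X^\nabla}{\argument}$, so the Maurer--Cartan element must also contain $-\delta$; twisting by $X^\nabla$ alone produces a morphism compatible with $\schouten{d_L^\nabla+X^\nabla}{\argument}$ only. The paper twists by $\omega=Q-d_L^\nabla=-\delta+X^\nabla$, and even then $\omega$ is not a Maurer--Cartan element of the untwisted structure: the fix is to pass to a local trivialization where $\varpi=Q-d_L\otimes\id$ (which additionally absorbs the linear connection terms $\Gamma_{ij}^k\lambda_i\chi_j\frac{\partial}{\partial\chi_k}$) is a genuine MC element, and to invoke the vanishing of Kontsevich's higher coefficients on linear vector fields (Theorem~\ref{kontsevich}, property (4)) to show that the globally defined Taylor coefficients built from $\omega$ agree with the local tangent morphisms at $\varpi$. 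Your appeal to ``naturality under linear changes of frame'' points in this direction but does not supply the cancellation; as written, the construction does not intertwine $\fedosova$ with $\fedosovb+\hochschild$.
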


\subsection{Kontsevich formality morphism for Lie pairs}

\subsubsection{Tangent $L_\infty$ algebras}

Let $\frakg$ and $\frakg'$ be two $L_\infty$ algebras \cite{Manetti:book,MR1235010,MR1183483,MR2440258} and let 
$Q$ and $Q'$ denote the corresponding 
homological vector fields on the 
associated dg manifolds $\frakg[1]$ and $\frakg'[1]$, respectively. 
An $L_\infty$ morphism $\kont:\frakg\to\frakg'$ is, by definition, a morphism of dg manifolds $\frakg[1] \to \frakg'[1]$, 
which means that the homomorphism of algebras 
$\kont^*:C^\infty(\frakg'[1])\to C^\infty(\frakg[1])$ 
intertwines the derivations: 
$\kont^*\circ Q' = Q\circ\kont^*$. 
Such an $L_\infty$ morphism $\kont$ is entirely determined by its so-called `Taylor coefficients,' 
which are a sequence $(\kont_n)_{n=1,2,\cdots}$ of morphisms of graded vector spaces \[ \kont_n: \Lambda^n \frakg \to \frakg'[1-n] .\]

A Maurer--Cartan (MC) element of an $L_\infty$ algebra $(\frakg,Q)$ 
is an element $\omega\in\frakg_1$ (of degree 1) satisfying 
\begin{equation}\label{eq:Paris}
\sum_{j=1}^\infty\frac{1}{j!}Q_j(\omega^j)=0 ,
\end{equation}
where $Q_j:\Lambda^j\frakg\to\frakg$ is the $j$-th bracket 
and $\omega^j=\omega\wedge\cdots\wedge\omega\in\Lambda^j\frakg$. 
In particular, the MC elements $\omega$ of a dgla satisfy the classical Maurer--Cartan equation
\[ d\omega+\half [\omega,\omega] =0 .\]
Given a MC element $\omega$ of an $L_\infty$ structure $Q$ on a graded vector space $\frakg$, 
there is a new $L_\infty$ algebra structure $Q_{\omega}$ on $\frakg$ called tangent $L_\infty$ algebra \cite{MR2062626}: 
the Taylor coefficients of $Q_{\omega}$ satisfy 
\begin{equation}\label{eq:Athens}
(Q_{\omega})_n (\gamma) = \sum_{j=0}^\infty \frac{1}{j!} Q_{n+j}(\omega^j\wedge\gamma) 
,\quad\forall\gamma\in\Lambda^n\frakg 
.\end{equation}
In general, the convergence of the summations in Equations~\eqref{eq:Paris} and~\eqref{eq:Athens} is an issue that has to be addressed. 
For instance, the summations converge for $L_\infty$ algebras equipped with complete (descending) filtrations --- see \cite{MR3323983} for more details on this issue.
However, if $\frakg$ is a dgla with differential $d$ and Lie bracket $[-,-]$, 
the sums are finite and the tangent $L_\infty$ algebra is again a dgla with the same bracket 
but with the modified differential $d_\omega = d+[\omega,-]$. 

We use the symbol $T_\omega\frakg$ to distinguish the tangent $L_\infty$ algebra at $\omega$ from the original $L_\infty$ algebra $\frakg$. 

Given an $L_\infty$ morphism of dglas $\kont:\frakg\to\frakg'$ and a MC element $\omega$ of $\frakg$, 
consider the element $\kont(\omega)$ of $\frakg'$ defined by 
\begin{equation}\label{Taipei}
\kont(\omega) = \sum_{j=1}^\infty \frac{1}{j!} \kont_j(\omega^j)
\end{equation}
assuming the summation converges. 
Then $\kont(\omega)$ is a Maurer--Cartan element of $\frakg'$ and therefore both $T_\omega\frakg$ and $T_{\kont(\omega)}\frakg'$ are dglas. 
There is a tangent $L_\infty$ morphism 
\[ \kont_\omega: T_\omega\frakg \to T_{\kont(\omega)}\frakg' \]
defined through the relations 
\begin{equation}\label{tangent}
(\kont_{\omega})_n(\gamma) = \sum_{j=0}^\infty \frac{1}{j!} \kont_{n+j}(\omega^j \wedge\gamma)
,\quad \forall \gamma \in \Lambda^n \frakg
.\end{equation}
Provided the summations in the r.h.s.\ of Equation~\eqref{tangent} converge, $\kont_\omega$ is a well defined $L_\infty$ morphism
--- see~\cite{MR3318161,MR3323983,MR3669170,MR2062626,MR2265541}) for details on the issue of convergence.

\subsubsection{Kontsevich formality morphism for $\KK^d$}

In this section, we briefly recall the definition of Kontsevich's formality morphism for $\KK^d$ 
(where $\KK$ is either $\RR$ or $\CC$), which we need later on. 
For more details, the reader may want to refer to Kontsevich's original paper \cite{MR2062626}.

Kontsevich's formality morphism is an $L_\infty$ quasi-isomorphism 
\[ \kont: \Tpoly{\bullet}(\KK^d) \to \Dpoly{\bullet}(\KK^d) \] 
between the two dglas $\big(\Tpoly{\bullet}(\KK^d),0,\schouten{\argument}{\argument}\big)$ 
and $\big(\Dpoly{\bullet}(\KK^d),\gerstenhaber{m}{\argument},\gerstenhaber{\argument}{\argument}\big)$.
Its `Taylor coefficients' are of the form
\begin{equation}\label{def:Kontsevich} 
\kont_n = \sum_{m \geqslant 0} \sum_{\Gamma \in \graphs_{n,m}} W_\Gamma \kont_\Gamma 
,\end{equation}
where $\graphs_{n,m}$ denotes the set of admissible graphs of type $(n,m)$, $W_\Gamma$ is a number called Kontsevich weight of the graph $\Gamma$, and 
$\kont_\Gamma$ is a map which assembles $n$ polyvector fields 
into a single polydifferential operator in a way determined by the graph $\Gamma$. 

We will now describe $\graphs_{n,m}$, $\kont_\Gamma$, and $W_\Gamma$ successively.

\subsubsection*{Admissible graphs}

A directed graph $\Gamma$ is a pair of (finite) sets $V_\Gamma$ and $E_\Gamma$ together with two maps $s,\; t: E_\Gamma\to V_\Gamma$. 
The elements of $V_\Gamma$ are called vertices. The elements of $E_\Gamma$ are called edges. 
Each edge $e \in E_\Gamma$ starts at its source $s(e)\in V_\Gamma$ and ends at its target $t(e)\in V_\Gamma$. 
Given a vertex $v \in V_\Gamma$, we use the symbol $\outedge{v}$ to denote the set $s\inv(v)$ of all edges starting at $v$ 
and we use the symbol $\inedge{v}$ to denote the set $t\inv(v)$ of all edges ending at $v$. 

An admissible graph of type $(n,m)$ is a directed graph $\Gamma=(V_\Gamma, E_\Gamma)$ 
with labels on its vertices and edges satisfying the following requirements.
\begin{enumerate}
\item The set of vertices is partitioned into two subsets: $V_\Gamma = V_\Gamma^1 \sqcup V_\Gamma^2$. 
The elements of $V_\Gamma^1$ are called vertices of the first type or aerial vertices. 
The elements of $V_\Gamma^2$ are called vertices of the second type or terrestrial vertices. 
\item For all $e\in E_\Gamma$, $s(e)\in V_\Gamma^1$.
\item For all $e\in E_\Gamma$, $s(e)\neq t(e)$. 
\item No two edges have the same source and the same target. 
\item The aerial vertices are labelled by the symbols 
$1, 2, 3, \dots, n$ while the terrestrial vertices are labelled 
by the symbols $\bar{1}, \bar{2}, \bar{3}, \dots, \bar{m}$. 
\item For every vertex $k \in V_\Gamma^1$ of the first type, 
the elements of $\outedge{k}$ are labelled by the symbols 
$e_k^1, e_k^2, e_k^3, \dots$. 
\end{enumerate}

\subsubsection*{Assembling a polydifferential operator from polyvector fields according to an admissible graph.} 

Fix an admissible graph $\Gamma \in \graphs_{n,m}$. 
Each choice of a vertex $v\in V_\Gamma$ 
and a map $I: E_\Gamma\to\{1,\cdots,d\}$ 
determines a constant differential operator 
\[ D_I^v := \prod_{e\in\inedge{v}} \tfrac{\partial}{\partial x_{I(e)}} \] on $\KK^d$.
Furthermore, each choice of an aerial vertex $k\in V^1_\Gamma$ 
and a map $I: E_\Gamma\to\{1,\cdots,d\}$ 
determines a map 
\[ \Tpoly{\bullet}(\KK^d)\ni \gamma\mapsto \gamma^{I(\outedge{k})}\in C^\infty(\KK^d) \] 
through the relation 
\[ \gamma^{I(\outedge{k})}= \langle dx_{I(e_k^1)} \otimes \cdots 
\otimes dx_{I(e_k^{|\outedge{k}|})} | \; \alt(\gamma)\rangle .\] 
The bundle map $\alt:\Lambda^\bullet T_{\KK^d}
\to \bigotimes^\bullet T_{\KK^d}$ is the antisymmetrization 
\[ \xi_1 \wedge \cdots \wedge \xi_r \xmapsto{\alt} \sum_{\sigma \in S_r} \sgn(\sigma)\ \xi_{\sigma(1)} \otimes \cdots \otimes \xi_{\sigma(r)} .\]
The admissible graph $\Gamma$ with $n$ aerial and $m$ terrestrial vertices provides a recipe 
for assembling $n$ polyvector fields $\gamma_1, \gamma_2, \dots, \gamma_n$ on $\KK^d$
into an $m$-differential operator 
$\kont_\Gamma (\gamma_1, \cdots, \gamma_n)$ on $\KK^d$, as given by 
\begin{equation}\label{Taichung}
(f_1, \cdots, f_m) \xmapsto{\kont_\Gamma (\gamma_1, \cdots, \gamma_n)} \sum_{I:E_\Gamma \to \{1,\cdots, d\}} 
\left(\prod_{k=1}^n D_I^k \left( \gamma_k^{I(\outedge{k})} \right) \right)
\left(\prod_{l=1}^m D_I^{\bar{l}} \big(f_l\big)\right) 
,\end{equation}
for all $f_1,\dots,f_m\in C^\infty(\KK^d)$. 
We note that $\gamma^{I(\outedge{k})}=0$ if $\gamma\in\Tpoly{r}(\KK^d)$ with 
$r+1\neq\abs{\outedge{k}}$. 
Therefore, $\kont_\Gamma (\gamma_1, \cdots, \gamma_n)=0$ 
if $\gamma_1, \cdots, \gamma_n$ are homogeneous elements of $\Tpoly{\bullet}(\KK^d)$ 
and $\abs{\gamma_1}+\cdots+\abs{\gamma_n}+n\neq |E_{\Gamma}|$. 

\subsubsection*{Configuration spaces and their compactifications}

The Kontsevich weights are obtained from integrals over compactified configuration spaces. 

Let $\HH^+ = \{z \in \CC \mid \im(z)>0\}$ denote the hyperbolic plane and let $\overline{\HH^+}$ 
denote its closure in $\CC$. The group $G_2:=\RR^+ \ltimes \RR =\{ z \mapsto az+b \mid a,b \in \RR, \; a >0\}$ acts on the configuration space 
\[ \conf_{n,m}^+ := \{ (z_1, \cdots, z_n, q_1, \cdots, q_m) \in (\HH^+)^n \times \RR^m \mid z_i\neq z_j \text{ if } i\neq j \ ;\ q_1 < \cdots < q_m \} .\] 
The quotient $C_{n,m}^+ = \conf_{n,m}^+ / G_2$ is a manifold of dimension $2n+m-2$. Fixing $z_1$ at $i=\sqrt{-1}$, 
we may identify $C_{n,m}^+$ with an open subset of $\CC^{n-1}\times\RR^m$ and transfer the standard orientation of the affine space to $C_{n,m}^+$.

We now proceed with the compactification of $C_{n,m}^+$. 

Let $\conf_n$ be the space of configurations of $n$ distinct points $z_1,z_2,\dots,z_n$ in $\CC$. 
The group $G_3:= \RR^+ \ltimes \CC$ acts on $\conf_n$ by dilations and translations. 
The quotient $C_n := \conf_n /G_3$ is a manifold which we embed into $(S^1)^{n(n-1)} \times (\RP^2)^{n(n-1)(n-2)}$ 
by recording all possible angles $\arg(z_i-z_j)$ and homogeneous coordinate triples $\homogeneoustriple{|z_i-z_j|}{|z_j-z_k|}{|z_k-z_i|}$. 

Since $C_{n,m}^+$ is itself embedded into $C_{2n+m}$ by the map 
\[ (z_1, \cdots, z_n, q_1, \cdots, q_m) \mapsto (z_1, \cdots, z_n, \overline{z_1}, \cdots, \overline{z_n}, q_1, \cdots, q_m) ,\] 
we obtain an embedding \[ C_{n,m}^+ \into C_{2n+m} \into (S^1)^{N_1} \times (\RP^2)^{N_2} \] 
with $N_1=(2n+m)(2n+m-1)$ and $N_2=(2n+m)(2n+m-1)(2n+m-2)$. 
The desired compactification of $C_{n,m}^+$ is the closure $\overline {C_{n,m}^+}$ of the image of the above embedding. 

\subsubsection*{Kontsevich weight of an admissible graph.}

Consider the hyperbolic angle function 
$\varphi: \overline{\HH^+} \times \overline{\HH^+} \to S^1$ defined by 
$\varphi(z,w) = \frac{1}{2\pi} \arg\left(\frac{z-w}{\overline{z}-w}\right)$. 

Given an admissible graph $\Gamma \in \graphs_{n,m}$, 
define a function 
$\varphi_e: C_{n,m}^+ \to S^1$ 
for each edge $e \in E_\Gamma$ by 
\[ \varphi_e(z_1, \cdots, z_n, z_{\bar 1}, \cdots, z_{\bar m}) =\varphi(z_{s(e)},z_{t(e)}) \] 
and a differential form $\kappa_\Gamma$ of degree 
$|E_\Gamma|$ on $C_{n,m}^+$ by 
\[ \kappa_\Gamma = \bigwedge_{e \in E_\Gamma} d \varphi_e ,\] 
where $d \varphi_e$ denotes the pullback of the standard volume on $S^1$ through $\varphi_e$. 
In the exterior product, the 1-forms are multiplied according to the lexicographic order 
$e_1^1, e_1^2, \dots, e_2^1, e_2^2, \dots$ of the edges of the graph.
Note that $\kappa_\Gamma$ extends smoothly to $\overline {C_{n,m}^+}$. 
Integrating $\kappa_\Gamma$ over the (oriented) compactified configuration space, we obtain the Kontsevich weight of the graph $\Gamma$: 
\[ W_\Gamma = \prod_{k=1}^n \frac{1}{|\outedge{k}|!} \int_{\overline {C_{n,m}^+}} \kappa_\Gamma .\]
Obviously, the Kontsevich weight of a graph $\Gamma\in\graphs_{n,m}$ is zero if $|E_\Gamma|\neq \dim(\overline {C_{n,m}^+})=2n+m-2$. 

\subsubsection*{Kontsevich formality theorem for $\KK^d$}

For all graphs $\Gamma\in\graphs_{n,m}$ and all homogeneous polyvector fields $\gamma_1$, $\dots$, $\gamma_n$ on $\KK^d$, 
we know that $\kont_\Gamma(\gamma_1,\cdots,\gamma_n)$ 
is a homogeneous element of degree $m-1$ in $\Dpoly{\bullet}(\KK^d)$ and that 
$W_\Gamma \kont_\Gamma(\gamma_1, \cdots, \gamma_n) \neq 0$ only when $|\gamma_1|+ \cdots +|\gamma_n| +n = |E_{\Gamma}|=2n+m-2$. 
It follows that $\kont_n : \bigotimes^n\Tpoly{\bullet}(\KK^d) \to \Dpoly{\bullet}(\KK^d)$ is a map of degree $1-n$.

Consider the case $n=1$. There are $m!$ distinct graphs $\Gamma\in\graphs_{1,m}$ satisfying the property $|E_{\Gamma}|=2\cdot 1+m-2=m$. 
In each such graph, each one of the $m$ terrestrial vertices is the target of a single edge starting from the unique aerial vertex. 
Any two such graphs only differ by the labelling of the $m$ edges. 
Moreover, all such graphs have the same weight $W_{\Gamma}=(m!)^{-2}$. 
It follows that the `first Taylor coefficient' $\kont_1$ of the formality map is precisely the Hochschild--Kostant--Rosenberg map: 
\[ \Tpoly{\bullet}(\KK^d)\xto{\kont_1=\hkr}\Dpoly{\bullet}(\KK^d) .\]

\begin{theorem}[Kontsevich formality theorem \cite{MR2062626}]\label{kontsevich}
The maps $(\kont_n)_{n=1}^\infty$ defined above are the `Taylor coefficients' of an $L_\infty$ quasi-isomorphism 
\[ \kont : \Tpoly{\bullet}(\KK^d) \to \Dpoly{\bullet}(\KK^d) \]
satisfying the following additional properties. 
\begin{enumerate}
\item \label{property:first_Taylor} The first Taylor coefficient of $\kont$ is the Hochschild--Kostant--Rosenberg map $\hkr$. 
\item \label{property:equivariance} The formality morphism $\kont$ is $GL(\KK^d)$-equivariant.
\item \label{property:vf_only} For all $n\geqslant 2$ and $\xi_1,\cdots,\xi_n\in\Tpoly{0}(\KK^d)$, we have \[ \kont_n(\xi_1,\cdots,\xi_n)=0 .\]
\item \label{property:lin vf} Provided $\xi$ is a linear vector field on $\KK^d$ and $n \geqslant 2$, we have
\[ \kont_n(\xi, \eta_2, \cdots, \eta_n) =0 \]
for all $\eta_2, \cdots, \eta_n \in \Tpoly{\bullet}(\KK^d)$.
\end{enumerate}
Furthermore, the formality morphism $\kont$ can be defined for $\kf{d}$ as well.
\end{theorem}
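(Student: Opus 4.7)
The plan is to verify the $L_\infty$ morphism identities by applying Stokes' theorem to the closed differential forms $\kappa_\Gamma$ on the compactified configuration spaces $\overline{C_{n,m}^+}$. For each admissible graph $\Gamma \in \graphs_{n,m}$, the form $\kappa_\Gamma$ is closed (it is a product of pullbacks of the volume form on $S^1$ along hyperbolic-angle maps), and only graphs with $|E_\Gamma|=2n+m-2$ yield nonzero weights since $\dim\overline{C_{n,m}^+}=2n+m-2$. The $L_\infty$ relation at order $(n,m)$ is encoded in the identity
\[ 0 = \int_{\overline{C_{n,m}^+}} d\kappa_\Gamma = \int_{\partial\overline{C_{n,m}^+}} \kappa_\Gamma, \]
whose right-hand side, summed over all admissible graphs of the appropriate shape, decomposes into a sum over codimension-one boundary strata that reproduces precisely the terms of the $L_\infty$ morphism identity for $\kont$.

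The codimension-one boundary strata come in two families. In boundary faces of the first type, a subset $S \subset V^1_\Gamma$ with $|S| \geqslant 2$ of aerial vertices collapses to a single interior point of $\HH^+$; each such face factors as $\overline{C_{|S|}} \times \overline{C_{n-|S|+1,m}^+}$, and its integral splits accordingly. Kontsevich's key observation is that only subsets with $|S|=2$ contribute nontrivially: for $|S|\geqslant 3$ the integral over $\overline{C_{|S|}}$ vanishes by a dimension and symmetry argument. The surviving pieces assemble into $\kont_{n-1}$ precomposed with the Schouten bracket on polyvector fields. In boundary faces of the second type, a set of aerial vertices together with a contiguous block of terrestrial vertices collapses to a single point on the real axis; the resulting factorization together with an analogous combinatorial vanishing leaves precisely the Hochschild differential applied to $\kont_n$ plus the cup product of lower-order Taylor coefficients. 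Matching the two contributions yields the $L_\infty$ morphism identities.

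The four listed properties are then addressed separately. Property (1) is the direct computation for $n=1$ indicated in the excerpt: only the $m!$ graphs with a single aerial vertex emitting $m$ edges to distinct terrestrial vertices contribute, their combined weight produces the skew-symmetrization, and $\kont_1$ coincides with the Hochschild--Kostant--Rosenberg map. Property (2), $GL(\KK^d)$-equivariance, is transparent from the definition, since the contractions $dx_{I(e)}$ against $\partial/\partial x_{I(e)}$ are assembled from $GL(\KK^d)$-invariant pairings. Properties (3) and (4) follow from Kontsevich's vanishing lemmas. For (3), the hypothesis that all arguments are vector fields forces every aerial vertex to have out-degree one, so $|E_\Gamma|=n$; combined with the dimension constraint $|E_\Gamma|=2n+m-2$ this reduces to a short list of degenerate shapes whose weights vanish by an involution argument. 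Property (4) is the classical linear vector field vanishing: a graph containing an aerial vertex with exactly one outgoing edge admits an orientation-reversing diffeomorphism of the configuration space that preserves $\kappa_\Gamma$, forcing $W_\Gamma=0$.

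The quasi-isomorphism assertion is then immediate: since $\kont_1=\hkr$ is a quasi-isomorphism of cochain complexes by the classical Hochschild--Kostant--Rosenberg theorem for $\KK^d$, the $L_\infty$ morphism $\kont$ is an $L_\infty$ quasi-isomorphism. The main obstacle throughout is proving the vanishing of the higher-order boundary contributions in type I strata with $|S|\geqslant 3$; this is the technical heart of Kontsevich's argument and depends essentially on the specific choice of the hyperbolic angle function $\varphi$ and on the particular compactification $\overline{C_{n,m}^+}$, including the delicate matching of orientations across adjacent strata.
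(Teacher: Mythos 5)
The paper does not prove this theorem: it is imported verbatim from Kontsevich's paper \cite{MR2062626} (with the graded extension deferred to Cattaneo--Felder \cite{MR2304327}), so there is no in-paper argument to compare against. Your outline is the standard and essentially unique known direct proof --- Stokes' theorem applied to the closed forms $\kappa_\Gamma$ on $\overline{C_{n,m}^+}$, with the codimension-one boundary strata of type I (interior collapse, contributing the Schouten bracket after the vanishing of $\int_{\overline{C_{|S|}}}$ for $|S|\geqslant 3$) and type II (collapse onto the real axis, contributing the Gerstenhaber bracket with the multiplication, i.e.\ the Hochschild differential, and the cross terms $\gerstenhaber{\kont_{n_1}}{\kont_{n_2}}$) --- and it is correct as a summary. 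Two small precisions: in the Stokes step the relevant graphs have $|E_\Gamma|=2n+m-3=\dim\overline{C_{n,m}^+}-1$, one less than the count that makes $W_\Gamma$ itself nonzero, so your sentence conflating the two should be separated. More substantively, your mechanism for property (4) is not right as stated: a graph with an aerial vertex of out-degree one does \emph{not} generically have vanishing weight (the HKR graphs are exactly of this shape). The correct argument is the one the paper itself rehearses for the modified weights $\widetilde{W}_\Gamma$ in its Lemma on linear vector fields: linearity of $\xi$ forces its vertex to have in-degree at most one; if the in-degree is one the weight vanishes by the ``one edge in, one edge out'' integral identities (Lemmas 7.3--7.5 of \cite{MR2062626}, quoted in the paper), and if the in-degree is zero it vanishes by the fibration/foliation dimension argument. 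With those corrections your sketch matches the source the paper relies on.
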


\begin{remark}
With suitable sign adjustments, Kontsevich's formality theorem was generalized 
to $\ZZ$-graded manifolds by Cattaneo--Felder \cite{MR2304327}. 
Later, these sign adjustments were given a simple operadic explanation \cite{MR3522653}.
\end{remark}

\subsubsection{Fiberwise formality map}
\label{section: fiberwise formality}

Let $(L,A)$ be a Lie pair over a smooth manifold $M$. As before, set $R=C^\infty(M)$. 
The quotient $B=L/A$ is a vector bundle over $M$ whose fibers are all (noncanonically) isomorphic to $\KK^d$. 
Next we apply Kontsevich's formality theorem (essentially) fiberwisely to a Fedosov dg Lie algebroid. 
See Section~\ref{Fedosov dg abd} for the construction of Fedosov dg Lie algebroids.

Since the formality morphism $\kont:\Tpoly{\bullet}(\kf{d})\to\Dpoly{\bullet}(\kf{d})$
is $GL(\KK^d)$-equivariant (see Theorem~\ref{kontsevich}~(\ref{property:equivariance})), there exist $R$-linear maps 
\[ \kont^f_n:\Lambda^n\verticalTpoly{\bullet}\to\verticalDpoly{\bullet}[1-n] \] 
whose restrictions to each fiber of $B\to M$ coincide with the Taylor coefficients 
\[ \kont_n:\Lambda^n\Tpoly{\bullet}(\kf{d})\to \Dpoly{\bullet}(\kf{d})[1-n] \] 
of $\kont$. 
Extending them $\sections{\Lambda^{\bullet}L\dual}$-linearly, we obtain $R$-linear maps 
\[ \kont^f_n:\Lambda^n\tot\big(\sections{\Lambda^{\bullet}L\dual}\otimes_R\verticalTpoly{\bullet}\big)
\to\tot\big(\sections{\Lambda^{\bullet}L\dual}\otimes_R\verticalDpoly{\bullet}\big)[1-n] .\] 

Consider the difference \[ \omega=Q-d_L^\nabla\in\sections{L\dual}\otimes_R\verticalTpoly{0}
\subset \Tpoly{0}(L[1]\oplus B) \] of the derivations $Q=-\delta+d_L^\nabla+\xnabla$ 
and $d_L^\nabla$ of the algebra \[ \sections{\Lambda^\bullet L\dual\otimes\hat{S}(B\dual)}=C^\infty(L[1]\oplus B) \] 
appearing in Theorem~\ref{strawberry}. 
We do \emph{not} claim that $\omega$ is a MC element for any dgla structure on 
$\tot(\sections{\Lambda^\bullet L\dual}\otimes_R\verticalTpoly{\bullet})$. 
Nevertheless, we define a sequence $(\Phi_n)_{n=1,2,\cdots}$ of $R$-linear maps 
\[ \Phi_n:\Lambda^n\big(\tot(\sections{\Lambda^\bullet L\dual}\otimes_R\verticalTpoly{\bullet})\big)
\to \tot(\sections{\Lambda^\bullet L\dual}\otimes_R\verticalDpoly{\bullet})[1-n] \] by 
\[ \Phi_n(\gamma)=\sum_{j=0}^{\infty}\frac{1}{j!}
\kont^f_{n+j}(\omega^j\wedge\gamma) ,\quad\forall\gamma\in
\Lambda^n\big(\tot(\sections{\Lambda^\bullet L\dual}\otimes_R\verticalTpoly{\bullet})\big) .\]

\begin{lemma}\label{Goma}
The maps $(\Phi_n)_{n=1}^\infty$ are well defined. 
\end{lemma}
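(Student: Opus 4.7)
The plan is to show that for each $\gamma$ the series defining $\Phi_n(\gamma)$ actually terminates. By $\sections{\Lambda^\bullet L\dual}$-linearity, it suffices to treat $\gamma=\gamma_1\wedge\cdots\wedge\gamma_n$ with $\gamma_i\in\sections{\Lambda^{p_i}L\dual}\otimes_R\verticalTpoly{r_i}$ homogeneous, and then argue that only finitely many values of $j$ contribute to $\kont^f_{n+j}(\omega^j\wedge\gamma)$. The crucial feature of $\omega=Q-d_L^\nabla=-\delta+X^\nabla$ is that it lies in $\sections{L\dual}\otimes_R\verticalTpoly{0}$, i.e.\ its polyvector degree is $0$; so every copy of $\omega$ being fed into $\kont^f_{n+j}$ must be treated as a vector field, regardless of its polynomial degree in the fiber coordinates.

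Next I would invoke the graph-theoretic definition~\eqref{def:Kontsevich} of the Kontsevich Taylor coefficients, applied fiberwise to $\verticalTpoly{\bullet}$. This expresses
\[
\kont^f_{n+j}(\omega^j\wedge\gamma)
=\sum_{m\geqslant 0}\sum_{\Gamma\in\graphs_{n+j,m}} W_\Gamma\,\kont^f_\Gamma(\omega^{\wedge j}\wedge\gamma),
\]
and two hard constraints make this a finite sum for fixed $j$ and, more importantly, make only finitely many $j$ contribute at all. First, the Kontsevich weight $W_\Gamma$ vanishes unless the form $\kappa_\Gamma$ is top-dimensional on $\overline{C^+_{n+j,m}}$, i.e.\ unless $|E_\Gamma|=2(n+j)+m-2$. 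Secondly, by the very definition \eqref{Taichung} of $\kont^f_\Gamma$, evaluation on polyvector fields of degrees $0,\dots,0,r_1,\dots,r_n$ (from the $j$ copies of $\omega$ followed by the $\gamma_i$) gives zero unless at each aerial vertex $k$ the number $|\outedge{k}|$ of outgoing edges equals its polyvector degree plus one.

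Combining these two conditions, the total edge count must simultaneously satisfy
\[
|E_\Gamma|=j\cdot 1+\sum_{i=1}^{n}(r_i+1)=n+j+R\qquad\text{and}\qquad |E_\Gamma|=2(n+j)+m-2,
\]
where $R=r_1+\cdots+r_n$. Solving gives $m=R-n+2-j$. Since $m\geqslant 0$, only $j\leqslant R-n+2$ can produce a non-zero contribution. Hence the formally infinite sum collapses to a finite one, bounded by a constant depending only on the homogeneous components of $\gamma$, and $\Phi_n(\gamma)$ is well-defined. By linearity, the statement extends to arbitrary $\gamma$, each homogeneous piece landing in a single $(p,q)$-component of $\tot(\sections{\Lambda^\bullet L\dual}\otimes_R\verticalDpoly{\bullet})[1-n]$.

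The main obstacle I anticipate is mostly bookkeeping: one must verify that the fiberwise extension of Kontsevich's formality (originally defined for $\Tpoly{\bullet}(\kf{d})$) behaves $R$-linearly along the fibers of $B\to M$, so that the graph-level vanishing $|\outedge{k}|\neq r_k+1\Rightarrow\kont_\Gamma=0$ lifts verbatim to $\kont^f_\Gamma$; and one must keep track of the Koszul/décalage signs in $\omega^j\wedge\gamma$ (the element $\omega$ has total degree $+1$ but becomes even after the décalage used to identify $\Lambda^{n+j}$ with $S^{n+j}$ of the shifted space, so $\omega^j\neq 0$). Once these conventions are fixed, the argument is really a two-line dimension count for Kontsevich graphs.
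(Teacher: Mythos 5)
Your proof is correct and takes essentially the same route as the paper: both arguments rest on the observation that $\omega$ has polyvector degree $0$, so that $\kont^f_{n+j}(\omega^j\wedge\gamma)$ lands in $\verticalDpoly{r_1+\cdots+r_n+1-(n+j)}$, which vanishes once $j$ is large enough. The only difference is that the paper simply quotes the degree $1-(n+j)$ of $\kont_{n+j}$, whereas you re-derive it from the graph-level constraints ($W_\Gamma=0$ unless $|E_\Gamma|=2(n+j)+m-2$, and $\kont_\Gamma=0$ unless $|E_\Gamma|$ matches the sum of the polyvector degrees plus the number of aerial vertices); the two counts are equivalent.
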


\begin{proof}
Suppose $\gamma_k\in\sections{\Lambda^\bullet L\dual}\otimes_R\verticalTpoly{r_k}$ 
for $k\in\{1,2,\cdots,n\}$. Since 
\[ \kont_{n+j}:\Lambda^{n+j}\Tpoly{\bullet}(\kf{d})
\to\Dpoly{\bullet}(\kf{d}) \]
is a map of degree $1-(n+j)$ and $\omega\in\sections{L\dual}\otimes\verticalTpoly{0}$, we have 
\[ \kont^f_{n+j}(\underset{j \text{ factors}}{\underbrace{\omega\wedge\cdots\wedge\omega}}
\wedge\gamma_1\wedge\cdots\wedge\gamma_n) \in 
\sections{\Lambda^\bullet L\dual}\otimes_R\verticalDpoly{r_1+\cdots+r_n+1-(n+j)} .\] 
As $j$ increases, $r_1+\cdots+r_n+1-(n+j)$ eventually becomes smaller than $-1$ 
forcing $\kont^f_{n+j}(\omega\wedge\cdots\wedge\omega
\wedge\gamma_1\wedge\cdots\wedge\gamma_n)$ to vanish. 
Therefore, only finitely many of the terms of 
$\Phi_n(\gamma_1\wedge\cdots\wedge\gamma_n)$ are not zero. 
\end{proof}

Although $\omega$ is not a MC element, the maps $(\Phi_n)_{n=1}^\infty$ still define an $L_\infty$ morphism. 

\begin{proposition}\label{thm:renoPhi}
The maps $(\Phi_n)_{n=1}^\infty$ are the Taylor coefficients of an $L_\infty$ morphism of dglas
\[ \Phi : \big( \tot(\sections{\Lambda^\bullet L\dual}\otimes_R\verticalTpoly{\bullet}) , \fedosova , \schouten{\argument}{\argument} \big) 
\to \big( \tot(\sections{\Lambda^\bullet L\dual}\otimes_R \verticalDpoly{\bullet}) , \gerstenhaber{Q+m}{\argument} ,  \gerstenhaber{\argument}{\argument}) .\]
\end{proposition}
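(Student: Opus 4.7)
I would realize $\Phi$ as a tangent $L_\infty$ morphism obtained from a fiberwise globalization of Kontsevich's formality morphism. First, the $GL(\kf{d})$-equivariance of $\kont$ (Theorem~\ref{kontsevich}(\ref{property:equivariance})) guarantees that its Taylor coefficients globalize to $R$-linear maps between $\verticalTpoly{\bullet}$ and $\verticalDpoly{\bullet}$. Extending $\sections{\Lambda^\bullet L\dual}$-multilinearly --- which preserves the $L_\infty$ relations because $\sections{\Lambda^\bullet L\dual}$ interacts trivially with the purely vertical Schouten, Gerstenhaber and Hochschild operations --- yields an $L_\infty$ quasi-isomorphism of dglas
\[ \kont^f : \bigl( \sections{\Lambda^\bullet L\dual} \otimes_R \verticalTpoly{\bullet},\; 0,\; \schouten{\argument}{\argument} \bigr) \to \bigl( \sections{\Lambda^\bullet L\dual} \otimes_R \verticalDpoly{\bullet},\; \hochschild,\; \gerstenhaber{\argument}{\argument} \bigr). \]

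Next, I would observe that the defining formula $\Phi_n(\gamma) = \sum_{j \geqslant 1} \kont^f_{n+j}(\omega^j \wedge \gamma)$ is the Kontsevich tangent $L_\infty$ morphism construction (formula~\eqref{tangent}) applied to $\kont^f$ and $\omega = -\delta + X^\nabla$. I would then verify the $L_\infty$ morphism equations for $\Phi$ with source differential $\fedosova$ and target differential $\fedosovb + \hochschild$ by noting that, on vertical polyvector fields, $\fedosova = \liederivative{d_L^\nabla} + \schouten{\omega}{\argument}$ and $\fedosovb = \liederivative{d_L^\nabla} + \gerstenhaber{\omega}{\argument}$. With this decomposition, the $L_\infty$ morphism equations for $\Phi$ split into contributions from (a) the $L_\infty$ relations for $\kont^f$, (b) the derivation property of $\liederivative{d_L^\nabla}$ with respect to $\schouten{\argument}{\argument}$ and $\gerstenhaber{\argument}{\argument}$, and (c) the identity $\lie{Q}{Q} = 0$ built into the Fedosov construction (Theorem~\ref{strawberry}).

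The main obstacle I anticipate is that $\omega$ is \emph{not} a genuine Maurer--Cartan element in the naive sense: unpacking $\lie{Q}{Q} = 0$ with $Q = d_L^\nabla + \omega$ yields
\[ \liederivative{d_L^\nabla}(\omega) + \tfrac{1}{2}\schouten{\omega}{\omega} = -\tfrac{1}{2}\lie{d_L^\nabla}{d_L^\nabla}, \]
whose right-hand side, corresponding to the curvature of $\nabla$, is nonzero in general. Consequently, the standard twisting of a dgla by a MC element does not apply directly. The key technical point would be to show that the curvature-type terms are absorbed by the interaction of $\liederivative{d_L^\nabla}$ with the Kontsevich graphs via the $R$-linearity of $\kont^f$ in its vertical arguments, so that $\liederivative{d_L^\nabla}$ commutes with $\kont^f$ up to graph-theoretic corrections that cancel against the curvature contribution. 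Once this cancellation is established, the $L_\infty$ morphism equations for $\Phi$ follow by routine but bookkeeping-heavy manipulations of the tangent $L_\infty$ morphism formula, mirroring Dolgushev's globalization of Kontsevich formality \cite{MR2102846} adapted to the Fedosov dg Lie algebroid setting.
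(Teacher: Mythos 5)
Your overall framing --- globalize Kontsevich's fiberwise morphism via $GL(\kf{d})$-equivariance and realize $\Phi$ as a tangent $L_\infty$ morphism at $\omega$ --- is the same as the paper's, and you correctly identify the central difficulty: $\omega=-\delta+X^\nabla$ is not a Maurer--Cartan element. The gap is in your proposed resolution of that difficulty. You assert that the curvature defect $\liederivative{d_L^\nabla}\omega+\tfrac12\schouten{\omega}{\omega}=-\tfrac12\lie{d_L^\nabla}{d_L^\nabla}$ is ``absorbed by graph-theoretic corrections'' to the commutation of $\liederivative{d_L^\nabla}$ with $\kont^f$. This is not a proof, and it is not the mechanism either: the very $GL(\kf{d})$-equivariance you invoke makes the covariant derivative commute with each $\kont^f_n$ \emph{exactly}, so there are no graph-theoretic correction terms available to cancel anything; and since $(\liederivative{d_L^\nabla})^2=\liederivative{R^\nabla}\neq 0$, the operator $\liederivative{d_L^\nabla}$ is not a differential, so the decomposition $\fedosova=\liederivative{d_L^\nabla}+\schouten{\omega}{\argument}$ never places you in a setting where any twisting procedure applies. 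The proof cannot close along these lines as written.

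The missing idea --- which is how the paper argues --- is to localize. In a trivialization $B|_U\cong U\times\KK^d$ one has the \emph{flat} differential $d_L\otimes\id$, and $\varpi=Q-d_L\otimes\id$ \emph{is} an honest Maurer--Cartan element of the local dgla (this is exactly $Q^2=0$, with no curvature remainder), so the tangent morphism $\kont^f_{U,\varpi}$ is a genuine $L_\infty$ morphism intertwining $\liederivative{Q}$ with $\gerstenhaber{Q}{\argument}+\hochschild$. The discrepancy $\omega-\varpi=d_L\otimes\id-d_L^\nabla$ is a \emph{linear} vertical vector field (the Christoffel part of the connection), so Theorem~\ref{kontsevich}~(\ref{property:lin vf}) gives $\kont^f_{n+j}(\omega^j\wedge\gamma)=\kont^f_{n+j}(\varpi^j\wedge\gamma)$ and hence $\Phi_n|_U=(\kont^f_{U,\varpi})_n$. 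Since the $L_\infty$ relations are local and the $\Phi_n$ are defined globally, the relations hold globally. In short: the curvature is not ``cancelled'' --- it is circumvented by flattening locally, and the connection terms drop out because they are linear vector fields. Without this step, or a genuine substitute for it, your argument has a hole at its crucial point.
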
 

We will need the following well known lemma. 
\begin{lemma}\label{bidule}
Let $(C,\bar{d})$ be a cdga, and $\kont:(\frakg, d,[\argument,\argument]) \to (\frakg', d', [\argument,\argument]')$ be an $L_\infty$ morphism of dglas
\begin{enumerate}
\item
Then $(C \otimes \frakg, \bar{d}\otimes \id + \id \otimes d,[\argument,\argument])$ and $(C \otimes \frakg', \bar{d}\otimes \id + \id \otimes d', [\argument,\argument]')$ are dglas
\item
and the $C$-linear extension of $\kont$ 
\[ \widehat{\kont}: (C \otimes \frakg, \bar{d}\otimes \id + \id \otimes d,[\argument,\argument]) \to (C \otimes \frakg', \bar{d}\otimes \id + \id \otimes d', [\argument,\argument]') \] 
is an $L_\infty$ morphism of dglas.
\end{enumerate}
\end{lemma}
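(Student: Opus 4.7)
The plan is to observe that tensoring with a commutative differential graded algebra is a symmetric-monoidal operation on the category of cochain complexes over $\KK$, so all the structures in sight (dgla structure on $\frakg$ and $\frakg'$, and $L_\infty$-morphism between them) are automatically preserved; the entire proof then reduces to verifying that the natural candidate formulas have the right signs. I would organize the verification in three short steps.

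\medskip

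For statement (1), I would define the extended bracket on $C\otimes\frakg$ by
\[
[c_1\otimes x_1,\,c_2\otimes x_2] \;:=\; (-1)^{|x_1|\,|c_2|}\,(c_1 c_2)\otimes[x_1,x_2],
\]
and analogously on $C\otimes\frakg'$, and extend by $\KK$-bilinearity. Graded antisymmetry is immediate from the graded commutativity of $C$ and the antisymmetry of $[\argument,\argument]$ on $\frakg$. The graded Jacobi identity for the extension is a routine Koszul-sign check that combines the Jacobi identity on $\frakg$ with the graded commutativity of $C$. That $D:=\bar d\otimes\id+\id\otimes d$ satisfies $D^2=0$ follows from $\bar d^2=0$, $d^2=0$, together with the cancellation of the two cross-terms provided by the sign rule for tensoring graded maps. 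The Leibniz rule for $D$ with respect to the extended bracket reduces to the Leibniz rule for $\bar d$ as a derivation of the cdga product on $C$ and the Leibniz rule for $d$ as a derivation of the bracket on $\frakg$.

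\medskip

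For statement (2), I would define the Taylor coefficients of $\widehat{\kont}$ by
\[
\widehat{\kont}_n(c_1\otimes x_1,\ldots,c_n\otimes x_n) \;:=\; \varepsilon\cdot(c_1\cdots c_n)\otimes \kont_n(x_1,\ldots,x_n),
\]
where $\varepsilon$ is the Koszul sign obtained by permuting the $c_i$'s past the $x_j$'s into the indicated order; this definition is forced by $C$-linearity. The verification of the $L_\infty$-morphism equations for $\widehat{\kont}$ then reduces, term by term, to the corresponding equations for $\kont$: each summand that appears in the axiom, whether on the source side (involving $D$ and the extended bracket on $\frakg$) or on the target side (involving $D'$ and the extended bracket on $\frakg'$), factors as the \emph{same} common product $(c_1\cdots c_n)\otimes(-)$ in $C$ multiplied by an individual term of the corresponding axiom for $\kont$ between $\frakg$ and $\frakg'$, with matching Koszul signs on both sides. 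Summing over the partition-and-shuffle indexing set, the $L_\infty$-morphism equations for $\widehat{\kont}$ collapse to those for $\kont$.

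\medskip

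The only obstacle is the Koszul-sign bookkeeping; the algebraic content is tautological. The one point worth flagging is the contribution of the new summand $\bar d\otimes\id$ to the differential: because $\widehat{\kont}_n$ is defined by a formula in which the $c_i$'s are merely multiplied together inside $C$, the Leibniz-type redistribution of $\bar d$ across the product $c_1\cdots c_n$ reproduces exactly the contributions of $\bar d\otimes\id$ acting on each individual argument $c_i\otimes x_i$ on the source side, and analogously on the target side. This confirms that no new relations are needed and that the formulas above define a genuine $L_\infty$-morphism of dglas, as claimed.
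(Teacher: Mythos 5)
The paper offers no proof of this lemma at all: it is recorded as ``well known'' and immediately applied in the proof of Proposition~\ref{thm:renoPhi}, so there is no argument of the authors' to compare yours against. Your verification is correct and is the standard one: the sign conventions you choose for the extended bracket and for $\widehat{\kont}_n$ are the forced Koszul ones, graded antisymmetry and Jacobi for $C\otimes\frakg$ follow from graded commutativity of $C$ plus the corresponding identities in $\frakg$, and you correctly isolate the only point with genuine content, namely that the Leibniz redistribution of $\bar d$ across the product $c_1\cdots c_n$ inside $\widehat{\kont}_n$ exactly reproduces the contributions of $\bar d\otimes\id$ applied to the individual arguments, so that every term of the $L_\infty$-morphism axiom factors as $(c_1\cdots c_n)\otimes(\text{a term of the axiom for }\kont)$ with matching signs. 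This is precisely the argument one would supply to fill the gap the paper leaves.
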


\begin{proof}[Proof of Proposition~\ref{thm:renoPhi}]
Choosing a local trivialization $B|_U\cong U\times\KK^d$ of the vector bundle $B$ 
over an open subset $U$ of $M$ yields identifications 
\begin{gather*} 
\big(\sections{\Lambda^\bullet L\dual}\otimes_R\verticalTpoly{\bullet}\big)|_U
\cong \sections{U;\Lambda^\bullet L\dual}\otimes_{\KK}\Tpoly{\bullet}(\kf{d}) \\ 
\big(\sections{\Lambda^\bullet L\dual}\otimes_R\verticalDpoly{\bullet}\big)|_U
\cong \sections{U;\Lambda^\bullet L\dual}\otimes_{\KK}\Dpoly{\bullet}(\kf{d}) 
.\end{gather*}
According to Lemma~\ref{bidule}, 
the restrictions to $U$ of the maps $(\kont^f_n)_{n=1,2,\cdots}$ constructed earlier 
are the Taylor coefficients of an $L_\infty$ morphism of dglas
\[ \begin{tikzcd} 
\big( \tot(\sections{U;\Lambda^\bullet L\dual}\otimes_{\KK}\Tpoly{\bullet}(\kf{d})) , 
d_L\otimes\id_{\Tpoly{\bullet}(\kf{d})} , \schouten{\argument}{\argument} \big) 
\arrow[d, "\kont^f_U"] \\ 
\big( \tot(\sections{U;\Lambda^\bullet L\dual}\otimes_{\KK}\Dpoly{\bullet}(\kf{d})) , 
d_L\otimes\id_{\Dpoly{\bullet}(\kf{d})} +\id\otimes\hochschild , \gerstenhaber{\argument}{\argument} \big)
.\end{tikzcd} \]
In the chosen local trivialization $B|_U\cong U\times\KK^d$ of the vector bundle $B$ 
over the open subset $U$ of $M$, 
we may compare $Q$ with the derivation $d_L\otimes\id_{\KK [[ \chi_1,\cdots,\chi_d ]]}$ of the algebra 
\[ \sections{U;\Lambda^\bullet L\dual\otimes\hat{S}B\dual} 
\cong \sections{U;\Lambda^\bullet L\dual}\otimes_{\KK} \KK [[ \chi_1,\cdots,\chi_d ]] .\]
Since $Q^2=0$, the difference \[ \varpi=Q-d_L\otimes\id\in 
\sections{U;L\dual}\otimes_{\KK} \Tpoly{0}(\kf{d}) \] 
is a MC element of the dgla 
\[\tot\big(\sections{\Lambda^\bullet L\dual}\otimes_R\verticalTpoly{\bullet}\big)|_U
\cong \tot\big(\sections{U;\Lambda^\bullet L\dual}\otimes_{\KK}\Tpoly{\bullet}(\kf{d})\big) \]
endowed with the differential $d_L\otimes\id_{\Tpoly{\bullet}(\kf{d})}$ 
and the $\sections{U;\Lambda L\dual}$-multilinear extension of the Schouten bracket 
on $\Tpoly{\bullet}(\kf{d})$. 

Since 
\begin{align*} 
\kont^f_U(\varpi) &= \sum_{j=1}^{\infty} \frac{1}{j!}\kont^f_j(\varpi^j) 
&& \text{by Equation~\eqref{Taipei}} \\ 
&= \kont^f_1(\varpi) && \text{by Theorem~\ref{kontsevich}~(\ref{property:vf_only})} \\ 
&= \hkr(\varpi) && \text{by Theorem~\ref{kontsevich}~(\ref{property:first_Taylor})} \\ 
&= \varpi ,&& 
\end{align*}
we obtain the tangent $L_\infty$ morphism of $\kont^f_U$ at $\varpi$: 
\begin{multline*} \kont^f_{U,\varpi}: 
\big( \tot(\sections{\Lambda^\bullet L\dual}\otimes_R\verticalTpoly{\bullet})|_U , 
\schouten{Q}{\argument} , \schouten{\argument}{\argument} \big) \\
 \to  
\big( \tot(\sections{\Lambda^\bullet L\dual}\otimes_R\verticalDpoly{\bullet})|_U , 
\gerstenhaber{Q+m}{\argument} , \gerstenhaber{\argument}{\argument} \big) .\end{multline*}
Adapting the argument used for $\Phi$ in the proof of Lemma~\ref{Goma}, one can show that the tangent 
$L_\infty$ morphism $\kont^f_{U,\varpi}$ is well defined. 

Since the map $\Phi_n$ depends only locally on its arguments, 
we may consider its restriction to the open subset $U$ of $M$. 
We claim that the $n$-th Taylor coefficient of the $L_\infty$ morphism 
$\kont^f_{U,\varpi}$ is the restriction of $\Phi_n$ to $U$. 
Indeed, one easily checks that $\omega-\varpi$ is 
(the tensor product of a section of $L\dual$ over $U$ with) 
a \emph{linear} vertical vector field on $\kf{d}$ 
and it then follows from Theorem~\ref{kontsevich}~(\ref{property:lin vf}) that, for all 
$\gamma\in\Lambda^n\big(\tot(\sections{\Lambda^\bullet L\dual}
\otimes_R\verticalTpoly{\bullet})|_U\big)$, 
\[ \Phi_n(\gamma)=\sum_{j=0}^{\infty}\frac{1}{j!}
\kont^f_{n+j}(\omega^j\wedge\gamma) 
=\sum_{j=0}^{\infty}\frac{1}{j!}\kont^f_{n+j}(\varpi^j\wedge\gamma)
=\big(\kont^f_{U,\varpi}\big)_n(\gamma) .\]
This shows that $(\Phi_n)_{n=1,2,\cdots}$ is the sequence of Taylor coefficients of an $L_\infty$ morphism 
\[ \Phi: \big( \tot(\sections{\Lambda^\bullet L\dual}\otimes_R\verticalTpoly{\bullet}) , \fedosova , \schouten{\argument}{\argument} \big) \to 
\big( \tot(\sections{\Lambda^\bullet L\dual}\otimes_R\verticalDpoly{\bullet}) , 
\gerstenhaber{Q+m}{\argument} , \gerstenhaber{\argument}{\argument} \big) \]
defined globally on $M$. 
\end{proof}

Our construction of the $L_\infty$ morphism $\Phi$ is essentially the same as the one given by Dolgushev in \cite{MR2102846} 
except that we define its Taylor coefficients $\Phi_n$ globally from the get-go rather than by glueing local data. 

\subsection{Algebraic homomorphism property}

In this section, we sketch a proof why $\Phi_1$ is a morphism of associative algebras up to homotopy. 
For more details, the reader may want to consult \cite{MR1990011,MR2077241,MR1872382,MR2062626,MR2646112}.

\subsubsection{Kontsevich's eye}

The compactified configuration space $\overline{C_{2,0}^+}$, 
which is customarily called `Kontsevich's eye,' is represented in Figure~\ref{oeil}. 
\begin{figure}[ht] 
\caption{`Kontsevich's eye'}
\label{oeil}
\centering
\includegraphics[width=7cm]{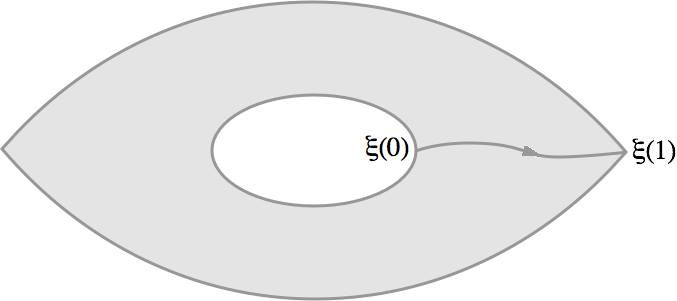} 
\end{figure}
Its boundary admits the following decomposition in strata: 
\[ \partial(\overline{C_{2,0}^+})=C_{2} \sqcup C_{1,1} \sqcup C_{1,1} \sqcup C_{0,2} .\]
The stratum $C_{2}$ --- the pupil of the eye --- is reached when the two aerial vertices 
$z_1$ and $z_2$ merge. 
The first copy of $C_{1,1}$ --- the upper eyelid --- is reached when the aerial vertex $z_1$ approaches the real line. 
The second copy of $C_{1,1}$ --- the lower eyelid --- is reached when the aerial vertex $z_2$ approaches the real line. 
The stratum $C_{0,2}$ is made of two points --- the corners of the eye. 
The left corner is reached when the vertices $z_1$ and $z_2$ each approach a distinct point of the real line simultaneously and $z_1$ is the leftmost of the two points. 
The right corner is reached when the vertices $z_1$ and $z_2$ each approach a distinct point of the real line simultaneously and $z_1$ is the rightmost of the two points. 

\subsubsection{Vanishing lemma}

Given a configuration space $C_{n,m}^+$ with $n\geqslant 2$, consider the projection $\pi: C_{n,m}^+ \to C_{2,0}^+$, 
which forgets all but the first two of the $n$ aerial points in $\HH^+$ and all $m$ points on the real line. 
More precisely, consider its continuous extension $\overline{\pi}: \overline{C_{n,m}^+} \to \overline{C_{2,0}^+}$ to the compactified configuration spaces. 

Now choose a smooth path $\xi: [0,1] \to \overline{C_{2,0}^+}$ starting from a point on the inner boundary of Kontsevich's eye and ending at the right corner. 
The inverse image of $\xi([0,1])$ under $\pi$ in $\overline{C_{n,m}^+}$ is a compact subspace denoted $Z_{n,m}$. 
Kontsevich assigns a weight 
\[ \widetilde{W}_\Gamma = \prod_{k=1}^n \frac{1}{|\outedge{k}|!} \int_{Z_{n,m}} j^*(\kappa_\Gamma) \]
to each admissible graph $\Gamma\in\graphs_{n,m}$. The symbol $j$ denotes the embedding of $Z_{n,m}$ into $\overline{C_{n,m}^+}$. 
Since $\dim Z_{n,m} = 2n+m-3$ and $\kappa_\Gamma$ is an $|E_\Gamma|$-form, the weight $\widetilde{W}_\Gamma$ is zero unless $|E_\Gamma| = 2n+m-3$.

The following vanishing lemma is analogous to Theorem~\ref{kontsevich}~(\ref{property:lin vf}). 

\begin{lemma}\label{lem:lin vf for hpt op}
If $\xi\in\Tpoly{0}(\KK^d)$ is a vector field \emph{linear on $\KK^d$} 
and $\Gamma\in\graphs_{n,m}$ is an admissible graph with $n \geqslant 3$, then 
\[ \widetilde{W}_\Gamma \kont_\Gamma(\eta_1,\cdots,\eta_{n-1},\xi) =0 \]
for all $\eta_1,\cdots,\eta_{n-1}\in\Tpoly{\bullet}(\KK^d)$. 
\end{lemma}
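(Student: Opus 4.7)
The strategy follows Kontsevich's original vanishing argument for property~(\ref{property:lin vf}) of Theorem~\ref{kontsevich}, adapted to the constrained integration domain $Z_{n,m}\subset\overline{C_{n,m}^+}$. Let $v$ denote the aerial vertex of $\Gamma$ corresponding to $\xi$; because the statement places $\xi$ as the last argument of $\kont_\Gamma$, the vertex $v$ carries the label $n$. Since $\xi\in\Tpoly{0}(\KK^d)$, only graphs with $\abs{\outedge{v}}=1$ contribute to $\kont_\Gamma(\eta_1,\ldots,\eta_{n-1},\xi)$. Crucially, the hypothesis $n\geqslant 3$ forces $v\notin\{1,2\}$, so the point $z_v\in\HH^+$ remains a genuinely free two-dimensional parameter in the parametrization of $Z_{n,m}$. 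The remainder of the argument is a case analysis on the in-degree $\abs{\inedge{v}}$.

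If $\abs{\inedge{v}}\geqslant 2$, then $\kont_\Gamma(\eta_1,\ldots,\eta_{n-1},\xi)$ vanishes trivially: the contribution of $v$ to formula~\eqref{Taichung} is the constant differential operator $D_I^v$ applied to the component $\xi^{I(\outedge{v})}$, and a polynomial of degree at most one in the coordinates is killed by two or more partial derivatives. If instead $\abs{\inedge{v}}=0$, then $v$ is adjacent to exactly one edge $e$, and among the $1$-forms $(d\varphi_{e'})_{e'\in E_\Gamma}$ whose wedge product is $\kappa_\Gamma$, only $d\varphi_e$ involves either of the differentials $dx_v$ or $dy_v$ (writing $z_v=x_v+iy_v$). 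Since a single such $1$-form cannot produce the $2$-form $dx_v\wedge dy_v$ needed to span the $z_v$-directions of the $(2n+m-3)$-dimensional space $Z_{n,m}$, the pullback $j^*\kappa_\Gamma$ vanishes and $\widetilde{W}_\Gamma=0$.

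The delicate case is $\abs{\inedge{v}}=\abs{\outedge{v}}=1$. Writing $\xi=\sum_{i,j}a_{ij}x^j\partial_{x^i}$ and letting $e\in\outedge{v}$ and $e'\in\inedge{v}$ be the unique edges at $v$, the contribution of $v$ to~\eqref{Taichung} collapses to the constant $a_{I(e),I(e')}$, while the only factor of $j^*\kappa_\Gamma$ depending on $z_v$ is the $2$-form $d\varphi(z_{s(e')},z_v)\wedge d\varphi(z_v,z_{t(e)})$. The integral
\[ F(z_{s(e')},z_{t(e)}) := \int_{z_v\in\HH^+} d\varphi(z_{s(e')},z_v)\wedge d\varphi(z_v,z_{t(e)}) \]
therefore factors out of $\widetilde{W}_\Gamma$. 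Invoking Stokes' theorem on the compactification of $\HH^+$ reduces the computation to boundary strata where $z_v$ collides with $z_{s(e')}$ or $z_{t(e)}$, approaches the real line, or escapes to infinity; a stratum-by-stratum verification — parallel to the arguments of Kontsevich and their elaborations by Manchon--Torossian and Mochizuki — shows that these contributions either vanish for dimensional reasons akin to the preceding case or cancel pairwise via the symmetries of the angle function $\varphi$. This final step is the main technical obstacle; the other two cases follow from elementary degree and derivation counts.
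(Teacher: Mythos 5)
Your reduction and case analysis coincide with the paper's own proof: only graphs with $\abs{\outedge{n}}=1$ contribute because $\xi$ is a $1$-vector field; the case $\abs{\inedge{n}}\geqslant 2$ vanishes because a constant-coefficient operator of order at least two annihilates the linear components $\xi^{I(\outedge{n})}$; the case $\inedge{n}=\varnothing$ vanishes because the single $1$-form $d\varphi_{e_n^1}$ cannot saturate the two-dimensional freedom of $z_n$ inside $Z_{n,m}$ (the paper phrases this as a foliation of $Z_{n,m}$ by curves along which every $\varphi_e$ is constant, but your count of the $dx_v$, $dy_v$ degrees is the same argument); and in the remaining case $\abs{\inedge{n}}=\abs{\outedge{n}}=1$ the weight factors, by Fubini along the fibers of the forgetful map $F_n$, through the fiber integral $\int_{z\in\HH^+}d\varphi(z_{s(e')},z)\wedge d\varphi(z,z_{t(e)})$. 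Your observation that $n\geqslant 3$ is what keeps $z_n$ unconstrained by the defining condition of $Z_{n,m}$ is exactly the right point.

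The gap is in how you dispose of that fiber integral. It vanishes \emph{identically} --- for every fixed position of the source and target, whether the target is aerial or terrestrial, and even when the two coincide --- and this is precisely the content of Lemma~\ref{lem:vanishing} (Kontsevich's Lemmas 7.3, 7.4 and 7.5), which the paper states immediately before the proof and then simply invokes. You instead propose to apply Stokes' theorem on a compactification and run a ``stratum-by-stratum verification,'' which you yourself flag as the unresolved technical obstacle. As written this does not close the argument: what must be proved is a pointwise identity about a single integral over the $z_n$-fiber, not a statement about boundary strata of $\overline{C_{n,m}^+}$, and the claimed pairwise cancellation ``via the symmetries of the angle function'' is neither carried out nor obviously the right mechanism (Kontsevich's proofs of these identities rest on explicit involution and reflection tricks applied to the fiber itself). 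The efficient repair is to quote Lemma~\ref{lem:vanishing} at this point, after which your Fubini factorization immediately yields $\widetilde{W}_\Gamma=0$; everything else in your proposal is sound and matches the paper.
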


All ingredients of the proof can be found in Kontsevich's original paper~\cite{MR2062626}. 

Recall the hyperbolic angle function $\varphi: \overline{\HH^+} \times \overline{\HH^+} \to S^1$ 
defined by $\varphi(z,w)=\frac{1}{2\pi}\arg\left(\frac{z-w}{\overline{z}-w}\right)$. 
Given a point $z_0$ of $\overline{\HH^+}$, let $d\varphi(z,z_0)$ denote the pullback to $\HH^+$ 
of the standard volume form on $S^1$ through the function $\HH^+\ni z\mapsto \varphi(z,z_0)\in S^1$. 
Likewise let $d\varphi(z_0,z)$ denote the pullback to $\HH^+$ of the standard volume form on $S^1$ 
through the function $\HH^+\ni z\mapsto \varphi(z_0,z)\in S^1$. 

\begin{lemma}[{\cite[Lemmas 7.3, 7.4, and 7.5]{MR2062626}}]\label{lem:vanishing}
\begin{enumerate}
\item
For every pair of distinct points $z_1$ and $z_2$ in $\HH^+$, we have 
\[ \int_{z \in \HH^+ \smallsetminus \{z_1, z_2\}} d\varphi(z_1,z) \wedge d\varphi(z,z_2) =0 .\]
\item
For every pair of points $z_1\in\HH^+$ and $z_2\in\RR=\partial(\HH^+)$, we have 
\[ \int_{z \in \HH^+ \smallsetminus \{z_1\}} d\varphi(z_1,z) \wedge d\varphi(z,z_2) =0 .\]
\item
For every point $z_0$ in $\HH^+$, we have 
\[ \int_{z \in \HH^+ \smallsetminus \{z_0\}} d\varphi(z_0,z) \wedge d\varphi(z,z_0) =0 .\]
\end{enumerate}
\end{lemma}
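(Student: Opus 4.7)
The three identities are concrete analytic vanishing statements about integrals of products of hyperbolic angle forms on the upper half plane. The approach I would take combines Stokes' theorem with careful boundary analysis, together with a symmetry argument for the diagonal case (3).

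\textbf{Approach for (1).} The 2-form $d\varphi(z_1,z) \wedge d\varphi(z,z_2)$ is the wedge of two closed 1-forms and can be rewritten as $d\bigl(\varphi(z_1,z)\, d\varphi(z,z_2)\bigr)$. I would apply Stokes' theorem on the region obtained from $\HH^+$ by removing small disks of radius $\epsilon$ around $z_1$ and $z_2$ and truncating outside a large semicircle of radius $R$, then take the limits $\epsilon \to 0$ and $R \to \infty$. The key boundary evaluations are: on the real line $\RR$ one has $\varphi(z, z_2) = 0$, since $z = \bar z$ forces $(z - z_2)/(\bar z - z_2) = 1$, so $d\varphi(z, z_2)$ is tangentially zero there; near infinity, $\varphi(z_1, z)$ decays to $0$ so the large semicircle gives a vanishing contribution; around $z_1$, a local expansion shows the circular integral is $O(\epsilon)$; finally, the residue-type contribution from the small circle around $z_2$ must be balanced against a branch-cut contribution arising from the multi-valuedness of $\varphi(z_1, \cdot)$.

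\textbf{Approach for (2) and (3).} The argument for (2) is parallel to that for (1), with $z_2$ now on $\RR$ rather than in $\HH^+$: one punctures only around $z_1$, and the boundary term along $\RR$ requires more care since $\varphi(z, z_2)$ is no longer identically zero near $z_2$, so one must extract a principal value. For (3), a symmetry approach is cleanest: consider the involution $\sigma(z) = 2\Re(z_0) - \bar z$ of $\HH^+$, which fixes $z_0$ and reverses orientation. Direct computation from the defining formula $\varphi(z,w) = \frac{1}{2\pi}\arg\!\bigl((z-w)/(\bar z - w)\bigr)$ should show that $\sigma^{*}\bigl(d\varphi(z_0,z) \wedge d\varphi(z,z_0)\bigr)$ differs from $d\varphi(z_0,z) \wedge d\varphi(z,z_0)$ by a single sign, forcing the integral to equal its negative and hence to vanish.

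\textbf{Main obstacle.} The chief difficulty is the branch-cut and orientation bookkeeping. Each angle $\varphi(w,\cdot)$ is single-valued only modulo $\ZZ$, so the Stokes computation requires a chosen branch whose cut contributes an additional boundary term. In (1), the nontrivial monodromy of $\varphi(z_1, \cdot)$ around $z_1$ means that the residue from the small circle around $z_2$ should cancel exactly against the branch-cut contribution, a cancellation that is slick to state but delicate to verify with correct signs and orientations. For (3), one must verify that the proposed involution genuinely flips the integrand without residual contributions from the behaviour at $z_0$ or from the non-compactness of the domain.
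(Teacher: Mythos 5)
The paper offers no proof of this lemma: it is quoted with a citation to Kontsevich's Lemmas 7.3--7.5, so there is no in-house argument to compare against, and your proposal must stand on its own. Its overall architecture (Stokes' theorem with excised disks for (1)--(2), a reflection for (3)) is the standard one, and most of your boundary analysis is right: $\varphi(\cdot,z_2)$ does vanish identically on $\RR$ when $z_2\in\HH^+$, the circle around $z_1$ contributes $O(\epsilon)$, and the large semicircle contributes $O(1/R)$ --- though for the last point you need the decay $d\varphi(\cdot,z_2)=O(1/R)$ in addition to $\varphi(z_1,\cdot)=O(1/R)$, since the length of the arc grows like $R$.

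Two points need repair. First, in (3) the mechanism you state is not the one that works. Under $\sigma(z)=2\Re(z_0)-\bar z$ one computes $\varphi\bigl(z_0,\sigma(z)\bigr)=-\varphi(z_0,z)$ and $\varphi\bigl(\sigma(z),z_0\bigr)=-\varphi(z,z_0)$ modulo $\ZZ$, so the $2$-form picks up $(-1)^2=+1$: it is \emph{invariant}, not ``off by a single sign.'' The vanishing comes from invariance of the form combined with the orientation reversal of $\sigma$, via $\int\omega=-\int\sigma^{*}\omega=-\int\omega$; had the form been anti-invariant under an orientation-reversing map, as you assert, the argument would yield no information. Second, for (1) the Stokes computation as you leave it does not close: the small circle around $z_2$ contributes $\mp\varphi(z_1,z_2)$ and the branch cut of $\varphi(z_1,\cdot)$ contributes $\pm\varphi(z_1,z_2)$, but each of these is a \emph{branch value}, well defined only up to an integer, so a priori the computation shows only that the integral lies in $\ZZ$. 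The deferred ``delicate cancellation'' is the entire content of the lemma. You must either match the two branches explicitly, or add a closing step such as: the integral is a continuous, integer-valued function of $(z_1,z_2)$ on a connected parameter space, and it tends to $0$ as $\Im z_1\to 0$ because $\varphi(z_1,\cdot)\equiv 0$ when $z_1\in\RR$; hence it vanishes identically. The same remark applies to (2); note also that for $z_2\in\RR$ the function $\varphi(\cdot,z_2)=\tfrac{1}{\pi}\arg(\cdot-z_2)$ is single-valued on $\HH^+$ and locally constant (equal to $0$ or $1$) on $\RR\smallsetminus\{z_2\}$, so no principal value is required --- one simply excises a half-disk around $z_2$, and the jump of $\varphi(\cdot,z_2)|_{\RR}$ across $z_2$ supplies the compensating boundary term.
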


\begin{proof}[Proof of Lemma~\ref{lem:lin vf for hpt op}]
Since $\xi\in\Tpoly{0}(\KK^d)$, we have $\xi^{I(\outedge{n})}=0$ for all 
maps $I:E_\Gamma\to\{1,\cdots,d\}$ unless $\abs{\outedge{n}}=1$.
Moreover, if $\abs{\inedge{n}}>1$, the order of the differential operator $D^n_I$ is at least two, 
no matter which map $I:E_\Gamma\to\{1,\cdots,d\}$ is considered.
Therefore  the function $D^n_I \xi^{I(\outedge{n})}$ vanishes
since $\xi$ is linear. 
Hence $\kont_\Gamma(\eta_1,\cdots,\eta_{n-1},\xi)=0$ unless 
$\abs{\outedge{n}}=1$ and $\abs{\inedge{n}}\leqslant 1$.
We may thus assume without loss of generality 
that $\outedge{n}=\{e_n^1\}$ and $\abs{\inedge{n}}\in\{0,1\}$. 

Let's assume for now that $\outedge{n}=\{e_n^1\}$ and $\inedge{n}=\{e'\}$ 
--- we will treat the other case later. 
Consider the graph $\Delta\in\graphs_{n-1,m}$ obtained 
from $\Gamma\in\graphs_{n,m}$ by removing the $n$-th aerial vertex $n$ and all the edges 
starting or ending at it. We have 
\[ \kappa_\Gamma=\pm d\varphi_{e'}\wedge d\varphi_{e_n^1}\wedge F_n^*(\kappa_{\Delta}) ,\] 
where $F_n:C_{n,m}^+\onto C_{n-1,m}^+$
is the projection which forgets the $n$-th aerial point $z_n$ 
of a configuration $(z_1,\cdots,z_n;q_1,\cdots,q_m)$. 
Making use of Fubini's theorem, we obtain 
\[ \widetilde{W}_\Gamma 
=\int_{Z_{n,m}}j^*(\kappa_\Gamma)
=\pm\int_{Z_{n,m}}j^*\big(d\varphi_{e'}\wedge d\varphi_{e_n^1}\wedge F_n^*(\kappa_{\Delta})\big)
=\pm\int_{F_n(Z_{n,m})}f\cdot j^*(\kappa_{\Delta}) ,\] 
where $f$ denotes the function on $C_{n-1,m}^+$ obtained by integration 
of the 2-form $d\varphi_{e'}\wedge d\varphi_{e_n^1}$ along the fibers of 
$F_n:C_{n,m}^+\onto C_{n-1,m}^+$. Since 
\[ f(z_1,\cdots,z_{n-1};z_{\bar{1}},\cdots,z_{\bar{m}}) 
=\int_{z_n\in\HH^+\smallsetminus\{z_1,\cdots,z_{n-1}\}} 
d\varphi(z_{s(e')},z_n)\wedge d\varphi(z_n,z_{t(e_n^1)}) ,\] 
it follows from Lemma~\ref{lem:vanishing} that $\widetilde{W}_\Gamma=0$.

Finally, we turn our attention to the situation where $\outedge{n}=\{e_n^1\}$ and 
$\inedge{n}=\varnothing$. We start with making two observations. 
\begin{enumerate}
\item For all $e\in E_\Gamma$ with $e\neq e_n^1$, the aerial vertex $n$ is neither the source 
nor the target of the edge $e$ and, consequently, the function $\varphi_e:C_{n,m}^+\to S^1$ 
is constant along the fibers of the projection $F_n:C_{n,m}^+\onto C_{n-1,m}^+$
which forgets the $n$-th aerial point $z_n$ of a configuration $(z_1,\cdots,z_n;q_1,\cdots,q_m)$. 
\item Each fiber of the projection $F_n:C_{n,m}^+\onto C_{n-1,m}^+$ is diffeomorphic to $\HH^+$ 
punctured at $n-1$ points and is foliated by its intersections with the level sets of the function 
$\varphi_{e_n^1}:C_{n,m}^+\to S^1$. In other words, $C_{n,m}^+$ is foliated by the fibers of 
$F_n$, which are themselves foliated by curves along which the function $\varphi_{e_n^1}$ 
is constant. Obviously, the subspace $Z_{n,m}$ of $C_{n,m}^+$ is a union of such curves. 
\end{enumerate}
It follows from these observations that $Z_{n,m}$ is foliated by curves along which 
all functions $\varphi_e$ for all edges $e\in E_\Gamma$ are constant. 
Therefore, the component of the form $j^*(\bigwedge_{e\in E_\Gamma}d\varphi_e)$ 
of degree $\dim(Z_{n,m})=2n+m-3$ vanishes. 
Hence $\widetilde{W}_\Gamma=\int_{Z_{n,m}}j^*(\kappa_\Gamma)=0$. 
\end{proof}

\subsubsection{Homotopy operator}

For every admissible graph $\Gamma \in \graphs_{n,m}$, the operator 
\[ \kont_\Gamma : \Tpoly{\bullet}(\kf{d})^{\otimes n} \to \Dpoly{m-1}(\kf{d}) \]
defined by Equation~\eqref{Taichung} is $GL_d(\KK)$-equivariant. 
Therefore there exists an $R$-linear map 
\[ \kont^f_\Gamma:\big(\verticalTpoly{\bullet}\big)^{\otimes n}\to\verticalDpoly{m-1} \] 
whose restrictions to each fiber of $B\to M$ coincide with $\kont_\Gamma$. 
Extending the latter $\sections{\Lambda^{\bullet}L\dual}$-multilinearly, 
we obtain an $R$-linear operator 
\[ \kont^f_\Gamma: (\Gamma(\Lambda^\bullet L\dual) \otimes_R \verticalTpoly{\bullet})^{\otimes n} 
\to \Gamma(\Lambda^\bullet L\dual) \otimes_R \verticalDpoly{m-1} .\]

Using the maps $\kont^f_\Gamma$, the weights $\widetilde{W}_\Gamma$, and 
the difference \[ \omega=Q-d_L^\nabla\in\sections{L\dual}\otimes_R\verticalTpoly{0} \] 
of the derivations $Q$ and $d_L^\nabla$ appearing in Theorem~\ref{strawberry}, 
we define an operator 
\[ H: \big(\Gamma(\Lambda^\bullet L\dual) \otimes_R \verticalTpoly{\bullet}\big) 
\times \big(\Gamma(\Lambda^\bullet L\dual) \otimes_R \verticalTpoly{\bullet}\big) 
\to \Gamma(\Lambda^\bullet L\dual) \otimes_R \verticalDpoly{\bullet} \]
by
\begin{equation}\label{Agbogbloshie} 
H(\alpha,\beta)=\sum_{\substack{n\geqslant 0 \\ m\geqslant 0}} \frac{1}{n!} \sum_{\Gamma\in\graphs_{2+n,m}} 
(-1)^{m+1} \widetilde{W}_\Gamma \kont^f_\Gamma(\alpha,\beta,
\underset{n \text{ arguments}}{\underbrace{\omega,\cdots,\omega}})
.\end{equation}

Note that since $\omega$ has ``exterior degree'' $1$, the polydifferential operator
$\kont^f_\Gamma(\alpha,\beta,\overset{n \text{ arguments}}{\overbrace{\omega,\cdots,\omega}})$ 
vanishes when $n>\rk(L)$. 
Furthermore, the coefficient $\widetilde{W}_\Gamma$ vanishes for $m$ large enough. 
Therefore, the summation \eqref{Agbogbloshie} involves only finitely many nonzero terms 
and the operator $H$ is indeed well-defined  

\begin{proposition}
For all $\alpha\in(\Gamma(\Lambda^\bullet L)\otimes\verticalTpoly{a})$ 
and $\beta\in(\Gamma(\Lambda^\bullet L)\otimes\verticalTpoly{b})$, we have 
\[ \Phi_1(\alpha\cup\beta) - \Phi_1(\alpha)\cup\Phi_1(\beta)
= \gerstenhaber{Q+m}{H(\alpha,\beta)} - H\big(\schouten{\fedosov}{\alpha},\beta\big) 
- (-1)^{a} H\big(\alpha,\schouten{\fedosov}{\beta}\big) .\]
\end{proposition}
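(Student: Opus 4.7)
The plan is to establish this identity via the standard Stokes-theorem argument on compactified configuration spaces pioneered by Kontsevich and refined by Manchon--Torossian, Mochizuki, and Calaque--Van den Bergh \cite{MR2062626,MR1990011,MR2077241,MR1872382,MR2646112}, adapted to the present twisted fiberwise context. The homotopy operator $H$ defined by Equation~\eqref{Agbogbloshie} is tailored precisely so that the identity drops out of this argument.

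The first step is to observe that each form $\kappa_\Gamma = \bigwedge_{e \in E_\Gamma} d\varphi_e$ on $\overline{C_{n+2,m}^+}$ is closed, since each $d\varphi_e$ is exact. Applying Stokes' theorem to the codimension-one chain $Z_{n+2,m} = \overline{\pi}\inv(\xi([0,1]))$ yields
\[ 0 = \int_{Z_{n+2,m}} d(j^*\kappa_\Gamma) = \int_{\partial Z_{n+2,m}} j^*\kappa_\Gamma \]
for every $\Gamma \in \graphs_{n+2,m}$. One then decomposes $\partial Z_{n+2,m}$ into three classes of strata: (i)~the fiber over the initial point of $\xi$ on the inner boundary of Kontsevich's eye, where $z_1$ and $z_2$ merge; (ii)~the fiber over the terminal point of $\xi$ at the right corner of the eye, where $z_1$ and $z_2$ simultaneously reach distinct points of $\RR$ in a fixed order; and (iii)~the intersections of $Z_{n+2,m}$ with the remaining codimension-one faces of $\overline{C_{n+2,m}^+}$, corresponding to collisions among the $n$ auxiliary aerial vertices (labelled by $\omega$), their collisions with the $\alpha$- or $\beta$-vertex, or aerial vertices descending onto the real line together with a contiguous subset of terrestrial vertices.

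Next, I would evaluate each class of boundary contribution, multiply by $\kont^f_\Gamma(\alpha, \beta, \omega, \dots, \omega)$ with combinatorial weight $(-1)^{m+1}/n!$, and sum over $n$, $m$, and $\Gamma$. Stratum (i) assembles into $\Phi_1(\alpha\cup\beta)$: collapsing the two input vertices into one labelled by $\alpha\wedge\beta$ transforms the graph sum into $\Phi_1$ applied to $\alpha\cup\beta$. Stratum (ii) factorizes as a product of two independent Kontsevich integrals, producing $\Phi_1(\alpha) \cup \Phi_1(\beta)$ after sign accounting. The interior strata (iii), once collected and combined with the $d_L^\nabla$-pieces present in $\fedosov$, assemble into the remaining three terms $(\fedosov + \hochschild) H(\alpha,\beta) - H(\fedosov\alpha,\beta) + (-1)^a H(\alpha, \fedosov\beta)$. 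The $\hochschild$-contribution arises from coincidences of terrestrial vertices on $\RR$; the $\fedosov$-contributions arise from collisions of $\omega$-vertices with the $\alpha$-, $\beta$-, or $H$-output, once one splits $\omega = -\delta + X^\nabla$ and reassembles the nonlinear piece $X^\nabla$ with $d_L^\nabla$ to recover $\fedosov$ on each side.

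The main obstacle is the sign and combinatorial bookkeeping needed to match each boundary stratum to the correct term, made subtle by the $\ZZ$-graded setting (where the sign corrections of Cattaneo--Felder \cite{MR2304327} are essential) and, crucially, by the fact that $\omega$ is \emph{not} a Maurer--Cartan element of any dgla structure. The indispensable tool for handling this non-MC feature is the vanishing Lemma~\ref{lem:lin vf for hpt op}: since the difference $\omega - \varpi$ is a linear vertical vector field in any local trivialization, strata in which $\omega - \varpi$ enters a multi-vertex configuration drop out, so that $\omega$ can effectively be replaced by the local Maurer--Cartan element $\varpi$ inside each boundary stratum. This reduction brings the problem back to a local computation on each fiber, which coincides with the standard proof that $\hkr = \kont_1$ is an algebra homomorphism up to homotopy on $\kf{d}$.
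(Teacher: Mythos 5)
Your proposal is correct and follows essentially the same route as the paper's own (sketched) proof: both reduce the failure of $\omega$ to be a Maurer--Cartan element to the local decomposition $\omega=\varpi+(\text{linear vector field})$ together with the vanishing Lemma~\ref{lem:lin vf for hpt op}, and then invoke the Stokes-theorem computation on $\partial Z_{n+2,m}$ due to Manchon--Torossian and Mochizuki, with the pupil and corner strata of Kontsevich's eye producing $\Phi_1(\alpha\cup\beta)$ and $\Phi_1(\alpha)\cup\Phi_1(\beta)$ and the remaining strata producing the homotopy terms. The only difference is that you spell out the boundary stratification that the paper delegates to the citations.
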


\begin{proof}[Sketch of proof]
Recall from the proof of Theorem~\ref{thm:renoPhi} that, although $\omega$ is not a Maurer--Cartan element, 
its restriction to any open subset of the manifold $M$ over which the vector bundle $B$ is trivial 
is equal to the sum of a Maurer--Cartan element and a linear vector field. 
It follows from Lemma~\ref{lem:lin vf for hpt op}, the definitions and locality of $\Phi_1$ and $H$ that, 
for the purpose of this proof, $\omega$ may be treated as if it were a Maurer--Cartan element. 
The rest of the proof is then virtually identical to a difficult computation due to Manchon and Torossian 
\cite[Théorème~4.6]{MR1990011,MR2077241} --- see also Mochizuki's work \cite[Equation 56]{MR1872382}. 
There is only one significant difference with \cite[Théorème~4.6]{MR1990011,MR2077241}: 
their Poisson bivector $\hbar\gamma$ must be replaced by our vector field $\omega$. 
This is responsible for the discrepancy in the number of edges of the admissible graphs appearing here and in \cite[Théorème~4.6]{MR1990011,MR2077241}. 
\end{proof}

\begin{remark}
The operator $H$ defined above, which is implicit in \cite[Théorème~4.6]{MR1990011,MR2077241} 
and \cite[Equation 56]{MR1872382}, was made explicit in \cite[Proposition~9.1]{MR2646112}.
\end{remark}

\subsection{Explicit formula for \texorpdfstring{$\Phi_1$}{Phi1}}

Consider the first `Taylor coefficient' 
\[ \tot\big(\sections{\Lambda^\bullet L\dual}\otimes_R\verticalTpoly{\bullet}\big) 
\xto{\Phi_1} \tot\big(\sections{\Lambda^\bullet L\dual}\otimes_R\verticalDpoly{\bullet}\big) \] 
of the $L_\infty$ morphism $\Phi$ constructed in Section~\ref{section: fiberwise formality}. 

In this section, we prove the following

\begin{proposition}
\label{thm:Florence}
The map $\Phi_1: \tot(\sections{\Lambda^\bullet L^\vee}\otimes_R \verticalTpoly{\bullet}) \to \tot(\sections{\Lambda^\bullet L^\vee}\otimes_R \verticalDpoly{\bullet})$ 
is the modification 
of the Hochschild--Kostant--Rosenberg map by (the square root of) the canonical Todd cocycle: 
\[ \Phi_1 = \hkr \circ (\ttodd_\cF^{\can})^{\frac{1}{2}} .\]
\end{proposition}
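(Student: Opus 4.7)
The plan is to apply the Kontsevich--Shoikhet theorem (Theorem~\ref{thm:Shoikhet}) fiberwise along the Fedosov dg manifold, and then match the resulting Todd cocycle with $\ttodd^{\can}_\cF$ via Lemma~\ref{lem:local Atiyah}.

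First, I would reduce the problem to a Maurer--Cartan computation. Fix a local trivialization $B|_U \cong U \times \KK^d$. As in the proof of Proposition~\ref{thm:renoPhi}, $\varpi := Q - d_L \otimes \id$ is a Maurer--Cartan element of the fiberwise dgla, while $\omega = Q - d_L^\nabla$ differs from $\varpi$ by a vertical vector field that is \emph{linear} along the formal fibers $\kf{d}$ (encoding the connection $1$-form $d_L^\nabla - d_L \otimes \id$). By Theorem~\ref{kontsevich}~(\ref{property:lin vf}), such a linear argument is annihilated by every Kontsevich coefficient $\kont^f_n$ with $n \geqslant 2$, so
\[ \Phi_1(\gamma) = \sum_{j \geqslant 0} \tfrac{1}{j!}\, \kont^f_{1+j}(\omega^j \wedge \gamma) = \sum_{j \geqslant 0} \tfrac{1}{j!}\, \kont^f_{1+j}(\varpi^j \wedge \gamma) = (\kont^f_{U,\varpi})_1(\gamma). \]
That is, $\Phi_1|_U$ coincides with the first Taylor coefficient of the tangent $L_\infty$ morphism of $\kont^f$ at the Maurer--Cartan element $\varpi$.

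Second, I would invoke the Kontsevich--Shoikhet theorem (Theorem~\ref{thm:Shoikhet}), applied to the formal dg manifold $(\kf{d},\varpi)$ and extended $\sections{U;\Lambda^\bullet L\dual}$-linearly. This theorem evaluates precisely such a tangent Taylor coefficient and yields
\[ \Phi_1|_U = \hkr \circ \bigl(\ttodd^{\trivial}_{(\kf{d},\varpi)}\bigr)^{\frac{1}{2}}, \]
where $\ttodd^{\trivial}_{(\kf{d},\varpi)}$ is the twisted Todd cocycle of $(\kf{d},\varpi)$ computed with the trivial connection of Example~\ref{ex:trivial cntn}.

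Third, I would identify $\ttodd^{\trivial}_{(\kf{d},\varpi)}$ with $\ttodd^{\can}_\cF|_U$. Under the identification of the vertical tangent bundle of $\cM \to L[1]$ with $\cF$, the canonical connection $\nabla^{\can}$ on $\cF$ (characterized by $\nabla^{\can}_{\hhb{i}} \hhb{j} = 0$) coincides with the trivial connection of Example~\ref{ex:trivial cntn}. By that example, the resulting Atiyah cocycle is locally $\sum_k \tfrac{\partial^2 Q_k}{\partial \chi_i \partial \chi_j}\, \hhb{k}$; the computation in the proof of Lemma~\ref{lem:local Atiyah} (where $[\delta,\hhb{j}] = 0$ and $[d_L^\nabla, \hhb{j}]$ is $\chi$-independent, hence killed by $\nabla^{\can}_{\hhb{i}}$) shows that only $X^\nabla$ contributes, yielding $\At^{\can}_\cF(\hhb{i},\hhb{j}) = \sum_k \hhb{i}(\hhb{j} f_k)\, \hhb{k}$. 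The two Atiyah cocycles thus coincide, so the equality $\Phi_1 = \hkr \circ (\ttodd^{\can}_\cF)^{\frac{1}{2}}$ holds locally; since both sides are globally defined and the identification is canonical, the equality extends to all of $M$.

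The principal obstacle will be the identification performed in the third step, which rests on recognizing that the canonical connection on $\cF$ and the trivial connection on the vertical tangent bundle of $\cM$ coincide in local trivializations; Lemma~\ref{lem:local Atiyah} provides exactly the local formula needed to bridge the two viewpoints. A secondary, largely bookkeeping, difficulty is the careful reconciliation of sign and normalization conventions between the Kontsevich--Shoikhet formulation of Theorem~\ref{thm:Shoikhet} and the definition of $\ttodd^{\can}_\cF$ given by Equation~\eqref{twisted Todd}.
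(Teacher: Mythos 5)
Your overall strategy (localize, reduce $\Phi_1$ to the first Taylor coefficient of the tangent morphism at $\varpi$, invoke Kontsevich--Shoikhet, then match Todd cocycles) is the same as the paper's, and your first step is correct and identical to the argument in Section~\ref{section: fiberwise formality}. However, there is a genuine gap in your second step. You propose to apply Theorem~\ref{thm:Shoikhet} to ``the formal dg manifold $(\kf{d},\varpi)$, extended $\sections{U;\Lambda^\bullet L\dual}$-linearly.'' But $(\kf{d},\varpi)$ is not a dg manifold: $\varpi$ lives in $\sections{U;L\dual}\otimes\Tpoly{0}(\kf{d})$ and satisfies only the \emph{relative} Maurer--Cartan equation $\liederivative{d_L}\varpi+\tfrac{1}{2}\schouten{\varpi}{\varpi}=0$, not $\schouten{\varpi}{\varpi}=0$; it is not a homological vector field on the formal fiber, and Theorem~\ref{thm:Shoikhet} as stated requires an honest dg manifold $(\cV,Q)$ with $\lie{Q}{Q}=0$. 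The paper circumvents this by applying Kontsevich--Shoikhet to the genuine dg manifold $\cV=\RR^m\times\kkf{r}{l}$ with the full homological vector field $Q=d_L+\varpi$, which forces two additional, nontrivial steps that are absent from your proposal: (a) Lemma~\ref{lem:Tokyo}, a graph-weight vanishing argument showing that $\cI\circ\big(\kont^f_{U,\varpi}\big)_1=\big(\kont^{\cV}_Q\big)_1\circ\cI$ --- i.e.\ that Kontsevich graphs with an edge landing on a vertex decorated by $d_L$ contribute nothing, because $d_L$ is independent of the fiber coordinates $\chi$; and (b) the identification of the \emph{full} Todd cocycle $\ttodd^{\trivial}_{T_\cV}$ (a Berezinian over all of $T_\cV$, including the $x$- and $\lambda$-directions) with $\ttodd^{\can}_{\cF|_U}$, which rests on the block-triangular form~\eqref{archer} of $(\cI\transpose\otimes\id)(\At^{\trivial}_{T_\cV})$ forcing the supertrace of all powers to reduce to the trace over the $\chi$-block alone.

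Your third step correctly identifies the vertical block of the two Atiyah cocycles (this is indeed Lemma~\ref{lem:local Atiyah} combined with Example~\ref{ex:trivial cntn}), but because the Kontsevich--Shoikhet output is the Todd cocycle of the \emph{whole} tangent dg Lie algebroid $T_\cV$ computed with the Berezinian, matching vertical blocks is not by itself enough: you must also show the remaining blocks do not contribute to $\supertrace\big((\At^{\trivial}_{T_\cV})^s\big)$ after restriction along $\cI\transpose$. If you repair step 2 by passing to $(\cV,Q)$ and proving the analogue of Lemma~\ref{lem:Tokyo}, and then upgrade step 3 to the full supertrace computation, your argument becomes the paper's proof.
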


Suppose that an open subset $U$ of $M$ diffeomorphic to $\RR^m$ is the domain of a coordinate chart of $M$ over which the vector bundles $L$ and $B$ are trivial. 
The algebra of functions of the graded manifold $\cV=\RR^m\oplus\kkf{r}{l}$ obtained by restriction of the Fedosov dg manifold $\cM$ to the support $U$ is 
\[ C^\infty(\cV)=C^\infty(\RR^m)\otimes\hat{S}\big((\KK^r\oplus\KK^l[1])\dual\big) .\] 
Here $m$ is the dimension of the manifold $M$ while $r$ is the rank of the vector bundle $B$ and $l$ is the rank of the vector bundle $L$. 

There are natural injections
\[ \begin{tikzcd}[column sep=small] 
& C^\infty(\RR^m\times\kkf{0}{l})\otimes\Tpoly{0}\kkf{r}{0} \arrow[d, , hook] \\ 
\XX(\RR^m\times\kkf{0}{l}) \arrow[r, hook] & \XX(\cV) 
.\end{tikzcd} \]
The restriction of the Fedosov homological vector field $Q\in\XX(\cM)$ to $U$ is the sum 
\[ Q=d_L + \varpi \] 
of $d_L\in \XX(\RR^m\times\kkf{0}{l})$ and $\varpi\in C^\infty(\RR^m\times\kkf{0}{l})\otimes\Tpoly{0}\kkf{r}{0}$
as observed in Section~\ref{section: fiberwise formality}. 

\begin{lemma}\label{lem:Osaka}
The sum $Q=Q_1 + Q_2$ 
of two vector fields $Q_1\in\XX(\RR^m\times\kkf{0}{l})\subset\XX(\cV)$ 
and $Q_2\in C^\infty(\RR^m\times\kkf{0}{l})\otimes\Tpoly{0}\kkf{r}{0}\subset\XX(\cV)$
is a homological vector field on $\cV=\RR^m\otimes\kkf{r}{l}$ if and only if
\textbf{(1)} $Q_1$ is a homological vector field on $\RR^m\times\kkf{0}{l}$ 
(i.e.\ $Q_1$ is of degree $+1$ and $Q^2_1=0$)
and \textbf{(2)} $Q_2$ satisfies the Maurer--Cartan equation 
$\liederivative{Q_1} Q_2+\half [Q_2, Q_2]=0$. \newline
Moreover, in this case, $(C^\infty(\RR^m\times\kkf{0}{l}),Q_1)$ is a cdga 
and $Q_2$ is a MC element in 
$C^\infty(\RR^m\times\kkf{0}{l})\otimes\Tpoly{\bullet}(\kf{r})$ 
endowed with its $L_\infty$ algebra structure determined by $Q_1$ and $\Tpoly\bullet(\kf{r})$.
\end{lemma}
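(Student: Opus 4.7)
The plan is to expand $[Q,Q]$ by bilinearity of the graded commutator and to exploit a direct-sum decomposition of $\XX(\cV)$ into horizontal and vertical parts that separates the resulting three terms into disjoint subspaces. Since $Q$, $Q_1$, and $Q_2$ are to have degree $+1$, and since $\Tpoly{0}(\kkf{r}{0})$ sits in degree $0$, so that $Q_2$ inherits its degree from its scalar coefficient in $C^\infty(\RR^m\times\kkf{0}{l})$, graded skew-symmetry gives
\[
\tfrac{1}{2}[Q,Q] \;=\; \tfrac{1}{2}[Q_1,Q_1] \;+\; [Q_1,Q_2] \;+\; \tfrac{1}{2}[Q_2,Q_2].
\]
The degree condition on $Q$ is clearly equivalent to the corresponding conditions on $Q_1$ and $Q_2$, so it remains to analyse the vanishing of $[Q,Q]$.

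Write $\XX(\cV)=\XX^H\oplus\XX^V$, where $\XX^H$ consists of vector fields killing every vertical coordinate $\chi_\alpha$ and $\XX^V=\bigoplus_\alpha C^\infty(\cV)\cdot\partial/\partial\chi_\alpha$. In local coordinates, $Q_1=\sum_k A_k\,\partial_{z_k}$ with $A_k\in C^\infty(\RR^m\times\kkf{0}{l})$ (where $z_k$ ranges over the horizontal coordinates), while $Q_2=\sum_\alpha F_\alpha\,\partial/\partial\chi_\alpha$ with $F_\alpha\in C^\infty(\RR^m\times\kkf{0}{l})\otimes\hat{S}((\KK^r)\dual)$. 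A short direct computation then yields: (i) $[Q_1,Q_1]\in\XX^H$ with coefficients still in $C^\infty(\RR^m\times\kkf{0}{l})$; (ii) $[Q_1,Q_2]\in\XX^V$ and, because $\partial_{\chi_\alpha}(A_k)=0$, it equals $\sum_{k,\alpha}A_k\,\partial_{z_k}(F_\alpha)\,\partial/\partial\chi_\alpha$, which is exactly $\liederivative{Q_1}Q_2$ sitting inside $C^\infty(\RR^m\times\kkf{0}{l})\otimes\Tpoly{0}(\kkf{r}{0})$; (iii) $[Q_2,Q_2]\in\XX^V$ lies in the same subspace, since the Schouten bracket of two fiberwise vector fields is fiberwise and the horizontal coefficients simply multiply. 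Therefore the horizontal and vertical components of $[Q,Q]$ vanish independently, giving $[Q_1,Q_1]=0$ and $2\liederivative{Q_1}Q_2+[Q_2,Q_2]=0$, which is the claimed equivalence.

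For the concluding ``moreover,'' $(C^\infty(\RR^m\times\kkf{0}{l}),Q_1)$ is a cdga exactly because these functions form a graded commutative algebra and $Q_1^2=\tfrac{1}{2}[Q_1,Q_1]=0$. The tensor product $C^\infty(\RR^m\times\kkf{0}{l})\otimes\Tpoly{\bullet}(\kkf{r}{0})$ then inherits a standard dgla structure (differential $\liederivative{Q_1}\otimes\id$; Lie bracket the $C^\infty(\RR^m\times\kkf{0}{l})$-multilinear extension of the Schouten bracket with the usual Koszul signs), in which the Maurer--Cartan equation for a degree-$+1$ element reduces precisely to $\liederivative{Q_1}Q_2+\tfrac{1}{2}[Q_2,Q_2]=0$. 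The only genuinely delicate step is (ii): the identification $[Q_1,Q_2]=\liederivative{Q_1}Q_2$ depends crucially on the separation-of-variables built into the hypothesis that $Q_2$'s coefficients lie in $C^\infty(\RR^m\times\kkf{0}{l})$ rather than in the full $C^\infty(\cV)$, so that the vertical derivatives appearing in $Q_2$ annihilate the coefficients of $Q_1$; otherwise an additional mixed term of the form $F_\alpha\partial_{\chi_\alpha}(A_k)\partial_{z_k}$ would survive and spoil the clean splitting of $[Q,Q]$ into horizontal and vertical pieces.
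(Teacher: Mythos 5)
Your argument is correct. The paper states Lemma~\ref{lem:Osaka} without proof, and what you give is precisely the routine verification it leaves to the reader: expand $[Q,Q]$ by graded bilinearity, observe that $[Q_1,Q_1]$ lands in the purely horizontal summand while $[Q_1,Q_2]$ and $[Q_2,Q_2]$ land in the purely vertical summand $C^\infty(\RR^m\times\kkf{0}{l})\otimes\Tpoly{0}\kkf{r}{0}$, and identify the vertical component with the Maurer--Cartan expression for the dgla obtained by tensoring the cdga $\big(C^\infty(\RR^m\times\kkf{0}{l}),Q_1\big)$ with the (zero-differential) dgla $\Tpoly{\bullet}(\kf{r})$. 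One slip in your closing remark: the vanishing of the mixed term $F_\alpha\partial_{\chi_\alpha}(A_k)\partial_{z_k}$ is forced by the hypothesis on $Q_1$ --- its coefficients $A_k$ lie in $C^\infty(\RR^m\times\kkf{0}{l})$ and hence are annihilated by $\partial_{\chi_\alpha}$ --- not by any hypothesis on $Q_2$, whose coefficients genuinely live in all of $C^\infty(\cV)$; your computation in step (ii) already invokes the correct fact $\partial_{\chi_\alpha}(A_k)=0$, so only the attribution in the last sentence needs fixing.
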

According to Va{\u\i}ntrob~\cite{MR1480150}, Condition (1) in Lemma~\ref{lem:Osaka} above 
means that the trivial vector bundle $\RR^m\times\KK^l\to\RR^m$ carries a Lie algebroid structure. 

It follows from Lemma~\ref{lem:Osaka} that 
$\big(C^\infty(\RR^m\times\kkf{0}{l}),d_L\big)$ is a cdga 
and $\varpi$ is a Maurer--Cartan element of the dgla
$C^\infty(\RR^m\times\kkf{0}{l})\otimes\Tpoly{\bullet}\kkf{r}{0}$ determined by 
the differential $d_L\otimes\id_{\Tpoly{0}\kkf{r}{0}} =\liederivative{d_L}$
and the restriction of the Schouten bracket in $\Tpoly{\bullet}\cV$.

In Section~\ref{section: fiberwise formality}, we proved that $\Phi_1$ 
depends only locally on its arguments and that its restriction to $U$ is 
\[ C^\infty(\RR^m\times\kkf{0}{l})\otimes\Tpoly{\bullet}(\kf{r}) 
\xto{\big(\kont^f_{U,\varpi}\big)_1} 
C^\infty(\RR^m\times\kkf{0}{l})\otimes\Dpoly{\bullet}(\kf{r}) ,\] 
which is the first Taylor coefficient of the tangent $L_\infty$ morphism to $\kont^f_U$ at the Maurer--Cartan element $\varpi$. 

Therefore, to establish Proposition~\ref{thm:Florence}, it suffices to prove that 
\[ \big(\kont^f_{U,\varpi}\big)_1= \hkr\circ \big(\ttodd^{\can}_{\cF|_U}\big)^{\frac{1}{2}} \] 
in every coordinate chart $U$ of $M$ over which the vector bundles $B$ and $L$ are trivial. 

Note that the dglas $C^\infty(\RR^m\times\kkf{0}{l})\otimes\Tpoly{\bullet}(\kf{r})$ 
and $C^\infty(\RR^m\times\kkf{0}{l})\otimes\Dpoly{\bullet}(\kf{r})$, 
the restrictions of $\tot(\sections{\Lambda^\bullet L^\vee}\otimes_R \verticalTpoly{\bullet})$ 
and $\tot(\sections{\Lambda^\bullet L^\vee}\otimes_R \verticalDpoly{\bullet})$ over $U$, 
are dg Lie subalgebras of $\big(\Tpoly{\bullet}\cV,\schouten{Q}{\argument},\schouten{\argument}{\argument}\big)$ 
and $\big(\Dpoly{\bullet}\cV,\gerstenhaber{Q+m}{\argument},\gerstenhaber{\argument}{\argument}\big)$, respectively.

Now, consider the Kontsevich formality $L_\infty$ quasi-isomorphism 
\[ (\Tpoly{\bullet}\cV, 0, \schouten{\argument}{\argument})
\xto{\kont^{\cV}} (\Dpoly{\bullet}\cV, \gerstenhaber{m}{\argument}, \gerstenhaber{\argument}{\argument}) \] 
devised for $\cV=\RR^m\times\kkf{r}{l}$ in~\cite[Appendix]{MR2304327}.

Since $[Q, Q]=0$, the vector field $Q\in \Tpoly{0}\cV$ is a Maurer--Cartan element of the dgla $\Tpoly{\bullet}\cV$ 
and we can consider the tangent $L_\infty$ morphism $\kont^{\cV}_Q$ defined by Equation~\eqref{tangent}.
Since $Q$ is a vector field, it follows from Theorem~\ref{kontsevich}~(\ref{property:lin vf}) 
that $\kont^{\cV}(Q)=Q\in\Dpoly{0}\cV$, where $\kont^{\cV}(Q)$ is given by the graded version of Equation~\eqref{Taipei} as in \cite{MR2304327}. 
Hence we obtain the $L_\infty$ morphism
\[ (\Tpoly{\bullet}\cV, \schouten{Q}{\argument}, \schouten{\argument}{\argument})
\xto{\kont^{\cV}_Q} (\Dpoly{\bullet}\cV,  \gerstenhaber{Q+m}{\argument}, \gerstenhaber{\argument}{\argument}) .\]

\begin{lemma}\label{lem:Tokyo}
In the category of cochain complexes of $\KK$-modules, the diagram 
\[ \begin{tikzcd}
\big(\Tpoly{\bullet}(\cV),\schouten{Q}{\argument}\big) \arrow[r, "\big(\kont^{\cV}_Q\big)_1"] & 
\big(\Dpoly{\bullet}(\cV),\gerstenhaber{Q+m}{\argument}\big) \\ 
C^\infty(\RR^m\times\kkf{0}{l})\otimes\Tpoly{\bullet}\kf{r} \arrow[u,"\cI" , hook] \arrow[r, "\big(\kont^f_{U,\varpi}\big)_1", swap] & 
C^\infty(\RR^m\times\kkf{0}{l})\otimes\Dpoly{\bullet}\kf{r} \arrow[u,"\cI" , hook]
\end{tikzcd} \]  
is commutative. 
\end{lemma}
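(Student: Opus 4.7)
The plan is to unfold both composite maps on $\gamma\in C^\infty(\RR^m\times\kkf{0}{l})\otimes\Tpoly{\bullet}(\kf{r})$ via Equation~\eqref{tangent} and identify the resulting Kontsevich sums term by term:
\[ (\kont^{\cV}_Q)_1(\cI(\gamma))=\sum_{j\geqslant 0}\tfrac{1}{j!}\,\kont^{\cV}_{1+j}(Q^{\wedge j}\wedge\cI(\gamma)), \qquad
\cI\bigl((\kont^f_{U,\varpi})_1(\gamma)\bigr)=\sum_{j\geqslant 0}\tfrac{1}{j!}\,\cI\bigl(\kont^f_{1+j}(\varpi^{\wedge j}\wedge\gamma)\bigr) .\]
The first step will be a \emph{verticality reduction}: since $\kont^f_n$ is by construction the $C^\infty(\RR^m\times\kkf{0}{l})$-linear extension of the fiberwise Kontsevich morphism on each copy of $\kf{r}$, and since any labelled admissible graph contributing to $\kont^{\cV}_n$ on inputs in the image of $\cI$ must have every outgoing edge from an aerial vertex labelled by a fiber coordinate $\chi_k$ (the other labels pair against absent $dx$- and $d\lambda$-components of vertical polyvector fields), one obtains
\[ \kont^{\cV}_n\bigl(\cI(\eta_1),\dots,\cI(\eta_n)\bigr)=\cI\bigl(\kont^f_n(\eta_1,\dots,\eta_n)\bigr) \]
for all vertical arguments $\eta_i$. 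Applied with the $\eta_i$'s being copies of $\varpi$ together with $\gamma$, this identifies $\cI\circ(\kont^f_{U,\varpi})_1(\gamma)$ with $\sum_{j\geqslant 0}\tfrac{1}{j!}\kont^{\cV}_{1+j}(\varpi^{\wedge j}\wedge\cI(\gamma))$.

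It then remains to show that the additional terms in $(\kont^{\cV}_Q)_1(\cI(\gamma))$ produced by writing $Q^{\wedge j}=(d_L+\varpi)^{\wedge j}$ and expanding multilinearly all vanish, i.e.\
\[ \kont^{\cV}_{1+i+k}(d_L^{\wedge i}\wedge\varpi^{\wedge k}\wedge\cI(\gamma))=0 \qquad\text{for every } i\geqslant 1 .\]
The key structural observation is that $d_L$ is a vector field tangent to $\RR^m\times\kkf{0}{l}$: it carries no $\partial_\chi$-component and its coefficients are independent of $\chi$. In any admissible graph where $d_L$ sits at an aerial vertex $v$, every incoming edge to $v$ originates from a vertex holding a vertical polyvector field ($\varpi$ or $\cI(\gamma)$) and therefore carries a $\chi$-label, forcing $\partial_\chi d_L=0$ at $v$. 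Moreover, the unique outgoing edge from $v$ cannot be $\chi$-labelled, since $d_L^\chi=0$. Consequently, only graphs in which every $d_L$-vertex is an isolated source whose unique outgoing edge ends at a terrestrial vertex with a non-$\chi$ label survive this initial pruning.

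The main obstacle is disposing of this residual family of graphs. My plan is to invoke the graded extension, due to Cattaneo--Felder~\cite{MR2304327}, of Kontsevich's vanishing property for linear vector fields (Theorem~\ref{kontsevich}~(\ref{property:lin vf})), applied to the homogeneous pieces of $d_L$ viewed as a formal vector field on $\cV$. An alternative route is to exploit the iterated-tangent identity $(\kont^{\cV}_Q)_1=\bigl((\kont^{\cV}_{d_L})_{\varpi}\bigr)_1$, together with $\kont^{\cV}(d_L)=d_L$ (which follows from Theorem~\ref{kontsevich}~(\ref{property:vf_only}) applied to the vector field $d_L$), reducing the lemma to showing that the Taylor coefficients $(\kont^{\cV}_{d_L})_n$ restrict on vertical inputs to $\cI\circ\kont^f_n$ --- a statement amenable to the same graph-theoretic analysis.
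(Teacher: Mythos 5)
Your overall strategy (unfold both composites via Equation~\eqref{tangent}, identify the $\varpi$-only terms with the fiberwise morphism, and reduce to the vanishing of every term carrying at least one $d_L$-insertion) is the same reduction the paper performs, and the ``verticality reduction'' step is fine. The gap is in how you kill the terms $\kont^{\cV}_{1+j}(X_1\wedge\cdots\wedge X_j\wedge\cI(\gamma))$ with some $X_p=d_L$. Your label-based pruning is both incorrect and incomplete: an incoming edge to a $d_L$-vertex need not come from a vertical vertex (it can come from another $d_L$-vertex when $i\geqslant 2$), and the outgoing edge of a $d_L$-vertex need not end at a terrestrial vertex --- it can end at a $\varpi$- or $\gamma$-vertex, whose component functions genuinely depend on the base and $\lambda$-coordinates, so the resulting $\partial_x$ or $\partial_\lambda$ does not annihilate them. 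You correctly flag that a residual family of graphs survives, but neither of your proposed tools closes it: Theorem~\ref{kontsevich}~(\ref{property:lin vf}) requires a \emph{linear} vector field, whereas the $d_L$ appearing here, schematically $\sum\rho_{ij}\lambda_j\,\partial/\partial x_i-\tfrac12\sum c_{ij}^k\lambda_i\lambda_j\,\partial/\partial\lambda_k$, has coefficients that are arbitrary smooth functions of $x$; and the iterated-tangent route merely restates the problem of showing that $(\kont^{\cV}_{d_L})_n$ agrees with $\cI\circ\kont^f_n$ on vertical inputs, which involves exactly the same family of graphs.

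The missing idea is the weight/degree count. For $\Gamma\in\graphs_{1+j,m}$ one has $W_\Gamma=0$ unless $\abs{E_\Gamma}=2j+m$, while $\kont_\Gamma(X_1,\dots,X_j,\gamma)=0$ unless $\abs{E_\Gamma}=j+(k+1)$ (one outgoing edge per vector field, $k+1$ per $(k+1)$-vector field $\gamma$). Hence $k+1=j+m$, which is exactly the number of vertices other than the $\gamma$-vertex; since no two edges share the same source and target and there are no loops, the $\gamma$-vertex must emit exactly one edge to \emph{every} other vertex. In particular each $d_L$-vertex receives an edge $e'$ from the $\gamma$-vertex; that edge is necessarily $\chi$-labelled because $\gamma$ is vertical, and the resulting derivative $\partial/\partial\chi_{I(e')}$ annihilates the components of $d_L$, which lie in $C^\infty(\RR^m\times\kkf{0}{l})$. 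This single observation disposes of all graphs containing a $d_L$-vertex at once, and is what your argument needs in place of the pruning and the appeal to linearity.
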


\begin{proof}
Let $\gamma\in C^\infty(\RR^m\times\kkf{0}{l})\otimes\Tpoly{k}\kf{r}$ be a $(k+1)$-vector field. 
It follows from Equations~\eqref{tangent}, \eqref{def:Kontsevich}, and~\eqref{Taichung} that 
\[ \begin{split} 
\big(\kont^{\cV}_Q\big)_1(\gamma)
&{}= \sum_{j=0}^{\infty} \frac{1}{j!} \kont^{\cV}_{1+j}(Q\wedge\cdots\wedge Q\wedge\gamma) 
\\ 
&{}= \sum_{j=0}^{\infty} \frac{1}{j!} \sum_{m\geqslant 0} \sum_{\Gamma\in\graphs_{1+j,m}} W_\Gamma \kont^{\cV}_\Gamma(Q\wedge\cdots\wedge Q\wedge\gamma) 
\end{split} \]
and 
\[ \begin{split} 
\big(\kont^f_{U,\varpi}\big)_1(\gamma)
&{}= \sum_{j=0}^{\infty} \frac{1}{j!} \big(\kont^f_{U}\big)_{1+j}(\varpi\wedge\cdots\wedge\varpi\wedge\gamma) \\ 
&{}= \sum_{j=0}^{\infty} \frac{1}{j!} \kont^{\cV}_{1+j}(\varpi\wedge\cdots\wedge\varpi\wedge\gamma) 
\\ 
&{}= \sum_{j=0}^{\infty} \frac{1}{j!} \sum_{m\geqslant 0} \sum_{\Gamma\in\graphs_{1+j,m}} W_\Gamma \kont^{\cV}_\Gamma(\varpi\wedge\cdots\wedge\varpi\wedge\gamma) 
.\end{split} \]
Therefore, since $Q=d_L+\varpi$, it suffices to prove that, provided $X_1,X_2,\dots,X_j\in\{d_L,\varpi\}$ and $X_p=d_L$ for at least one $p\in\{1,2,\dots,j\}$, 
the expression $W_\Gamma\kont^{\cV}_\Gamma(X_1\wedge X_2\wedge\cdots\wedge X_j\wedge\gamma)$ vanishes for all $\Gamma\in\graphs_{j+1,m}$. 

Given a graph $\Gamma\in\graphs_{1+j,m}$, we know that 
\begin{itemize}
\item $W_\Gamma=0$ if $\abs{E_\Gamma}\neq 2j+m$; 
\item $\kont_\Gamma(X_1,\cdots,X_j,\gamma)=0$ if $\abs{E_\Gamma}\neq j+k+1$; 
\item and $\kont_\Gamma(X_1,\cdots,X_j,\gamma)=0$ if the number of edges starting from the $(j+1)$-th aerial vertex is different from $k+1$ 
(since $\gamma$ is a $(k+1)$-vector field). 
\end{itemize}
Therefore, if $\Gamma\in\graphs_{1+j,m}$, we have $W_\Gamma\kont_\Gamma(X_1,\cdots,X_j,\gamma)=0$ unless 
$\abs{\outedge{v_{j+1}}}=k+1=j+m$. 
In other words, since $(j+1)+m$ is the total number of vertices of the graph $\Gamma$, 
we have $W_\Gamma\kont_\Gamma(X_1,\cdots,X_j,\gamma)=0$ unless a single edge runs 
from the $(j+1)$-th aerial vertex of $\Gamma$ to each one of the other $j+m$ vertices of $\Gamma$. 

However, if an edge $e'$ of $\Gamma$ starts at the $(j+1)$-th aerial vertex $v_{j+1}$ (the aerial vertex corresponding to $\gamma$) 
and ends at $v_p$ (the aerial vertex corresponding to $X_p=d_L$), the factor $D_I^{v_p}(X_p^{I(\outedge{p})})$ 
appearing in each term of the expansion \eqref{Taichung} of $\kont_\Gamma(X_1,\cdots,X_j,\gamma)$ must vanish. 
Indeed, $D_I^{v_p}$ is a composition of one or more partial derivatives w.r.t.\ the coordinates on $\kf{r}$ 
containing $\frac{\partial}{\partial x_{I(e')}}$ at the very least, while $X_p^{I(\outedge{p})}=(d_L)^{I(\outedge{p})}$ 
is a function in the subalgebra $C^\infty(\RR^m\times\kkf{0}{l})$ of $C^\infty(\cV)$. 

The proof is complete. 
\end{proof}

The following theorem was first announced by Shoikhet in \cite{arXiv:math/9812009}. 
We refer the interested reader to \cite{MR1854132} for more details.

\begin{theorem} [Kontsevich--Shoikhet \cite{arXiv:math/9812009}] \label{thm:Shoikhet}
The first `Taylor coefficient' $\big(\kont^{\cV}_Q\big)_1:\Tpoly{\bullet}\cV\to\Dpoly{\bullet}\cV$ 
of the tangent $L_\infty$ quasi-iso\-mor\-phism $\kont^{\cV}_Q$ is the modification 
\begin{equation*}
\big(\kont^{\cV}_Q\big)_1=\hkr\circ\big(\ttodd^{\trivial}_{T_\cV}\big)^{\frac{1}{2}}
\end{equation*}
of the Hochschild--Kostant--Rosenberg map by (the square root of) the Todd cocycle 
$\ttodd^{\trivial}_{T_\cV}\in\prod_{k=0}^\infty\Omega^k(\cV)$ 
of the dg manifold $(\cV,Q)$ 
associated with the trivial connection as in Example~\ref{ex:trivial cntn}. 
The Todd cocycle acts on $\Tpoly{\bullet}\cV$ by contraction.
\end{theorem}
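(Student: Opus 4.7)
The plan is to expand the first Taylor coefficient of the tangent $L_\infty$ morphism as a weighted sum over Kontsevich admissible graphs and then isolate which graph topologies contribute. By~\eqref{tangent} and~\eqref{def:Kontsevich}, for $\gamma\in\Tpoly{\bullet}\cV$ one has
\[ \big(\kont^{\cV}_Q\big)_1(\gamma) = \sum_{j\geqslant 0}\frac{1}{j!}\sum_{m\geqslant 0}\sum_{\Gamma\in\graphs_{1+j,m}} W_\Gamma\,\kont^{\cV}_\Gamma(\underbrace{Q,\dots,Q}_{j \text{ factors}},\gamma). \]
Because $Q\in\Tpoly{0}\cV$ is a $1$-vector, each of the first $j$ aerial vertices (the "$Q$-vertices") in any contributing graph $\Gamma$ carries exactly one outgoing edge, while the last aerial vertex (carrying $\gamma$) emits $|\gamma|+1$ edges; arbitrary numbers of incoming edges are allowed. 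My first step would be to classify surviving graphs by the combinatorial structure of the subgraph induced on the $Q$-vertices.

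The second step is to show, by Kontsevich-style vanishing arguments in the spirit of Lemma~\ref{lem:vanishing}, that the only graphs contributing nontrivially decompose into a \emph{base tree} --- edges emanating from the $\gamma$-vertex directly to terrestrial vertices, reproducing the Hochschild--Kostant--Rosenberg map --- together with a disjoint family of \emph{wheels} attached to the $\gamma$-vertex. A $k$-wheel is a directed cycle of $k$ $Q$-vertices (each vertex's unique outgoing edge targeting the next in the cycle), each of whose vertices additionally receives one incoming edge originating at the $\gamma$-vertex. Graphs whose $Q$-vertices participate in any other topology (trees of $Q$-vertices terminating on the boundary, for instance) should be killed either because their Kontsevich weight vanishes after an involution/Stokes argument on fibers of suitable configuration-space projections, or because the associated polydifferential operator vanishes once $Q$ is substituted.

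The third step is the algebraic identification. Using Example~\ref{ex:trivial cntn}, which expresses $\At^{\trivial}_{T_\cV}$ in terms of second derivatives of the components of $Q$, one checks that the operator $\kont^{\cV}_\Gamma(Q,\dots,Q,\gamma)$ for a $k$-wheel $\Gamma$ equals $w_k\cdot\supertrace\!\big((\At^{\trivial}_{T_\cV})^k\big)$ contracted into $\gamma$, where $w_k$ denotes the integral of the corresponding hyperbolic-angle form over the compactified configuration space of a pure $k$-wheel. Summing over disjoint unions of wheels exponentiates the single-wheel generating series, so the total wheel contribution acts on $\gamma$ as $\exp\!\big(\sum_{k\geqslant 1} w_k\,c_k\big)$ by contraction, after which the base tree yields $\hkr$. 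To conclude, one must match the formal series $\sum_{k\geqslant 1}w_k x^k$ with $\tfrac{1}{2}\log\!\det\!\left(\tfrac{x}{e^{x/2}-e^{-x/2}}\right)$ expanded in the scalar Atiyah classes, which is exactly $\log\big(\ttodd^{\trivial}_{T_\cV}\big)^{1/2}$.

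The main obstacle --- and the reason Kontsevich--Shoikhet required an involved independent proof in~\cite{MR2816610} --- is the explicit evaluation of the wheel weights $w_k$. The vanishing of the odd-parity components of the Todd factor relies on an orientation-reversing involution of the relevant compactified configuration space, delicate in the graded setting and requiring the sign corrections of~\cite{MR2304327}. The values of the even-parity weights $w_{2k}$ must match the Bernoulli-number expansion of $\tfrac{1}{2}\log\!\tfrac{x}{e^{x/2}-e^{-x/2}}=-\tfrac{1}{2}\sum_{k\geqslant 1}\tfrac{B_{2k}}{2k\,(2k)!}x^{2k}$; establishing this equality amounts to Shoikhet's original Kontsevich-integral computation and constitutes the technical heart of the theorem. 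Everything else is local, graph-by-graph bookkeeping that follows the template already laid out by Kontsevich.
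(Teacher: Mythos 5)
The paper does not prove this statement: Theorem~\ref{thm:Shoikhet} is imported verbatim from the literature (``first announced by Shoikhet in \cite{math/9812009}; we refer the interested reader to \cite{MR1854132,MR2816610} for more details''), so there is no internal proof to compare yours against. Judged on its own terms, your outline is the standard route to this result and matches the strategy of the cited references: expand $\big(\kont^{\cV}_Q\big)_1$ via Equations~\eqref{tangent} and~\eqref{def:Kontsevich}, observe that each $Q$-vertex emits exactly one edge because $Q\in\Tpoly{0}\cV$, reduce to graphs consisting of the HKR base tree together with directed wheels of $Q$-vertices spoked from the $\gamma$-vertex, identify the $k$-wheel operator with the contraction by $\supertrace\big((\At^{\trivial}_{T_\cV})^k\big)$ using Example~\ref{ex:trivial cntn}, exponentiate over disjoint unions of wheels, and match weights against the Bernoulli expansion of $\tfrac{1}{2}\log\tfrac{x}{e^{x/2}-e^{-x/2}}$ (which you state correctly). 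You also correctly flag the two points where the real work lies and which your sketch does not carry out: the vanishing of all non-wheel topologies (which needs more than Lemma~\ref{lem:vanishing} alone, together with the graded-sign bookkeeping of \cite{MR2304327}) and the explicit evaluation of the wheel weights $w_k$, which is Shoikhet's configuration-space integral computation. As a proof this is therefore an honest outline rather than a complete argument, but it is the right outline, and since the paper itself treats the theorem as a black box there is no discrepancy with the text.
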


The Todd cocycle $\ttodd^{\trivial}_{T_\cV}$ can be expressed in terms of the scalar Atiyah cocycles 
$\supertrace\big((\At^{\trivial}_{T_{\cV}})^s\big) \in \sections{\Lambda^s T_\cV \dual}$, 
which are related to the scalar Atiyah cocycles $\trace\big((\At_{\cF|_U}^{\can})^s\big) \in \sections{\Lambda^s\cF|_U\dual}$ 
of the restriction $\cF|_U$ of the Fedosov dg Lie algebroid in the following way:

\begin{lemma}
For all $s\in\NN$, we have
\[ \trace\big((\At_{\cF|_U}^{\can})^s\big) = \cI\transpose \supertrace\big((\At^{\trivial}_{T_{\cV}})^s\big) ,\]
where $\cI\transpose$ is the transpose of the bundle map 
$\cI:\Lambda^s \cF|_U\cong\cV\times\Lambda^s\kkf{r}{0}\hookrightarrow\cV\times\Lambda^s\cV\cong\Lambda^s T_\cV$.
\end{lemma}

\begin{proof}
Let $U\xto{(x_1,\dots,x_m)}\RR^m$ be a local chart of $M$. 

Let $\partial_1,\dots,\partial_r$ be a local frame for $B\to M$ over $U$ 
and let $\chi_1,\dots,\chi_r$ be the dual local frame for $B\dual\to M$. 
Likewise, let $\eta_1,\dots,\eta_l$ be a local frame for $L\to M$ over $U$ 
and let $\lambda_1,\dots,\lambda_l$ be the dual local frame for $L\dual\to M$ with the degree shift: $|\lambda_j| = 1$. 

The restrictions to $U$ of the anchor map $\rho:L\to T_M$, 
the Lie bracket on $\sections{L}$, the bundle map $q:L\to B$, 
and the $L$-connection $\nabla$ on $B$ admit local expressions 
\begin{align*}
\rho(\eta_j) &{} =\sum_{i=1}^m \rho_{ij}\frac{\partial}{\partial x_i} &
\lie{\eta_i}{\eta_j} &{} =\sum_{k=1}^l c_{ij}^k \eta_k
\\
q(\eta_j) &{} =\sum_{i=1}^r q_{ij} \partial_i & 
\nabla_{\eta_i}\partial_j &{} =\sum_{k=1}^r\Gamma_{ij}^k\partial_k 
\end{align*}
where $\rho_{ij}$, $c_{ij}^k$, $q_{ij}$, and $\Gamma_{ij}^k$ 
are functions of the coordinates $x_1,\dots,x_m$. 

Then $(z_1,\dots,z_{m+l+r})=(x_1,\dots,x_m,\lambda_1,\dots,\lambda_l,\chi_1,\dots,\chi_r)$ are coordinates on $\cV=\RR^m\times\kkf{r}{l}$ whose degrees are
\[ |x_i| = 0, \qquad |\lambda_j| = 1, \qquad |\chi_k| = 0 ,\]
for $i\in\{1,\cdots,m\}$, $j\in\{1,\cdots,l\}$, and $k\in\{1,\cdots,r\}$. The homological vector field on $\cV$ is the sum 
$Q=-\delta+d_L^\nabla+X^\nabla$ of 
\begin{gather*} 
\delta=\sum_{k=1}^r\sum_{j=1}^l q_{kj}\lambda_j\frac{\partial}{\partial\chi_k} ,\\ 
d_L^\nabla=\sum_{j=1}^m\sum_{i=1}^l \rho_{ji}\lambda_i\frac{\partial}{\partial x_j} 
-\frac{1}{2}\sum_{i,j,k=1}^l c_{ij}^k \lambda_i\lambda_j
\frac{\partial}{\partial\lambda_k} 
-\sum_{i=1}^l\sum_{j,k=1}^r \Gamma_{ij}^k\lambda_i\chi_j\frac{\partial}{\partial \chi_k} ,\\ 
\intertext{and} 
X^\nabla=\sum_{k=1}^r f_k \frac{\partial}{\partial\chi_k}
.\end{gather*} 

Let $\hat{z}_1, \cdots, \hat{z}_{m+l+r}$ be the local frame of $T_\cV\dual$ dual to $\frac{\partial}{\partial z_1}, \cdots, \frac{\partial \;}{\partial z_{m+l+r}}$. 
This local frame is essentially $dz_1, \cdots, dz_{m+l+r}$, but they have different degrees: $|\hat{z}_i| = |dz_i| -1 = |z_i|$. It follows from Lemma~\ref{lem:local Atiyah} that 
\begin{equation}\label{crossbow}
\At^{\can}_{\cF|_U} = \sum_{i,j,k=1}^r \frac{\partial^2 f_k}{\partial\chi_i\partial\chi_j} 
\hat\chi_i\otimes\left(\hat\chi_j\otimes\frac{\partial}{\partial\chi_k}\right) 
\end{equation}
and from Example~\ref{ex:trivial cntn} that 
the Atiyah 1-cocycle of the dg Lie algebroid $T_{\cV}$ associated with 
the trivial connection $\nabla^{\trivial}_{\frac{\partial}{\partial z_i}}
\frac{\partial}{\partial z_j}=0$ is 
\[ \At^{\trivial}_{T_{\cV}} = \sum_{i,j,k=1}^{m+l+r} (-1)^{\abs{z_i}+\abs{z_j}} 
\frac{\partial^2 \big(Q(z_k)\big)}{\partial z_i\partial z_j} \hat z_i\otimes\left(\hat z_j\otimes\frac{\partial}{\partial z_k}\right) .\]

Then, we have 
\[ \big(\cI\transpose\otimes\id\big) (\At^{\trivial}_{T_{\cV}}) = \sum_{i=1}^r 
\sum_{j,k=1}^{m+l+r} (-1)^{\abs{z_j}} 
\frac{\partial^2 \big(Q(z_k)\big)}{\partial \chi_i\partial z_j} 
\hat\chi_i\otimes\left(\hat z_j\otimes\frac{\partial}{\partial z_k}\right) .\]

Since $Q(x_k)=\sum_{i=1}^l \rho_{ki}\lambda_i$ and the functions $\rho_{ki}$ 
depend on the $x$-coordinates only, we have 
\[ \frac{\partial^2 \big(Q(x_k)\big)}{\partial \chi_i\partial z_j} =0 .\] 

Since $Q(\lambda_k)=-\frac{1}{2}\sum_{i,j=1}^l c_{ij}^k 
\lambda_i\lambda_j$ and the functions $c_{ij}^k$ 
depend on the $x$-coordinates only, we have 
\[ \frac{\partial^2 \big(Q(\lambda_k)\big)}{\partial \chi_i\partial z_j} =0 .\] 

Since $Q(\chi_k)=-\sum_{j=1}^l q_{kj}\lambda_j -\sum_{i,j}\Gamma_{ij}^k\lambda_i\chi_j +f_k$, 
the functions $q_{kj}$ and $\Gamma_{ik}^j$ depend on the $x$-coordinates only, we have 
\[ \frac{\partial^2 \big(Q(\chi_k)\big)}{\partial \chi_i\partial \chi_j} =\frac{\partial^2 f_k}{\partial\chi_i\partial\chi_j} .\] 
Therefore, the matrix representation of $\big(\cI\transpose\otimes\id\big) (\At^{\trivial}_{T_{\cV}})$
with respect to the frame $(\frac{\partial}{\partial z_1}, \cdots, \frac{\partial \;}{\partial z_{m+l+r}})$ is 
\begin{equation}\label{archer}
\big(\cI\transpose\otimes\id\big) (\At^{\trivial}_{T_{\cV}}) =
\begin{bmatrix} 0 & 0 & \ast \\ 0 & 0 & \ast \\ 0 & 0 & \frac{\partial^2 f_k}{\partial\chi_i\partial\chi_j}\hat\chi_i \end{bmatrix}.
\end{equation}
It follows from Equations~\eqref{crossbow} and~\eqref{archer} that 
\begin{align*}
\trace\big((\At_{\cF|_U}^{\can})^s\big) &=
\supertrace\big(((\cI\transpose\otimes\id)(\At^{\trivial}_{T_{\cV}}))^s\big) \\
 &=\cI\transpose\supertrace\big((\At^{\trivial}_{T_{\cV}})^s\big) ,
\end{align*}
which concludes the proof.
\end{proof}

Since the Todd cocycle can be expressed in terms of scalar Atiyah cocycles, 
we have the following immediate corollary.

\begin{corollary}\label{cor:local td}
The diagram 
\[ \begin{tikzcd}
\Tpoly{\bullet}(\cV) \arrow[r, "\ttodd^{\trivial}_{T_{\cV}}"] & \Tpoly{\bullet}(\cV) \\ 
C^\infty(\RR^m\times\kkf{0}{l})\otimes\Tpoly{\bullet}\kf{r} \arrow[u, "\cI", hook] 
\arrow[r, "\ttodd^{\can}_{\cF|_U}", swap] & 
C^\infty(\RR^m\times\kkf{0}{l})\otimes\Tpoly{\bullet}\kf{r} \arrow[u, "\cI", hook]
\end{tikzcd} \]
commutes. 
\newline
Here $\ttodd^{\can}_{\cF|_U}\in\prod_{k=0}^{\infty} C^\infty(\RR^m\times\kkf{0}{l})\otimes\Omega^k(\kkf{r}{0})$ 
is the Todd cocycle of the restriction to $U$ of the Fedosov Lie algebroid $\cF$ associated with the canonical connection 
while $\ttodd^{\trivial}_{T_\cV}\in\prod_{k=0}^{\infty}\Omega^k(\cV)$ is the 
Todd cocycle of the dg manifold $(\cV,Q)$ associated with the trivial connection 
as in Example~\ref{ex:trivial cntn}. 
\newline
The Todd cocycles act by contraction on the spaces of polyvector fields. 
\end{corollary}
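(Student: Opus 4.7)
The plan is to reduce the commutativity of the diagram to the trace-compatibility identity established in the preceding lemma, exploiting the fact that the Todd cocycle is built from scalar Atiyah cocycles in a universal way.

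First, I would recall the standard fact that the Todd power series $\det\bigl(\tfrac{x}{e^{x/2}-e^{-x/2}}\bigr)$ can be expanded as a universal (even) power series in the elementary symmetric functions of the eigenvalues, and equivalently as a universal polynomial in the power sums. In the present setting this means that both $\ttodd^{\can}_{\cF|_U}$ and $\ttodd^{\trivial}_{T_{\cV}}$ can be written as the \emph{same} universal formal series evaluated on the scalar Atiyah cocycles $c_s=\tfrac{1}{s!}\bigl(\tfrac{i}{2\pi}\bigr)^s\trace\bigl((\At^{\can}_{\cF|_U})^s\bigr)$ and $\tilde c_s=\tfrac{1}{s!}\bigl(\tfrac{i}{2\pi}\bigr)^s\supertrace\bigl((\At^{\trivial}_{T_{\cV}})^s\bigr)$ respectively, with the wedge product providing the multiplication. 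By the preceding lemma, we have $c_s=\cI\transpose\tilde c_s$ for every $s\in\NN$. Since $\cI\transpose:\Omega^\bullet(\cV)\to\sections{\Lambda^\bullet\cF|_U\dual}$ is a morphism of graded commutative algebras (it is the restriction to a subbundle), it intertwines the two formal series, so that
\[ \cI\transpose\bigl(\ttodd^{\trivial}_{T_{\cV}}\bigr)=\ttodd^{\can}_{\cF|_U} \]
as elements of $\prod_{k\geqslant 0}\sections{\Lambda^k\cF|_U\dual}$.

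Next I would verify the naturality of contraction with respect to the inclusion $\cI$. If $\xi\in C^\infty(\RR^m\times\kkf{0}{l})\otimes\Tpoly{\bullet}\kf{r}$, then $\cI(\xi)$ is by construction a polyvector field on $\cV$ all of whose factors are vertical, i.e.\ tangent to the subbundle $\cF|_U\subset T_{\cV}$. For any form $\omega\in\Omega^\bullet(\cV)$ and any such vertical polyvector field, the contraction $\iota_{\cI(\xi)}\omega$ depends only on the restriction $\cI\transpose(\omega)\in\sections{\Lambda^\bullet\cF|_U\dual}$; more precisely, the evident identity
\[ \iota_{\cI(\xi)}\omega=\cI\bigl(\iota_{\xi}\cI\transpose(\omega)\bigr) \]
holds. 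Applying this with $\omega=\ttodd^{\trivial}_{T_{\cV}}$ and using the identification of Todd cocycles just obtained yields
\[ \ttodd^{\trivial}_{T_{\cV}}\cdot\cI(\xi)=\cI\bigl(\ttodd^{\can}_{\cF|_U}\cdot\xi\bigr) ,\]
which is precisely the commutativity of the diagram.

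The only nontrivial point, and the place where care is needed, is the verification that contraction really does factor through $\cI\transpose$ in the $\ZZ$-graded setting; once one unwinds the Koszul signs this is a direct consequence of the fact that $\cI$ lands in the subalgebra of polyvector fields tangent to $\cF|_U$, and that the pairing between $\Lambda^\bullet\cF|_U$ and $\Lambda^\bullet\cF|_U\dual$ is precisely obtained by pulling back forms from $T_{\cV}$ along the inclusion. Everything else is formal manipulation of the universal Todd series and an application of the preceding lemma.
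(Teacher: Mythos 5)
Your proposal is correct and follows the same route as the paper, which derives the corollary immediately from the preceding trace lemma precisely because the Todd cocycle is a universal series in the scalar Atiyah cocycles $\frac{1}{s!}(\frac{i}{2\pi})^s\supertrace(\At^s)$. The extra details you supply --- that $\cI\transpose$ is an algebra morphism and that contraction by a form against a vertical polyvector field factors through $\cI\transpose$ --- are exactly the steps the paper leaves implicit in calling the corollary ``immediate.''
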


Finally we have 
\begin{align*}
\cI\circ\big(\kont^f_{U,\varpi}\big)_1 
={} & \big(\kont^{\cV}_Q\big)_1\circ\cI && \text{(by Lemma~\ref{lem:Tokyo})} \\ 
={} & \hkr\circ\big(\ttodd^{\trivial}_{T_\cV}\big)^{\frac{1}{2}}\circ\cI && \text{(by Theorem~\ref{thm:Shoikhet})} \\ 
={} & \hkr\circ\cI\circ\big(\ttodd^{\can}_{\cF|_U}\big)^{\frac{1}{2}} && \text{(by Corollary~\ref{cor:local td})} \\ 
={} & \cI\circ\hkr\circ\big(\ttodd^{\can}_{\cF|_U}\big)^{\frac{1}{2}} && 
\end{align*}
Therefore, in every coordinate chart $U$ of $M$ over which the vector bundles $B$ and $L$ are trivial, we have 
\[ \big(\kont^f_{U,\varpi}\big)_1= \hkr\circ \big(\ttodd^{\can}_{\cF|_U}\big)^{\frac{1}{2}} .\] 
The proof of Proposition~\ref{thm:Florence} is complete. 

\subsection{Proof of Theorem~\ref{thm:main0}}

The difference between Equations~\eqref{twisted Todd} and~\eqref{corrected Todd} is the factor $e^{\half\trace\At^{\can}_\cF}$. 
We start with considering $\trace\At^{\can}_\cF\in\Gamma(\cF^\vee)_1$.

\begin{lemma}
We have \[ \trace \At^{\can}_\cF =d_{\cF}(\divergence X^\nabla) ,\] 
where $\divergence X^\nabla\in C^\infty (\cM)= \sections{L^\vee \otimes \hat{S}B^\vee}$ is the divergence of the formal vertical vector field $X^\nabla$. 
More explicitly, $\divergence X^\nabla=\sum_k \hhb{k} f_k.$
\end{lemma}

\begin{proof}
By Equation~\eqref{eq:local Atiyah matrix}, 
\[ \trace \At^{\can}_\cF = \sum_{i,k=1}^r \hhb{i}(\hhb{k}f_k) \chi_i = d_{\cF} (\divergence X^\nabla) .\qedhere\] 
\end{proof}

Furthermore, one has the following lemma.

\begin{lemma}\label{onhafhankelijkheid}
Let $\cA\to \cM$ be a dg Lie algebroid. 
Let $\mathcal{Q}$ denote the endomorphism of $\sections{\cA}$ induced by the dg structure of $\cA$,
and let $d_{\cA}$ denote the Chevalley--Eilenberg differential. 
If $\xi\in\sections{\cA\dual}$ satisfies $d_{\cA} \xi=0$ and $\mathcal{Q} \xi=0$, 
then the contraction with $\xi$ is a derivation of the differential Gerstenhaber algebra
$\big(\sections{\Lambda^\bullet \cA},\lie{\argument}{\argument},\mathcal{Q}\big)$.
\end{lemma}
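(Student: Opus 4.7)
\emph{Proof sketch.} I would show that $\iota_\xi$ is a graded derivation of each of the three structures --- the wedge product, the Gerstenhaber bracket $\lie{\argument}{\argument}$, and the differential $\mathcal{Q}$ --- that together comprise the differential Gerstenhaber algebra on $\sections{\Lambda^\bullet\cA}$. The compatibility with $\wedge$ is immediate: $\iota_\xi$ is by construction the derivation of the exterior algebra extending $v\mapsto\duality{\xi}{v}$ for $v\in\sections{\cA}$. For the compatibility with $\mathcal{Q}$, I would extend $\mathcal{Q}$ to $\sections{\cA\dual}$ by the usual duality rule and establish the Cartan-type identity
\[ \lie{\mathcal{Q}}{\iota_\xi}=\pm\,\iota_{\mathcal{Q}(\xi)} \]
on $\sections{\Lambda^\bullet\cA}$, first directly on sections of $\cA$, then on arbitrary elements using the fact that both sides are graded derivations of $\wedge$. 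The hypothesis $\mathcal{Q}(\xi)=0$ then forces the right-hand side to vanish.

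The core of the argument is the derivation property with respect to the bracket,
\[ \iota_\xi\lie{X}{Y}=\lie{\iota_\xi X}{Y}+(-1)^{(\abs{X}-1)(\abs{\xi}-1)}\lie{X}{\iota_\xi Y} ,\qquad X,Y\in\sections{\Lambda^\bullet\cA} .\]
Since $\lie{\argument}{\argument}$ is a biderivation of $\wedge$ and $\iota_\xi$ is a derivation of $\wedge$, both sides of this identity are biderivations in each argument, so the equality propagates from generators. It therefore suffices to check it when each of $X$ and $Y$ is either a function on $\cM$ or a section of $\cA$. The cases where at least one argument is a function are trivial: the resulting bracket is either zero or a function, which $\iota_\xi$ annihilates, and the right-hand side vanishes for the same reason. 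In the substantive case $X,Y\in\sections{\cA}$, the identity reduces --- after a short sign computation --- to
\[ \duality{\xi}{\lie{X}{Y}}=\rho(X)\duality{\xi}{Y}\pm\rho(Y)\duality{\xi}{X} ,\]
which is precisely the statement $(d_\cA\xi)(X,Y)=0$ assumed in the hypothesis.

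The only real difficulty will be the careful tracking of Koszul signs, which arise both from the internal $\ZZ$-grading of $\cA$ inherited from the dg manifold $\cM$ and from the grading shift intrinsic to the Gerstenhaber convention. Once the signs are correctly accounted for, each of the three derivation properties follows by a routine computation, and together they give that $\iota_\xi$ is a derivation of the differential Gerstenhaber algebra $\big(\sections{\Lambda^\bullet\cA},\lie{\argument}{\argument},\mathcal{Q}\big)$.
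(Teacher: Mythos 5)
Your proposal is correct, and it supplies exactly the routine argument that the paper itself omits (the lemma is stated there without proof). The key reduction --- that both sides of the bracket-derivation identity are biderivations with respect to the wedge product, so the verification localizes to generators in $C^\infty(\cM)$ and $\sections{\cA}$, where it becomes precisely the hypotheses $d_\cA\xi=0$ and $\mathcal{Q}(\xi)=0$ --- is the standard and intended route, with only the Koszul-sign bookkeeping left to carry out.
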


Applying Lemma~\ref{onhafhankelijkheid} to the Fedosov dg Lie algebroid $\cF$ 
and the section $\trace\big(\At^{\can}_\cF\big)$ of $\cF\dual$ and noting that 
$\sections{\Lambda^\bullet\cF}\cong\tot\big( \sections{\Lambda^\bullet L\dual}\otimes_R\verticalTpoly{\bullet}\big)$ 
and $\mathcal{Q}=\schouten{\fedosov}{\argument}$, we obtain

\begin{corollary}\label{cor:premainthm}
\begin{enumerate}
\item The contraction by $\trace \At^{\can}_\cF$
is a derivation of the differential Gerstenhaber algebra $\tot\big(\sections{\Lambda^\bullet L\dual}\otimes_R
\verticalTpoly{\bullet}\big)$.
\item The contraction by $e^{\half \trace \At^{\can}_\cF}$
is an automorphism of the differential Gerstenhaber algebra $\tot\big(\sections{\Lambda^\bullet L\dual}\otimes_R
\verticalTpoly{\bullet}\big)$.
\end{enumerate}
\end{corollary}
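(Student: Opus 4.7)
The plan is to deduce Corollary~\ref{cor:premainthm} from Lemma~\ref{onhafhankelijkheid}, applied to the Fedosov dg Lie algebroid $\cF \to \cM$ and the section $\xi := \trace \At^{\can}_\cF \in \sections{\cF\dual}$. Under the identifications $\sections{\Lambda^\bullet \cF} \cong \tot\big(\sections{\Lambda^\bullet L\dual}\otimes_R \verticalTpoly{\bullet}\big)$ and $\mathcal{Q} = \schouten{\fedosov}{\argument}$ recorded just before the statement, Lemma~\ref{onhafhankelijkheid} yields precisely part~(1), provided its two hypotheses $d_\cF \xi = 0$ and $\mathcal{Q} \xi = 0$ are verified.

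Both hypotheses are essentially immediate. The lemma preceding Corollary~\ref{cor:premainthm} exhibits $\xi$ as the $d_\cF$-exact form $d_\cF(\divergence X^\nabla)$, whence $d_\cF \xi = 0$ follows from $d_\cF^2 = 0$. For the second condition, $\At^{\can}_\cF$ is by its very definition a $\mathcal{Q}$-cocycle in $Z^1\big(\sections{\cF\dual \otimes \End \cF}, \mathcal{Q}\big)$, and the trace map $\sections{\cF\dual \otimes \End \cF} \to \sections{\cF\dual}$ is $\mathcal{Q}$-equivariant, being induced by the canonical evaluation $\End \cF \cong \cF \otimes \cF\dual \to C^\infty(\cM)$, which intertwines the dg module structures. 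Hence $\mathcal{Q}(\trace \At^{\can}_\cF) = \trace\big(\mathcal{Q} \At^{\can}_\cF\big) = 0$, and Lemma~\ref{onhafhankelijkheid} delivers part~(1).

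For part~(2), a standard exponential argument will suffice. The section $\xi$ sits in $\sections{\Lambda^1 L\dual \otimes \hat{S}B\dual \otimes \Lambda^1 B\dual}$, and its $n$-th wedge power $\xi^n$ lies in $\sections{\Lambda^n L\dual \otimes \hat{S}B\dual \otimes \Lambda^n B\dual}$, vanishing once $n>\rk L$; thus $e^{\half \xi} = \sum_{n\geqslant 0}\frac{\xi^n}{n!\, 2^n}$ is a well-defined element. The identity $\iota_{\alpha \wedge \beta} = \iota_\alpha \circ \iota_\beta$ (with trivial signs, because $\xi$ has even total degree: $\mathcal{Q}$-degree $+1$ combined with polyvector-contraction-degree $-1$) identifies $\iota_{e^{\half \xi}}$ with $\exp(\half \iota_\xi)$. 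By part~(1), $\iota_\xi$ is an even derivation of the differential Gerstenhaber algebra commuting with $\mathcal{Q}$, and the exponential of such a derivation is automatically an automorphism, yielding part~(2).

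The main technical point to address is the parity bookkeeping that ensures $\iota_\xi$ is an even derivation and that $\iota_{e^{\half \xi}}$ coincides with $\exp(\half \iota_\xi)$; this is a routine computation once one has chosen local frames and written $\xi$ explicitly via Lemma~\ref{lem:local Atiyah}, after which everything else is a formal consequence of Lemma~\ref{onhafhankelijkheid}.
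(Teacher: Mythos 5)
Your proposal is correct and follows the paper's own route: the paper likewise obtains part~(1) by applying Lemma~\ref{onhafhankelijkheid} to the Fedosov dg Lie algebroid $\cF$ and the section $\trace\At^{\can}_\cF$ under the identifications $\sections{\Lambda^\bullet\cF}\cong\tot\big(\sections{\Lambda^\bullet L\dual}\otimes_R\verticalTpoly{\bullet}\big)$ and $\mathcal{Q}=\fedosova$, with part~(2) following by exponentiating the degree-zero derivation $\iota_{\trace\At^{\can}_\cF}$. Your explicit verification of the two hypotheses of the lemma (exactness of $\trace\At^{\can}_\cF$ under $d_\cF$ and $\mathcal{Q}$-closedness via equivariance of the trace) and the parity/nilpotency bookkeeping for the exponential are exactly the details the paper leaves implicit.
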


Let $\Phia$ be the composition of the contraction operator $e^{\half \trace \At^{\can}_\cF}$, 
which is an automorphism of the dgla $\tot(\sections{\Lambda^\bullet L\dual}\otimes_R\verticalTpoly{\bullet})$ according to Corollary~\ref{cor:premainthm}, 
with the $L_\infty$ morphism
\[ \Phi : \tot(\sections{\Lambda^\bullet L\dual}\otimes_R\verticalTpoly{\bullet}) 
\to \tot(\sections{\Lambda^\bullet L\dual}\otimes_R\verticalDpoly{\bullet}) \]
constructed in Proposition~\ref{thm:renoPhi}. It follows from Proposition~\ref{thm:Florence} that the first Taylor coefficient of the $L_\infty$ morphism $\Phia$ is 
\[ \Phia_1 = \Phi_1 \circ e^{\half \trace \At^{\can}_\cF}
= \hkr \circ (\ttodd_\cF^{\can})^{\frac{1}{2}}\circ e^{\half \trace \At^{\can}_\cF} 
= \hkr\circ (\todd_\cF^{\can})^{\frac{1}{2}}. \]

To conclude the proof, we need the following lemma which follows from a straightforward computation.

\begin{lemma}\label{lem:s-hkr}
The diagram 
\[ \begin{tikzcd}
\tot\big(\sections{\Lambda^\bullet L\dual}\otimes_R\verticalTpoly{\bullet}\big)
\arrow[r, "\hkr"] \arrow[d, "\etendu{\sigma}", swap] &
\tot\big(\sections{\Lambda^\bullet L\dual}\otimes_R\verticalDpoly{\bullet}\big)
\arrow[d, "\etendu{\sigma}"] \\ 
\tot\big(\sections{\Lambda^\bullet A\dual}\otimes_R\Tpoly{\bullet}\big) \arrow[r, "\hkr", swap] &
\tot\big(\sections{\Lambda^\bullet A\dual}\otimes_R\Dpoly{\bullet}\big) 
\end{tikzcd} \]
commutes.
\end{lemma}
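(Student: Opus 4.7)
The plan is to reduce the commutativity to a pointwise check on generators, exploiting two facts: $\hkr$ is given by the \emph{same} skew-symmetrization formula in the vertical and Chevalley--Eilenberg settings, and $\etendu{\sigma}$ is natural with respect to $\otimes_R$ and sends fiberwise constant vertical objects to their geometric counterparts.

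First I would observe that every map in the diagram is $\sections{\Lambda^\bullet L\dual}$-linear in the ``form'' part and multilinear over $\KK$ in the ``vector'' part, so it suffices to verify the commutation on pure elements of the form $\alpha \otimes (\hat{b}_1 \wedge \cdots \wedge \hat{b}_n)$, where $\alpha \in \sections{\Lambda^\bullet L\dual}$ and each $\hat{b}_i$ is the fiberwise constant vertical vector field on $\cM$ associated to a section $b_i \in \sections{B}$. On such an element, the top $\hkr$ produces
\[ \alpha\otimes\tfrac{1}{n!}\sum_{\sigma\in S_n}\sgn(\sigma)\,\hat{b}_{\sigma(1)}\otimes\cdots\otimes\hat{b}_{\sigma(n)} , \]
to which the right-hand $\etendu{\sigma}$ is then applied.

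Next I would recall from the construction of Proposition~\ref{sigma for general tensor} that $\etendu{\sigma}$ is the $\sections{\Lambda A\dual}$-linear extension of the fiberwise ``evaluation at zero'' map $\sigma$, composed with the restriction $\sections{\Lambda^\bullet L\dual}\to\sections{\Lambda^\bullet A\dual}$ dual to $A\hookrightarrow L$. On a fiberwise constant vertical object, $\sigma$ simply returns the underlying section of $B$ (viewed inside the appropriate target), and it distributes over $\otimes_R$. Consequently $\etendu{\sigma}$ acts termwise on each tensor factor in $\verticalDpoly{n-1}$, sending each $\hat{b}_i$ to $b_i$; the result coincides with the image obtained by first applying the left-hand $\etendu{\sigma}$ (which sends $\hat{b}_1\wedge\cdots\wedge\hat{b}_n$ to $b_1\wedge\cdots\wedge b_n$) and then the bottom $\hkr$. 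Hence the square commutes.

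The main obstacle, such as it is, is purely bookkeeping: one must confirm that $\etendu{\sigma}$ really is multiplicative with respect to the tensor product $\otimes_R$ defining $\verticalDpoly{k}=\verticalDpoly{0}\otimes_R\cdots\otimes_R\verticalDpoly{0}$, and that the restriction $\sections{\Lambda^\bullet L\dual}\onto\sections{\Lambda^\bullet A\dual}$ passes cleanly through the vector-valued parts of the two $\hkr$ maps. Both points are immediate from the definitions of $\etendu{\sigma}$ and of the tensor structures in the vertical and Chevalley--Eilenberg pictures, which is why the computation is merely ``straightforward.''
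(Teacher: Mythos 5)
Your argument is correct and is precisely the ``straightforward computation'' the paper declines to write out: both composites are determined by their values on elements $f\cdot(\hat{b}_1\wedge\cdots\wedge\hat{b}_n)$ with $f\in C^\infty(\cM)$ and $b_i\in\sections{B}$, the fiberwise $\hkr$ only reshuffles the $\Lambda^n B\to (SB)^{\otimes n}$ part without touching the coefficient, and $\etendu{\sigma}$ acts on that coefficient the same way on both sides of the square. Two small points deserve to be made explicit, though neither breaks the proof. First, a general element of $\sections{\Lambda^\bullet L\dual}\otimes_R\verticalTpoly{\bullet}$ carries a coefficient in $\sections{\Lambda^\bullet L\dual\otimes\hat{S}B\dual}$, not merely in $\sections{\Lambda^\bullet L\dual}$; on anything with a $\hat{S}^{\geqslant 1}B\dual$ or $q^\top(\Lambda^{\geqslant 1}B\dual)$ component both composites vanish because $\sigma$ kills the coefficient and $\hkr$ does not alter it, so the reduction to your generators is legitimate but should be stated in this form. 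Second, the right-hand $\etendu{\sigma}$ is not bare ``evaluation at zero'': by its definition it is $\sigma\otimes\pbw^{\otimes n}$, so the commutativity ultimately rests on the fact that after the fiberwise $\hkr$ each tensor slot lies in $S^1B=B$, where $\pbw:\sections{SB}\to\Dpoly{0}$ restricts to exactly the canonical inclusion $\sections{L/A}\hookrightarrow\Dpoly{0}$ used to define the bottom $\hkr$. Your phrase ``viewed inside the appropriate target'' is silently invoking this normalization of $\pbw$ in degree one; naming it closes the only real seam in the argument.
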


Together with Theorem~\ref{thm:HKR}, Theorem~\ref{thm:contractionTpol} and Theorem~\ref{thm:contractionDpol}, Lemma~\ref{lem:s-hkr} implies that the cochain map
\[ \hkr: \big(\tot(\sections{\Lambda^\bullet L\dual}\otimes_R\verticalTpoly{\bullet}), \schouten{Q}{\argument} \big) 
\to \big(\tot(\sections{\Lambda^\bullet L\dual}\otimes_R\verticalDpoly{\bullet}), \gerstenhaber{Q+m}{\argument} \big) \]
and thus $\Phia_1 = \hkr\circ (\todd_\cF^{\can})^{\frac{1}{2}}$ as well are quasi-isomorphisms.
The proof of Theorem~\ref{thm:main0} is thus complete.

\subsection{Proof of Theorem~\ref{thm:main}}

The following result is an immediate consequence of Proposition~\ref{sigma-atiyah-todd}.

\begin{corollary}
\label{cor:s-td}
The diagram  
\[ \begin{tikzcd}
\tot\big(\sections{\Lambda^{\bullet} L\dual}\otimes_R\verticalTpoly{\bullet}\big) \arrow[r, "(\todd^{\can}_\cF)^{\frac{1}{2}}"] \arrow[d, "\etendu{\sigma}", swap] 
& \tot\big(\sections{\Lambda^{\bullet} L\dual}\otimes_R\verticalTpoly{\bullet}\big) \arrow[d, "\etendu{\sigma}"] \\ 
\tot\big(\sections{\Lambda^{\bullet} A\dual}\otimes_R\Tpoly{\bullet}\big) \arrow[r, "(\todd^{\nabla}_{L/A})^{\frac{1}{2}}", swap] 
& \tot\big(\sections{\Lambda^{\bullet} A\dual}\otimes_R\Tpoly{\bullet}\big)
\end{tikzcd} \]
commutes.
\end{corollary}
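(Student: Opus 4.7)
The plan is to deduce this corollary directly from Proposition~\ref{sigma-atiyah-todd} combined with a simple multiplicativity/naturality property of the map $\etendu{\sigma}$. First, I would observe that Proposition~\ref{sigma-atiyah-todd} already gives $\etendu{\sigma}(\todd^{\can}_\cF) = \todd^\nabla_{L/A}$ (and the analogous statement for $\ttodd$). Because the half-power $(\todd^{\can}_\cF)^{1/2}$ is defined as a formal power series in the canonical Atiyah cocycle $\At^{\can}_\cF$, and $\etendu{\sigma}$ maps $\At^{\can}_\cF$ to $\atiyahcocycle$ compatibly with wedge products and determinants, the same argument gives $\etendu{\sigma}\bigl((\todd^{\can}_\cF)^{1/2}\bigr) = (\todd^\nabla_{L/A})^{1/2}$ as elements of $\bigoplus_{k\geqslant 0}\sections{\Lambda^k A\dual\otimes\Lambda^k A^\perp}$.

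Second, I would verify that contraction is intertwined by $\etendu{\sigma}$: for every homogeneous element $\phi \in \sections{\Lambda^k L\dual\otimes\hat{S}B\dual\otimes\Lambda^k B\dual}$ and every $\gamma \in \tot\bigl(\sections{\Lambda^\bullet L\dual}\otimes_R\verticalTpoly{\bullet}\bigr)$, one has
\[
\etendu{\sigma}(\phi\lrcorner\gamma) \;=\; \etendu{\sigma}(\phi)\lrcorner\etendu{\sigma}(\gamma).
\]
This is really a statement about $\sigma$ on the fiber level: $\sigma$ is the ``restriction to the base'' map (extracting the zeroth Taylor jet along the fibers of $L/A\to M$), and contraction between $\Lambda^k B\dual$ and $\Lambda^k B$ factors through this restriction since it is $\hat{S}B\dual$-multilinear. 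The $\sections{\Lambda^\bullet L\dual}$-component is then handled by the splitting used to define $\etendu{\sigma}$: the wedge product on the $\sections{\Lambda^\bullet A\dual}$-side is precisely the composition of the wedge on the $\sections{\Lambda^\bullet L\dual}$-side with projection via this splitting, which is plainly compatible with contraction.

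Combining these two ingredients gives commutativity of the square: for any $\gamma$ in the top-left corner,
\[
\etendu{\sigma}\bigl((\todd^{\can}_\cF)^{1/2}\lrcorner\gamma\bigr)
= \etendu{\sigma}\bigl((\todd^{\can}_\cF)^{1/2}\bigr)\lrcorner\etendu{\sigma}(\gamma)
= (\todd^\nabla_{L/A})^{1/2}\lrcorner\etendu{\sigma}(\gamma).
\]

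The only step that requires any genuine care is the naturality of contraction under $\etendu{\sigma}$, and this is routine given the explicit description of $\etendu{\sigma}$ as a composition of fiber-restriction and identifications induced by the splitting of $0\to A\to L\to B\to 0$. There is no serious obstacle; this is why the text labels the corollary ``an immediate consequence of Proposition~\ref{sigma-atiyah-todd}.''
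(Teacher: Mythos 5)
Your argument is correct and is precisely the reasoning the paper leaves implicit when it declares the corollary ``an immediate consequence of Proposition~\ref{sigma-atiyah-todd}'': the identity $\etendu{\sigma}\big((\todd^{\can}_\cF)^{\frac{1}{2}}\big)=(\todd^{\nabla}_{L/A})^{\frac{1}{2}}$ follows because the square root is a universal power series in the Atiyah cocycle and $\sigma$ is an algebra morphism (its kernel, the terms with positive $q^\top(B\dual)$- or $\hat{S}^{\geqslant 1}B\dual$-degree, is an ideal for the bigraded product), and the same multiplicativity gives the compatibility of $\etendu{\sigma}$ with contraction. Nothing further is needed.
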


According to Theorem~\ref{thm:contractionTpol}, we have a contraction
\[ \begin{tikzcd}[cramped]
\Big(\tot\big(\sections{\Lambda^\bullet A\dual}\otimes_R\Tpoly{\bullet}\big),d_A^{\Bott}\Big)
\arrow[r, "\etendu{\perturbed{\tau}}", shift left] &
\Big(\tot\big(\sections{\Lambda^\bullet L\dual}\otimes_R\verticalTpoly{\bullet}\big), \liederivative{Q}\Big)
\arrow[l, "\etendu{\sigma}", shift left]
\arrow["\etendu{\perturbed{h}}", loop,out=5,in=-5,looseness = 3]
\end{tikzcd} .\]
The r.h.s.\ $\tot\big(\sections{\Lambda^\bullet L\dual}\otimes_R\verticalTpoly{\bullet}\big)$ 
is a dgla while the l.h.s.\ $\tot\big(\sections{\Lambda^\bullet A\dual}\otimes_R \Tpoly{\bullet}\big)$ 
inherits an $L_\infty$ structure from the dgla structure of the r.h.s.\ by homotopy transfer theorem of $L_\infty$ algebras \cite{MR3276839,MR1932522,MR1950958, arXiv:1705.02880,MR2361936,arXiv:1807.03086, MR3318161,MR3323983}.

\begin{lemma}[Homotopy transfer of $L_\infty$ structures {\cite[Theorem~1.9]{arXiv:1705.02880}}]
\label{lem:ext} 
Let $(C,\eth)$ and $(K,d)$ be two cochain complexes and let
\[ \begin{tikzcd} 
(C,\eth) \arrow[r, "\tau", shift left] & (K,d) \arrow[l, "\sigma", shift left] \arrow[loop right, "h"] 
\end{tikzcd} \] 
be a contraction of $(K,d)$ onto $(C,\eth)$.
Given an $L_\infty$ algebra structure on $K$ (with $d$ as unary bracket), 
there exists a `transferred' $L_\infty$ algebra structure on $C$ and a pair of $L_\infty$ quasi-i\-so\-mor\-phisms 
$T:C\to K$ and $\Sigma:K\to C$ having the chain maps $\tau$ and $\sigma$ as respective first Taylor coefficients. 
\end{lemma}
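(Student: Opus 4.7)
The plan is to use the homological perturbation lemma at the level of the cocommutative coalgebras encoding the $L_\infty$ structures. An $L_\infty$ structure on a graded space $W$ is equivalent to a degree $+1$ coderivation $\mathcal{Q}$ on the cofree cocommutative coalgebra over $W$ (after d\'ecalage, the exterior coalgebra $\Lambda(W)$ used in the paper) satisfying $\mathcal{Q}^2=0$ and $\mathcal{Q}(1)=0$. Its linear part is the coderivation induced by the given differential on $W$, and its higher Taylor coefficients assemble into a ``perturbation'' $\delta$ of that linear part. The strategy is to transfer $\delta$ along a coalgebra-level contraction built from the given contraction on $(K,d)$, $(C,\eth)$.

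First, I would extend $\tau$ and $\sigma$ functorially to morphisms of cocommutative coalgebras $\tilde\tau\colon \Lambda(C)\to\Lambda(K)$ and $\tilde\sigma\colon\Lambda(K)\to\Lambda(C)$ intertwining the coderivations $\tilde\eth$ and $\tilde d$ induced by $\eth$ and $d$. The homotopy requires a symmetrized extension $\tilde h\colon\Lambda(K)\to\Lambda(K)[-1]$ given by an Eulerian-type formula, chosen so that $(\tilde\tau,\tilde\sigma,\tilde h)$ is still a contraction (the side conditions $\tilde\sigma\tilde h=0$, $\tilde h\tilde\tau=0$, $\tilde h^2=0$ being the delicate part) and so that $\tilde h$ is compatible with the coalgebra structure in the appropriate coderivation-homotopy sense. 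I would then apply the homological perturbation lemma to this extended contraction with perturbation $\delta$: because $\delta$ strictly raises the arity filtration on $\Lambda(K)$, the series
\[ \tilde\eth' \;=\; \tilde\eth + \tilde\sigma(1-\delta\tilde h)^{-1}\delta\tilde\tau,\qquad T \;=\; (1-\tilde h\delta)^{-1}\tilde\tau,\qquad \Sigma\;=\;\tilde\sigma(1-\delta\tilde h)^{-1} \]
terminate on every element and produce a new contraction of $\bigl(\Lambda(K),\tilde d+\delta\bigr)$ onto $\bigl(\Lambda(C),\tilde\eth'\bigr)$ with $(\tilde\eth')^2=0$ automatically. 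The coalgebra compatibility secured above then guarantees that $\tilde\eth'$ is a coderivation and that $T,\Sigma$ are coalgebra morphisms, so they encode the transferred $L_\infty$ algebra on $C$ and two $L_\infty$ morphisms. Projecting onto cogenerators yields their Taylor coefficients; the linear-in-source components pick up only the $k=0$ term of the geometric series, which is $\tau$ and $\sigma$ respectively.

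The main obstacle is the symmetrization of $\tilde h$: the naive tensor extension of a chain homotopy does not descend to the cocommutative quotient, so one must construct $\tilde h$ by an averaging procedure and verify that the side conditions (or an acceptable weakening thereof, as in Markl's variant of the HPL) persist. Once this combinatorial issue is settled, everything else is formal. An alternative route, which I would mention but not pursue, is to avoid the HPL entirely and write the transferred brackets and the Taylor coefficients of $T$ and $\Sigma$ as explicit finite sums over rooted trees --- leaves labelled by $\tau$, internal edges by $h$, internal vertices by the higher brackets of $K$, and the root by $\id$ (for $T$) or $\sigma$ (for $\Sigma$); the $L_\infty$ relations and morphism identities then follow from a standard tree-grouping combinatorial argument. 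For the detailed verification, in either form, I would defer to the reference cited in the statement.
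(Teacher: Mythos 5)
The paper offers no proof of this lemma at all---it is quoted verbatim from the cited reference---so there is nothing internal to compare against; your sketch (symmetrized coalgebra-level contraction, perturbation by the higher brackets, local finiteness of the geometric series via the word-length filtration, with the tree-sum formulas as the explicit alternative) is precisely the standard argument carried out in that reference. You correctly isolate the one genuinely delicate point, namely that the homotopy must be symmetrized to descend to the cocommutative coalgebra while retaining the side conditions, so the proposal is sound as an outline and consistent with the paper's (purely citational) treatment.
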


Lemma~\ref{lem:ext} asserts the existence of an $L_\infty$ quasi-isomorphism $\Tau$ having $\etendu{\perturbed{\tau}}$ as first Taylor coefficient.

According to Theorem~\ref{thm:contractionDpol}, there is also a contraction
\[ \begin{tikzcd}[cramped]
\Big(\tot\big(\sections{\Lambda^\bullet A\dual}\otimes_R\Dpoly{\bullet}\big),\dau + \dHH \Big)  
\arrow[shift left]{r}{ \etendu{\perturbed{\tau}}} &
\Big(\tot\big(\sections{\Lambda^\bullet L\dual}\otimes_R\verticalDpoly{\bullet}\big), \gerstenhaber{Q + m}{\argument}\Big)
\arrow[shift left]{l}{ \etendu{\sigma} }
\arrow["\etendu{\perturbed{h}}", loop, out=5,in=-5,looseness = 3] 
\end{tikzcd} .\]
 
Again, the l.h.s.\ inherits an $L_\infty$ structure from the dgla structure of the r.h.s.\ by homotopy transfer. 
Lem\-ma~\ref{lem:ext} asserts the existence of an $L_\infty$ quasi-isomorphism $\etendu{\Sigma}$ having $\etendu{\sigma}$ as first Taylor coefficient. 
 
Consider the $L_\infty$ quasi-isomorphism 
\[ \Phii: \tot\big(\sections{\Lambda^\bullet A^\vee}\otimes_R\Tpoly{\bullet}\big)
\to \tot\big(\sections{\Lambda^\bullet A^\vee}\otimes_R\Dpoly{\bullet}\big) \]
obtained as the composition 
\[ \Phii=\etendu{\Sigma}\circ\Phia\circ\Tau \]
of the $L_\infty$ quasi-isomorphism  
$\Phia : \tot\big(\sections{\Lambda^\bullet L^\vee}\otimes_R\verticalTpoly{\bullet}\big)
\to \tot\big(\sections{\Lambda^\bullet L^\vee}\otimes_R\verticalDpoly{\bullet}\big)$ 
of Theorem~\ref{thm:main0} with the $L_\infty$ quasi-isomorphisms $\Tau$ and $\etendu{\Sigma}$: 
\[ \begin{tikzcd}
\tot\big(\sections{\Lambda^\bullet L\dual}\otimes_R\verticalTpoly{\bullet}\big) 
\arrow[r, "\Phia"] & 
\tot\big(\sections{\Lambda^\bullet L\dual}\otimes_R\verticalDpoly{\bullet}\big) 
\arrow[d, "\etendu{\Sigma}"] \\ 
\tot\big(\sections{\Lambda^\bullet A\dual}\otimes_R\Tpoly{\bullet}\big) 
\arrow[u, "\Tau"] \arrow[r, "\Phii", swap, dashed] & 
\tot\big(\sections{\Lambda^\bullet A\dual}\otimes_R\Dpoly{\bullet}\big) 
.\end{tikzcd} \]
Its first Taylor coefficient is 
\begin{align*}
\Phii_1 =&\ (\etendu{\Sigma})_1 \circ \Phia_1 \circ(\Tau)_1 & \\
=&\ \etendu{\sigma} \circ (\hkr\circ (\todd_\cF^{\can})^{\frac{1}{2}}) \circ \etendu{\perturbed{\tau}} && \text{(by Theorem~\ref{thm:main0})} \\
=&\ \hkr\circ \etendu{\sigma}\circ (\todd_\cF^{\can})^{\frac{1}{2}} \circ \etendu{\perturbed{\tau}} && \text{(by Lemma~\ref{lem:s-hkr})} \\
=&\ \hkr\circ (\todd^\nabla_{L/A})^{\frac{1}{2}} \circ \etendu{\sigma} \circ \etendu{\perturbed{\tau}} && \text{(by Corollary~\ref{cor:s-td})} \\
=&\ \hkr\circ (\todd^\nabla_{L/A})^{\frac{1}{2}} & 
\end{align*} 
This concludes the proof of Theorem~\ref{thm:main}. 

\section{Applications}
\label{applications}

\subsection{Complex manifolds}

Let $X$ be a complex manifold. 
Then $T_X\otimes\CC\cong T^{0,1}_X\bowtie T^{1,0}_X$ is a matched pair of Lie algebroids 
(see Section~\ref{section:dgla} or~\cite{MR1460632} for the definition of matched pairs).
Hence $(T_X\otimes\CC,T^{0,1}_X)$ is a Lie pair with quotient $T^{1,0}_X$. 
The flat $T^{0,1}_X$-connection on $T^{1,0}_X$ --- the Bott connection ---  
encodes the holomorphic vector bundle structure of $T^{1,0}_X$; 
the sections of $T^{1,0}_X$ which are flat w.r.t.\ the $T^{0,1}_X$-connection 
are precisely the holomorphic sections of $T^{1,0}_X$. 
In other words, the Chevalley--Eilenberg differential associated with the Bott representation of the Lie pair $(T_X\otimes\CC,T^{0,1}_X)$ 
is the Dolbeault operator
\[ \bar{\partial}: \Omega^{0, \bullet }(T^{1,0}_X)\to \Omega^{0, \bullet+1 }(T^{1,0}_X) .\]

\subsubsection{Atiyah and Todd classes of complex manifolds}

A torsion-free $T_X\otimes\CC$-connection $\nabla$ on $T^{1,0}_X$ extending the Bott $T^{0,1}_X$-connection 
is necessarily the sum $\nabla=\bar{\partial}+\nabla^{1,0}$ --- more precisely $d^\nabla=\bar{\partial}+d^{\nabla^{1,0}}$ 
--- of the Dolbeault operator and a torsion-free $T^{1,0}_X$-connection $\nabla^{1,0}$ on $T^{1,0}_X$, i.e.\ a $\CC$-bilinear map 
$\nabla^{1,0}:\sections{T^{1,0}_X}\times\sections{T^{1,0}_X}\to\sections{T^{1,0}_X}$ 
satisfying the usual connection axioms and the condition 
\[ \nabla_X Y-\nabla_Y X = \lie{X}{Y} ,\quad\forall\; X,Y\in\sections{T^{1,0}_X} .\] 

The Atiyah cocycle associated with such a connection $\nabla$ 
is the element $\atiyahcocycle\in\OO^{1,1}\big(\End(T_X^{1,0})\big)$ defined by 
\[ \atiyahcocycle(a; b) = \nabla_a \nabla_{b} - \nabla_b \nabla_a - \nabla_{\lie{a}{b}} ,\quad\forall\; a\in\sections{T_{X}^{0,1}},\; b\in\sections{T_X^{1,0}} .\]
Its cohomology class $\alpha_X\in H^{1,1}\big(X,\End(T_X^{1,0})\big)$ is independent of the choice of the connection $\nabla$ 
and is precisely the Atiyah class of the complex manifold $X$. 

The Todd cocycle associated with the connection $\nabla$ is 
\[ \todd^\nabla_{X}=\det\left(\frac{\atiyahcocycle}{1-e^{-\atiyahcocycle}}\right)
\in\bigoplus_{k=0}^\infty\Omega^{k,0}\big(\Lambda^k(T_X^{0,1})\dual\big) 
\cong\bigoplus_{k=0}^\infty\Omega^{k,k}(X) .\]
Its cohomology class 
\[ \Todd_{X}=\det\left(\frac{\atiyahclass_{X}}{1-e^{-\atiyahclass_{X}}}\right)
\in\bigoplus_{k=0}^\infty H^{k}\big(X,\Lambda^k(T_X^{0,1})\dual\big) 
\cong\bigoplus_{k=0}^\infty H^{k,k}(X) .\]
is independent of the choice of the connection $\nabla$ 
and is called the Todd class of the complex manifold $X$.

\subsubsection{Polyvector fields and polydifferential operators on 
complex manifolds}

Since $T^{0,1}_X\bowtie T^{1,0}_X$ is a matched pair, 
it follows from Corollary~\ref{Bari} that $\OO^{0,\bullet}\big(X,\Tpoly{\bullet}(X)\big)$ 
is a differential Gerstenhaber algebra with the Dolbeault operator 
$\bar{\partial}:\OO^{0,\bullet}\big(X,\Tpoly{\bullet}(X)\big)\to\OO^{0,\bullet+1}\big(X,\Tpoly{\bullet}(X)\big)$
as differential, the wedge product as associative multiplication 
and the natural extension (see Equation~\eqref{eq:Brussels1})
\begin{multline*} [\xi_1\otimes b_1, \xi_2\otimes b_2]
=\xi_1\wedge \xi_2 \otimes [b_1, b_2]
+\xi_1\wedge \nabla^{\Bott}_{b_1} \xi_2 \otimes b_2
- \nabla^{\Bott}_{b_2} \xi_1\wedge \xi_2 \otimes b_1 ,\\ 
\forall\; \xi_1,\xi_2\in\OO^{0,\bullet}(X),\; b_1,b_2\in\sections{T_X^{1,0}} 
,\end{multline*}
of the Lie bracket on $\sections{T_X^{1,0}}$ as graded Lie bracket. 

Similarly, $\OO^{0,\bullet}\big(X,\Dpoly{\bullet}(X)\big)$ is a dgla 
and its cohomology is a Gerstenhaber algebra (Corollary~\ref{thm:Naples}). 
Here the differential is $\bar{\partial}+\dHH$, 
where $\bar{\partial}:\OO^{0,\bullet}(X,\Dpoly{\bullet}(X))\to\OO^{0,\bullet+1}(X,\Dpoly{\bullet}(X))$ is the Dolbeault operator, 
while the associative multiplication and the graded Lie bracket are given by Proposition~\ref{pro:zurich}.
 
Note that $\big(\OO^{0,\bullet}(X,\Tpoly{\bullet}(X)),\bar{\partial}\big)$ is the Dolbeault resolution of the complex of sheaves 
\[ 0 \to \mathcal{O}_X \xto{0} \Theta_X \xto{0} \Lambda^2\Theta_X \xto{0} \Lambda^3\Theta_X \to \cdots \] 
of holomorphic polyvector fields over $X$, while $\big(\OO^{0,\bullet}(X,\Dpoly{\bullet}(X)),\bar{\partial}+\dHH\big)$ 
is the Dolbeault resolution of the complex of sheaves 
\[ 0 \to \mathcal{O}_X \to \mathscr{D}_X \xto{\hochschild} \mathscr{D}_X^{\otimes 2} \xto{\hochschild} \mathscr{D}_X^{\otimes 3} \to \cdots \] 
of holomorphic polydifferential operators over $X$.

As a result, for the Lie pair $(L=T_X\otimes\CC,A=T^{0,1}_X)$, the cohomology 
\[ \hypercohomology_{\CE}^{\bullet}(A,\Tpoly{\bullet})=\hypercohomology(\big(\OO^{0,\bullet}(X,\Tpoly{\bullet}(X)),\bar{\partial}\big) \] 
is isomorphic to the sheaf cohomology $\hypercohomology_{\sheaf}^\bullet(X,\Lambda^\bullet\Theta_X)$
while the cohomology
\[ \hypercohomology_{\CE}^{\bullet}(A,\Dpoly{\bullet})=\hypercohomology^{\bullet}\big(\OO^{0,\bullet}(X,\Dpoly{\bullet}(X)),\bar{\partial}+\dHH\big) \] 
is isomorphic to the Hochschild cohomology $HH^{\bullet}(X)\cong\Ext_{\cO_{X\times X}}^\bullet(\cO_{\Delta},\cO_{\Delta})$ 
(see~\cite{MR2141853, MR2923978, MR2732577}) of the complex manifold $X$.

\subsubsection{Formality theorem for complex manifolds}

Theorems~\ref{thm:main} and~\ref{KD-thm} imply the following two theorems. 

\begin{theorem}[Formality theorem for complex manifolds]
Let $X$ be a complex manifold. Choose a torsion-free $T^{1, 0}_X$-connection $\nabla^{1,0}$ on $T^{1, 0}_X$.
There exists an $L_\infty$ quasi-isomorphism
\[ \Phii: \OO^{0,\bullet}\big(X,\Tpoly{\bullet}(X)\big) \to \OO^{0,\bullet}\big(X,\Dpoly{\bullet}(X)\big) \] 
with first Taylor coefficient $\Phii_1$ satisfying the following two properties:
\begin{itemize}
\item $\Phii_1$ preserves the associative algebra structures up to homotopy;
\item $\Phii_1=\hkr\circ\big(\todd^{\bar{\partial}+\nabla^{1,0}}_X\big)^{\frac{1}{2}}$, 
where the square root of the Todd cocycle $\todd^{\bar{\partial}+\nabla^{1,0}}_X\in\bigoplus_{k=0}\OO^{k, k}(X)$ 
acts on $\OO^{0,\bullet}\big(X,\Tpoly{\bullet}(X)\big)$ by contraction.
\end{itemize}
\end{theorem}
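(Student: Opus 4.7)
The plan is to deduce this statement as a direct application of the main formality theorem (Theorem~\ref{thm:main}) to the specific Lie pair $(L,A)=(T_X\otimes\CC,\; T^{0,1}_X)$ naturally associated with the complex manifold $X$. First I would verify the identifications that make the general Lie pair machinery specialize to the complex-geometric setting: the quotient $L/A$ is canonically isomorphic to $T^{1,0}_X$, the Chevalley--Eilenberg complex $\sections{\Lambda^\bullet A^\vee}$ of $A=T^{0,1}_X$ is exactly the Dolbeault complex $\OO^{0,\bullet}(X)$, and the Bott $A$-representation on $L/A$ is precisely the Dolbeault operator $\bar{\partial}$ making $T^{1,0}_X$ a holomorphic vector bundle. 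Consequently,
\[
\tot\big(\sections{\Lambda^\bullet A^\vee}\otimes_R\Tpoly{\bullet}\big)\cong\OO^{0,\bullet}\big(X,\Tpoly{\bullet}(X)\big),\qquad
\tot\big(\sections{\Lambda^\bullet A^\vee}\otimes_R\Dpoly{\bullet}\big)\cong\OO^{0,\bullet}\big(X,\Dpoly{\bullet}(X)\big),
\]
as already noted in the introduction.

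Next, I would exhibit a torsion-free $L$-connection on $L/A$ extending the Bott representation and compute the associated Atiyah/Todd cocycle. The natural choice is $\nabla=\bar\partial+\nabla^{1,0}$, i.e., the covariant derivative characterized by $\nabla_{\bar\xi}s=\bar{\partial}_{\bar\xi}s$ for $\bar\xi\in\sections{T^{0,1}_X}$ and $\nabla_\xi s=\nabla^{1,0}_\xi s$ for $\xi\in\sections{T^{1,0}_X}$. One checks that this is a $\CC$-linear $L$-connection on $L/A\cong T^{1,0}_X$, that the component on $A$ is the Bott representation, and that torsion-freeness of $\nabla^{1,0}$ together with the compatibility of $\bar\partial$ with the holomorphic bracket imply that $\torsion=0$. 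The Atiyah cocycle $\atiyahcocycle\in\sections{A^\vee\otimes A^\perp\otimes\End(L/A)}$ then reduces, under the identifications above, to an element of $\OO^{1,1}(X,\End T^{1,0}_X)$ that coincides with the classical Atiyah cocycle of $X$ recalled in the paragraph preceding the theorem. Hence $\todd^{\nabla}_{L/A}$ coincides with $\todd^{\bar{\partial}+\nabla^{1,0}}_X$ in $\bigoplus_k\OO^{k,k}(X)$.

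With these identifications in place, the conclusion follows immediately by invoking Theorem~\ref{thm:main}: it yields an $L_\infty$ quasi-isomorphism
\[
\Phii:\tot\big(\sections{\Lambda^\bullet A^\vee}\otimes_R\Tpoly{\bullet}\big)\longrightarrow\tot\big(\sections{\Lambda^\bullet A^\vee}\otimes_R\Dpoly{\bullet}\big),
\]
whose first Taylor coefficient preserves associative algebra structures up to homotopy and equals $\hkr\circ(\todd^\nabla_{L/A})^{1/2}$. Transporting through the two canonical isomorphisms above, we obtain the desired quasi-isomorphism $\Phii:\OO^{0,\bullet}(X,\Tpoly{\bullet}(X))\to\OO^{0,\bullet}(X,\Dpoly{\bullet}(X))$ with $\Phii_1=\hkr\circ(\todd^{\bar{\partial}+\nabla^{1,0}}_X)^{1/2}$.

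The only substantive step is the compatibility verification in the second paragraph: matching the abstract Atiyah cocycle of the Lie pair with the classical Atiyah cocycle of the holomorphic tangent bundle. The rest is formal specialization. I expect no serious obstacle beyond keeping track of the $(p,q)$-bidegrees and the identification $A^\perp\cong (T^{1,0}_X)^\vee$, which makes the contraction action of the Todd cocycle on $\OO^{0,\bullet}(X,\Tpoly{\bullet}(X))$ agree with the classical description.
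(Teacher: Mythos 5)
Your proposal is correct and follows the same route as the paper: the paper likewise obtains this theorem by specializing Theorem~\ref{thm:main} to the Lie pair $(T_X\otimes\CC,\,T^{0,1}_X)$, after recording exactly the identifications you list (Bott connection $=\bar{\partial}$, torsion-free extensions $=\bar{\partial}+\nabla^{1,0}$, Atiyah/Todd cocycles $=$ the classical ones). The only point the paper makes explicit that you leave implicit is that $T^{0,1}_X\bowtie T^{1,0}_X$ is a matched pair, which is what guarantees (via Corollary~\ref{Bari}, Corollary~\ref{thm:Naples} and the matched-pair propositions) that the transferred $L_\infty$ structures on the two Dolbeault complexes are the standard dgla structures appearing in the statement.
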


\begin{theorem}[Kontsevich-Duflo theorem for complex manifolds]
For every complex manifold $X$, the composition
\[ \hkr\circ(\Todd_X)^{\frac{1}{2}}:\hypercohomology_{\sheaf}^\bullet(X,\Lambda^\bullet T_X)\to HH^\bullet(X) \]
is an isomorphism of Gerstenhaber algebras. 
It is understood that the square root of the Todd class 
\[ \Todd_X\in\bigoplus_{k=0}H^{k, k}(X)\cong H_{\sheaf}^k(X,\Omega_X^k) \]
acts on $\hypercohomology_{\sheaf}^\bullet(X,\Lambda^\bullet T_X)$ by contraction. 
\end{theorem}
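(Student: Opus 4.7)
The plan is to deduce this as a special case of Theorem~\ref{KD-thm} applied to the Lie pair $(L,A)=(T_X\otimes\CC,\,T_X^{0,1})$, whose quotient is $L/A\cong T_X^{1,0}$. The subsection preceding the statement has essentially assembled every required ingredient, so the proof amounts to carefully matching each piece of the abstract machinery with its classical counterpart on $X$.

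First I would unpack the identifications already noted in the excerpt:
\begin{enumerate}
\item The Bott $A$-connection on $L/A$ coincides with the $\bar{\partial}$-operator giving $T_X^{1,0}$ its holomorphic structure, so the Chevalley--Eilenberg cochains
$\tot\big(\sections{\Lambda^\bullet A^\vee}\otimes_R\Tpoly{\bullet}\big)$
become the Dolbeault complex $\big(\Omega^{0,\bullet}(X,\Lambda^{\bullet+1}T_X^{1,0}),\bar{\partial}\big)$, and similarly
$\tot\big(\sections{\Lambda^\bullet A^\vee}\otimes_R\Dpoly{\bullet}\big)$
becomes the Dolbeault resolution $\big(\Omega^{0,\bullet}(X,\Dpoly{\bullet}(X)),\bar{\partial}+\hochschild\big)$ of holomorphic polydifferential operators.
\item Taking hypercohomology therefore yields
$\hypercohomology^\bullet_{\CE}(A,\Tpoly{\bullet})\cong\hypercohomology_{\sheaf}^\bullet(X,\Lambda^\bullet T_X)$
and
$\hypercohomology^\bullet_{\CE}(A,\Dpoly{\bullet})\cong HH^\bullet(X)$;
these are classical facts, the second being the Gerstenhaber--Shack/\v{C}ald\v{a}raru identification of Hochschild cohomology with $\Ext_{\cO_{X\times X}}^\bullet(\cO_\Delta,\cO_\Delta)$.
\end{enumerate}

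Next I would match the secondary invariants. The key observation is that a torsion-free $L$-connection on $L/A$ extending the Bott representation is necessarily of the form $\bar{\partial}+\nabla^{1,0}$ for a torsion-free $(1,0)$-connection $\nabla^{1,0}$ on $T_X^{1,0}$; such $\nabla^{1,0}$ exist (e.g. the $(1,0)$-part of any Hermitian connection). A direct computation of the Atiyah cocycle for this $\nabla$ then yields the classical Atiyah cocycle in $\Omega^{0,1}(T_X^{1,0\,\vee}\otimes\End T_X^{1,0})$, and after passing to cohomology $\alpha_{L/A}=\alpha_X\in H^{1,1}(X,\End T_X^{1,0})$. Taking the standard determinant series of $\alpha_{L/A}/(1-e^{-\alpha_{L/A}})$ then identifies $\Todd_{L/A}$ with the classical Todd class $\Todd_X\in\bigoplus_k H^k(X,\Omega_X^k)$. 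Finally, the Hochschild--Kostant--Rosenberg map $\hkr$ of Section~\ref{section:HKR}, by its very definition as the skew-symmetrization of the inclusion $\sections{L/A}\hookrightarrow\Dpoly{0}$, reduces on cohomology to the classical Gerstenhaber--Shack HKR isomorphism.

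With these identifications in place, Theorem~\ref{KD-thm} applied verbatim to $(T_X\otimes\CC,T_X^{0,1})$ asserts precisely that $\hkr\circ(\Todd_X)^{1/2}$ is an isomorphism of Gerstenhaber algebras from $\hypercohomology_{\sheaf}^\bullet(X,\Lambda^\bullet T_X)$ to $HH^\bullet(X)$, where the contraction of a $(k,k)$-class against a polyvector field is unambiguously defined via the natural pairing $\Omega_X^k\otimes\Lambda^kT_X\to\cO_X$. I expect no genuine obstacle: the one place that deserves care is checking that the canonical connection entering $\ttodd^{\can}_\cF$ on the Fedosov side transports (via Proposition~\ref{sigma-atiyah-todd}) to a connection on $L/A$ whose induced Todd class really is the holomorphic Todd class, rather than some twisted variant. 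This is a bookkeeping verification comparing the formula \eqref{twisted Todd} with the identification of $\alpha_{L/A}$ above; it is routine but is the step most likely to hide a sign or normalization issue.
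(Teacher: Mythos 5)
Your proposal is correct and follows essentially the same route as the paper: the theorem is obtained by specializing Theorem~\ref{KD-thm} to the Lie pair $(T_X\otimes\CC,\,T_X^{0,1})$, after the identifications of the Bott connection with $\bar{\partial}$, of the two hypercohomologies with $\hypercohomology_{\sheaf}^\bullet(X,\Lambda^\bullet T_X)$ and $HH^\bullet(X)$, and of the Atiyah/Todd classes of the pair with the classical ones --- exactly the content of the subsections preceding the statement. The one caveat you flag (matching the canonical Fedosov connection's Todd cocycle with the holomorphic Todd class) is handled in the paper at the level of the general Lie pair by Proposition~\ref{sigma-atiyah-todd}, so no additional verification is needed in this application.
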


The Kontsevich-Duflo theorem for complex manifolds is due 
to Kontsevich \cite{MR2062626} (for associative algebra structures). 
See~\cite{MR2646112} for a detailed proof including Gerstenhaber algebra structures.
See~\cite{MR2348030,MR3311762} for further developments. 

\subsection{Lie algebra pairs}

A Lie algebra pair is a Lie pair $(\frakg,\frakh)$, where $\frakg$ is a finite-dimensional Lie algebra 
and $\frakh$ is a Lie subalgebra of $\frakg$.

\subsubsection{Atiyah and Todd classes of Lie algebra pairs}

A $\frakg$-connection on $\frakg/\frakh$ is simply a bilinear map 
$\nabla:\frakg\times\frakg/\frakh\to\frakg/\frakh$. 
Its torsion is the linear map $T^\nabla:\Lambda^2\frakg\to\frakg/\frakh$ defined by 
\[ T^\nabla(X,Y)=\nabla_Xq(Y) -\nabla_Yq(X) -q(\lie{X}{Y}) ,\quad\forall\; X,Y\in\frakg .\] 
The map $q:\frakg\to\frakg/\frakh$ is the canonical projection. 

Let $\nabla$ be a $\frakg$-connection on $\frakg/\frakh$ which extends the Bott $\frakh$-connection: 
$\nabla^{\Bott}_{a}q(l)=q([a,l])$, for all $a\in\frakh$ and $l\in\frakg$. 
The Atiyah cocycle associated with $\nabla$ is the bilinear map 
\[ \atiyahcocycle: \frakh \otimes \frakg/\frakh \to \End(\frakg/\frakh) \] 
defined by 
\[ \atiyahcocycle(a; q(l)) = \nabla_a \nabla_{l} - \nabla_l \nabla_a - \nabla_{[a,l]}, \qquad\forall\; a\in\frakh,\; l\in\frakg .\] 
According to Proposition~\ref{Thm:atiyahclass}, the element 
$\atiyahcocycle\in\frakh\dual\otimes\frakh^\perp\otimes\End(\frakg/\frakh)$ 
is a Chevalley--Eilenberg 1-cocycle for the Lie algebra $\frakh$ 
with values in the $\frakh$-module $\frakh^\perp\otimes\End(\frakg/\frakh)$. 
Its cohomology class $\atiyahclass_{\frakg/\frakh}\in H_{\CE}^1\big(\frakh,\frakh^\perp\otimes\End(\frakg/\frakh)\big)$ 
is independent of the choice of $\frakg$-connection $\nabla$ and is called the Atiyah class of the Lie algebra pair $(\frakg,\frakh)$.

The Todd cocycle of the Lie algebra pair $(\frakg,\frakh)$ associated with the connection $\nabla$ is the Chevalley-Eilenberg cocycle
\[ \todd_{\frakg/\frakh}^{\nabla}=\det\left(\frac{\atiyahcocycle}{1-e^{-\atiyahcocycle}}\right)
\in \bigoplus_{k=0} \Lambda^k \frakh\dual\otimes \Lambda^k \frakh^\perp .\] 
The Todd class of the Lie algebra pair $(\frakg ,\frakh )$ is the corresponding Chevalley-Eilenberg cohomology class
\[ \Todd_{\frakg/\frakh} =\det\left(\frac{\atiyahclass_{\frakg/\frakh}}{1-e^{-\atiyahclass_{\frakg/\frakh}}}\right)
\in \bigoplus_{k=0} H_{\CE}^{k}(\frakh, \Lambda^k \frakh^\perp ) .\]

\subsubsection{Polyvector fields and polydifferential operators on Lie algebra pairs}

For a Lie algebra pair $(\frakg,\frakh)$, it follows from Corollary~\ref{Bari} and Corollary~\ref{thm:Naples} that both 
$\tot\big(\Lambda^\bullet\frakh\dual\otimes\Lambda^{\bullet+1}(\frakg/\frakh)\big)$ 
and $\tot\big(\Lambda^\bullet\frakh\dual\otimes\Big(\frac{\enveloping{\frakg}}{\enveloping{\frakg}\cdot\frakh}\Big)^{\otimes\bullet+1}\big)$ 
carry $L_\infty$ algebra structures unique up to  $L_\infty$ isomorphisms.
(Whenever $\frakg=\frakh\bowtie\frakm$ is a matched pair, $\frac{\enveloping{\frakg}}{\enveloping{\frakg}\cdot\frakh}$ 
is isomorphic to $\enveloping{\frakm}$ and the two $L_\infty$ algebras above are actually differential graded Lie algebras.) 

The quotient $\frakg / \frakh$ of a Lie algebra pair $(\frakg,\frakh)$ is an $\frakh$-module with the action 
\[ a \cdot q(l) = \nabla^{\Bott}_{a} q(l) = q([a,l]) ,\quad\forall\; a\in\frakh,\; l\in\frakg .\]
Again, $q:\frakg\to\frakg/\frakh$ is the canonical projection. 
This action extends by the Leibniz rule to an $\frakh$-action on $\Tpoly{\bullet}=\Lambda^{\bullet+1}(\frakg/\frakh)$.
Let $d_{\frakh}^{\Bott}:\Lambda^p \frakh\dual\otimes
\Lambda^{q +1} (\frakg/\frakh) \to \Lambda^{p+1} \frakh \dual \otimes
\Lambda^{q +1} (\frakg/\frakh)$ be the corresponding Chevalley--Eilenberg differential.
According to Corollary~\ref{Bari}, the space $\tot\big(\Lambda^\bullet \frakh\dual \otimes \Lambda^{\bullet+1}(\frakg/\frakh)\big)$ 
carries an $L_\infty$ algebra structure, unique up to $L_\infty$ isomorphism, 
with $d_{\frakh}^{\Bott}$ as unary bracket. 
Furthermore, when endowed with the wedge product, the hypercohomology $\hypercohomology_{\CE}^\bullet\big(\frakh,\Lambda^{\bullet+1}(\frakg/\frakh)\big)$ 
becomes a Gerstenhaber algebra.

Similarly, the Lie algebra $\frakh$ acts on $\Dpoly{0} = \frac{\enveloping{\frakg}}{\enveloping{\frakg} \cdot \frakh}$ 
by left multiplication and henceforth it acts on $\Dpoly{\bullet}=\Big(\frac{\enveloping{\frakg}}{\enveloping{\frakg}\cdot\frakh}\Big)^{\otimes\bullet+1}$ as well. 
The Chevalley--Eilenberg differential associated with this action is denoted 
\[ \dau:\Lambda^p \frakh\dual\otimes\Dpoly{q}\to \Lambda^{p+1} \frakh\dual \otimes\Dpoly{q} .\] 
Meanwhile, the Hochschild differential $\hochschild:\Dpoly{q} \to \Dpoly{q+1}$ extends to 
\[ \dHH:\Lambda^p\frakh\dual\otimes\Dpoly{q}\to\Lambda^p\frakh\dual\otimes\Dpoly{q+1} \]
by graded linearity. 
By Corollary~\ref{thm:Naples}, the graded vector space 
$\tot\big(\Lambda^\bullet\frakh\dual\otimes\Big(\frac{\enveloping{\frakg}}{\enveloping{\frakg}
\cdot\frakh}\Big)^{\otimes\bullet+1}\big)$ 
carries an $L_\infty$ algebra structure, unique up to $L_\infty$ isomorphism, 
with $\dau+\dHH$ as unary bracket.
When endowed with the cup product, the corresponding hypercohomology 
$\hypercohomology_{\CE}^\bullet\Big(\frakh,\Big(\frac{\enveloping{\frakg}}{\enveloping{\frakg}\cdot\frakh}\Big)^{\otimes\bullet+1}\Big)$ 
becomes a Gerstenhaber algebra.

The above $L_\infty$ algebra structures depend on the choice of 
a splitting of the short exact sequence $0\to\frakh\to\frakg\to\frakg/\frakh\to 0$
and a torsion-free $\frakg$-connection on $\frakg/\frakh$. 
However, different choices induce isomorphic $L_\infty$ algebras. 
Moreover, the first `Taylor coefficient' of the $L_\infty$ isomorphism is the identity map. 
Therefore, the Gerstenhaber algebra structures inherited by the cohomologies 
are in fact canonical \cite{arXiv:1901.04602}.

The natural map induced by skew-symmetrization (see Section~\ref{section:HKR})
\[ \hkr: \tot\big( \Lambda^\bullet \frakh^\vee \otimes \Lambda^{\bullet+1}(\frakg/\frakh) \big)
\to \tot \left( \Lambda^\bullet \frakh^\vee\otimes \Big(\tfrac{\enveloping{\frakg}}{\enveloping{\frakg} \cdot \frakh} \Big)^{\otimes \bullet +1} \right) \] 
is a quasi-isomorphism of cochain complexes.

\subsubsection{Formality theorem for Lie algebra pairs}

Theorems~\ref{thm:main} and~\ref{KD-thm} imply the following corollaries.

\begin{theorem}[Formality theorem for Lie algebra pairs] 
Let $(\frakg, \frakh)$ be a Lie algebra pair. Given a splitting of the short exact sequence 
$0 \to \frakh \to \frakg \to \frakg/\frakh \to 0$ and a torsion-free $\frakg$-connection $\nabla$ on $\frakg/\frakh$, 
there exists an $L_\infty$ quasi-isomorphism
\[ \Phii: \tot\big( \Lambda^\bullet \frakh^\vee \otimes \Lambda^{\bullet+1}(\frakg/\frakh) \big)
\to \tot \left( \Lambda^\bullet \frakh^\vee\otimes \Big(\tfrac{\enveloping{\frakg}}{\enveloping{\frakg} \cdot \frakh} \Big)^{\otimes \bullet +1} \right) \]
with first `Taylor coefficient' $\Phii_1$ satisfying the following two properties: 
\begin{enumerate}
\item $\Phii_1$
preserves the associative algebra structures (wedge and cup product, respectively) up to homotopy;
\item $\Phii_1 =\hkr\circ (\todd_{\frakg/\frakh}^{\nabla})^{\frac{1}{2}}$, where 
\[ (\todd_{\frakg/\frakh}^{\nabla})^{\frac{1}{2}}\in\bigoplus_{k=0}^\infty \Lambda^k\frakh\dual\otimes\Lambda^k\frakh^\perp 
= \bigoplus_{k=0}\Lambda^k\frakh\dual\otimes\Lambda^k(\frakg/\frakh)\dual \] 
acts on $\tot\big(\Lambda^\bullet\frakh^\vee\otimes\Lambda^{\bullet+1}(\frakg/\frakh)\big)$ by contraction.
\end{enumerate}
\end{theorem}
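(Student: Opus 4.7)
The plan is to deduce this theorem as a direct specialization of the main formality theorem for Lie pairs (Theorem~\ref{thm:main}) to the case where the base manifold $M$ is a single point. A Lie $\KK$-algebroid over a point is the same thing as a finite-dimensional Lie algebra over $\KK$, so a Lie algebra pair $(\frakg,\frakh)$ is a Lie pair in the sense used throughout the paper, with $L = \frakg$ and $A = \frakh$, and I only need to verify that all auxiliary structures translate correctly.

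First I would unpack the specialization dictionary. Setting $M = \mathrm{pt}$ gives $R = \KK$, so $\sections{\Lambda^\bullet A^\vee} = \Lambda^\bullet \frakh^\vee$ and $\Tpoly{k} = \sections{\Lambda^{k+1}(L/A)} = \Lambda^{k+1}(\frakg/\frakh)$. The universal enveloping algebra $\enveloping{L}$ is $\enveloping{\frakg}$, so that $\Dpoly{0} = \enveloping{\frakg}/(\enveloping{\frakg}\cdot\frakh)$ and $\Dpoly{k}$ is its $(k+1)$-fold tensor power over $\KK$. Under these identifications, the Bott $A$-representation, the differentials $d_A^{\Bott}$ and $\dau$, the Hochschild differential $\hochschild$, and the Hochschild--Kostant--Rosenberg map $\hkr$ reduce verbatim to the Lie-algebraic objects described immediately before the theorem statement.

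Next I would verify that the geometric data required by Theorem~\ref{thm:main} are precisely the data assumed here, namely a splitting of $0 \to \frakh \to \frakg \to \frakg/\frakh \to 0$ and a torsion-free $\frakg$-connection $\nabla$ on $\frakg/\frakh$ (which over a point is simply a bilinear map). Specializing Proposition~\ref{Thm:atiyahclass} and the associated Todd-class construction of Section~\ref{atiyah-todd} in this setting recovers exactly the $\atiyahcocycle \in \frakh^\vee \otimes \frakh^\perp \otimes \End(\frakg/\frakh)$ and $\todd_{\frakg/\frakh}^{\nabla} \in \bigoplus_{k \geqslant 0} \Lambda^k \frakh^\vee \otimes \Lambda^k \frakh^\perp$ defined above. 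Invoking Theorem~\ref{thm:main} then yields an $L_\infty$ quasi-isomorphism $\Phii$ whose first Taylor coefficient is $\hkr \circ (\todd_{\frakg/\frakh}^{\nabla})^{\frac{1}{2}}$ and which preserves the associative products up to homotopy.

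I anticipate no substantive obstacle. The Fedosov dg Lie algebroid $\cF \to \cM$ associated with $(\frakg,\frakh)$ reduces to a purely algebraic object over the graded vector space $\cM = \frakg[1] \oplus \frakg/\frakh$, and none of the ingredients in the proof of Theorem~\ref{thm:main} (the fiberwise application of Kontsevich formality, the homotopy transfer of Lemma~\ref{lem:ext}, the contraction by $e^{\frac{1}{2}\trace \At^{\can}_\cF}$) requires partition-of-unity arguments that might degenerate over a point. The only points that deserve a brief check are (i) the finiteness of the sums defining the Taylor coefficients $\Phi_n$, which follows as in the general case from the grading argument, and (ii) that the $L_\infty$ structures on the two total complexes produced by Corollaries~\ref{Bari} and~\ref{thm:Naples} coincide with the standard ones used in the statement, which is immediate since the homotopy-transfer construction is canonical up to $L_\infty$ quasi-isomorphism with identity as first Taylor coefficient.
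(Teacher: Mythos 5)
Your proposal is correct and matches the paper's own treatment: the paper simply asserts that this theorem follows from Theorem~\ref{thm:main} applied to the Lie pair $(L,A)=(\frakg,\frakh)$ over the one-point manifold, which is exactly the specialization you carry out. Your explicit verification of the dictionary ($R=\KK$, the reduction of $\Dpoly{\bullet}$, the Atiyah and Todd cocycles, and the connection data) is precisely the routine checking the paper leaves implicit.
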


\begin{theorem}[Kontsevich-Duflo type theorem for Lie algebra pairs]
Given a Lie algebra pair $(\frakg, \frakh)$, the map
\[ \hkr\circ\Todd_{\frakg/\frakh}^{\frac{1}{2}}: \hypercohomology^\bullet_{\CE}\big(\frakh,\Lambda^{\bullet+1}(\frakg/\frakh)\big)
\xto{\cong} \hypercohomology^\bullet_{\CE}\left(\frakh,\Big(\tfrac{\enveloping{\frakg}}{\enveloping{\frakg} \cdot \frakh} \Big)^{\otimes \bullet +1}\right) \]
is an isomorphism of Gerstenhaber algebras. 
It is understood that the square root $\Todd_{\frakg/\frakh}^{\frac{1}{2}}$ of the Todd class 
$\Todd_{\frakg/\frakh}\in\bigoplus_{k=0}H_{\CE}^{k}(\frakh,\Lambda^k(\frakg/\frakh)\dual)$ 
acts on $\hypercohomology^\bullet_{\CE}(\frakh,\Lambda^{\bullet+1}(\frakg/\frakh))$ by contraction. 
\end{theorem}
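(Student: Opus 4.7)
The plan is to deduce this statement directly from Theorem~\ref{KD-thm} by specializing to the degenerate case in which the base manifold is a single point. Indeed, a Lie algebra pair $(\frakg, \frakh)$ is precisely a Lie pair $(L, A)$ with $M = \{*\}$, $L = \frakg$ and $A = \frakh$ regarded as vector bundles over $\{*\}$ equipped with zero anchor. The ring $R = C^\infty(\{*\},\KK)$ is then $\KK$, and all tensor products over $R$ appearing in the statement of Theorem~\ref{KD-thm} become ordinary tensor products over $\KK$.

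Under this specialization, one verifies the following list of identifications: $\sections{\Lambda^\bullet A\dual} = \Lambda^\bullet \frakh\dual$ as Chevalley--Eilenberg complexes; the quotient $L/A$ becomes the $\frakh$-module $\frakg/\frakh$ equipped with the Bott action $a \cdot q(l) = q([a,l])$; the complex $\Tpoly{\bullet}$ associated with the Lie pair becomes $\Lambda^{\bullet+1}(\frakg/\frakh)$; and the complex $\Dpoly{\bullet}$ becomes $\big(\tfrac{\enveloping{\frakg}}{\enveloping{\frakg}\cdot\frakh}\big)^{\otimes\bullet+1}$. The Hochschild--Kostant--Rosenberg map of Section~\ref{section:HKR} reduces to the standard skew-symmetrization, and the Atiyah cocycle, Atiyah class, Todd cocycle and Todd class of the Lie pair, as introduced in Section~\ref{atiyah-todd}, coincide with the corresponding objects defined for the Lie algebra pair immediately above the theorem statement.

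With these identifications in place, Theorem~\ref{KD-thm} applied to the Lie pair $(\frakg,\frakh)$ immediately yields that $\hkr\circ\Todd_{\frakg/\frakh}^{1/2}$ is an isomorphism of Gerstenhaber algebras between $\hypercohomology^\bullet_{\CE}(\frakh,\Lambda^{\bullet+1}(\frakg/\frakh))$ and $\hypercohomology^\bullet_{\CE}\big(\frakh,(\tfrac{\enveloping{\frakg}}{\enveloping{\frakg}\cdot\frakh})^{\otimes\bullet+1}\big)$, with the square root of the Todd class acting by contraction as stated. There is no genuine obstacle in this reduction: the main thing to check is compatibility of structures, namely that the wedge product, cup product, Gerstenhaber brackets, and the contraction action of $\Todd^{1/2}$ all specialize correctly. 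Each of these is a direct unpacking of the constructions carried out earlier in the paper (Corollaries~\ref{Bari} and~\ref{thm:Naples} for the $L_\infty$ structures, together with the formula for the first Taylor coefficient in Theorem~\ref{thm:main}). Consequently the proof is a one-line appeal to Theorem~\ref{KD-thm} preceded by the above dictionary between Lie pairs over a point and Lie algebra pairs.
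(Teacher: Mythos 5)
Your proposal is correct and is essentially the paper's own argument: the paper derives this statement by observing that a Lie algebra pair is exactly a Lie pair over the one-point manifold and then invoking Theorem~\ref{KD-thm}, with the preceding subsections on Atiyah/Todd classes and on polyvector fields and polydifferential operators for Lie algebra pairs supplying precisely the dictionary you describe. Nothing further is needed.
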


\subsection{\texorpdfstring{$\mathfrak{g}$}{g}-manifolds}

In this section, we consider the formality theorem for a $\frakg$-manifold, i.e.\ a smooth manifold 
with a Lie algebra action (see~\cite{MR3650387} for more details). Let $M$ be a $\mathfrak{g}$-manifold 
with infinitesimal action $\mathfrak{g}\ni a\mapsto\hat{a}\in\XX(M)$. 
Every $\mathfrak{g}$-manifold $M$ determines in a canonical way 
a matched pair of Lie algebroids $(\mathfrak{g}\ltimes M)\bowtie T_M$ 
(see e.g. \cite[Example~5.5]{MR1460632} or \cite{MR1650045}).
The notation $\mathfrak{g}\ltimes M$ refers to the transformation Lie algebroid arising from the infinitesimal $\mathfrak{g}$-action on $M$.
Therefore, we can form a Lie pair $(L,A)$, where $L=(\mathfrak{g}\ltimes M)\bowtie T_M$ and $A=\mathfrak{g}\ltimes M$. 
In this case, the quotient $L/A$ is isomorphic to $T_M$ and the Bott $A$-connection on $L/A$ is the map 
\[ \nabla^{\Bott}:C^\infty(M,\mathfrak{g})\otimes\XX(M)\to\XX(M) \] defined by 
\[ \nabla^{\Bott}_{f\cdot a}X=f\cdot\lie{\hat{a}}{X} ,\]
for all $a\in\mathfrak{g}$, $f\in C^\infty(M)$, and $X\in\XX(M)$.

\subsubsection{Atiyah and Todd classes of $\mathfrak{g}$-manifolds}

It is not difficult to see that, for the Lie pair constituted of the Lie algebroid $L=(\mathfrak{g}\ltimes M)\bowtie T_M$ and its Lie subalgebroid $A=\mathfrak{g}\ltimes M$, 
a choice of $L$-connection on $L/A$ extending the Bott $A$-connection is essentially a choice of affine connection on $M$. 
Moreover, the torsion of the $L$-connection on $L/A$ reduces to the torsion of the corresponding affine connection on $M$. 

Given an affine connection $\nabla$ on $M$, the Atiyah 1-cocycle associated with $\nabla$ is the map 
\[ \atiyahcocycle: \mathfrak{g} \times\XX(M)\to \End_R \XX(M) \] defined by
\[ \atiyahcocycle(a,X)=\liederivative{\hat{a}}\circ\nabla_X - \nabla_X\circ\liederivative{\hat{a}} - \nabla_{\liederivative{\hat{a}}X} ,\]
for all $a\in\mathfrak{g}$ and $X\in\XX(M)$. 

Following Proposition~\ref{Thm:atiyahclass}, we prove the following

\begin{proposition} 
\begin{enumerate}
\item The Atiyah cocycle $\atiyahcocycle \in \mathfrak{g}\dual \otimes \sections{T_M\dual \otimes \End T_M}$ 
is a Chevalley--Eilenberg $1$-cocycle of the $\mathfrak{g}$-module $\sections{T_M\dual \otimes \End T_M}$.
\item The cohomology class $\atiyahclass_{M/\mathfrak{g}}\in H_{\CE}^1(\mathfrak{g}, \sections{T_M\dual \otimes \End T_M})$ 
of the $1$-cocycle $\atiyahcocycle$ does not depend on the choice of connection $\nabla$.
\end{enumerate}
\end{proposition}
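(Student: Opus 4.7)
The plan is to deduce this proposition from Proposition~\ref{Thm:atiyahclass} applied to the Lie pair $(L,A) = \bigl((\frakg \ltimes M) \bowtie T_M,\; \frakg \ltimes M\bigr)$ associated with the $\frakg$-manifold $M$, for which $L/A \cong T_M$ and the Bott $A$-action on $T_M$ reduces to $\nabla^{\Bott}_{f\cdot a} X = f \cdot \liederivative{\hat{a}} X$. First, I would make precise the correspondence, informally stated just above the proposition, between affine connections on $M$ and $L$-connections on $L/A$ extending the Bott $A$-connection: using the matched-pair splitting $L = A \oplus T_M$, the $A$-component of such an $L$-connection is forced to be Bott, while its $T_M$-component is precisely the data of an affine connection $\nabla\colon \XX(M) \times \XX(M) \to \XX(M)$ satisfying the usual axioms.

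Next, I would verify that, under this correspondence, the abstract Atiyah 1-cocycle $R^\nabla(a,l) = \nabla_a \nabla_l - \nabla_l \nabla_a - \nabla_{\lie{a}{l}}$, restricted to pairs $(a,l)$ with $a = \hat{a} \in \sections{A}$ lifted from $a \in \frakg$ and $l = X \in \XX(M) \subset \sections{L}$, reduces to the formula in the statement. Indeed, the matched-pair bracket gives $\lie{\hat{a}}{X} = \liederivative{\hat{a}} X$ in $\sections{L}$; moreover, on sections of $T_M$ the action $\nabla_{\hat{a}}$ is $\liederivative{\hat{a}}$, while $\nabla_X$ is the chosen affine connection. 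Substituting into the general formula yields precisely
\[ \liederivative{\hat{a}} \circ \nabla_X - \nabla_X \circ \liederivative{\hat{a}} - \nabla_{\liederivative{\hat{a}} X} .\]

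With these identifications in place, both (1) and (2) follow directly from the corresponding parts of Proposition~\ref{Thm:atiyahclass}. The only additional observation needed is that for the transformation Lie algebroid $A = \frakg \ltimes M$, the Chevalley--Eilenberg complex with coefficients in an $A$-module $\sections{E}$ is canonically the Lie algebra cochain complex $\bigl(\Lambda^\bullet \frakg\dual \otimes \sections{E},\, d_{\CE}\bigr)$ for the $\frakg$-action $a \mapsto \liederivative{\hat{a}}$; under this identification, together with $A^\perp \cong (L/A)\dual \cong T_M\dual$, the class $\atiyahclass_{L/A}$ of the general theory transports to the class $\atiyahclass_{M/\frakg}$ asserted here. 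I anticipate no genuine obstacle: the entire argument amounts to carefully unpacking the matched-pair bracket and the identifications $A\dual \cong \frakg\dual \otimes R$ and $A^\perp \cong T_M\dual$, with everything conceptual already contained in Proposition~\ref{Thm:atiyahclass}.
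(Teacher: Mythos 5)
Your proposal is correct and follows the same route as the paper: the paper introduces this proposition with ``Following Proposition~\ref{Thm:atiyahclass}, we prove the following,'' i.e.\ it is obtained exactly by specializing the general Lie-pair statement to $(L,A)=\bigl((\frakg\ltimes M)\bowtie T_M,\ \frakg\ltimes M\bigr)$, using the correspondence between affine connections on $M$ and $L$-connections on $L/A\cong T_M$ extending the Bott connection. Your unpacking of the matched-pair bracket and of the identifications $A\dual\cong\frakg\dual\otimes R$, $A^\perp\cong T_M\dual$ is precisely the verification the paper leaves implicit.
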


The cohomology class $\atiyahclass_{M/\mathfrak{g}}$ is called the Atiyah class of the $\mathfrak{g}$-manifold $M$. 
It is the obstruction class to the existence of a $\mathfrak{g}$-invariant connection on $M$, 
i.e.\ an affine connection $\nabla$ on $M$ satisfying \[ [\hat{a},\nabla_X Y] = \nabla_{[\hat{a},X]} Y + \nabla_X [\hat{a},Y] \] 
for all $a\in\mathfrak{g}$ and $X,Y\in\XX(M)$. Note that if $\mathfrak{g}$ is a compact Lie algebra, $\atiyahclass_{M/\mathfrak{g}}$ 
vanishes since $\mathfrak{g}$-invariant connections always exist.

In the context of $\frakg$-manifolds, the Todd cocycle of a $\mathfrak{g}$-manifold $M$ 
is the Chevalley--Eilenberg cocycle
\[ \todd_{M/\mathfrak{g}}^{\nabla}=\det\left(\frac{\atiyahcocycle}{1-e^{-\atiyahcocycle}}\right)
\in\bigoplus_{k=0}\Lambda^{k}\mathfrak{g}^\vee\otimes\Omega^k(M) .\]
Its corresponding Chevalley--Eilenberg cohomology class is the \emph{Todd class} 
\[ \Todd_{M/\mathfrak{g}}=\det\left(\frac{\atiyahclass_{M/\mathfrak{g}}}{1-e^{-\atiyahclass_{M/\mathfrak{g}}}}\right)
\in\bigoplus_{k=0}H_{\CE}^{k}(\mathfrak{g},\Omega^k(M)) .\]
The spaces $\Omega^k (M)$, with $k\geq 0$, are endowed with their natural $\mathfrak{g}$-module structures.
Since the Lie algebra $\mathfrak{g}$ is finite dimensional, the above expression for the Todd class $\Todd_{M/\mathfrak{g}}$ reduces to a finite sum.

\subsubsection{Polyvector fields and polydifferential operators on $\mathfrak{g}$-manifolds}

The space of polyvector fields and the space of polydifferential operators on the Lie pair $\big((\mathfrak{g}\ltimes M)\bowtie T_M,\ \mathfrak{g}\ltimes M\big)$
are naturally isomorphic to $\tot\big(\Lambda^\bullet\mathfrak{g}\dual\otimes_{\KK}\Tpoly{\bullet}\big(M))$ 
and $\tot\big(\Lambda^\bullet\mathfrak{g}\dual\otimes_{\KK}\Dpoly{\bullet}\big(M))$, respectively.
Here $\Tpoly{\bullet}\big(M)$ denotes the space of ordinary polyvector fields on $M$, 
while $\Dpoly{\bullet}\big(M)$ denotes the space of ordinary polydifferential operators on $M$.
Since $(\mathfrak{g}\ltimes M)\bowtie T_M$ is a matched pair,
it follows from Proposition~\ref{pro:zurich} that both
$\tot\big(\Lambda^\bullet\mathfrak{g}\dual\otimes_{\KK}\Tpoly{\bullet}\big(M))$ 
and $\tot\big(\Lambda^\bullet\mathfrak{g}\dual\otimes_{\KK}\Dpoly{\bullet}\big(M))$ are dglas.

We proceed to describe these dgla structures. 
The $\mathfrak{g}$-action on $M$ and the Schouten bracket together determine a $\mathfrak{g}$-module structure on $\Tpoly{k}$ for each $k\geq -1$: 
\[ a \cdot \gamma = \schouten{\hat{a}}{\gamma} \qquad \forall \; a \in \mathfrak{g},\; \gamma \in \Tpoly{k}(M) .\]

Therefore, the complex with trivial differential 
\[ \cdots\to\Tpoly{k}(M)\xto{0}\Tpoly{k+1}(M)\to\cdots \] 
is a complex of $\mathfrak{g}$-modules and we obtain the differential Gerstenhaber algebra 
\[ \big(\tot(\Lambda^\bullet\mathfrak{g}\dual\otimes_{\KK}\Tpoly{\bullet}\big(M)),d_{\CE}+0,\schouten{\argument}{\argument},\wedge\big) ,\] 
whose graded Lie bracket and product are respectively defined by
\begin{align*}
\schouten{\alpha\otimes\mathcal{X}}{\beta\otimes\mathcal{Y}} &= (-1)^{q_1 p_2} \alpha\wedge\beta\otimes\schouten{\mathcal{X}}{\mathcal{Y}} \\
\intertext{and}
(\alpha\otimes\mathcal{X})\wedge(\beta\otimes\mathcal{Y}) &= (-1)^{q_1 p_2} (\alpha\wedge\beta)\otimes(\mathcal{X}\wedge\mathcal{Y})
,\end{align*}
for all $\alpha\otimes\mathcal{X}\in\Lambda^{p_1}\mathfrak{g}\dual\otimes_{\KK}\Tpoly{q_1}(M)$ 
and $\beta\otimes\mathcal{Y}\in\Lambda^{p_2}\mathfrak{g}\dual\otimes_{\KK}\Tpoly{q_2}(M)$.

Likewise, the $\mathfrak{g}$-action on $M$ and the Gerstenhaber bracket together determine a $\mathfrak{g}$-module structure on $\Dpoly{\bullet}$: 
\[ a\cdot\mu=\gerstenhaber{\hat{a}}{\mu} \qquad\forall\; a\in\mathfrak{g},\; \mu\in\Dpoly{\bullet}(M) .\] 
Since the Gerstenhaber bracket satisfies the graded Jacobi identity, 
the infinitesimal $\mathfrak{g}$-action on $\Dpoly{\bullet}(M)$ is compatible with the Hochschild differential. 
Consequently, the Hochschild cochain complex \[ \cdots\to\Dpoly{k}(M)\xto{\hochschild}\Dpoly{k+1}(M)\to\cdots \] 
is a complex of $\mathfrak{g}$-modules. 
Next, we endow $\Lambda^\bullet\mathfrak{g}\dual\otimes_{\KK}\Dpoly{\bullet}(M)$ 
with the differential $d_{\CE} + \dHH$, 
the cup product $\smile$, and the Gerstenhaber bracket $\gerstenhaber{\argument}{\argument}$ 
defined by 
\begin{align*}
(\alpha\otimes\xi)\smile(\beta\otimes\eta) &= (-1)^{q_1 p_2} (\alpha\wedge\beta)\otimes(\xi\smile\eta) \\
\gerstenhaber{\alpha\otimes\xi}{\beta\otimes\eta} &= (-1)^{q_1 p_2} \alpha\wedge\beta\otimes\gerstenhaber{\xi}{\eta} 
\end{align*}
for all $\alpha\otimes\xi\in\Lambda^{p_1}\mathfrak{g}\dual\otimes_{\KK}\Dpoly{q_1}(M)$ 
and $\beta\otimes\eta\in\Lambda^{p_2}\mathfrak{g}\dual\otimes_{\KK}\Dpoly{q_2}(M)$. 
It follows from Proposition~\ref{pro:zurich} that 
\[ \big(\tot(\Lambda^\bullet\mathfrak{g}\dual\otimes_{\KK}\Dpoly{\bullet}(M)),d_{\CE}+\dHH ,\gerstenhaber{\argument}{\argument}) \]
is a dgla whose cohomology $\hypercohomology^\bullet_{\CE}(\mathfrak{g},\Dpoly{\bullet}(M))$, 
endowed with the cup product and the Gerstenhaber bracket, is a Gerstenhaber algebra.

The $\Lambda^\bullet\mathfrak{g}$-linear extension 
$\hkr:\Lambda^\bullet\mathfrak{g}\dual\otimes_{\KK}\Tpoly{\bullet}(M) \to\Lambda^\bullet\mathfrak{g}\dual\otimes_{\KK}\Dpoly{\bullet}(M)$
of the classical HKR map of the smooth manifold $M$ is a quasi-isomorphism of cochain complexes 
but does not preserve the Lie structures on cohomologies. 

\subsubsection{Formality theorem for $\mathfrak{g}$-manifolds}

Theorem~\ref{thm:main} and Theorem~\ref{KD-thm} imply the following:

\begin{theorem}[Formality theorem for $\mathfrak{g}$-manifolds] 
\label{Curacao}
Given a $\mathfrak{g}$-manifold $M$ and an affine torsion-free connection $\nabla$ on $M$,
there exists an $L_\infty$ quasi-isomorphism $\Phii$ from the dgla
$\tot\big(\Lambda^\bullet \mathfrak{g}\dual \otimes_{\KK} \Tpoly{\bullet}(M)\big)$
to the dgla
$\tot \big(\Lambda^\bullet\mathfrak{g}\dual \otimes_{\KK} \Dpoly{\bullet}(M)\big)$
with first `Taylor coefficient' $\Phii_1$ satisfying the following two properties: 
\begin{enumerate} 
\item $\Phii_1$ is, up to homotopy, an isomorphism of associative algebras (and hence induces an isomorphism of associative algebras of the cohomologies); 
\item $\Phii_1=\hkr\circ (\todd_{M/\mathfrak{g}}^{\nabla})^{\frac{1}{2}}$, 
where $(\todd_{M/\mathfrak{g}}^{\nabla})^{\frac{1}{2}}\in\bigoplus_{k=0}\Lambda^{k}\mathfrak{g}\dual\otimes\Omega^k(M)$ 
acts on $\tot\big(\Lambda^\bullet\mathfrak{g}\dual\otimes_{\KK}\Tpoly{\bullet}(M)\big)$ by contraction. 
\end{enumerate}
\end{theorem}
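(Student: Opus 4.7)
The plan is to deduce this from Theorem~\ref{thm:main} by specializing to the Lie pair $(L,A) = \bigl((\frakg \ltimes M) \bowtie T_M,\; \frakg \ltimes M\bigr)$ canonically attached to the $\frakg$-manifold $M$, in close analogy with the complex-manifold and Lie-algebra-pair specializations carried out earlier in this section. The first step is to identify the graded vector spaces and differentials appearing in the statement with those produced by the general theorem. Since $A = \frakg\ltimes M$ is the transformation Lie algebroid, one has a canonical isomorphism $\sections{\Lambda^\bullet A\dual} \cong R \otimes \Lambda^\bullet \frakg\dual$; combined with $L/A \cong T_M$ and $\Dpoly{0} = \enveloping{L}/\enveloping{L}\sections{A} \cong \Dpoly{0}(M)$, this yields identifications
\[ \tot\bigl(\sections{\Lambda^\bullet A\dual}\otimes_R\Tpoly{\bullet}\bigr) \cong \tot\bigl(\Lambda^\bullet\frakg\dual\otimes_\KK \Tpoly{\bullet}(M)\bigr) \]
and similarly on the polydifferential side, under which $d_A^{\Bott}$ becomes $d_{\CE}$ (acting on $\Tpoly{\bullet}(M)$ via $a\cdot\gamma = \schouten{\hat a}{\gamma}$) and $\dau + \hochschild$ becomes $d_{\CE}+\hochschild$. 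Because $L = A \bowtie T_M$ is a matched pair, Proposition~\ref{pro:zurich} guarantees that the $L_\infty$ structures on these totalizations are in fact honest dglas, matching those spelled out in the body of this subsection.

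The second step is to extend the torsion-free affine connection $\nabla$ on $M$ to a torsion-free $L$-connection on $L/A$ that extends the Bott $A$-connection. Using the canonical splitting $L = A \oplus T_M$ associated with the matched pair, I set
\[ \nabla^L_{a+X} Y \;=\; \liederivative{\hat a} Y + \nabla_X Y, \qquad a\in\sections{A},\; X,Y\in\XX(M). \]
Torsion-freeness of $\nabla^L$ reduces to torsion-freeness of $\nabla$ on the $T_M$-component, together with the Bott identity $\nabla^L_a q(l) = q\lie{a}{l}$ on the $A$-component (Lemma~\ref{eq:dog}). A direct computation of the curvature $R^{\nabla^L}$ on $A \otimes T_M \to \End(T_M)$ then recovers exactly the expression
\[ \atiyahcocycle(a,X) \;=\; \liederivative{\hat a}\circ\nabla_X - \nabla_X\circ\liederivative{\hat a} - \nabla_{\liederivative{\hat a}X}, \]
so that the Atiyah cocycle $\atiyahcocycle_{L/A}$ of the Lie pair coincides with the Atiyah cocycle of the $\frakg$-manifold, and therefore $\todd^{\nabla^L}_{L/A} = \todd^{\nabla}_{M/\frakg}$.

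With these two identifications in hand, Theorem~\ref{thm:main} applied to $(L,A)$ produces an $L_\infty$ quasi-isomorphism $\Phii$ between the two dglas, whose first Taylor coefficient $\Phii_1$ is homotopic to an isomorphism of the associative algebra structures and equals $\hkr \circ (\todd^{\nabla^L}_{L/A})^{1/2} = \hkr \circ (\todd^{\nabla}_{M/\frakg})^{1/2}$, which is precisely the conclusion. The only genuinely delicate point I anticipate is the Atiyah-cocycle computation in Step~2: since $\nabla^L$ is $R$-linear in its first slot while $\liederivative{\hat a}$ is not $R$-linear in $a$, one must carefully restrict the curvature to the $A \otimes A^\perp$ component (where $A^\perp \subset L\dual$ is identified with $T_M\dual$ via the chosen splitting) and check that the mixed terms coming from the Bott identity conspire to reproduce the Lie-derivative expression above. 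Once this verification is made, the rest of the argument is a mechanical specialization of the general Lie-pair formality theorem, exactly in the spirit of the complex-manifold and Lie-algebra-pair cases treated just above.
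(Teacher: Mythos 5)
Your proposal is correct and follows exactly the route the paper intends: the paper derives this theorem as an immediate specialization of Theorem~\ref{thm:main} to the Lie pair $\bigl((\frakg\ltimes M)\bowtie T_M,\;\frakg\ltimes M\bigr)$, using precisely the identifications you describe (matched-pair dgla structures via Proposition~\ref{pro:zurich}, the correspondence between torsion-free $L$-connections extending the Bott connection and torsion-free affine connections on $M$, and the resulting equality of Atiyah/Todd cocycles). Your flagged "delicate point" about the curvature restriction is handled correctly and matches the paper's own description of the Atiyah cocycle of a $\frakg$-manifold.
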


\begin{theorem}[Kontsevich--Duflo type theorem for $\mathfrak{g}$-manifolds] 
Given a $\mathfrak{g}$-manifold $M$, the map
\[ \hkr\circ\Todd^{\frac{1}{2}}_{M/\mathfrak{g}}:
\hypercohomology^\bullet_{\CE}\big(\mathfrak{g},\Tpoly{\bullet}(M) \big)
\xrightarrow{\cong} \hypercohomology^\bullet_{\CE}\big(\mathfrak{g},\Dpoly{\bullet}(M) \big) \]
is an isomorphism of Gerstenhaber algebras.
It is understood that the square root $\Todd^{\frac{1}{2}}_{M/\mathfrak{g}}$ of the 
Todd class $\Todd_{M/\mathfrak{g}}\in\bigoplus_{k=0}H_{\CE}^{k}\big(\mathfrak{g},\Omega^k(M)\big)$ 
acts on $\hypercohomology^\bullet_{\CE}\big(\mathfrak{g},\Tpoly{\bullet}(M) \big)$ by contraction. 
\end{theorem}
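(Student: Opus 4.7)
The plan is to deduce this Kontsevich--Duflo type theorem for $\mathfrak{g}$-manifolds as a direct specialization of Theorem~\ref{KD-thm} (the Kontsevich--Duflo type theorem for Lie pairs). First, I would take the Lie pair $(L,A)$ with $L=(\mathfrak{g}\ltimes M)\bowtie T_M$ and $A=\mathfrak{g}\ltimes M$, as already set up in the preceding discussion. The canonical identification $L/A\cong T_M$ makes $\Tpoly{\bullet}$ of the Lie pair into $\Tpoly{\bullet}(M)$ and, since $(\mathfrak{g}\ltimes M)\bowtie T_M$ is a matched pair with $\enveloping{L}\cong\enveloping{A}\otimes_R\enveloping{T_M}$, the quotient $\frac{\enveloping{L}}{\enveloping{L}\sections{A}}$ identifies canonically with $\enveloping{T_M}\cong\Dpoly{0}(M)$, so that $\Dpoly{\bullet}$ of the Lie pair becomes $\Dpoly{\bullet}(M)$. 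Under these identifications the Bott $A$-action is precisely the Lie derivative action of $\sections{\mathfrak{g}\ltimes M}=C^\infty(M,\mathfrak{g})$ on polyvector fields and polydifferential operators through the infinitesimal $\mathfrak{g}$-action.

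Next, I would verify that the Chevalley--Eilenberg hypercohomologies $\hypercohomology^\bullet_{\CE}(A,\Tpoly{\bullet})$ and $\hypercohomology^\bullet_{\CE}(A,\Dpoly{\bullet})$ of the Lie pair coincide with $\hypercohomology^\bullet_{\CE}(\mathfrak{g},\Tpoly{\bullet}(M))$ and $\hypercohomology^\bullet_{\CE}(\mathfrak{g},\Dpoly{\bullet}(M))$ as defined in the $\mathfrak{g}$-manifold setting. This reduces to observing that the Chevalley--Eilenberg complex of the transformation Lie algebroid $\mathfrak{g}\ltimes M$ with coefficients in any $\mathfrak{g}$-module is naturally isomorphic to the standard $\mathfrak{g}$-module Chevalley--Eilenberg complex, because $\sections{\Lambda^\bullet A\dual}=\Lambda^\bullet\mathfrak{g}\dual\otimes R$ and the differentials agree.

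Third, I would match the Todd class. An affine torsion-free connection $\nabla$ on $M$ yields a torsion-free $L$-connection on $L/A\cong T_M$ extending the Bott representation (the splitting $L=A\oplus T_M$ is canonical here, since $L$ already contains $T_M$ as a direct summand of vector bundles). A direct computation of the Atiyah cocycle then shows that the Atiyah cocycle of the Lie pair given by Proposition~\ref{Thm:atiyahclass} reduces term-by-term to the Atiyah cocycle $\atiyahcocycle(a,X)=\liederivative{\hat a}\circ\nabla_X-\nabla_X\circ\liederivative{\hat a}-\nabla_{\liederivative{\hat a}X}$ for the $\mathfrak{g}$-manifold. Consequently $\Todd_{L/A}$ maps to $\Todd_{M/\mathfrak{g}}\in\bigoplus_{k\geqslant 0}H_{\CE}^k(\mathfrak{g},\Omega^k(M))$ under the identifications above, and the contraction action of its square root on $\hypercohomology^\bullet_{\CE}(\mathfrak{g},\Tpoly{\bullet}(M))$ is the one described.

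With these identifications in place, Theorem~\ref{KD-thm} applied to the Lie pair $\big((\mathfrak{g}\ltimes M)\bowtie T_M,\,\mathfrak{g}\ltimes M\big)$ delivers exactly the claimed isomorphism of Gerstenhaber algebras $\hkr\circ\Todd_{M/\mathfrak{g}}^{\frac{1}{2}}$. The bulk of the work is bookkeeping: the only step requiring genuine (though routine) verification is the matching of the Atiyah/Todd cocycles in step three, since there is a potential mismatch in conventions between the general Lie-pair formula (where $A^\perp\subset L\dual$ is identified with $(L/A)\dual\cong T_M\dual$) and the $\mathfrak{g}$-manifold formula (where the class lives in $\Lambda^k\mathfrak{g}\dual\otimes\Omega^k(M)$); I would expect this to be the main, though mild, obstacle.
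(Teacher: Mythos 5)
Your proposal is correct and matches the paper's own route: the paper likewise obtains this statement as an immediate specialization of Theorem~\ref{KD-thm} to the Lie pair $\big((\mathfrak{g}\ltimes M)\bowtie T_M,\ \mathfrak{g}\ltimes M\big)$, after carrying out in the preceding subsections exactly the identifications you describe (quotient $L/A\cong T_M$, matched-pair identification of $\Dpoly{\bullet}$ with $\Dpoly{\bullet}(M)$, and the reduction of the Lie-pair Atiyah/Todd cocycles to the $\mathfrak{g}$-manifold ones via an affine torsion-free connection). No gaps.
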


\begin{remark}
In~\cite{MR2254188}, \v{S}evera studied quantization of ``Poisson actions up to homotopy'' 
using a version of Theorem~\ref{Curacao} applicable to trivial $\mathfrak{g}$-actions.
\end{remark}

\begin{remark}
To the best of our knowledge, the first construction of an $L_\infty$ quasi-isomorphism 
from the dgla $\tot\big(\Lambda^\bullet\mathfrak{g}\dual\otimes_{\KK}\Tpoly{\bullet}(M)\big)$ 
to the dgla $\tot\big(\Lambda^\bullet\mathfrak{g}\dual\otimes_{\KK}\Dpoly{\bullet}(M)\big)$ 
can be credited to Dolgushev~\cite[concluding remarks]{MR2102846}.
\end{remark}

\subsection{Foliations}

Let $\mathcal{F}$ be a regular foliation of a smooth manifold $M$. 
The tangent bundle of $\mathcal{F}$ is a subbundle of $T_M$, denoted $T_{\mathcal{F}}$, 
whose sections are closed under the Lie bracket of vector fields. 
Therefore, $(T_M,T_{\mathcal{F}})$ is a Lie pair. 
Its quotient $N_{\mathcal{F}}=T_M/T_{\mathcal{F}}$ is the normal bundle of the foliation $\mathcal{F}$. 
We have the short exact sequence of vector bundles 
\[ 0 \to T_{\mathcal{F}} \to T_M \xto{q} N_{\mathcal{F}} \to 0 .\] 
The Bott $T_{\mathcal{F}}$-connection on $N_{\mathcal{F}}$ is defined by 
\[ \nabla^{\Bott}_{a} q(l) = q([a,l]), \quad\forall\; a\in\sections{T_{\mathcal{F}}},\; l\in\XX(M) .\]
The Chevalley--Eilenberg Lie algebroid cohomology $H_{\CE}^\bullet(T_{\mathcal{F}},\mathfrak{M})$ 
with coefficients in a $T_{\mathcal{F}}$-module $\mathfrak{M}$ 
coincides exactly with the leafwise de Rham cohomology $H^\bullet_{\dR}(\mathcal{F},\mathfrak{M})$ 
of the foliation $\mathcal{F}$ with coefficients in the module $\mathfrak{M}$.

\subsubsection{Atiyah and Todd classes of foliations}

The torsion of a $T_M$-connection $\nabla$ on $N_{\mathcal{F}}$ is 
the bundle map $T^\nabla:\Lambda^2 T_{M}\to N_{\mathcal{F}}$ defined by 
\[ T^\nabla(X,Y)= \nabla_{X}q(Y) - \nabla_{Y}q(X) - q(\lie{X}{Y}) ,\] 
for all $X,Y\in\XX(M)$. 
The Atiyah cocycle associated with a torsion-free $T_M$-connection $\nabla$ on $N_{\mathcal{F}}$ 
is the bundle map $\atiyahcocycle:T_{\mathcal{F}}\otimes N_{\mathcal{F}}\to\End(N_{\mathcal{F}})$ 
--- or the corresponding section of $T_{\mathcal{F}}\dual\otimes T_{\mathcal{F}}^\perp\otimes\End(N_{\mathcal{F}})$ --- defined by 
\[ \atiyahcocycle\big(a;q(l)\big) = \nabla_a\nabla_l - \nabla_l\nabla_a - \nabla_{\lie{a}{l}}, \qquad\forall\; a\in\sections{T_{\mathcal{F}}},\; l\in\sections{T_M} .\]
According to Proposition~\ref{Thm:atiyahclass}, 
$\atiyahcocycle \in \sections{ T_{\mathcal{F}}\dual \otimes T_{\mathcal{F}}^\perp \otimes \End(N_{\mathcal{F}})}$ 
is a leafwise de Rham closed 1-form with values in the $T_{\mathcal{F}}$-module 
$T_{\mathcal{F}}^\perp \otimes \End(N_{\mathcal{F}})$. 
Its cohomology class 
$\atiyahclass_{\mathcal{F}}\in H_{\dR}^1({\mathcal{F}}, T_{\mathcal{F}}^\perp \otimes \End(N_{\mathcal{F}}))$ 
is independent of the chosen connection $\nabla$ and is called the Molino class of the foliation $\mathcal{F}$. 
It is an invariant of the foliation that was first introduced by Molino~\cite{MR0346814}.

The Todd cocycle of the foliation $\mathcal{F}$ associated with the connection $\nabla$ is the leafwise closed form 
\[ \todd_{\mathcal{F}}^{\nabla} =\det\left(\frac{\atiyahcocycle}{1-e^{-\atiyahcocycle }}\right) \in \bigoplus_{k=0} 
\sections{ \Lambda^k T_{\mathcal{F}}\dual\otimes \Lambda^k T_{\mathcal{F}}^\perp} .\]
The Todd class of the foliation $\mathcal{F}$ is the corresponding cohomology class
\[ \Todd_{\mathcal{F}} =\det\left(\frac{\atiyahclass_{\mathcal F}}{1-e^{-\atiyahclass_{\mathcal{F}}}}\right)
\in \bigoplus_{k=0}
H^k_{\dR}(\mathcal{F}, \Lambda^k T_{\mathcal{F}}^\perp ) .\]

\subsubsection{Transversal polyvector fields and transversal polydifferential operators on foliations}

It follows from Corollary~\ref{Bari} and Corollary~\ref{thm:Naples} applied to the Lie pair $(T_M,T_{\mathcal{F}})$
that both $\tot\big(\sections{\Lambda^\bullet T_\mathcal{F}\dual} \otimes_R \nTpoly{\bullet}\big)$ 
and $\tot\big(\sections{\Lambda^\bullet T_\mathcal{F}\dual}\otimes_R\nDpoly{\bullet}\big)$ 
can be endowed with $L_\infty$ algebra structures, unique up to $L_\infty$ isomorphism.
Here $\nTpoly{\bullet} = \sections{\Lambda^{\bullet+1}N_{\mathcal F}}$
can be considered as the space of polyvector fields transversal 
to the foliation $\mathcal{F}$ \cite{MR3277952,MR3300319}.
The unary bracket on $\tot\big(\sections{\Lambda^\bullet T_\mathcal{F}\dual}\otimes_R\nTpoly{\bullet}\big)$ 
is the leafwise de Rham differential $d_{\dR}$ with values in $\nTpoly{\bullet}$.
Similarly, $\nDpoly{\bullet} = \bigoplus_{k=-1}^\infty\nDpoly{k}$ 
can be considered as the space of polydifferential operators transversal to $\mathcal{F}$. 
Here $\nDpoly{-1}$ denotes the algebra $R$ of smooth functions on the manifold $M$, 
$\nDpoly{0}$ denotes the left $R$-module
$\frac{\enveloping{T_M}}{\enveloping{T_M}\cdot\sections{T_{\mathcal{F}}}}
\cong \frac{\mathcal{D}(M)}{\mathcal{D}(M)\cdot\sections{T_{\mathcal{F}}}}$
of `transverse differential operators,' 
and $\nDpoly{k}$ denotes the tensor product $\nDpoly{0}\otimes_R\cdots\otimes_R\nDpoly{0}$ 
of $(k+1)$ copies of the left $R$-module $\nDpoly{0}$. 
(Should there exist a foliation $\mathcal{F}'$ transverse to $\mathcal{F}$, 
the space $\nDpoly{0}$ would be isomorphic to 
the space $\enveloping{T_{\mathcal{F}'}}$ of differential operators in the direction of $\mathcal{F}'$.) 

For every $k\geq 0$, $\nDpoly{k}$ is naturally a left $\enveloping{T_{\mathcal{F}}}$-module 
and we can consider the associated leafwise de Rham differential 
\[ d_{\dR}: \sections{\Lambda^\bullet T_\mathcal{F}\dual}\otimes_R\nDpoly{k} 
\to \sections{\Lambda^{\bullet+1} T_\mathcal{F}\dual}\otimes_R\nDpoly{k} .\]

Since $\frac{\mathcal{D}(M)}{\mathcal{D}(M)\cdot\sections{T_{\mathcal{F}}}}$ is a coalgebra over $R$ 
with an associative comultiplication 
\[ \Delta: \tfrac{\mathcal{D}(M)}{\mathcal{D}(M)\cdot\sections{T_{\mathcal{F}}}} 
\to \tfrac{\mathcal{D}(M)}{\mathcal{D}(M)\cdot\sections{T_{\mathcal{F}}}} 
\otimes_R \tfrac{\mathcal{D}(M)}{\mathcal{D}(M)\cdot\sections{T_{\mathcal{F}}}} ,\] 
there is a Hochschild differential
\[ \hochschild:\nDpoly{k}\to\nDpoly{k+1} ,\]
which extends to a $\sections{\Lambda^\bullet T_\mathcal{F}\dual}$-graded linear operator of degree $+1$ 
on $\tot\big(\sections{\Lambda^\bullet T_\mathcal{F}\dual}\otimes_R\nDpoly{\bullet}\big)$ denoted $\dHH$. 
The unary bracket of the $L_\infty$ algebra structure on 
$\tot\big(\sections{\Lambda^\bullet T_\mathcal{F}\dual}\otimes_R\nDpoly{\bullet}\big)$ 
is $d_{\dR}+\dHH$.

The $L_\infty$ structures on $\tot\big(\sections{\Lambda^\bullet T_\mathcal{F}\dual}\otimes_R\nTpoly{\bullet}\big)$ 
and $\tot\big(\sections{\Lambda^\bullet T_\mathcal{F}\dual}\otimes_R\nDpoly{\bullet}\big)$ 
depend on the choice of a splitting of the short exact sequence $0 \to T_{\mathcal F} \to T_M \to N_{\mathcal F} \to 0$
and a torsion-free $T_M$-connection on $N_{\mathcal F}$ --- see~\cite{arXiv:1901.04602}. 
However, different choices induce isomorphic $L_\infty$ algebra structures.
Moreover, the first `Taylor coefficient' of the $L_\infty$ isomorphism is the identity map. 
Therefore, the resulting Gerstenhaber algebra structures on the cohomologies
$\hypercohomology^\bullet_{\dR}( \mathcal{F}, \nTpoly{\bullet})$ and
$\hypercohomology^\bullet_{\dR}( \mathcal{F}, \nDpoly{\bullet})$ are indeed canonical \cite{arXiv:1901.04602}.

According to Section~\ref{section:HKR}, the skew-symmetrization map 
$\nTpoly{\bullet}\to\nDpoly{\bullet}$
induces a quasi-iso\-mor\-phism of cochain complexes
\[ \hkr: \tot\big(\sections{\Lambda^\bullet T_\mathcal{F}\dual} \otimes_R \nTpoly{\bullet}\big) 
\to \tot\big(\sections{\Lambda^\bullet T_\mathcal{F}\dual} \otimes_R \nDpoly{\bullet}\big) .\] 

\subsubsection{Formality theorem for foliations}

Theorem~\ref{thm:main} and Theorem~\ref{KD-thm} imply the following

\begin{theorem}[Formality theorem for foliations] 
Let $\mathcal{F}$ be a regular foliation on a smooth manifold $M$. 
Given a splitting of the short exact sequence $0 \to T_{\mathcal F} \to T_M \to N_{\mathcal F} \to 0$
and a torsion-free $T_M$-connection $\nabla$ on $N_{\mathcal F}$, 
there exists an $L_\infty$ quasi-isomorphism
\[ \Phii: \tot\big(\sections{\Lambda^\bullet T_\mathcal{F}\dual} \otimes_R \nTpoly{\bullet}\big) 
\to \tot\big(\sections{\Lambda^\bullet T_\mathcal{F}\dual} \otimes_R \nDpoly{\bullet}\big) \] 
with first `Taylor coefficient' $\Phii_1$ satisfying the following two properties: 
\begin{enumerate}
\item $\Phii_1$
preserves the associative algebra structures (wedge and cup product, respectively) up to homotopy;
\item $\Phii_1 =\hkr\circ(\todd_{\mathcal F}^{\nabla})^{\frac{1}{2}}$, where 
$(\todd_{\mathcal F}^{\nabla})^{\frac{1}{2}}\in\bigoplus_{k=0}^\infty\sections{\Lambda^k T_{\mathcal{F}}\dual\otimes\Lambda^k T_{\mathcal{F}}^\perp}$
acts on $\tot\big(\sections{\Lambda^\bullet T_\mathcal{F}\dual}\otimes_R\nTpoly{\bullet}\big)$ by contraction.
\end{enumerate}
\end{theorem}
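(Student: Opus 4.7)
The plan is to deduce this theorem directly from the general formality theorem for Lie pairs (Theorem~\ref{thm:main}) by specializing to the Lie pair $(L,A) = (T_M, T_{\mathcal{F}})$ and then translating every object in the general statement into its foliation-theoretic counterpart. First I would set up the dictionary: the quotient $L/A$ is the normal bundle $N_{\mathcal{F}}$, the abstract Bott $A$-connection on $L/A$ from Example~\ref{Auderghem} is exactly the classical Bott connection of the foliation, and a torsion-free $L$-connection on $L/A$ extending this Bott connection is precisely the datum $\nabla$ appearing in the hypothesis. Consequently the Chevalley--Eilenberg Lie-algebroid cohomology $\hypercohomology^\bullet_{\CE}(A,-)$ coincides with the leafwise de Rham cohomology $\hypercohomology^\bullet_{\dR}(\mathcal{F},-)$ on any $T_{\mathcal{F}}$-module, and the abstract Atiyah and Todd cocycles $\atiyahcocycle$ and $\todd_{L/A}^{\nabla}$ become, respectively, the Molino-type Atiyah cocycle and the Todd cocycle $\todd_{\mathcal{F}}^{\nabla}$ defined earlier in this subsection.

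Next I would identify the two graded spaces of ``polyvector fields'' and ``polydifferential operators.'' By the definitions recalled in the Preliminaries, $\Tpoly{k} = \sections{\Lambda^{k+1}(L/A)} = \sections{\Lambda^{k+1}N_{\mathcal{F}}} = \nTpoly{k}$, and $\Dpoly{0} = \enveloping{T_M}/\enveloping{T_M}\sections{T_{\mathcal{F}}} = \nDpoly{0}$, with higher $\Dpoly{k} = \nDpoly{k}$ via the same iterated tensor-product construction over $R$. The induced $A$-module structures and the Hochschild coboundary $\hochschild$ transport term-by-term, so the $L_\infty$-structures on $\tot(\sections{\Lambda^\bullet A\dual}\otimes_R\Tpoly{\bullet})$ and $\tot(\sections{\Lambda^\bullet A\dual}\otimes_R\Dpoly{\bullet})$ provided by Corollaries~\ref{Bari} and~\ref{thm:Naples} agree with the ones on $\tot(\sections{\Lambda^\bullet T_{\mathcal{F}}\dual}\otimes_R\nTpoly{\bullet})$ and $\tot(\sections{\Lambda^\bullet T_{\mathcal{F}}\dual}\otimes_R\nDpoly{\bullet})$. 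The skew-symmetrization map $\hkr$ of Section~\ref{section:HKR} then specializes to the $\hkr$ appearing in the statement.

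With the dictionary in place, I would invoke Theorem~\ref{thm:main} to produce an $L_\infty$ quasi-isomorphism $\Phii$ with the required source and target and transfer its properties verbatim: (1) follows from the corresponding statement about the abstract $\Phii_1$, and (2) follows because $\hkr\circ(\todd_{L/A}^{\nabla})^{1/2}$ rewrites under the dictionary as $\hkr\circ(\todd_{\mathcal{F}}^{\nabla})^{1/2}$, which acts on $\tot(\sections{\Lambda^\bullet T_{\mathcal{F}}\dual}\otimes_R\nTpoly{\bullet})$ by contraction exactly as stated. I do not expect a genuine obstacle: the only point meriting care is to confirm that the splitting of $0\to T_{\mathcal{F}}\to T_M\to N_{\mathcal{F}}\to 0$ and the torsion-free connection used in the Fedosov construction underlying Theorem~\ref{thm:main} are the same data that define $\todd_{\mathcal{F}}^{\nabla}$, which holds by construction in Section~\ref{Kabul}. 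Thus the theorem is purely a translation of Theorem~\ref{thm:main} into the language of transverse geometry.
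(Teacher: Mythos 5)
Your proposal is correct and matches the paper's own treatment: the paper likewise sets up the dictionary identifying $L/A$ with $N_{\mathcal{F}}$, the Bott connection, the Molino-type Atiyah/Todd cocycles, and $\nTpoly{\bullet}$, $\nDpoly{\bullet}$ with $\Tpoly{\bullet}$, $\Dpoly{\bullet}$, and then deduces the result directly from Theorem~\ref{thm:main}. No substantive difference.
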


\begin{theorem}[Kontsevich-Duflo type theorem for foliations]
Given a regular foliation $\mathcal{F}$ on a smooth manifold $M$, the map
\[ \hkr\circ\Todd_{\mathcal F}^{\frac{1}{2}}: \hypercohomology^\bullet_{\dR}(\mathcal{F},\nTpoly{\bullet})
\xto{\cong} \hypercohomology^\bullet_{\dR}(\mathcal{F},\nDpoly{\bullet}) \]
is an isomorphism of Gerstenhaber algebras. 
It is understood that the square root $\Todd_{\mathcal{F}}^{\frac{1}{2}}$ 
of the Todd class $\Todd_{\mathcal{F}}\in\bigoplus_{k=0}H_{\dR}^{k}({\mathcal{F}},\Lambda^k T_{\mathcal{F}}^\perp)$ 
acts on $\hypercohomology^\bullet_{\dR}(\mathcal{F},\nTpoly{\bullet})$ by contraction. 
\end{theorem}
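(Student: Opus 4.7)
The plan is to derive this statement as a direct specialization of the general Kontsevich--Duflo type theorem for Lie pairs (Theorem~\ref{KD-thm}) applied to the Lie pair $(L,A)=(T_M,T_{\mathcal{F}})$ arising from the regular foliation $\mathcal{F}$. The key observation, already made in the preceding discussion, is that the tangent bundle of a foliated manifold together with the tangent bundle of the foliation forms a Lie $\RR$-pair whose quotient $L/A=T_M/T_{\mathcal{F}}$ is the normal bundle $N_{\mathcal{F}}$, and whose Bott $A$-representation is exactly the classical Bott connection on $N_{\mathcal{F}}$.

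First, I would unwind the identifications of all objects appearing in the statement of Theorem~\ref{KD-thm} in this particular setting. For the spaces of coefficients, $\Tpoly{\bullet}=\sections{\Lambda^{\bullet+1}(L/A)}=\nTpoly{\bullet}$ by definition, and $\Dpoly{0}=\enveloping{L}/\enveloping{L}\sections{A}=\enveloping{T_M}/\enveloping{T_M}\sections{T_{\mathcal{F}}}\cong\nDpoly{0}$, so that $\Dpoly{\bullet}=\nDpoly{\bullet}$. For the cohomology groups, the Chevalley--Eilenberg cohomology of the Lie algebroid $A=T_{\mathcal{F}}$ with values in a $T_{\mathcal{F}}$-module $\mathfrak{M}$ coincides with the leafwise de Rham cohomology $H^\bullet_{\dR}(\mathcal{F},\mathfrak{M})$; this gives the identifications $\hypercohomology^\bullet_{\CE}(A,\Tpoly{\bullet})=\hypercohomology^\bullet_{\dR}(\mathcal{F},\nTpoly{\bullet})$ and $\hypercohomology^\bullet_{\CE}(A,\Dpoly{\bullet})=\hypercohomology^\bullet_{\dR}(\mathcal{F},\nDpoly{\bullet})$. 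Finally, the Todd class $\Todd_{L/A}\in\bigoplus_k H^k_{\CE}(A,\Lambda^k A^\perp)$ manifestly becomes $\Todd_{\mathcal{F}}\in\bigoplus_k H^k_{\dR}(\mathcal{F},\Lambda^k T_{\mathcal{F}}^\perp)$ as defined earlier in this subsection, since the Atiyah cocycle of the Lie pair reduces verbatim to the Molino--type Atiyah cocycle of $\mathcal{F}$.

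With these identifications in hand, the map $\hkr\circ\Todd^{\frac{1}{2}}_{L/A}$ of Theorem~\ref{KD-thm} is literally the map $\hkr\circ\Todd^{\frac{1}{2}}_{\mathcal{F}}$ appearing in the present statement, the square root of the Todd class acting by contraction on $\hypercohomology^\bullet_{\dR}(\mathcal{F},\nTpoly{\bullet})$. By Theorem~\ref{KD-thm}, this map is an isomorphism of Gerstenhaber algebras, which gives the desired conclusion.

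I do not anticipate any real obstacle: all the work is done by Theorem~\ref{KD-thm}, and the only task left is bookkeeping of the identifications above, together with a brief verification that the Gerstenhaber algebra structures on $\hypercohomology^\bullet_{\dR}(\mathcal{F},\nTpoly{\bullet})$ and $\hypercohomology^\bullet_{\dR}(\mathcal{F},\nDpoly{\bullet})$ used in the statement agree with the canonical ones transported from Corollaries~\ref{Bari} and~\ref{thm:Naples} via these identifications. The latter is immediate from the constructions, since both the wedge product on $\nTpoly{\bullet}$ and the cup product on $\nDpoly{\bullet}$ are exactly the multiplications inherited from the Lie pair setup.
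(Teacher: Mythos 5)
Your proposal is correct and follows exactly the paper's own route: the paper presents this statement as an immediate consequence of Theorem~\ref{KD-thm} applied to the Lie pair $(T_M,T_{\mathcal{F}})$, with the same identifications of $\nTpoly{\bullet}$, $\nDpoly{\bullet}$, the leafwise de Rham cohomology, and the Todd class that you spell out. Nothing is missing.
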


\appendix

\section{Fedosov dg Lie algebroids}

In this section, we recall basic ingredients needed to establish our main result (Theorem~\ref{thm:main}) in Section~\ref{Kabul}. 
For details, we refer the interested reader to~\cite{arXiv:1901.04602}.

\subsection{DGLAs associated to dg Lie algebroids}\label{section:dgla}

One can make sense of polyvector fields and polydifferential operators for a dg Lie algebroid just as one does for ordinary Lie algebroids.
Both give rise to dglas and their cohomology groups are in fact Gerstenhaber algebras. 
More precisely, a $(k+1)$-vector field on a dg Lie algebroid $\cL\to\cM$ is a 
section of the vector bundle $\Lambda^{k+1}\cL\to\cM$
while a $(k+1)$-differential operator is an element of $\enveloping{\cL}^{\otimes k+1}$,
the tensor product (as left $C^\infty (\cM )$-modules) of $k+1$ copies of the universal enveloping algebra $\enveloping{\cL}$.

It is clear that the differential $\mathcal{Q}:\sections{\cL}\to\sections{\cL}$ 
and the homological vector field $\mathcal{Q}:C^\infty(\cM)\to C^\infty(\cM)$ 
extend naturally to a degree $+1$ differential
$\mathcal{Q}: \sections{\Lambda^{k+1}\cL} \to \sections{\Lambda^{k+1}\cL}$ 
and the Lie algebroid structure on $\cL$ yields a Schouten bracket 
\[ \schouten{\argument}{\argument}: \sections{\Lambda^{u+1}\cL}\otimes
\sections{\Lambda^{v+1}\cL}\to \sections{\Lambda^{u+v+1}\cL} .\]
 
\begin{proposition}\label{pro:hongkong}
Let $\cL$ be a dg Lie algebroid over $\cM$.
\begin{enumerate}
\item When endowed with the differential $\mathcal{Q}$, the wedge product, and the Schouten bracket, 
the space of `polyvector fields' $\sections{\Lambda^{\bullet+1}\cL}$ is a differential Gerstenhaber algebra --- whence a dgla.
\item When endowed with the wedge product and the Schouten bracket, 
the cohomology $H^\bullet\big(\sections{\Lambda^{\bullet+1}\cL},\mathcal{Q})$ is a Gerstenhaber algebra.
\end{enumerate}
\end{proposition}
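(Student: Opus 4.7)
The plan is to deduce (2) from (1), since the cohomology of a differential Gerstenhaber algebra inherits a Gerstenhaber algebra structure automatically from the fact that the differential is a derivation of both the wedge product and the Schouten bracket. So the whole task reduces to establishing (1), namely that $\sections{\Lambda^{\bullet+1}\cL}$, equipped with $\mathcal{Q}$, $\wedge$, and $\schouten{\argument}{\argument}$, is a differential Gerstenhaber algebra.

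I would proceed in three stages. First, verify that $(\sections{\Lambda^{\bullet+1}\cL},\wedge,\schouten{\argument}{\argument})$ is a Gerstenhaber algebra, ignoring the differential for the moment. This is the direct generalization to the dg setting of the classical fact that polyvector fields on a Lie algebroid form a Gerstenhaber algebra (due to Xu and others). The Schouten bracket is defined by extending the bracket $\lie{\argument}{\argument}$ on $\sections{\cL}$ and the $C^\infty(\cM)$-action on $\sections{\cL}$ via the graded Leibniz rule in both entries --- respecting Koszul signs dictated by the internal $\ZZ$-grading inherited from $\cM$ and $\cL$ on top of the wedge grading. The Gerstenhaber axioms (graded skew-symmetry, graded Jacobi, and graded Leibniz with respect to $\wedge$) then follow from the Lie algebroid axioms on $\cL$ by induction on wedge degree, exactly as in the ungraded case.

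Second, extend $\mathcal{Q}$ from $\sections{\cL}$ to all of $\sections{\Lambda^{\bullet+1}\cL}$ by declaring it to be a degree $+1$ derivation of the wedge product, and verify $\mathcal{Q}^2 = 0$. Since $\mathcal{Q}^2$ is automatically a derivation of $\wedge$ of even total degree, it suffices to check vanishing on $C^\infty(\cM)$ and on $\sections{\cL}$, where it holds by the defining properties of a dg Lie algebroid.

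Third --- and this is where the main subtlety lies --- verify that $\mathcal{Q}$ is a derivation of $\schouten{\argument}{\argument}$, i.e.
\[
\mathcal{Q}\schouten{X}{Y} \;=\; \schouten{\mathcal{Q}(X)}{Y} \;+\; (-1)^{|X|}\schouten{X}{\mathcal{Q}(Y)} .
\]
Both sides are biderivations of $\wedge$ (with matching degrees and signs), so it is enough to check the identity when $X,Y\in\sections{\cL}$; there it is precisely the compatibility of $\mathcal{Q}$ with the Lie bracket on $\sections{\cL}$, built into the notion of a dg Lie algebroid. The main obstacle I foresee is purely bookkeeping: the presence of two gradings (wedge degree and internal dg degree) means that writing down the correct signs for graded skew-symmetry, graded Jacobi, and the Leibniz rules for $\mathcal{Q}$ and $\schouten{\argument}{\argument}$ requires careful application of the Koszul convention, particularly when one reduces identities from $\sections{\Lambda^\bullet\cL}$ to generators. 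Once the signs are pinned down, every identity follows either from the ungraded case or from a single defining axiom of a dg Lie algebroid.
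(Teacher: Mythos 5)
Your proposal is correct and follows the same route the paper takes: the paper states this proposition without proof, treating it as the routine extension to the dg setting of the classical fact that polyvector fields on a Lie algebroid form a Gerstenhaber algebra, which is exactly the argument you flesh out. The only point worth making explicit is that in your third step the reduction to generators also requires checking the derivation identity when one argument lies in $C^\infty(\cM)$, where $\schouten{X}{f}=\rho(X)f$ and the identity amounts to the compatibility of $\mathcal{Q}$ with the anchor rather than with the bracket --- but this too is one of the defining axioms of a dg Lie algebroid, so no gap results.
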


Adapting the definition given for Lie algebroids, 
one can define the universal enveloping algebra of a dg Lie algebroid. 
The universal enveloping algebra of a dg Lie algebroid $\cL\to\cM$
is a dg Hopf algebroid $\enveloping{\cL}$ over the cdga $\mathcal{R}:=C^\infty(\cM)$.
For each $k\geqslant 0$, the dg structure on the dg Lie algebroid
$\cL\to\cM$ determines a differential 
$\mathcal{Q}:\enveloping{\cL}^{\otimes k+1}\to\enveloping{\cL}^{\otimes k+1}$ 
of degree $+1$. 
A Hochschild coboundary differential
\[ \hochschild: \enveloping{\cL}^{\otimes k}\to 
\enveloping{\cL}^{\otimes k+1} \] 
and Gerstenhaber bracket 
\begin{equation}\label{eq:Gbraket}
\gerstenhaber{\argument }{\argument }: \UcL{u}\otimes_{\KK} \UcL{v}\to \UcL{u+v}
\end{equation} 
can be defined by the following explicit algebraic expressions:
\begin{multline}\label{hola}
\hochschild(u_1\otimes\cdots\otimes u_k) = 1\otimes u_1\otimes\cdots\otimes u_k
+ \sum_{i=1}^{k} (-1)^i u_1\otimes\cdots\otimes\Delta(u_i)\otimes\cdots\otimes u_k \\
+ (-1)^{k+1} u_1\otimes\cdots\otimes u_k\otimes 1,
\end{multline}
and, for $\phi\in\UcL{u}$ and $\phi\in\UcL{v}$,
\begin{equation}\label{hazmat} 
\gerstenhaber{\phi}{\psi} = \phi\star\psi - (-1)^{uv} \psi\star\phi \in\UcL{u+v} 
,\end{equation}
where $\phi\star\psi\in\UcL{u+v}$ is defined by
\[ \phi\star\psi = \sum_{k=0}^{u} (-1)^{kv}
d_0\otimes\cdots\otimes d_{k-1}\otimes (\Delta^v d_k)\cdot\psi\otimes d_{k+1}
\otimes\cdots\otimes d_u \]
if $\phi=d_0\otimes d_1 \otimes\cdots\otimes d_u$ for some $d_0,d_1,\dots,d_u\in\enveloping{\cL}$.

We refer the reader to~\cite{MR1815717} for the precise meaning 
of the product $(\Delta^v d_k)\cdot \psi$ in $\UcL{v}$ appearing in the last equation above. 

\begin{proposition}\label{pro:hongkong1}
Let $\cL$ be a dg Lie algebroid over $\cM$. 
\begin{enumerate}
\item When endowed with the differential $\mathcal{Q}+\hochschild$ 
and the Gerstenhaber bracket \eqref{eq:Gbraket}, 
$\enveloping{\cL}^{\otimes \bullet+1}$ is a dgla.
\item When endowed with the cup product (i.e.\ the tensor product $\otimes_{\mathcal{R}}$) 
and the Gerstenhaber bracket, the Hochschild cohomology $H^\bullet\big(\enveloping{\cL}^{\otimes \bullet+1},\mathcal{Q}+\hochschild\big)$,
is a Gerstenhaber algebra.
\end{enumerate}
\end{proposition}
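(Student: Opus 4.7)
The plan is to reduce both statements to classical Hochschild--Gerstenhaber arguments, treating the internal dg structure $\mathcal{Q}$ as an additional compatible differential. The key observation is that the universal enveloping algebra $\enveloping{\cL}$ of a dg Lie algebroid is a \emph{dg Hopf algebroid}: its comultiplication $\Delta$ is a morphism of dg coalgebras, which is equivalent to saying $\mathcal{Q}$ and $\Delta$ are compatible. All required identities in the statement then follow by combining this compatibility with the classical purely algebraic identities (which hold for any Hopf algebroid).

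For part (1), I would proceed in three steps. First, verify that $\mathcal{Q}+\hochschild$ squares to zero on $\enveloping{\cL}^{\otimes\bullet+1}$. Since $\mathcal{Q}^2=0$ from the dg structure and $\hochschild^2=0$ from coassociativity of $\Delta$ (a purely algebraic calculation on the formula \eqref{hola}), it suffices to show $\mathcal{Q}\hochschild+\hochschild\mathcal{Q}=0$. Inspecting each term in \eqref{hola}, the only nontrivial identity needed is that $\mathcal{Q}$ commutes with $\Delta$ (up to the Koszul sign), which is precisely the dg Hopf algebroid compatibility. Second, check that the Gerstenhaber bracket \eqref{hazmat} satisfies graded Jacobi; this is a formal algebraic consequence of the defining formula for $\phi\star\psi$ together with coassociativity, and the calculation is identical to Gerstenhaber's original argument, carried out with appropriate Koszul signs. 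Third, verify that $\mathcal{Q}+\hochschild$ is a graded derivation of $\gerstenhaber{\argument}{\argument}$. Decomposing: the fact that $\hochschild$ is a graded derivation of $\gerstenhaber{\argument}{\argument}$ is Gerstenhaber's classical result; and $\mathcal{Q}$ is a graded derivation of the multiplication $\star$ (since $\mathcal{Q}$ derives the algebra structure of $\enveloping{\cL}$), hence also of the graded commutator $\gerstenhaber{\argument}{\argument}$.

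For part (2), once the dgla structure is in hand, we combine it with the associative (but not graded commutative) cup product $\otimes_{C^\infty(\cM)}$ to obtain a Gerstenhaber algebra on cohomology. The standard argument, going back to Gerstenhaber, constructs on the cochain level an explicit homotopy (the cup-$1$ product) between $\phi\cup\psi$ and $\pm\psi\cup\phi$ whose boundary yields $\gerstenhaber{\phi}{\psi}$; this shows graded commutativity of the cup product on cohomology and simultaneously identifies the induced bracket with the Gerstenhaber bracket's action. A second homotopy argument (the brace relations, or equivalently the pre-Lie identity satisfied by $\star$) shows that $\gerstenhaber{\argument}{\argument}$ is a graded biderivation of the cup product up to boundaries of $\mathcal{Q}+\hochschild$. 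Both homotopies carry over verbatim to the dg setting, since the extra differential $\mathcal{Q}$ is a derivation of \emph{all} the operations involved (cup product, $\star$, comultiplication).

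The main technical obstacle will be bookkeeping: tracking Koszul signs consistently when combining the internal grading on $\enveloping{\cL}$ (coming from $\cL$) with the tensor-degree grading on $\enveloping{\cL}^{\otimes\bullet+1}$. In particular, the derivation property of $\mathcal{Q}$ with respect to $\star$ involves a Koszul sign depending on both degrees, and showing $\{\mathcal{Q},\hochschild\}=0$ requires care in how $\mathcal{Q}$ moves past the tensor factors in \eqref{hola}. However, once the dg Hopf algebroid compatibilities (namely that $\mathcal{Q}$ is a graded derivation of the product and a graded coderivation of $\Delta$) are invoked, no new difficulty beyond the classical ungraded case arises; all identities are formal consequences of the Hopf algebroid axioms and hold on the nose at the cochain level for (1) and up to explicit homotopy for (2).
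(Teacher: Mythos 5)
Your proposal is correct and follows exactly the route the paper intends: the paper states this proposition without proof, deferring to the dg Hopf algebroid structure of $\enveloping{\cL}$ and to the Gerstenhaber-algebra results of~\cite{MR1815717} for ordinary Lie algebroids, and your outline supplies precisely the verifications (compatibility of $\mathcal{Q}$ with $\Delta$ and with the product, the pre-Lie/brace identities behind graded Jacobi and the derivation property of $\hochschild$, and the cup-one homotopies for part (2)) that reduce the dg case to the classical one. The only real work is the Koszul-sign bookkeeping between the internal grading and the tensor-degree grading, which you already flag and which introduces no new difficulty.
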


Recall that, for a Lie pair $(L,A)$, if a splitting $j:B(=L/A)\to L$ of the short exact sequence $0\to A\to L\to B\to 0$ is given, 
whose image $j(B)$ happens to be a Lie subalgebroid of $L$, then $A$ and $B$ are said to form a \emph{matched pair of Lie algebroids}
--- see \cite{MR1460632} for more details. 
In such a situation, we write $L=A\bowtie B$ to highlight that $A$ and $B$ play symmetric roles as 
a pair of complementary Lie subalgebroids of the Lie algebroid $L$.

\begin{lemma}
If $A\bowtie B$ is a matched pair of Lie algebroids, 
then $(A[1]\oplus B,d^{\Bott}_A)$ is a dg Lie algebroid over $(A[1],\dace)$.
\end{lemma}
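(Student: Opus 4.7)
The plan is to exhibit all the required structure directly and to reduce its compatibility to the matched pair axioms, which package precisely the data of a reciprocal pair of actions of $A$ on $B$ and of $B$ on $A$ whose bracket on $L=A\bowtie B$ splits into both halves.

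First I would identify sections of the pulled-back bundle $A[1]\oplus B\to A[1]$ with $\sections{\Lambda^\bullet A\dual\otimes_R B}$, viewed as a $C^\infty(A[1])=\sections{\Lambda^\bullet A\dual}$-module. The operator $\mathcal{Q}$ encoding the dg structure is then declared to be $d_A^{\Bott}$, where the Bott representation of $A$ on $B\cong L/A$ (Example~\ref{Auderghem}) is the $A$-action on $B$ from the matched pair. Flatness of $\nabla^{\Bott}$ gives $(d_A^{\Bott})^2=0$, and the Leibniz rule $d_A^{\Bott}(f\cdot s)=d_A(f)\cdot s+(-1)^{\abs{f}}f\cdot d_A^{\Bott}(s)$ identifies $A[1]\oplus B\to A[1]$ as a dg vector bundle whose fiberwise linear structure is preserved by the induced homological vector field on the total space.

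Next I would define the anchor $\hat{\rho}$ and bracket. The $B$-action on $\sections{A}$ from the matched pair dualizes to an action on $\sections{A\dual}$ and extends by derivation to $C^\infty(A[1])$; this gives for each $b\in\sections{B}$ a vector field $\hat{\rho}(b)\in\XX(A[1])$, which I then extend $\sections{\Lambda^\bullet A\dual}$-linearly. The bracket on $\sections{\Lambda^\bullet A\dual\otimes_R B}$ is the natural extension of $\lie{\argument}{\argument}_B$, namely
\[ \lie{\omega\otimes b_1}{\eta\otimes b_2}=(\omega\wedge\eta)\otimes\lie{b_1}{b_2}+(\omega\wedge\hat{\rho}(b_1)\eta)\otimes b_2-(-1)^{\abs{\omega}\abs{\eta}}(\eta\wedge\hat{\rho}(b_2)\omega)\otimes b_1, \]
with signs dictated by the Koszul convention.

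Finally I would verify the four compatibilities: graded anti-symmetry and graded Jacobi identity for $\lie{\argument}{\argument}$; the Leibniz rule relating bracket and anchor; that $d_A^{\Bott}$ is a graded derivation of $\lie{\argument}{\argument}$; and that $\hat{\rho}$ is a graded chain map, i.e.\ $d_A(\hat{\rho}(s)f)=\hat{\rho}(d_A^{\Bott}s)f+(-1)^{\abs{s}}\hat{\rho}(s)(d_Af)$. Each of these is a direct transcription of one of the matched pair compatibility conditions between the mutual actions of $A$ and $B$, together with flatness of both representations; the $C^\infty(M)$-linearity in the appropriate arguments reduces the check to the generating case $s=1\otimes b$, $s'=1\otimes b'$, where it becomes the usual Jacobi/Leibniz identity in $L=A\bowtie B$, and the case involving one $A\dual$-factor, where the matched pair mixed identities enter.

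The main obstacle will be bookkeeping the Koszul signs in the derivation and chain-map identities and disentangling the two mutual actions on mixed tensors; once the sign conventions are fixed, every identity is a bilinear reshuffling of the two halves of the $L$-bracket on $A\bowtie B$, so the proof is essentially a transliteration of the matched pair axioms into the graded setting and no new compatibility beyond those axioms is required.
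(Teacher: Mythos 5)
Your proposal is correct, but it follows a genuinely different route from the paper. The paper's proof is a two-line citation: by a theorem of Mackenzie, a matched pair $A\bowtie B$ gives rise to a double Lie algebroid with sides $A$ and $B$, and by a theorem of Gracia-Saz and Mehta, shifting a double Lie algebroid $D$ along one side produces a dg Lie algebroid $D[1]\to A[1]$. You instead carry out the direct verification: you write down the differential $d_A^{\Bott}$, the anchor obtained from the $B$-action on $\sections{\Lambda^\bullet A\dual}$, and the bracket on $\sections{\Lambda^\bullet A\dual\otimes_R B}$ --- your structure maps agree with Equations~\eqref{eq:Brussels1} and~\eqref{eq:Brussels2}, which the paper records immediately after the lemma without proof --- and you reduce the graded Jacobi identity, the derivation property of $d_A^{\Bott}$ over the bracket, and the chain-map property of the anchor to the mixed compatibility conditions defining a matched pair (e.g.\ $\nabla_a[b_1,b_2]=[\nabla_a b_1,b_2]+[b_1,\nabla_a b_2]+\nabla_{\nabla_{b_2}a}b_1-\nabla_{\nabla_{b_1}a}b_2$ and its anchor counterparts). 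Your approach is more elementary and self-contained, at the cost of the Koszul-sign bookkeeping you acknowledge; the paper's approach is shorter and places the lemma inside the conceptual framework of double structures, which in particular makes the naturality of the construction transparent. To be fully complete your write-up should also note that the bracket is well defined (the $C^\infty(M)$-Leibniz anomalies in each argument are absorbed by the anchor), but this is routine and no essential idea is missing.
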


\begin{proof}
A classical result of Mackenzie~\cite{MR2831518} asserts that, if $A\bowtie B$ is a matched pair of Lie algebroids over a smooth manifold $M$, 
then \[ \begin{tikzcd}[cramped, sep=small] A\bowtie B \arrow[r] \arrow[d] & B \arrow[d] \\ A \arrow[r] & M \end{tikzcd} \] is a double Lie algebroid. 
Moreover, Gracia-Saz and Mehta~\cite{MR2581370} proved that, given a double Lie algebroid 
\[ \begin{tikzcd}[cramped, sep=small] D \arrow[r] \arrow[d] & B \arrow[d] \\ A \arrow[r] & M \end{tikzcd} ,\]
the graded vector bundle $D[1]\to A[1]$ is automatically a dg Lie algebroid. 
\end{proof}

Here the dg manifold structures on $(A[1]\oplus B, \dabott)$ and $(A[1], \dace)$ 
result from the Lie algebroid structures on $A\oplus B\to B$ and $A\to M$, respectively. 
In what follows, denote by $\cB$ the dg manifold $A[1]\oplus B$.
The space of sections of $\cB\to A[1]$ can be naturally identified with $\sections{\Lambda^\bullet A\dual\otimes B}$.
The bracket on $\sections{\Lambda^\bullet A\dual\otimes B}$ is defined 
in terms of the Bott $B$-connection on $\Lambda A\dual$ by
\begin{equation}
\label{eq:Brussels1}
[\xi_1\otimes b_1, \xi_2\otimes b_2]
=\xi_1\wedge \xi_2 \otimes [b_1, b_2]
+\xi_1\wedge \nabla^{\Bott}_{b_1} \xi_2 \otimes b_2
- \nabla^{\Bott}_{b_2} \xi_1\wedge \xi_2 \otimes b_1
\end{equation}
for all $\xi_1,\xi_2\in\sections{\Lambda^\bullet A\dual}$ and $b_1,b_2\in\sections{B}$, 
while the anchor map 
$\sections{\Lambda^\bullet A\dual \otimes B}\xto{\bar\rho}\Der(\Lambda^\bullet A\dual)$
is defined by
\begin{equation}\label{eq:Brussels2}
\bar{\rho}(\xi \otimes b)(\eta) = \xi\wedge\nabla^{\Bott}_b\eta
,\end{equation}
for all $\xi,\eta\in\sections{\Lambda^\bullet A\dual}$ and $b\in \sections{B}$.
Finally, the differential on the space of sections of $\cB\to A[1]$ is simply 
the Chevalley--Eilenberg differential 
$\dabott: \sections{\Lambda^\bullet A\dual\otimes B} \to \sections{\Lambda^{\bullet+1}A\dual\otimes B}$
corresponding to the Bott $A$-connection on $B$. 

According to Proposition~\ref{pro:hongkong}, the dg Lie algebroid $\cB\to A[1]$ 
induces a differential Gerstenhaber algebra structure on 
$\sections{\Lambda^{\bullet+1}\cB}\cong\sections{\Lambda^\bullet A\dual\otimes\Lambda^{\bullet+1}B}$.
Its differential is the Chevalley--Eilenberg differential 
\begin{equation}\label{eq:Vienna}
\dabott: \sections{\Lambda^\bullet A\dual\otimes\Lambda^{\bullet+1}B}
\to \sections{\Lambda^{\bullet+1} A\dual\otimes\Lambda^{\bullet+1}B}
\end{equation}
corresponding to the Bott $A$-connection on $\Lambda B$ 
and its Lie bracket is the Schouten bracket of the dg Lie algebroid $\cB\to A[1]$ 
--- essentially the extension of Equations~\eqref{eq:Brussels1} and~\eqref{eq:Brussels2} by the graded Leibniz rule.

Next, consider the universal enveloping algebra $\enveloping{\cB}$ of the dg Lie algebroid $\cB\to A[1]$,
which is a dg Hopf algebroid over $(\sections{\Lambda^\bullet A\dual},d_A)$.
It is clear that $\enveloping{\cB}\cong\sections{\Lambda^\bullet A\dual}\otimes_R\enveloping{B}$
and $\enveloping{\cB}^{\otimes k+1} \cong\sections{\Lambda^\bullet A\dual}\otimes_R\enveloping{B}^{\otimes k+1}$. 
Under this identification, the differential
$\mathcal{Q}:\enveloping{\cB}^{\otimes k+1}\to\enveloping{\cB}^{\otimes k+1}$
becomes the Cheval\-ley--Eilenberg differential
\begin{equation}
\label{eq:AUB}
\dau: \sections{\wedge^\bullet A\dual}\otimes_R\enveloping{B}^{\otimes k+1}
\to\sections{\wedge^{\bullet+1} A\dual}\otimes_R\enveloping{B}^{\otimes k+1}.
\end{equation}
Here the $A$-module structure on $\enveloping{B}$ follows from the canonical identification 
of $\enveloping{B}$ with $\frac{\enveloping{L}}{\enveloping{L}\sections{A}}$ 
--- the Lie algebroid $A$ acts on the latter by multiplication from the left --- 
and extends to an $A$-module structure on $\enveloping{B}^{\otimes k+1}$ in the natural way. 
As a consequence, the total differential $\mathcal{Q}+\hochschild$ on $\enveloping{\cB}^{\otimes \bullet+1}$ 
coincides with $\dau+\dHH$ on $\tot\big(\sections{\Lambda^\bullet A\dual}\otimes_R\enveloping{B}^{\bullet+1}\big)$.

The following proposition summarizes the discussion above: 
\begin{proposition}
\label{pro:zurich}
Suppose $A\bowtie B$ is a matched pair of Lie algebroids. 
\begin{enumerate}
\item When endowed with the differential $\dabott$ as in~\eqref{eq:Vienna} 
and the Schouten bracket defined by Equations~\eqref{eq:Brussels1}-\eqref{eq:Brussels2}, 
$\sections{\wedge^\bullet A\dual\otimes\wedge^{\bullet+1}B}$ is a differential Gerstenhaber algebra, whence a dgla.
\item When endowed with the wedge product and the Schouten bracket, 
the cohomology $\hypercohomology^\bullet_{\CE}(A,\Lambda^{\bullet+1}B)$ is a Gerstenhaber algebra.
\item When endowed with the differential $\dau+\dHH$ (see~\eqref{hola} and~\eqref{eq:AUB}) and the Gerstenhaber bracket,
$\tot\big(\sections{\Lambda^\bullet A\dual}\otimes_R\enveloping{B}^{\otimes\bullet+1}$ is a dgla. 
\item When endowed with the cup product and the Gerstenhaber bracket, 
the Hochschild cohomology $\hypercohomology^\bullet_{\CE}\big(A,{\enveloping{B}}^{\otimes\bullet+1}\big)$, 
i.e.\ the cohomology of the complex 
$\big(\tot(\Lambda^\bullet A\dual\otimes_R\enveloping{B}^{\otimes\bullet+1},\dau+\dHH\big)$, 
is a Gerstenhaber algebra.
\end{enumerate}
\end{proposition}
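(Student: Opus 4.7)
The plan is to deduce Proposition~\ref{pro:zurich} from the general results on dg Lie algebroids, namely Propositions~\ref{pro:hongkong} and~\ref{pro:hongkong1}, by exhibiting an appropriate dg Lie algebroid structure derived from the matched pair $(A,B)$. The key observation, already sketched in the paragraphs preceding the proposition, is that the matched pair gives rise to a dg Lie algebroid $\cB = A[1]\oplus B \to A[1]$, so all four assertions should follow by specializing the general machinery.

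First I would recall Mackenzie's result that a matched pair $(A,B)$ of Lie algebroids produces a double Lie algebroid with $A\bowtie B$ on top, $A$ and $B$ on the sides, and $M$ on the bottom. Invoking the Gracia-Saz--Mehta theorem, the graded vector bundle $\cB = A[1]\oplus B \to A[1]$ then carries a canonical dg Lie algebroid structure, with homological vector field $\dace$ on the base and with anchor, bracket, and fibrewise differential $\dabott$ on sections determined by the Bott $A$- and $B$-representations via Equations~\eqref{eq:Brussels1}--\eqref{eq:Brussels2}. Since $\cB \to A[1]$ has $B$ as its fibre, we get the natural identifications
\[ \sections{\Lambda^{k+1}\cB} \cong \sections{\Lambda^\bullet A\dual \otimes \Lambda^{k+1} B}, \qquad \enveloping{\cB}^{\otimes k+1} \cong \sections{\Lambda^\bullet A\dual}\otimes_R \enveloping{B}^{\otimes k+1}, \]
and under these identifications the total differential $\mathcal{Q}$ on the polyvector fields of $\cB$ becomes $\dabott$, while the total differential $\mathcal{Q}+\hochschild$ on the polydifferential operators of $\cB$ becomes $\dau + \hochschild$.

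With these identifications in place, parts (1) and (2) follow by applying Proposition~\ref{pro:hongkong} to $\cB$: the wedge product, the Schouten bracket of the dg Lie algebroid, and the differential $\mathcal{Q}=\dabott$ together give a differential Gerstenhaber algebra on $\sections{\Lambda^{\bullet+1}\cB}$, which descends to a Gerstenhaber algebra on $\hypercohomology^\bullet_{\CE}(A,\Lambda^{\bullet+1}B)$. Similarly, parts (3) and (4) follow from Proposition~\ref{pro:hongkong1} applied to $\enveloping{\cB}^{\otimes\bullet+1}$, giving the dgla with differential $\dau + \hochschild$ and Gerstenhaber bracket, together with the induced Gerstenhaber algebra structure on the Hochschild hypercohomology.

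The only real work is checking that the differentials, brackets, and products produced abstractly by the dg Lie algebroid $\cB$ match exactly the explicit formulas~\eqref{eq:Vienna}, \eqref{hola}, \eqref{hazmat}, and the Chevalley--Eilenberg differential~\eqref{eq:AUB} described before the proposition; this is the step I expect to absorb all of the technical effort. The compatibility on polyvector fields reduces to checking the defining axioms on generators $\xi\otimes b$ with $\xi\in\sections{\Lambda A\dual}$ and $b\in\sections{B}$, where it is forced by~\eqref{eq:Brussels1}--\eqref{eq:Brussels2} together with the graded Leibniz rule. On polydifferential operators, the main point is that the isomorphism $\enveloping{\cB}\cong \sections{\Lambda A\dual}\otimes_R \enveloping{B}$ as dg Hopf algebroids intertwines the comultiplication of $\enveloping{\cB}$ with the comultiplication coming from the identification $\enveloping{B}\cong \enveloping{L}/\enveloping{L}\sections{A}$, so that the Hochschild differential~\eqref{hola} and the Gerstenhaber bracket~\eqref{hazmat} on $\enveloping{\cB}^{\otimes\bullet+1}$ translate into the stated operators on $\tot\bigl(\sections{\Lambda^\bullet A\dual}\otimes_R \enveloping{B}^{\otimes\bullet+1}\bigr)$. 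Once these identifications are verified, Propositions~\ref{pro:hongkong} and~\ref{pro:hongkong1} deliver all four conclusions at once.
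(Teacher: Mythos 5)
Your proposal is correct and follows essentially the same route as the paper: the paper likewise obtains the dg Lie algebroid $\cB=A[1]\oplus B\to A[1]$ from the matched pair via Mackenzie's double Lie algebroid construction and the Gracia-Saz--Mehta theorem, identifies $\sections{\Lambda^{\bullet+1}\cB}$ and $\enveloping{\cB}^{\otimes\bullet+1}$ with the stated spaces, and then reads off all four assertions from Propositions~\ref{pro:hongkong} and~\ref{pro:hongkong1}. The one point worth keeping in mind when you carry out the identification on the polydifferential side is that the resulting Gerstenhaber bracket on $\tot\big(\sections{\Lambda^\bullet A\dual}\otimes_R\enveloping{B}^{\otimes\bullet+1}\big)$ is \emph{not} the naive $\sections{\Lambda^\bullet A\dual}$-linear extension of the bracket on $\enveloping{B}^{\otimes\bullet+1}$ but involves the Bott representation of $B$ on $\sections{\Lambda^\bullet A\dual}$, as the paper notes in the remark following the proposition.
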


\begin{remark}
Note that the Gerstenhaber bracket on 
$\tot\big(\sections{\Lambda^\bullet A\dual}\otimes_R\enveloping{B}^{\otimes\bullet+1}\big)$ 
is \emph{not} the obvious extension of the Gerstenhaber bracket on $\enveloping{B}^{\otimes\bullet+1}$ 
obtained by tensoring with the commutative associative algebra $\sections{\Lambda^\bullet A\dual}$.
In fact, to write down an explicit formula --- which is quite involved --- one needs to use the 
Bott representation of $B$ on $\sections{\Lambda^\bullet A\dual}$.
\end{remark}

\subsection{Fedosov dg Lie algebroids}
\label{Fedosov dg abd}
Let $(L,A)$ be a Lie pair. 
We use the symbols $B$ to denote the quotient vector bundle $L/A$ and $r$ to denote its rank. 

Consider the endomorphism $\delta$ of the vector bundle 
$\Lambda^\bullet L\dual\otimes\hat{S}B\dual$ defined by 
\[ \delta(\omega\otimes\chi^J)=\sum_{m=1}^r \big(q^\top(\chi_m)\wedge\omega\big)\otimes J_m\,\chi^{J-e_m} ,\] 
for all $\omega\in\Lambda L\dual$ and $J\in\NN^r$. 
Here $\{\chi_k\}_{k=1}^r$ denotes an arbitrary local frame for the vector bundle $B\dual$, 
\[ \chi^J=\underset{J_1 \text{ factors}}{\underbrace{\chi_1\odot\cdots\odot\chi_1}}
\odot \underset{J_2 \text{ factors}}{\underbrace{\chi_2\odot\cdots\odot\chi_2}} 
\odot \cdots \odot \underset{J_r \text{ factors}}{\underbrace{\chi_r\odot\cdots\odot\chi_r}} \] 
if $J=(J_1,J_2,\cdots,J_r)$, 
the symbol $e_m$ denotes the multi-index $(0,\cdots,0,1,0,\cdots,0)$ having its single nonzero entry in $m$-th position, 
and $q^\top:B\dual\to L\dual$ is the map dual to the projection $q:L\to B$. 

The operator $\delta$ is a derivation of degree $+1$ of the 
graded commutative algebra $\sections{\Lambda^\bullet L^\vee 
\otimes\hat{S} B^\vee }$ and satisfies $\delta^2=0$. 
The resulting cochain complex
\[ \begin{tikzcd}[column sep=small] 
\cdots \arrow[r] & \Lambda^{n-1} L\dual \otimes\hat{S} B\dual \arrow[r, "\delta"] &
\Lambda^{n} L\dual \otimes\hat{S} B\dual \arrow[r, "\delta"] & 
\Lambda^{n+1} L\dual \otimes\hat{S} B\dual \arrow[r] & \cdots 
\end{tikzcd} \] 
deformation retracts onto the trivial complex
\[ \begin{tikzcd} 
\cdots \arrow[r] & \Lambda^{n-1} A\dual \arrow[r, "0"] & 
\Lambda^{n} A\dual \arrow[r, "0"] & \Lambda^{n+1} A\dual \arrow[r] & \cdots 
\end{tikzcd} \] 
Indeed, for every choice of splitting $i\circ p+j\circ q=\id_L$ of the short 
exact sequence 
\begin{equation}
\label{eq:9} 
\begin{tikzcd} 
0 \arrow[r] & A \arrow[r, "i"] & L \arrow[l, "p", bend left, dashed] \arrow[r, "q"] & B \arrow[r] \arrow[l, "j", bend left, dashed] & 0 
\end{tikzcd} 
\end{equation}
and its dual 
\[ \begin{tikzcd} 
0 \arrow[r] & B\dual \arrow[r, "q\transpose"] & L\dual \arrow[l, "j\transpose", bend left, dashed] \arrow[r, "i\transpose"] 
& A\dual \arrow[r] \arrow[l, "p\transpose", bend left, dashed] & 0 
\end{tikzcd} ,\]
the chain maps
\[ \sigma:\Lambda^\bullet L\dual \otimes\hat{S} B\dual \to\Lambda^\bullet A\dual \] 
and
\[ \tau:\Lambda^\bullet A\dual \to\Lambda^\bullet L\dual \otimes\hat{S} B\dual \]
respectively defined by 
\[ \sigma(\omega\otimes\chi^J)=\begin{cases} \omega\otimes\chi^J & 
\text{if } v=0 \text{ and } \abs{J}=0 \\ 
0 & \text{otherwise,} \end{cases} \]
for all $\omega\in p^\top(\Lambda^u A\dual)\otimes q^\top(\Lambda^v B\dual)$, 
and \[ \tau(\alpha)=p^\top(\alpha)\otimes 1 ,\] for all $\alpha\in\Lambda^\bullet(A\dual)$, 
satisfy
\[ \sigma\tau=\id \qquad \text{and} \qquad \id-\tau\sigma= h\delta+\delta h ,\]
where the homotopy operator 
\[ h:\Lambda^{\bullet} L\dual \otimes\hat{S} B\dual \to\Lambda^{\bullet-1} L\dual \otimes\hat{S} B\dual \] 
is defined by 
\[ h(\omega\otimes\chi^J)=\begin{cases} 
\frac{1}{v+\abs{J}}\sum_{k=1}^r (\iota_{j(\partial_k)}\omega)\otimes\chi^{J+e_k} & \text{if } v\geqslant 1 \\ 
0 & \text{if } v=0 \end{cases} \]
for all $\omega\in p^\top(\Lambda^u A\dual)\otimes q^\top(\Lambda^v B\dual)$. 
Here $\{\partial_k\}_{k=1}^r$ denotes the local frame for $B$ dual to $\{\chi_k\}_{k=1}^r$. 
Note that the operator $h$ is \emph{not} a derivation of the algebra 
$\sections{\Lambda^\bullet L^\vee \otimes\hat{S} B^\vee }$. 
Also, we note that $h\tau=0$, $\sigma h=0$, and $h^2 =0$.

\begin{lemma}
Let $(L,A)$ be a Lie pair and let $\nabla$ be an $L$-connection on $B$ extending the Bott $A$-connection. 
The torsion $\torsion$ of $\nabla$ vanishes (see Proposition~\ref{eq:dog}) if and only if $\delta d_L^{\nabla}+d_L^{\nabla}\delta=0$. 
\end{lemma}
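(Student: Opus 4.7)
The plan is to reduce the identity to a single computation on algebra generators. Both $\delta$ and $d_L^\nabla$ are graded derivations of degree $+1$ of the graded commutative algebra $\sections{\Lambda^\bullet L\dual \otimes \hat{S}B\dual}$: for $d_L^\nabla$ this is immediate because the $L$-connection on $B\dual$ extends as a derivation to $\hat{S}B\dual$, and for $\delta$ it follows from a direct check of its defining formula. Consequently, their anticommutator $\{\delta, d_L^\nabla\} := \delta\, d_L^\nabla + d_L^\nabla \delta$ is again a graded derivation, hence is determined by its action on any generating set of the algebra.

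As an $R$-algebra, $\sections{\Lambda^\bullet L\dual \otimes \hat{S}B\dual}$ is generated by $R \cup \sections{L\dual} \cup \sections{B\dual}$. The first two classes of generators sit inside $\sections{\Lambda^\bullet L\dual \otimes \hat{S}^0 B\dual}$; by inspection of the defining formula $\delta$ vanishes on this subspace, and $d_L^\nabla$ preserves it (the induced connection on $\hat{S}^0 B\dual = R$ acts simply by the anchor). Hence $\{\delta, d_L^\nabla\}$ vanishes on functions and on $\sections{L\dual}$ irrespective of $\nabla$, and the whole question reduces to computing $\{\delta, d_L^\nabla\}(\chi)$ for $\chi \in \sections{B\dual}$.

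For such a $\chi$, I would pick dual local frames $\{v_j\}$ of $L$ and $\{\nu_j\}$ of $L\dual$ and compute $\delta\chi = q\transpose\chi \in \sections{L\dual}$ and $d_L^\nabla\chi = \sum_j \nu_j \otimes \nabla^\vee_{v_j}\chi$, then evaluate both $\delta(d_L^\nabla\chi)$ and $d_L^\nabla(\delta\chi)$ on a pair $l_1, l_2 \in \sections{L}$. Expanding $d_L(q\transpose\chi)(l_1,l_2)$ by the Chevalley--Eilenberg formula and $\delta(d_L^\nabla\chi)(l_1,l_2)$ using the dual-connection identity $(\nabla^\vee_l\chi)(b) = \rho(l)\chi(b) - \chi(\nabla_l b)$, the four terms of the form $\rho(l_i)\chi(q(l_j))$ cancel pairwise, and the sum reassembles to
\[ \{\delta, d_L^\nabla\}(\chi)(l_1, l_2) = \chi\bigl(\nabla_{l_1} q(l_2) - \nabla_{l_2} q(l_1) - q([l_1, l_2])\bigr) = \chi\bigl(\torsion(l_1, l_2)\bigr). \]

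Since $\chi \in \sections{B\dual}$ and $l_1, l_2 \in \sections{L}$ are arbitrary, this identity shows $\{\delta, d_L^\nabla\}$ vanishes on generators if and only if the pairing $\chi \mapsto \chi(\torsion(l_1,l_2))$ is identically zero, i.e.\ if and only if $\torsion = 0$. The only step requiring genuine care is the pairwise cancellation in the boxed computation above; it is routine bookkeeping rather than a conceptual obstacle, and the derivation principle then packages the result into the full equivalence $\delta d_L^\nabla + d_L^\nabla \delta = 0 \iff \torsion = 0$.
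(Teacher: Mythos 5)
Your proof is correct. The paper states this lemma without proof (the appendix defers to the Fedosov construction reference), so there is no in-paper argument to compare against; your route --- observing that $\delta d_L^{\nabla}+d_L^{\nabla}\delta$ is a degree-$+2$ derivation, hence determined on the generators $R$, $\sections{L\dual}$, $\sections{B\dual}$, noting it vanishes automatically on the first two, and computing $(\delta d_L^{\nabla}+d_L^{\nabla}\delta)(\chi)(l_1,l_2)=\chi\bigl(\torsion(l_1,l_2)\bigr)$ on the third --- is the natural argument and the cancellation you describe does work out. The only point worth making explicit is that $\hat{S}(B\dual)$ is merely \emph{topologically} generated by $\sections{B\dual}$, but since $\delta$ and $d_L^{\nabla}$ are local operators preserving the filtration by symmetric degree, the derivation-on-generators principle still applies.
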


Consider the four maps $\etendu{\delta}$, $\etendu{\sigma}$, $\etendu{h}$, and $\etendu{\tau}$
\[ \begin{tikzcd}
\sections{\Lambda^\bullet A\dual \otimes B} \arrow[shift right]{r}[swap]{\etendu{\tau}}
& \sections{\Lambda^{\bullet} L\dual \otimes\hat{S} B\dual \otimes B} \arrow[shift right]{l}[swap]{\etendu{\sigma}} \arrow[shift left]{r}{\etendu{\delta}} 
& \sections{\Lambda^{\bullet+1} L\dual \otimes\hat{S} B\dual \otimes B} \arrow[shift left]{l}{\etendu{h}} 
\end{tikzcd} \]
defined by 
\begin{align*} 
\etendu{\delta}(\omega\otimes\sigma\otimes b)&=\delta(\omega\otimes\sigma)\otimes b,
& \etendu{\sigma}(\omega\otimes\sigma\otimes b)&=\sigma(\omega\otimes\sigma)\otimes b,
\\ \etendu{h}(\omega\otimes\sigma\otimes b)&=h(\omega\otimes\sigma)\otimes b, 
& \etendu{\tau}(\alpha\otimes b)&=\tau(\alpha)\otimes b,
\end{align*} 
for all $\alpha\in\sections{\Lambda A^\vee}$, 
$\omega\in\sections{\Lambda L^\vee}$, $\sigma\in\sections{\hat{S} B\dual }$, and $b\in\sections{B}$. 

\begin{theorem}[\cite{arXiv:1605.09656}] \label{strawberry}
Let $(L,A)$ be a Lie pair with quotient $B=L/A$.
We interpret the sections of the bundle $L\dual\otimes\hat{S}B\dual\otimes B$ as derivations of the algebra 
$\sections{\Lambda^\bullet L\dual\otimes\hat{S}B\dual}$ in the natural way. 
Given a splitting of the short exact sequence \eqref{eq:9} 
and a torsion-free $L$-connection $\nabla$ on $B$, 
there exists a \emph{unique} derivation 
\[ X^\nabla\in\sections{L\dual\otimes\hat{S}^{\geqslant 2}B\dual\otimes B}, \] 
satisfying $\etendu{h}(X^\nabla)=0$ and such that the derivation 
$Q: \sections{\Lambda^\bullet L^\vee\otimes\hat{S}B^\vee} 
\to \sections{\Lambda^{\bullet+1}L^\vee\otimes\hat{S}B^\vee}$ 
defined by 
\[ Q=-\delta+d_L^\nabla+X^\nabla \]
satisfies $Q^2=0$. 
Moreover, writing $X_k$ for the component of $X^\nabla$ in $L\dual\otimes S^k B\dual\otimes B$, 
we have $X^\nabla=\sum_{k=2}^{\infty} X_k$ with 
\[ X_2=\etendu{h}(R^\nabla)=\etendu{h}(\widetilde{\atiyahcocycle})+\etendu{h}(R^\nabla_{0,2}) .\]
As a consequence, $(\cM=L[1]\oplus B,Q=-\delta+d_L^\nabla+X^\nabla)$ is a dg manifold, 
which we call a \emph{Fedosov dg manifold} associated with the Lie pair $(L,A)$.
\end{theorem}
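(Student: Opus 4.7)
The plan is to construct $X^\nabla$ by iterating on the symmetric polynomial degree in $\hat{S}B\dual$, using the contraction $(\etendu{\sigma}, \etendu{\tau}, \etendu{h})$ onto $\sections{\Lambda^\bullet A\dual \otimes B}$ to invert $\etendu{\delta}$. First I would expand $Q^2 = \frac{1}{2}\lie{Q}{Q}$ with $Q = -\delta + d_L^\nabla + X^\nabla$. Using $\delta^2 = 0$, the torsion-free hypothesis $\delta d_L^\nabla + d_L^\nabla \delta = 0$, and $(d_L^\nabla)^2 = \liederivative{R^\nabla}$ (where $R^\nabla$ is viewed as a vertical vector field via the identification $\sections{\Lambda^2 L^\vee \otimes B} \hookrightarrow \sections{\Lambda^2 L^\vee \otimes\hat{S}B^\vee\otimes B}$), the equation $Q^2 = 0$ rewrites as the \emph{Fedosov fixed-point equation}
\begin{equation*}
\etendu{\delta}(X^\nabla) = R^\nabla + \lie{d_L^\nabla}{X^\nabla} + \tfrac{1}{2}\lie{X^\nabla}{X^\nabla},
\end{equation*}
where both sides live in $\sections{\Lambda^\bullet L^\vee\otimes\hat{S}B^\vee\otimes B}$.

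Next I would impose the gauge condition $\etendu{h}(X^\nabla) = 0$ and apply $\etendu{h}$ to the fixed-point equation. Since $\etendu{\sigma}$ of the right-hand side vanishes (it lives in $L^\vee$-degree $\geqslant 1$ and $\hat{S}B^\vee$-degree $\geqslant 1$), the homotopy identity $\etendu{h}\etendu{\delta} + \etendu{\delta}\etendu{h} = \id - \etendu{\tau}\etendu{\sigma}$ combined with $\etendu{h}(X^\nabla) = 0$ yields the equivalent equation
\begin{equation*}
X^\nabla = \etendu{h}(R^\nabla) + \etendu{h}\lie{d_L^\nabla}{X^\nabla} + \tfrac{1}{2}\etendu{h}\lie{X^\nabla}{X^\nabla}.
\end{equation*}
The central observation is that the right-hand side is a \emph{contraction} with respect to the filtration by $\hat{S}B^\vee$-degree: $\etendu{h}$ raises symmetric degree by one, $d_L^\nabla$ preserves it, and the Lie bracket of two elements of $\sections{L^\vee\otimes\hat{S}^{\geqslant 2}B^\vee\otimes B}$ lands in $\sections{L^\vee\otimes\hat{S}^{\geqslant 3}B^\vee\otimes B}$. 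I would then define $X^\nabla = \sum_{k\geqslant 2} X_k$ with $X_k\in\sections{L^\vee\otimes S^k B^\vee\otimes B}$ inductively: set $X_2 := \etendu{h}(R^\nabla)$, and determine $X_{k+1}$ from $X_2,\dots,X_k$ by extracting the symmetric degree $k+1$ component of the equation. Convergence is automatic in the $\mathfrak{m}$-adic topology of the completed symmetric algebra, and the splitting $R^\nabla = \widetilde{\atiyahcocycle} + R^\nabla_{0,2}$ immediately gives the stated expression for $X_2$.

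Uniqueness follows by reversing the argument: if $X'$ is any other derivation in $\sections{L^\vee\otimes\hat{S}^{\geqslant 2}B^\vee\otimes B}$ satisfying $\etendu{h}(X') = 0$ and making $(-\delta + d_L^\nabla + X')^2 = 0$, then $X'$ satisfies the same fixed-point equation above, so by induction on symmetric degree $X'_k = X_k$ for all $k$. The contraction property of the recursion forces agreement at each order.

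The main technical obstacle will be verifying carefully that the bracket $\lie{d_L^\nabla}{X^\nabla}$ and the vertical self-bracket $\lie{X^\nabla}{X^\nabla}$ are compatible with the symmetric-degree filtration in the way claimed — in particular, that $\lie{d_L^\nabla}{X^\nabla}$ does not introduce terms of symmetric degree strictly less than that of $X^\nabla$ (this uses torsion-freeness), and that the ``curvature as vertical vector field'' identification $R^\nabla \in \sections{\Lambda^2 L^\vee\otimes\End B} \leadsto \sections{\Lambda^2 L^\vee \otimes S^1 B^\vee \otimes B}$ inserts $R^\nabla$ at the correct symmetric degree for the induction to start at $k=2$. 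Once this bookkeeping is settled, the rest of the argument is a routine contraction-mapping induction in the $\mathfrak{m}$-adic topology.
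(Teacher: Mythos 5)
Your proposal follows essentially the same route as the paper's own sketch: expand $Q^2=0$, use $\delta^2=0$ and torsion-freeness to reduce to the equation $\lie{\delta}{X^\nabla}=R^\nabla+\lie{d_L^\nabla}{X^\nabla}+\tfrac{1}{2}\lie{X^\nabla}{X^\nabla}$, then apply the gauge condition $\etendu{h}(X^\nabla)=0$ together with the homotopy identity $\etendu{\delta}\etendu{h}+\etendu{h}\etendu{\delta}=\id-\etendu{\tau}\etendu{\sigma}$ to turn this into the recursion $X_2=\etendu{h}(R^\nabla)$, $X_{k+1}=\etendu{h}(\cdots)$, with uniqueness by induction on symmetric degree. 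This matches the paper's argument in both structure and level of detail.
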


\begin{proof}[Sketch of proof]
Suppose there exists such an $\xnabla$ and consider its decomposition $\xnabla=\sum_{k=2}^{\infty} X_k$, where 
$X_k\in\sections{L\dual\otimes\hat{S}^{k}B\dual\otimes B}$. 
Then $Q=-\delta+d_L^\nabla+\xtwo+\xhigher$ 
with $\xhigher=\sum_{k=3}^{\infty} X_k$ and 
\[ \begin{split} 
Q^2 =&\ \delta^2 
- \big(\delta d_L^\nabla+d_L^\nabla\delta\big) 
+ \big\{d_L^\nabla d_L^\nabla-\delta\xtwo-\xtwo\delta\big\} \\ 
&\ + \big\{d_L^\nabla\xnabla+\xnabla d_L^\nabla+{\xnabla}^2 
-\delta\xhigher-\xhigher\delta\big\} \\ 
=&\ \delta^2 - \lie{\delta}{d_L^\nabla} 
+ \big\{ R^\nabla-\lie{\delta}{\xtwo} \big\}
+ \big\{ \lie{d_L^\nabla+\tfrac{1}{2}\xnabla}{\xnabla}
- \lie{\delta}{\xhigher} \big\} 
.\end{split} \] 

For degree reasons, the requirement $Q^2=0$ is equivalent to the pair of equations 
\[ \lie{\delta}{\xtwo}=R^\nabla \qquad \text{and} \qquad \lie{\delta}{\xhigher}=\lie{d_L^\nabla+\tfrac{1}{2}\xnabla}{\xnabla} .\]

Note that $\etendu{\sigma}(\xtwo)=0$ and $\etendu{\sigma}(\xhigher)=0$, 
since $\xtwo,\xhigher\in\sections{L^\vee\otimes\hat{S}^{\geqslant 2}B^\vee\otimes B}$, 
and also that $\etendu{h}(\xtwo)=0$ and $\etendu{h}(\xhigher)=0$, as $\etendu{h}(\xnabla)=0$. 
Since $\etendu{\delta}\etendu{h}+\etendu{h}\etendu{\delta}=\id-\etendu{\tau}\etendu{\sigma}$, 
we obtain $\etendu{h}\etendu{\delta}(\xtwo)=\xtwo$ and 
$\etendu{h}\etendu{\delta}(\xhigher)=\xhigher$. 

It follows that 
\begin{gather*}
\xtwo=\etendu{h}\etendu{\delta}(\xtwo)=\etendu{h}(\lie{\delta}{\xtwo})=\etendu{h}(R^\nabla) \\ 
\intertext{while} 
\xhigher=\etendu{h}\etendu{\delta}(\xhigher)=\etendu{h}(\lie{\delta}{\xhigher})
=\etendu{h}\lie{d_L^\nabla+\tfrac{1}{2}\xnabla}{\xnabla}
.\end{gather*}
Projecting the latter equation onto 
$\sections{L^\vee\otimes\hat{S}^{k+1} B^\vee \otimes B}$, 
we obtain 
\[ X_{k+1} = \etendu{h}\Big(d_L^\nabla\circ X_k+X_k\circ d_L^\nabla+
\sum_{\substack{p+q=k+1 \\ 2\leqslant p,q\leqslant k-1}} X_p\circ X_q\Big), \qquad\text{for}\ k\geqslant 2 ,\] 
which shows that the higher terms of $\xnabla=\sum_{k=2}^{\infty} X_k$ 
can be computed iteratively starting from $X_2=\etendu{h}(R^\nabla)$. 
The derivation $X^\nabla$ is thus uniquely determined by the torsion-free connection $\nabla$. 
\end{proof}

The Fedosov dg manifold $(\cM,Q)$ of Theorem~\ref{strawberry} was also obtained independently by
Ba\-ta\-ki\-dis--Vo\-glai\-re~\cite{MR3724780} in the case of matched pairs.

\begin{remark}
When $L$ is the tangent bundle to a smooth manifold and $A$ is its trivial subbundle of rank $0$, 
Theorem~\ref{strawberry} reduces to a classical theorem of Emmrich--Weinstein~\cite{MR1327535} (see also~\cite{MR2102846}).
In the particular case of the Lie pair comprised of the complex Lie algebroids $L=T_X\otimes\CC$ and $A=T^{0,1}X$ associated with a complex manifold $X$, 
Theorem~\ref{strawberry} reduces to Theorem~5.9 in~\cite{MR2364075}.
\end{remark}

The identification of $C^{\infty}(M)$ with the subalgebra $\sections{\Lambda^0 L\dual\otimes S^0(B\dual)}$ 
of $C^{\infty}(\cM)=\sections{\Lambda^\bullet L\dual\otimes\hat{S}(B\dual)}$ determines a surjective submersion $\cM\onto M$. 
Let $\cF\to\cM$ denote the pullback of the vector bundle $B\to M$ through $\cM\onto M$. 
It is a graded vector bundle whose total space $\cF$ is the graded manifold with support $M$ 
associated with the graded vector bundle $L[1]\oplus B\oplus B\to M$. 
Its space of sections $\sections{\cF\to\cM}$ is canonically identified with 
$C^\infty(\cM)\otimes_{C^\infty(M)}\sections{B}=\sections{\Lambda^\bullet L\dual\otimes\hat{S}(B\dual)\otimes B}$. 
It is naturally a vector subbundle of $T_{\cM}\to\cM$; the inclusion $\sections{\cF\to\cM}\into\XX(\cM)$ 
takes the section $(\lambda\otimes\chi^J)\otimes\partial_k\in C^\infty(\cM)\otimes_{C^\infty(M)}\sections{B}$ of the vector bundle $\cF\to\cM$
to the derivation $\mu\otimes\chi^M\mapsto \lambda\wedge\mu\otimes M_k\chi^{J+M-e_k}$ of $C^{\infty}(\cM)$. 

\begin{proposition}[\cite{arXiv:1901.04602}]\label{pro:Rome}
The pullback $\cF\to\cM$ of the vector bundle $B\to M$ to the Fedosov dg manifold $(\cM,Q)$ 
is a dg Lie subalgebroid of the tangent dg Lie algebroid $T_{\cM}\to\cM$. 
\end{proposition}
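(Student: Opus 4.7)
The plan is to verify the three requirements defining a dg Lie subalgebroid: (a) $\cF \to \cM$ is a graded vector subbundle of the tangent dg Lie algebroid $T_\cM \to \cM$; (b) $\sections{\cF \to \cM}$ is closed under the Lie bracket of $\XX(\cM)$, so that $\cF$ inherits a Lie algebroid structure whose anchor is the inclusion $\cF \into T_\cM$; and (c) the homological vector field $Q$ on $\cM$ restricts to a compatible dg structure on $\cF$, i.e.\ $\liederivative{Q}$ preserves $\sections{\cF \to \cM}$ as a subspace of $\XX(\cM)$. The entire verification is a local computation that, by naturality of all constructions involved, glues to a global statement.

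First I would fix local frames $\partial_1,\dots,\partial_r$ of $B$ and $\eta_1,\dots,\eta_l$ of $L$, together with the dual frames $\chi_1,\dots,\chi_r$ of $B\dual$ and $\lambda_1,\dots,\lambda_l$ of $L\dual$, over a coordinate chart of $M$. Under the inclusion $\sections{\cF \to \cM} \into \XX(\cM)$ described in the paragraph preceding the proposition, the section $\partial_k$ corresponds to $\hat{\partial}_k = \frac{\partial}{\partial \chi_k}$, and $\sections{\cF \to \cM}$ is the $C^\infty(\cM)$-module spanned by $\hat{\partial}_1,\dots,\hat{\partial}_r$. This makes (a) automatic. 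For (b), since the vertical vector fields $\hat{\partial}_i$ commute pairwise, the graded Leibniz rule gives
\[ \lie{f \hat{\partial}_i}{g \hat{\partial}_j} = f \hat{\partial}_i(g) \hat{\partial}_j - (-1)^{\abs{f}\abs{g}} g \hat{\partial}_j(f) \hat{\partial}_i \in \sections{\cF \to \cM} \]
for homogeneous $f, g \in C^\infty(\cM)$, so $\sections{\cF \to \cM}$ is a graded Lie subalgebra of $\XX(\cM)$ and the anchor of $\cF$ is the restriction of the identity on $T_\cM$.

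For (c), I would compute $\lie{Q}{\hat{\partial}_k}$ using the decomposition $Q = -\delta + d_L^\nabla + X^\nabla$ of Theorem~\ref{strawberry} together with the local expressions already recalled in the proof of Lemma~\ref{lem:local Atiyah}. A direct calculation yields $\lie{\delta}{\hat{\partial}_k} = 0$ (the coefficients of $\delta$ are independent of the $\chi$ variables), $\lie{d_L^\nabla}{\hat{\partial}_k} = \sum_{i,m} \Gamma_{ik}^m \lambda_i \hat{\partial}_m$, and, writing $X^\nabla = \sum_l f_l \hat{\partial}_l$ with $f_l \in C^\infty(\cM)$, $\lie{X^\nabla}{\hat{\partial}_k} = -\sum_l \hat{\partial}_k(f_l)\, \hat{\partial}_l$. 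All three brackets lie in $\sections{\cF \to \cM}$, and combining this observation with the graded Leibniz rule shows that $\lie{Q}{\sections{\cF \to \cM}} \subseteq \sections{\cF \to \cM}$. The induced degree $+1$ operator $\mathcal{Q}$ on $\sections{\cF \to \cM}$ squares to zero since $\lie{Q}{Q} = 0$, endowing $\cF$ with the desired dg structure. The inclusion $\cF \into T_\cM$ is then automatically a morphism of dg Lie algebroids.

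I expect the main obstacle to be cosmetic rather than substantive: organizing the local verifications so that their independence from the chosen frames is transparent. This is handled by observing that each of $\delta$, $d_L^\nabla$, and $X^\nabla$ is intrinsically defined by Theorem~\ref{strawberry}, and that $\sections{\cF \to \cM}$ is defined globally as $C^\infty(\cM)\otimes_{C^\infty(M)}\sections{B}$; hence the local preservation of $\sections{\cF \to \cM}$ by $\lie{Q}{\argument}$ automatically globalizes.
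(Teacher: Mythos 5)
Your argument is correct and complete: conditions (a)--(c) are exactly what must be checked for $\cF$ to be a dg Lie subalgebroid of $T_\cM$, and your three bracket computations $\lie{\delta}{\hat{\partial}_k}=0$, $\lie{d_L^\nabla}{\hat{\partial}_k}=\sum_{i,m}\Gamma_{ik}^m\lambda_i\hat{\partial}_m$, and $\lie{X^\nabla}{\hat{\partial}_k}=-\sum_l\hat{\partial}_k(f_l)\hat{\partial}_l$ agree with the ones recorded in the paper's proof of Lemma~\ref{lem:local Atiyah}. The paper itself states Proposition~\ref{pro:Rome} without proof, deferring to \cite{1605.09656}, so there is no in-text argument to compare against; your proof supplies the missing verification using precisely the ingredients (the decomposition $Q=-\delta+d_L^\nabla+X^\nabla$ and the local frame computations) that the paper already develops for other purposes, with the global statement following since $\sections{\cF\to\cM}=C^\infty(\cM)\otimes_{C^\infty(M)}\sections{B}$ is frame-independent.
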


In other words, $\cF$ is a dg foliation of the dg manifold $(\cM, Q)$. 
Each such dg Lie algebroid $\cF\to\cM$ is called a \emph{Fedosov dg Lie algebroid} associated with the Lie pair $(L,A)$.

\subsection{Dolgushev--Fedosov type quasi-isomorphisms on \texorpdfstring{$\Tpoly{\bullet,\bullet}$}{tensors} and \texorpdfstring{$\Tpoly{\bullet}$}{polyvector fields}}

Below we describe an extension of Dolgushev--Fedosov type quasi-isomorphisms \cite{MR2102846} to the context of Lie pairs. 
Actually, a stronger result holds: the quasi-isomorphisms are contractions. 

Set $\Tpoly{r,s}:=\sections{(B\dual)^{\otimes r}\otimes B^{\otimes s}}$ and let $\verticalTpoly{r,s}$ 
denote the space of formal vertical tensors of type $(r,s)$ on the vector bundle $B\to M$, 
i.e.\ \[ \verticalTpoly{r,s} = \sections{\hat{S}(B\dual)} \otimes_R \Tpoly{r,s} .\]
It is simple to see that
\[ \sections{\cM;(\cF\dual)^{\otimes r}\otimes\cF^{\otimes s}}
\cong\sections{\Lambda^\bullet L\dual}\otimes_R \verticalTpoly{r,s}
\cong\sections{\Lambda^\bullet L\dual\otimes \hat{S}B\dual}\otimes_R\Tpoly{r,s} .\] 

Since $Q$ is a homological vector field on the graded manifold $\cM=L[1]\oplus B$, 
the Lie derivative $\liederivative{Q}$ is a coboundary operator on the space
$T^{r,s}_{\cM}$ of tensors of type $(r,s)$ on $\cM$.
The Lie derivative $\liederivative{Q}$ stabilizes the subspaces of tensors of type $(r,s)$ 
``tangent to the dg Lie subalgebroid $\cF$ of $T_{\cM}$.'' 
\begin{lemma}
The subspace $\sections{\Lambda^\bullet L\dual}\otimes_R\verticalTpoly{r,s}$ of $T^{r,s}_{\cM}$ is stable under $\liederivative{Q}$.
\end{lemma}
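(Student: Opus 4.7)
The plan is to reduce the claim to the defining property of $\cF$ as a dg Lie subalgebroid of $T_\cM$ (Proposition~\ref{pro:Rome}), combined with the graded Leibniz rule for $\liederivative{Q}$ with respect to tensor products and contractions. First I would use the identification $\sections{\Lambda^\bullet L\dual}\otimes_R\verticalTpoly{r,s}(B)\cong\sections{\cM;(\cF^\vee)^{\otimes r}\otimes\cF^{\otimes s}}$ recalled in the excerpt, realizing the subspace as the $C^\infty(\cM)$-submodule of $T^{r,s}_\cM$ generated by pulled-back sections of $B$ and $B^\vee$, embedded via the vector-bundle-like splitting $T_\cM=\cF\oplus\cH$ coming from the linear structure of $\cM=L[1]\oplus B$.

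Since $\liederivative{Q}$ is a graded derivation of the tensor algebra that commutes with duality pairings, it suffices to verify stability on generators. For functions $f\in C^\infty(\cM)=\sections{\Lambda^\bullet L\dual\otimes\hat{S}B\dual}$ the claim is immediate: $\liederivative{Q}f=Q(f)\in C^\infty(\cM)$. For vertical vector fields $X\in\sections{\cF}$, stability is exactly Proposition~\ref{pro:Rome}, namely $[Q,X]\in\sections{\cF}$. For vertical 1-forms $\xi\in\sections{\cF^\vee}$, I would use the dual characterization
\[ \langle\liederivative{Q}\xi,X\rangle=Q\bigl(\langle\xi,X\rangle\bigr)-(-1)^{|\xi|}\langle\xi,[Q,X]\rangle, \qquad \forall X\in\sections{\cF}, \]
which, together with Proposition~\ref{pro:Rome}, shows that the restriction of $\liederivative{Q}\xi$ to $\sections{\cF}$ defines an unambiguous element of $\sections{\cF^\vee}$. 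Extension to arbitrary $(r,s)$-tensors then follows from the Leibniz rule.

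The main obstacle is that the naive local representative $d\chi^k$ of an element of $\sections{\cF^\vee}$ does not satisfy $\liederivative{Q}(d\chi^k)\in\mathrm{span}_{C^\infty(\cM)}(d\chi^\ell)$: a direct computation using $Q=-\delta+d_L^\nabla+X^\nabla$ from Theorem~\ref{strawberry} produces $dx^i$ and $d\lambda^j$ contributions alongside the expected $d\chi^\ell$ terms. The resolution is to work with the \emph{intrinsic} degree $+1$ operator $\mathcal{Q}_{r,s}$ on $\sections{(\cF^\vee)^{\otimes r}\otimes\cF^{\otimes s}}$ furnished by the dg vector bundle structure on $\cF$ — recalled in the subsection on Atiyah and Todd cocycles of a dg Lie algebroid — and prove that under the embedding chosen above, the extra horizontal contributions to $\liederivative{Q}$ are reabsorbed into the $\sections{\Lambda^\bullet L\dual\otimes\hat{S}B\dual}$-factor of the subspace. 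Once $\mathcal{Q}_{r,s}$ is matched to the appropriate component of $\liederivative{Q}$ on generators via the decomposition of $Q$ of Theorem~\ref{strawberry}, stability of the whole subspace follows by the derivation property, by construction of $\mathcal{Q}_{r,s}$.
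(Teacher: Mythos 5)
Your overall strategy is the right one, and it is essentially the argument the paper intends: the paper offers no proof of this lemma beyond the sentence preceding it, namely that $\liederivative{Q}$ stabilizes the tensors ``tangent to the dg Lie subalgebroid $\cF$ of $T_{\cM}$,'' which is precisely your reduction to Proposition~\ref{pro:Rome} combined with the derivation property of $\liederivative{Q}$ and the duality formula
\[ \langle\liederivative{Q}\xi,X\rangle=Q\bigl(\langle\xi,X\rangle\bigr)-(-1)^{|\xi|}\langle\xi,[Q,X]\rangle ,\qquad X\in\sections{\cF} .\]
Applied with $[Q,X]\in\sections{\cF}$ (Proposition~\ref{pro:Rome}), this formula already disposes of the covariant slots; the contravariant slots and the $C^\infty(\cM)$-coefficients are handled exactly as you say.

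The one step that would fail as written is the ``reabsorption'' claim in your final paragraph. The extraneous $dx_i$ and $d\lambda_j$ contributions to $\liederivative{Q}(d\chi_k)$ cannot be absorbed into the $\sections{\Lambda^\bullet L\dual\otimes\hat{S}B\dual}$-factor: they are genuine covector components transverse to $\cF$, not functions on $\cM$. What actually saves the statement is that these contributions lie in the annihilator $\mathrm{Ann}(\cF)\subset T_{\cM}\dual$ --- indeed $\liederivative{Q}$ preserves $\sections{\mathrm{Ann}(\cF)}$, by the same duality formula together with Proposition~\ref{pro:Rome} --- so they are killed by the canonical projection $T_{\cM}\dual\onto\cF\dual$. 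In other words, for $r\geqslant 1$ the space $\sections{\cM;(\cF\dual)^{\otimes r}\otimes\cF^{\otimes s}}$ sits inside $T^{r,s}_{\cM}$ as a subquotient (a genuine subspace in the $\cF^{\otimes s}$-slots, a quotient in the $(\cF\dual)^{\otimes r}$-slots), and ``stability'' means that $\liederivative{Q}$ descends to a well-defined operator there, coinciding with the intrinsic operator $\mathcal{Q}$ induced by the dg vector bundle structure on $\cF$. Your duality formula already proves this; no matching of horizontal terms is needed, and trying to exhibit $\sections{\cF\dual}$ as a $\liederivative{Q}$-stable subspace of $\sections{T_{\cM}\dual}$ via the local coframe $d\chi_k$ does fail, exactly as your computation with $Q=-\delta+d_L^\nabla+X^\nabla$ shows.
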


By $\bsigma$, we denote the map $\sigma \otimes \id$:
\[ \sections{\Lambda^\bullet L\dual}\otimes_R\verticalTpoly{r,s}\cong 
\sections{\Lambda^\bullet L\dual\otimes \hat{S}B\dual}\otimes_R\Tpoly{r,s}
\xto{\sigma \otimes \id}\sections{\Lambda^\bullet A\dual}\otimes_R\Tpoly{r,s} .\] 

We have the following Dolgushev--Fedosov type quasi-isomorphism \cite{MR2102846}.

\begin{proposition}[\cite{arXiv:1901.04602}]\label{sigma for general tensor}
For each type $(r,s)$, the chain map
\[ \begin{tikzcd}[column sep=large]
\Big(\sections{\Lambda^\bullet L\dual}\otimes_R\verticalTpoly{r,s}, \liederivative{Q}\Big)\arrow[r, "\bsigma"]
& \Big(\sections{\Lambda^\bullet A\dual}\otimes_R\Tpoly{r,s},d_A^{\Bott}\Big)
\end{tikzcd} \]
is a quasi-isomorphism.
\end{proposition}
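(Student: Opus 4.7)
The plan is to prove this proposition via a filtration / spectral-sequence argument, reducing it to the Fedosov deformation retract $(\sigma,\tau,h)$ onto $\sections{\Lambda^\bullet A\dual}$, tensored with the identity on $\Tpoly{r,s}$.

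First I would check that $\bsigma$ is indeed a chain map, i.e.\ that $d_A^{\Bott}\circ\bsigma = \bsigma\circ\liederivative{Q}$. Decomposing $\liederivative{Q} = -\liederivative{\delta} + \liederivative{d_L^\nabla} + \liederivative{X^\nabla}$, the contributions of $\liederivative{-\delta}$ and $\liederivative{X^\nabla}$ produce outputs whose scalar part lies in $\sections{\Lambda^\bullet L\dual \otimes q^\top \Lambda^{\geqslant 1}B\dual \otimes \hat{S}B\dual}$ or in $\sections{\Lambda^\bullet L\dual\otimes\hat{S}^{\geqslant 1}B\dual}$ respectively (using that $X^\nabla \in \sections{L\dual\otimes\hat{S}^{\geqslant 2}B\dual\otimes B}$), so they are annihilated by $\bsigma$. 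The remaining term $\bsigma\circ\liederivative{d_L^\nabla}$, once restricted to $\etendu{\tau}$-images and projected by $\bsigma$, matches $d_A^{\Bott}\circ\bsigma$ by the very definition of $\nabla$ as an $L$-connection extending the Bott $A$-representation; this reduces to a short local-coordinate computation using the explicit expression for $d_L^\nabla$ recorded in the proof of Lemma~\ref{lem:local Atiyah}.

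Next I would introduce a decreasing filtration on each complex: on the source, $F^p$ consists of terms $\omega\otimes\chi^J\otimes T$ with $|\omega|_{\Lambda L\dual} + |J|_{\hat{S}B\dual}\geqslant p$; on the target, $F^p$ is spanned by $\alpha\otimes T$ with $|\alpha|_{\Lambda A\dual}\geqslant p$. Both filtrations are exhaustive and complete, the latter thanks to the $\hat{S}$-completion built into $\verticalTpoly{r,s}(B)$. With respect to these filtrations, $-\liederivative{\delta}$ and the natural extension $\etendu{h}$ preserve the total degree, $\liederivative{d_L^\nabla}$ raises it by one, and $\liederivative{X^\nabla}$ raises it by at least two, while $\bsigma$ is filtration-preserving. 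Consequently, the associated $E_0$-pages are $\big(\sections{\Lambda^\bullet L\dual\otimes\hat{S}B\dual}\otimes_R\Tpoly{r,s},-\liederivative{\delta}\big)$ and $\big(\sections{\Lambda^\bullet A\dual}\otimes_R\Tpoly{r,s},0\big)$, and the $E_0$-incarnation of $\bsigma$ is the tensor product $\sigma\otimes\id_{\Tpoly{r,s}}$. The latter is a quasi-isomorphism because $(\sigma,\tau,h)\otimes\id_{\Tpoly{r,s}}$ is still a contraction. By the Eilenberg--Moore comparison theorem for complete filtered complexes, $\bsigma$ induces an isomorphism on totalized cohomology.

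The main obstacle will be the chain-map verification, specifically the bookkeeping of how $\liederivative{-\delta}$ and $\liederivative{X^\nabla}$ act on the tensor slots $(\cF\dual)^{\otimes r}\otimes\cF^{\otimes s}$ through the identification $\sections{\cM;(\cF\dual)^{\otimes r}\otimes\cF^{\otimes s}}\cong\sections{\Lambda^\bullet L\dual}\otimes_R\verticalTpoly{r,s}(B)$. The delicate point is that $\liederivative{-\delta}$ does not preserve $\sections{\cF\dual}$ as a quotient of $T_\cM\dual$ on its own --- only the combined Lie derivative $\liederivative{Q}$ does --- so the chain-map identities must be organized so that the unwanted ``horizontal'' contributions from each of the three summands cancel after application of $\bsigma$. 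Once this cancellation is confirmed, the remainder of the argument is a standard spectral-sequence comparison.
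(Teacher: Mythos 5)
Your argument is correct, but it follows a different route from the one this paper relies on. The paper does not actually prove Proposition~\ref{sigma for general tensor}: it imports it from \cite{1605.09656}, where the \emph{stronger} statement is established that $\etendu{\sigma}$ is part of a contraction, obtained from the basic contraction $(\sigma,\tau,h)$ of the Koszul complex $\big(\sections{\Lambda^\bullet L\dual\otimes\hat{S}B\dual},\delta\big)$ by the homological perturbation lemma, the perturbation being $\liederivative{d_L^\nabla+X^\nabla}$ --- this is the content of Theorem~\ref{thm:contractionTpol} and the reason for the notation $\etendu{\perturbed{\tau}}$, $\etendu{\perturbed{h}}$ with $\etendu{\sigma}$ left unperturbed. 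Your filtration/spectral-sequence comparison, modeled on the paper's proof of Proposition~\ref{thm:HKR}, is more elementary and proves exactly the quasi-isomorphism asserted; what it does not buy is the contraction data, which the paper genuinely needs later for the homotopy transfer of $L_\infty$ structures (Lemma~\ref{lem:ext}). Your degree bookkeeping is sound: $-\liederivative{\delta}$ preserves $u+\abs{J}$, $\liederivative{d_L^\nabla}$ raises it by one, and $\liederivative{X^\nabla}$ raises it by at least two since $X^\nabla\in\sections{L\dual\otimes\hat{S}^{\geqslant 2}B\dual\otimes B}$ by Theorem~\ref{strawberry}; completeness of the filtration comes from the $\hat{S}$-completion, so the Eilenberg--Moore comparison applies. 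One remark on the step you flag as delicate: no cancellation between the three summands of $Q$ is actually required. Because $\lie{\delta}{\hat{\partial}_j}=0$ while $\lie{d_L^\nabla}{\hat{\partial}_j}$ and $\lie{X^\nabla}{\hat{\partial}_j}$ both lie in $\sections{\cF}$ (see the proof of Lemma~\ref{lem:local Atiyah}), each summand of $\liederivative{Q}$ separately preserves $\sections{\Lambda^\bullet L\dual}\otimes_R\verticalTpoly{r,s}(B)$, and the $\delta$- and $X^\nabla$-contributions are then each individually annihilated by $\etendu{\sigma}$ for exactly the degree reasons you give.
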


For polyvector fields, a stronger result was proved in~\cite{arXiv:1901.04602}.

Set $\Tpoly{k}:=\sections{\Lambda^{k+1} B}$ and let $\verticalTpoly{k}$ denote 
the space of formal vertical $(k+1)$-vector fields on $B$, i.e.\ 
\[ \verticalTpoly{k} = \sections{\hat{S}(B\dual)}\otimes_R \Tpoly{k} .\] 
Note that $\Tpoly{k}\subset\Tpoly{0,k+1}$ and $\verticalTpoly{k}\subset\verticalTpoly{0,k+1}$.
Then \[ \sections{\Lambda^{\bullet}L\dual}\otimes_R\verticalTpoly{k}
\cong \sections{\Lambda^{\bullet}L\dual\otimes \hat{S}B\dual}\otimes_R \Tpoly{k} .\] 

Denote by $\bsigma$ the map 
\[ \sections{\Lambda^{\bullet}L\dual}\otimes_R\verticalTpoly{k}
\cong \sections{\Lambda^{\bullet}L\dual\otimes \hat{S}B\dual}\otimes_R \Tpoly{k} 
\xto{\sigma\otimes\id}\sections{\Lambda^\bullet A\dual}\otimes_R \Tpoly{k} \] 

\begin{theorem}[\cite{arXiv:1901.04602}]
\label{thm:contractionTpol}
There exists a contraction
\begin{equation}\label{Yerevan} 
\begin{tikzcd}[cramped]
\Big(\tot\big(\sections{\Lambda^\bullet A\dual}\otimes_R
\Tpoly{\bullet}\big)
,d_A^{\Bott}\Big)
\arrow[r, " \etendu{\perturbed{\tau}}", shift left] &
\Big(\tot\big(\sections{\Lambda^\bullet L\dual}\otimes_R
\verticalTpoly{\bullet}\big), \liederivative{Q}\Big)
\arrow[l, " \etendu{\sigma}", shift left] 
\arrow[loop, "\etendu{\perturbed{h}}",out=5,in=-5,looseness = 3]
\end{tikzcd} 
\end{equation}
\end{theorem}

Consider the Fedosov dg Lie algebroid $\cF\to\cM$ of Section~\ref{Fedosov dg abd}.
It is clear that
\[ \sections{\cM; \Lambda^k\cF}\cong \sections{\Lambda^\bullet L\dual}\otimes_R \verticalTpoly{k} .\] 

Applying Proposition~\ref{pro:hongkong} to the dg Lie subalgebroid $\cF$ of $T_{\cM}$, we obtain 
\begin{proposition}\label{pro:Lyon}
\begin{enumerate}
\item Since the subspace $\sections{\Lambda^\bullet L\dual}\otimes_R\verticalTpoly{k}$ 
of the space $T_{\poly}^{k}(\cM)$ of $(k+1)$-vector fields on $\cM=L[1]\oplus B$ 
is stable under $\liederivative{Q}$, we obtain a cochain complex 
\[ \begin{tikzcd}
\cdots \arrow[r] & 
\sections{\Lambda^{u}L\dual}\otimes_R\verticalTpoly{k} 
\arrow[r, "\liederivative{Q}"] & 
\sections{\Lambda^{u+1}L\dual}\otimes_R\verticalTpoly{k} 
\arrow[r] & \cdots 
\end{tikzcd} \] for each $k\geqslant -1$. 
\item The total complex 
$\Big(\tot\big(\sections{\Lambda^\bullet L\dual}\otimes_R\verticalTpoly{\bullet}\big),\liederivative{Q}\Big)$
is a differential Gerstenhaber algebra, whence a dgla.
\end{enumerate}
\end{proposition}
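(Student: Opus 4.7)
The plan is to deduce Proposition~\ref{pro:Lyon} from Proposition~\ref{pro:hongkong} applied to the Fedosov dg Lie algebroid $\cF\to\cM$, using the natural identification of $\sections{\cM;\Lambda^{k+1}\cF}$ with $\sections{\Lambda^\bullet L\dual}\otimes_R\verticalTpoly{k}(B)$ noted just before the proposition. The only real content is to check that the intrinsic Lie-algebroid data on $\cF$ --- the fiberwise differential, the Schouten bracket, and the wedge product --- are precisely the restrictions of $\liederivative{Q}$, $\schouten{\argument}{\argument}$, and $\wedge$ on $\Tpoly{\bullet}(\cM)$ to the subspace of polyvector fields ``tangent to $\cF$.''

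First, I would unpack the identification $\sections{\cM;\Lambda^{k+1}\cF}\cong\sections{\Lambda^\bullet L\dual}\otimes_R\verticalTpoly{k}(B)$. Since $\cF\to\cM$ is the pullback of $B\to M$ and a graded subbundle of $T_\cM\to\cM$, its space of sections is the $C^\infty(\cM)$-submodule of $\XX(\cM)$ generated by $\sections{B}$; taking $\Lambda^{k+1}$ over $C^\infty(\cM)$ and using the identification $C^\infty(\cM)\cong\sections{\Lambda^\bullet L\dual\otimes\hat{S}B\dual}$ yields the claimed isomorphism. Under this identification, $\sections{\Lambda^{\bullet+1}\cF}$ sits inside $\Tpoly{\bullet}(\cM)$ as a graded subalgebra for the wedge product.

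Next, I would verify part (1). By Proposition~\ref{pro:Rome}, $\cF$ is a dg Lie subalgebroid of the tangent dg Lie algebroid $T_\cM$. By definition, this means that the homological vector field $Q\in\XX(\cM)$ preserves $\sections{\cF\to\cM}\subset\XX(\cM)$ under the Lie bracket, i.e., $\lie{Q}{\sections{\cF}}\subset\sections{\cF}$, and that the operator $\mathcal{Q}=\schouten{Q}{\argument}$ on $\sections{\cF}$ agrees with the differential induced by the dg structure. Since $\liederivative{Q}=\schouten{Q}{\argument}$ is a derivation of the Schouten algebra $(\Tpoly{\bullet}(\cM),\wedge,\schouten{\argument}{\argument})$, stability on $\sections{\cF}$ propagates to stability on $\sections{\Lambda^{\bullet+1}\cF}$: for $X_0\wedge\cdots\wedge X_k\in\sections{\Lambda^{k+1}\cF}$,
\[ \liederivative{Q}(X_0\wedge\cdots\wedge X_k) = \sum_{i=0}^{k} (-1)^{|X_0|+\cdots+|X_{i-1}|} X_0\wedge\cdots\wedge\liederivative{Q}(X_i)\wedge\cdots\wedge X_k ,\]
and each $\liederivative{Q}(X_i)$ lies in $\sections{\cF}$. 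Combined with $C^\infty(\cM)$-linearity of the identification, this proves stability of $\sections{\Lambda^\bullet L\dual}\otimes_R\verticalTpoly{k}(B)$ under $\liederivative{Q}$ and gives the cochain complex of (1).

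Finally, for part (2), I would invoke Proposition~\ref{pro:hongkong} verbatim for the dg Lie algebroid $\cF\to\cM$: it asserts that $\sections{\Lambda^{\bullet+1}\cF}$ equipped with $\mathcal{Q}$, the wedge product, and the Schouten bracket is a differential Gerstenhaber algebra. To conclude, I need to match the intrinsic Schouten bracket of $\cF$ with the restriction of the Schouten bracket of $T_\cM$ to $\sections{\Lambda^{\bullet+1}\cF}$; this is immediate because $\cF$ is a Lie subalgebroid, so its anchor and bracket are the restrictions of those of $T_\cM$, and the Schouten bracket on $\Lambda^\bullet\cF$ is uniquely determined by these via the graded Leibniz rule. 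Passing to the total complex via the identification above then yields the differential Gerstenhaber algebra $(\tot(\sections{\Lambda^\bullet L\dual}\otimes_R\verticalTpoly{\bullet}(B)),\liederivative{Q},\wedge,\schouten{\argument}{\argument})$, and a differential Gerstenhaber algebra is in particular a dgla. The main ``obstacle,'' such as it is, lies in bookkeeping the two different gradings (the Chevalley--Eilenberg degree on $\sections{\Lambda^\bullet L\dual}$ and the polyvector degree on $\verticalTpoly{\bullet}$) against the single total degree on $\sections{\Lambda^{\bullet+1}\cF}$, and checking the Koszul signs are consistent with the conventions fixed earlier in the paper; but no new geometric input is required beyond Propositions~\ref{pro:Rome} and~\ref{pro:hongkong}.
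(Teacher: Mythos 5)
Your proposal is correct and follows essentially the same route as the paper: the paper obtains Proposition~\ref{pro:Lyon} precisely by applying Proposition~\ref{pro:hongkong} to the Fedosov dg Lie algebroid $\cF\to\cM$ under the identification $\sections{\cM;\Lambda^{k+1}\cF}\cong\sections{\Lambda^\bullet L\dual}\otimes_R\verticalTpoly{k}(B)$. The additional checks you spell out (stability via Proposition~\ref{pro:Rome} and the derivation property of $\liederivative{Q}$, and the compatibility of the intrinsic Schouten bracket of $\cF$ with the restriction of that of $T_\cM$) are exactly the details the paper leaves implicit.
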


It follows from the homotopy transfer theorem for $L_\infty$ algebras (see Lemma~\ref{lem:ext}) 
applied to the contraction \eqref{Yerevan} that the dgla structure carried by 
$\tot\big(\sections{\Lambda^\bullet L\dual}\otimes_R\verticalTpoly{\bullet}\big)$ 
determines an $L_\infty$ algebra structure on 
$\tot\big(\sections{\Lambda^\bullet A\dual}\otimes_R\Tpoly{\bullet}\big)$.
Moreover, since the retraction $\etendu{\sigma}$ intertwines the associative algebra structures, 
we immediately obtain the following corollary of Proposition~\ref{pro:Lyon}.

\begin{corollary}[\cite{arXiv:1901.04602}] \label{Bari}
Given a Lie pair $(L,A)$, each choice of a splitting $j:B\to L$ of the short exact sequence of vector bundles $0\to A \to L \to B \to 0$
and of a torsion-free $L$-connection $\nabla$ on $B$ 
determines 	
\begin{enumerate}
\item an $L_\infty$ algebra structure on $\tot\big(\sections{\Lambda^\bullet A\dual}\otimes_R \Tpoly{\bullet}\big)$ 
with the operator $d_A^{\Bott}$ as unary bracket
\item and a Gerstenhaber algebra structure on $\hypercohomology^\bullet_{\CE}(A,\Tpoly{\bullet})$, 
the cohomology of the complex \[ \Big(\tot\big(\sections{\Lambda^\bullet A\dual}\otimes_R\Tpoly{\bullet}\big),d_A^{\Bott}\Big) .\]
\end{enumerate}
\end{corollary}

A priori, the $L_\infty$ algebra structure on $\tot\big(\sections{\Lambda^\bullet A\dual}\otimes_R\Tpoly{\bullet}\big)$ 
in Corollary~\ref{Bari} is not canonical; it depends on a choice of `Dolgushev--Fedosov type' replacement for the complex 
$\big(\tot\big(\sections{\Lambda^\bullet A\dual}\otimes_R\Tpoly{\bullet}\big),\dabott \big)$ via a Fedosov dg Lie algebroid $\cF\to\cM$.
The construction of the Fedosov differential involves the choice of a torsion-free connection $\nabla:\sections{L}\times\sections{B}\to\sections{B}$ 
and a splitting $j:B\to L$ of the short exact sequence of vector bundles $0\to A \to L \to B \to 0$.
However, different choices yield isomorphic $L_\infty$ algebra structures
on $\tot\big(\sections{\Lambda^\bullet A\dual}\otimes_R\Tpoly{\bullet}\big)$. 
Hence, we obtain the following improvement on Corollary~\ref{Bari}:
\begin{theorem}[\cite{arXiv:1901.04602}]
\label{Bari2}
Let $(L,A)$ be a Lie pair. 
\begin{enumerate}
\item The space $\tot\big(\sections{\Lambda^\bullet A\dual}\otimes_R \Tpoly{\bullet}\big)$ admits an $L_\infty$ algebra structure 
with the operator $\dabott$ as unary bracket. This $L_\infty$ algebra structure is unique up an $L_\infty$ isomorphism 
having the identity map as linear part.
\item The corresponding cohomology group $\hypercohomology_{\CE}(A,\Tpoly{\bullet})$ admits a canonical 
Gerstenhaber algebra structure.
\end{enumerate}
\end{theorem}

Moreover, when the Lie pair happens to be a matched pair, the transferred $L_\infty$ algebra structure on
$\tot\big(\sections{\Lambda^\bullet A\dual}\otimes_R\Tpoly{\bullet}\big)$ is precisely the dgla structure described in Proposition~\ref{pro:zurich}.
\begin{proposition}
Under the hypotheses of Corollary~\ref{Bari} and the additional assumption that $j(B)$ is a Lie subalgebroid of $L$ --- 
i.e.\ $L=A\bowtie B$ is a matched pair --- the $L_\infty$ algebra $\tot\big(\sections{\Lambda^\bullet A\dual}\otimes_R\Tpoly{\bullet}\big)$ 
and the Gerstenhaber algebra $\hypercohomology^\bullet_{\CE}(A,\Tpoly{\bullet})$ of Corollary~\ref{Bari} coincide respectively 
with the dgla $\tot\big(\sections{\Lambda^\bullet A\dual\otimes\Lambda^{\bullet+1}B}\big)$
and the Gerstenhaber algebra $\hypercohomology^\bullet_{\CE}(A,\Lambda^{\bullet+1}B)$ of Proposition~\ref{pro:zurich}.
\end{proposition}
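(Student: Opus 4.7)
The plan is to exhibit a choice of splitting and torsion-free $L$-connection for which the contraction of Theorem~\ref{thm:contractionTpol} becomes a strict dgla retraction onto the dgla of Proposition~\ref{pro:zurich}; the result then follows from the homotopy transfer theorem (Lemma~\ref{lem:ext}), since a strict contraction produces no higher brackets upon transfer.

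First, using the splitting $j:B\to L$ afforded by the matched pair hypothesis (so that $j(B)$ is a Lie subalgebroid of $L$), I would define the torsion-free $L$-connection $\nabla$ on $B$ by $\nabla_{i(a)}b=\nabla_a^{\Bott}b$ and $\nabla_{j(b_1)}b_2=\tfrac{1}{2}[b_1,b_2]_B$. Torsion-freeness then follows from the matched pair axioms combined with the symmetric choice of extension along $j(B)$. Next, I would check that the resulting Fedosov data satisfies: on $p^\top(\sections{\Lambda^\bullet A\dual})\subset C^\infty(\cM)$, the Fedosov differential $Q=-\delta+d_L^\nabla+X^\nabla$ restricts to $\dace$, and more generally on the image of $\etendu{\perturbed{\tau}}$ in $\sections{\Lambda^\bullet\cF}$, $\liederivative{Q}$ restricts to $\dabott$. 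This will use the recursive formula for $X^\nabla$, which produces only $\etendu{h}$-exact terms that are annihilated when restricted along the pertinent subspace.

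The main technical step is to verify that the image of $\etendu{\perturbed{\tau}}$ is closed under the Schouten bracket on $\sections{\Lambda^\bullet\cF}$ and that the induced bracket matches the one of Proposition~\ref{pro:zurich} (Equations~\eqref{eq:Brussels1}--\eqref{eq:Brussels2}). Since $\cF$ is an abelian Lie subalgebroid of $T_\cM$, the Schouten bracket of two constant-in-$\hat{S}(B\dual)$ sections in $\sections{\Lambda^\bullet\cF}$ receives nontrivial contributions only through the anchor $\rho_\cF$ acting on the $\Lambda L\dual$-coefficients and through the terms picked up from the perturbation in the contraction. The heart of the argument is matching these terms with the three summands in the bracket of Proposition~\ref{pro:zurich}: the Lie bracket $[b_1,b_2]_B$ and the Bott-action terms $\nabla_{b_1}^{\Bott}\xi_2$, $\nabla_{b_2}^{\Bott}\xi_1$. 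The chosen $\nabla$ ensures that the matched pair bracket on $L$ splits cleanly according to the decomposition $L=i(A)\oplus j(B)$, so that the Bott action on $\Lambda A\dual$ emerges precisely from the coupling of the anchor of $\cF$ with the $p^\top$-components of the perturbation.

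Granting these compatibilities, Lemma~\ref{lem:ext} applied to the contraction of Theorem~\ref{thm:contractionTpol} yields the transferred $L_\infty$ structure on $\tot(\sections{\Lambda^\bullet A\dual}\otimes_R\Tpoly{\bullet})$ as the strict dgla structure pulled back along $\etendu{\perturbed{\tau}}$, which by the previous step coincides with the structure of Proposition~\ref{pro:zurich}. Higher brackets vanish because the HPT tree-formula involves at least one internal edge decorated by $\etendu{\perturbed{h}}$, and $\etendu{\perturbed{h}}\circ\etendu{\perturbed{\tau}}=0$. The main obstacle is the bracket compatibility above: one must track in detail the anchor of $\cF$, the (in general infinite) perturbation $X^\nabla$, and the perturbation of the homotopy, and confirm that together they reproduce exactly the Bott-representation terms built into the natural dgla on $\sections{\Lambda^\bullet A\dual\otimes\Lambda^{\bullet+1}B}$.
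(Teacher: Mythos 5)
The paper states this proposition without proof (it is imported from \cite{1605.09656,BSX:17}), so I can only assess your proposal on its own terms. Your overall architecture is the right one: if the perturbed inclusion $\etendu{\perturbed{\tau}}$ of Theorem~\ref{thm:contractionTpol} carries the dgla of Proposition~\ref{pro:zurich} isomorphically onto a graded Lie subalgebra of $\big(\tot(\sections{\Lambda^\bullet L\dual}\otimes_R\verticalTpoly{\bullet}(B)),\schouten{\argument}{\argument}\big)$, then the side condition $\etendu{\perturbed{h}}\circ\etendu{\perturbed{\tau}}=0$ kills every transfer tree containing an internal edge, the transferred binary bracket reduces to $\etendu{\sigma}\schouten{\etendu{\perturbed{\tau}}(\argument)}{\etendu{\perturbed{\tau}}(\argument)}$, and the identification with Equations~\eqref{eq:Brussels1}--\eqref{eq:Brussels2} finishes the argument. (The compatibility of $\liederivative{Q}$ with $\dabott$ on the image of $\etendu{\perturbed{\tau}}$ needs no separate verification: $\etendu{\perturbed{\tau}}$ is a chain map by construction.)

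Two points, however, are genuinely wrong or missing. First, $\nabla_{j(b_1)}b_2=\tfrac12\lie{b_1}{b_2}$ is not a connection: $C^\infty(M)$-linearity in the first slot fails, since $\tfrac12\lie{fb_1}{b_2}=\tfrac{f}{2}\lie{b_1}{b_2}-\tfrac12\big(\rho(b_2)f\big)b_1$, and the Leibniz rule in the second slot comes out with coefficient $\tfrac12$ instead of $1$ on the anchor term. The paper's existence lemma avoids this by halving only the \emph{tensorial} torsion of an auxiliary connection, $\nabla''_l b=\nabla'_l b-\tfrac12\beta^{\nabla'}(q(l),b)$; halving the whole bracket is not legitimate. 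Moreover, the proposition inherits the hypotheses of Corollary~\ref{Bari}, i.e.\ an \emph{arbitrary} torsion-free $\nabla$, so establishing the claim for one preferred connection would at best yield an isomorphic --- not equal --- structure for the others. Second, the crux of the matter, namely that $\schouten{\im\etendu{\perturbed{\tau}}}{\im\etendu{\perturbed{\tau}}}\subseteq\im\etendu{\perturbed{\tau}}$ with the induced bracket reproducing the Bott terms $\nabla^{\Bott}_{b_1}\xi_2$ of Equation~\eqref{eq:Brussels1}, is only asserted: $\etendu{\perturbed{\tau}}$ is an infinite series built from $\etendu{h}$, $d_L^\nabla$ and $X^\nabla$, and the closure of its image under the vertical Schouten bracket is precisely the computation that constitutes the proof. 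Your supporting heuristic that ``$\cF$ is an abelian Lie subalgebroid'' is also imprecise: the constant vertical sections $\hat{\partial}_i$ commute, but general sections of $\cF$ do not, and the corrections appearing in $\etendu{\perturbed{\tau}}$ are far from constant. As it stands, the proposal is a sound plan with a faulty auxiliary construction and the central verification left undone.
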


\subsection{Dolgushev--Fedosov type quasi-isomorphism on \texorpdfstring{$\Dpoly{\bullet}$}{polydifferential operators}}

Let $C^n:=\Hom_{\KK}(\salgebra^{\otimes n+1},\salgebra)$ denote the space of Hochschild $n$-cochains of the algebra $\salgebra:=C^\infty(L[1]\oplus B)$.
The Gerstenhaber bracket of two cochains $\phi\in C^u$ and $\psi\in C^v$ is the cochain 
\[ \gerstenhaber{\phi}{\psi} = \phi\star\psi - (-1)^{uv} \psi\star\phi \in C^{u+v}\] 
where $\phi\star\psi\in C^{u+v}$ is defined by 
\[ (\phi\star\psi ) (a_0\otimes a_1\otimes \cdots\otimes a_{u+v}) = \sum_{k=0}^{u} (-1)^{kv}
\phi\big(a_0\otimes\cdots\otimes a_{k-1}\otimes 
\psi(a_k\otimes\cdots\otimes a_{k+v})\otimes a_{k+1+v} 
\otimes \cdots\otimes a_{u+v} \big) ,\] 
for all $a_0,a_1,\dots,a_{u+v}\in\salgebra$. 
The Gerstenhaber bracket satisfies the graded Jacobi identity. 
Since the multiplication $m$ in $C^\infty(L[1]\oplus B)$ is associative, we have $\gerstenhaber{m}{m}=0$ 
and the standard Hochschild coboundary operator $\gerstenhaber{m}{\argument}$ turns $C^\bullet$ into a cochain complex. 

The space $D_{\poly}^{\bullet}(L[1]\oplus B)$ of polydifferential operators on $L[1]\oplus B$ 
is a subspace of $C^\bullet$ closed under the Gerstenhaber bracket. 
Note that $Q\in\XX(L[1]\oplus B)\subset D_{\poly}^{0}(L[1]\oplus B)$ and $m\in D_{\poly}^{1}(L[1]\oplus B)$. 

\begin{lemma} 
We have $\gerstenhaber{Q+ m}{\argument}^2=0$.
\end{lemma}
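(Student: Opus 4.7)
The plan is to deduce $\gerstenhaber{Q+m}{\argument}^2 = 0$ from the Maurer--Cartan type identity $\gerstenhaber{Q+m}{Q+m} = 0$. The Gerstenhaber bracket on $C^\bullet = \bigoplus_n \Hom_\KK(\salgebra^{\otimes n+1},\salgebra)$ makes the shifted complex into a graded Lie algebra with respect to the \emph{total} degree that combines the Hochschild cochain degree with the internal $\ZZ$-degree inherited from $\salgebra = C^\infty(L[1]\oplus B)$. Under this convention, the homological vector field $Q$ (cochain degree $0$, internal degree $+1$) and the multiplication $m$ (cochain degree $1$, internal degree $0$) both have odd total degree, so $Q+m$ is homogeneous of odd total degree. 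For any odd element $X$ of a graded Lie algebra the graded Jacobi identity specializes to
\[ \gerstenhaber{X}{\gerstenhaber{X}{\phi}} = \tfrac{1}{2}\gerstenhaber{\gerstenhaber{X}{X}}{\phi}, \]
so it suffices to establish $\gerstenhaber{Q+m}{Q+m} = 0$.

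Expanding by bilinearity,
\[ \gerstenhaber{Q+m}{Q+m} = \gerstenhaber{Q}{Q} + 2\gerstenhaber{Q}{m} + \gerstenhaber{m}{m}, \]
using the fact that odd-odd elements have symmetric bracket, so $\gerstenhaber{Q}{m} = \gerstenhaber{m}{Q}$. I would then verify the three summands vanish separately. First, a direct unwinding of the definition of $\star$ shows that $\gerstenhaber{m}{m}(f,g,h)$ is (twice) the associator $(fg)h - f(gh)$, which is zero since $m$ is associative. Second, reading $Q$ as a $1$-cochain, the formula for $\star$ yields $(Q\star Q)(f) = Q(Q(f))$, so $\gerstenhaber{Q}{Q} = 2Q^2$ and this vanishes because $Q$ is homological by Theorem~\ref{strawberry}. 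Third,
\[ \gerstenhaber{Q}{m}(f,g) = Q\big(m(f,g)\big) - m\big(Q(f),g\big) - (-1)^{|Q||f|}\,m\big(f,Q(g)\big) \]
is precisely the Leibniz defect measuring the failure of $Q$ to be a graded derivation of $m$; this vanishes because $Q\in\XX(\cM)$ \emph{is} a graded derivation of the algebra $C^\infty(\cM)$.

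Combining these three vanishings gives $\gerstenhaber{Q+m}{Q+m} = 0$, and the specialization of graded Jacobi above immediately yields $\gerstenhaber{Q+m}{\argument}^2 = 0$. The only real obstacle is bookkeeping: one must fix the sign/degree convention for the Gerstenhaber bracket so that $Q+m$ is genuinely homogeneous (of odd total degree) and the three identities $Q^2 = 0$, $m$ associative, and $Q$ a derivation of $m$ appear as the three components of the single Maurer--Cartan equation $\gerstenhaber{Q+m}{Q+m} = 0$. This is the standard Gerstenhaber-theoretic repackaging of ``$(\salgebra,m,Q)$ is a dg commutative algebra,'' and no feature of the Fedosov construction beyond $Q^2 = 0$ is needed.
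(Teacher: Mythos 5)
Your proposal is correct and follows essentially the same route as the paper: the paper's proof likewise reduces the claim to the three vanishings $\gerstenhaber{m}{m}=0$ (associativity), $\gerstenhaber{Q}{m}=0$ ($Q$ is a derivation of $m$), and $\gerstenhaber{Q}{Q}=0$ ($Q$ is homological), and then invokes the graded Jacobi identity. Your additional care with the total-degree convention and the explicit unwinding of the three components is a fuller account of the same argument, not a different one.
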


\begin{proof}
We have $\gerstenhaber{m}{m}=0$ since the multiplication $m$ is associative, 
$\gerstenhaber{Q}{m}=0$ since $Q$ is a derivation of $m$, 
and $\gerstenhaber{Q}{Q}=0$ since $Q$ is a homological vector field. 
The conclusion follows from the Jacobi identity.
\end{proof}

Let $\verticalDpoly{k}$ denote the space of formal vertical 
$(k+1)$-polydifferential operators on the vector bundle $B$, 
and let $\verticalDpoly{\bullet}=
\bigoplus_{k=-1}^{\infty}\verticalDpoly{k}$.
Set $\mathscr{S}=\sections{\hat{S}(B\dual)}$. 
There exists a canonical isomorphism 
\[ \begin{tikzcd}
\sections{\hat{S}(B\dual)\otimes
\underset{k+1\text{ factors}}{\underbrace{S(B)\otimes\cdots\otimes S(B)}}} 
\arrow[r, "\varphi", "\cong"'] &
\verticalDpoly{k} 
\end{tikzcd} .\]
To $\chi^I\otimes\partial^{J_0}\otimes
\cdots\otimes\partial^{J_k}\in \sections{\hat{S}(B\dual)\otimes
\underset{k+1\text{ factors}}{\underbrace{S(B)\otimes\cdots\otimes S(B)}}}$, 
the isomorphism $\varphi$ associates the poly\-differential operator 
\[ \mathscr{S}^{\otimes k+1}\ni 
\chi^{M_0}\otimes\cdots\otimes\chi^{M_k}\longmapsto 
\chi^I\cdot\partial^{J_0}(\chi^{M_0})
\cdots\partial^{J_k}(\chi^{M_k})
\in\mathscr{S} .\]

The algebra of functions $C^\infty(L[1]\oplus B)$ is a module over its subalgebra 
$\sections{\Lambda^\bullet L}\equiv\sections{\Lambda^\bullet L\dual\otimes S^0(B\dual)}$. 
The subspace of $D_{\poly}^{\bullet}(L[1]\oplus B)$ comprised of all 
$\sections{\Lambda^\bullet L\dual}$-multilinear polydifferential operators 
is easily identified to 
$\tot\big(\sections{\Lambda^\bullet L\dual}\otimes_R\verticalDpoly{\bullet}\big)$.
It is simple to see that the universal enveloping algebra
$\enveloping{\cF}$ of the Fedosov dg Lie algebroid $\cF\to\cM$ is naturally 
identified 
with $\sections{\Lambda^\bullet L\dual}\otimes_R\verticalDpoly{0}$,
which is a dg Hopf algebroid over $C^\infty (\cM)\cong 
\sections{\Lambda^\bullet L\dual \otimes\hat{S} B\dual }$.
Moreover, $\enveloping{\cF}$ is a dg Hopf subalgebroid of $D_{\poly}^{0}(L[1]\oplus B)$.
Note that
\[ \enveloping{\cF}^{\otimes k+1}\cong \sections{\Lambda^\bullet L\dual}
\otimes_R\verticalDpoly{k} .\] 

Thus, as a consequence of Proposition~\ref{pro:hongkong1}, we have the following

\begin{proposition}
\label{cotedor}
\begin{enumerate}
\item The subspace $\tot\big(\sections{\Lambda^\bullet L\dual}\otimes_R\verticalDpoly{\bullet}\big)$ 
of $D_{\poly}^{\bullet}(L[1]\oplus B)$ is stable under the Hoch\-schild coboundary operator $\gerstenhaber{Q+ m}{\argument}$. 
\item The triple $\big( \tot\big(\sections{\Lambda^\bullet L\dual}\otimes_R\verticalDpoly{\bullet}\big), \gerstenhaber{Q+ m}{\argument}, \gerstenhaber{}{} \big)$ is a dgla.
\item The cohomology group $H^\bullet \Big(\tot\big(\sections{\Lambda^\bullet L\dual}\otimes_R\verticalDpoly{\bullet}\big), \gerstenhaber{Q+ m}{\argument}\Big)$
is a Gerstenhaber algebra.
\end{enumerate}
\end{proposition}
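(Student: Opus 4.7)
The strategy is to reduce the entire proposition to Proposition~\ref{pro:hongkong1} applied to the Fedosov dg Lie algebroid $\cF\to\cM$ by identifying the subspace $\sections{\Lambda^\bullet L\dual}\otimes_R\verticalDpoly{\bullet}$ of $D_{\poly}^{\bullet}(L[1]\oplus B)$ with $\enveloping{\cF}^{\otimes\bullet+1}$ as differential Gerstenhaber algebras. The identification at the level of graded vector spaces, $\enveloping{\cF}^{\otimes k+1}\cong \sections{\Lambda^\bullet L\dual}\otimes_R\verticalDpoly{k}$, is already noted in the paragraph immediately preceding the proposition. So the real content is to verify that the differential and the bracket coming from the ambient dgla $D_{\poly}^{\bullet}(L[1]\oplus B)$ agree, on this subspace, with the intrinsic differential and bracket on $\enveloping{\cF}^{\otimes\bullet+1}$ supplied by the dg Hopf algebroid structure.

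First I would handle the Gerstenhaber bracket. Since $\enveloping{\cF}$ sits inside $D_{\poly}^{0}(L[1]\oplus B)$ as a dg Hopf subalgebroid, the Gerstenhaber $\star$-product \eqref{hazmat} computed intrinsically from the Hopf structure on $\enveloping{\cF}$ agrees with the ambient $\star$-product of Hochschild cochains on $C^\infty(\cM)$ when both arguments are $\sections{\Lambda^\bullet L\dual}$-multilinear. From this, stability of $\sections{\Lambda^\bullet L\dual}\otimes_R\verticalDpoly{\bullet}$ under $\gerstenhaber{m}{\argument}$ follows — indeed, $\gerstenhaber{m}{\argument}$ restricts to the Hochschild differential $\hochschild$ of \eqref{hola} associated with the coproduct of $\enveloping{\cF}$. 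For the other piece, $\gerstenhaber{Q}{\argument}$: since $\cF$ is a dg Lie subalgebroid of $T_{\cM}$, the homological vector field $Q\in\XX(\cM)$ induces a derivation $\mathcal{Q}$ of $\enveloping{\cF}$ and, in turn, of each $\enveloping{\cF}^{\otimes k+1}$. A direct inspection shows that, under the identification above, this derivation is precisely the restriction of $\gerstenhaber{Q}{\argument}$, which therefore stabilizes the subspace. Combining the two pieces yields part (1).

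Parts (2) and (3) then follow immediately from Proposition~\ref{pro:hongkong1} applied to the dg Lie algebroid $\cF$: the triple $(\enveloping{\cF}^{\otimes\bullet+1},\mathcal{Q}+\hochschild,\gerstenhaber{\argument}{\argument})$ is a dgla whose cohomology, endowed with the cup product (tensor product over $C^\infty(\cM)$) and the induced bracket, is a Gerstenhaber algebra. Translating back via the identification gives the dgla structure on $\tot\big(\sections{\Lambda^\bullet L\dual}\otimes_R\verticalDpoly{\bullet}\big)$ and the Gerstenhaber structure on its cohomology.

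The main obstacle, in my view, is entirely bookkeeping: one must verify that the coproduct on $\enveloping{\cF}$ arising from its Hopf algebroid structure over $C^\infty(\cM)$ genuinely matches the restriction of the natural coproduct on the space of differential operators on $\cM$, so that the Hochschild differentials on both sides coincide — and, dually, that the local expression of $\mathcal{Q}$ on $\enveloping{\cF}$ given via $\sections{\cF}\to\sections{\cF}$ extends multiplicatively to match $\gerstenhaber{Q}{\argument}$. Once these compatibilities are confirmed (they are straightforward but notationally heavy), everything else is formal and the proposition is just a specialization of Proposition~\ref{pro:hongkong1}.
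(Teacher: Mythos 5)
Your proposal takes essentially the same route as the paper: the paper identifies $\enveloping{\cF}^{\otimes k+1}$ with $\sections{\Lambda^\bullet L\dual}\otimes_R\verticalDpoly{k}$, observes that $\enveloping{\cF}$ is a dg Hopf subalgebroid of $D_{\poly}^{0}(L[1]\oplus B)$, and deduces the proposition directly from Proposition~\ref{pro:hongkong1} applied to the Fedosov dg Lie algebroid $\cF$. Your additional care in checking that the intrinsic coproduct, Hochschild differential, and the derivation induced by $Q$ match their ambient counterparts is exactly the content the paper compresses into ``it is simple to see,'' so the plan is correct.
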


Now consider the map
\[ \bsigma: \sections{\Lambda^{u}L\dual}\otimes_R\verticalDpoly{v}
\to \sections{\Lambda^{u}A\dual}\otimes_R\Dpoly{v} \] 
defined by the commutative diagram 
\[ \begin{tikzcd}[row sep=tiny]
\sections{\Lambda^{u}L\dual}\otimes_R\verticalDpoly{v}
\arrow[dr, "\bsigma"] & \\
& \sections{\Lambda^{u}A\dual}\otimes_R\Dpoly{v} \\
\sections{\Lambda^{u}L\dual\otimes\hat{S}B\dual
\otimes (SB)^{\otimes v+1}}
\arrow[uu, "\id\otimes\varphi", "\equiv"']
\arrow[ur, swap, "\sigma\otimes\pbw^{\otimes v+1}"] &
\end{tikzcd} .\]

The map $\bsigma$ is a quasi-isomorphism of Dolgushev--Fedosov type similar to the classical Fedosov resolution 
of the polydifferential operators on a smooth manifold obtained by Dolgushev \cite{MR2102846}.
Indeed, we have the following

\begin{theorem}[\cite{arXiv:1901.04602}]
\label{thm:contractionDpol}
There exists a contraction
\begin{equation}\label{Karachi}
\begin{tikzcd}[cramped]
\Big(\tot\big(\sections{\Lambda^\bullet A\dual}\otimes_R\Dpoly{\bullet}\big),\dau+\dHH \Big)
\arrow[r, shift left, "\etendu{\perturbed{\tau}}"] & 
\Big(\tot\big(\sections{\Lambda^\bullet L\dual}\otimes_R\verticalDpoly{\bullet}\big),\gerstenhaber{Q+ m}{\argument}\Big)
\arrow[l, shift left, "\etendu{\sigma}"] 
\arrow[loop, "\etendu{\perturbed{h}}", out=5,in=-5,looseness = 3]
\end{tikzcd} 
\end{equation}
\end{theorem}

It follows from the homotopy transfer theorem for $L_\infty$ algebras (see Lemma~\ref{lem:ext}) 
applied to the contraction \eqref{Karachi} that the dgla structure carried by 
$\tot\big(\sections{\Lambda^\bullet L\dual}\otimes_R\verticalDpoly{\bullet}\big)$ 
determines an $L_\infty$ algebra structure on 
$\tot\big(\sections{\Lambda^\bullet A\dual}\otimes_R\Dpoly{\bullet}\big)$.
Moreover, since the retraction $\etendu{\sigma}$ intertwines the associative algebra structures, 
we immediately obtain the following corollary of Proposition~\ref{cotedor}.

\begin{corollary}[\cite{arXiv:1901.04602}]
\label{thm:Naples}
Given a Lie pair $(L,A)$, each choice of a splitting $j:B\to L$ of the short exact sequence of vector bundles $0\to A \to L \to B \to 0$
and of a torsion-free $L$-connection $\nabla$ on $B$ 
determines 	
\begin{enumerate}
\item an $L_\infty$ algebra structure on $\tot\big(\sections{\Lambda^\bullet A\dual}\otimes_R\Dpoly{\bullet}\big)$ 
with the operator $\dau+\dHH$ as unary bracket 
\item and a Gerstenhaber algebra structure on $\hypercohomology^\bullet_{\CE}(A,\Dpoly{\bullet})$, 
the cohomology of the complex \[ \Big(\tot\big(\sections{\Lambda^\bullet A\dual}\otimes_R\Dpoly{\bullet}\big),\dau+\dHH\Big) .\]
\end{enumerate}
\end{corollary}

A priori, the $L_\infty$ algebra structure on $\tot\big(\sections{\Lambda^\bullet A\dual}\otimes_R\Dpoly{\bullet}\big)$ 
in Corollary~\ref{thm:Naples} is not canonical; it depends on a choice of 
`Dolgushev--Fedosov type' replacement for the complex 
$\big(\tot\big(\sections{\Lambda^\bullet A\dual}\otimes_R\Dpoly{\bullet}\big),\dau+\dHH\big)$ 
via a Fedosov dg Lie algebroid $\cF\to\cM$.
The construction of the Fedosov differential involves the choice of a torsion-free connection 
$\nabla:\sections{L}\times\sections{B}\to\sections{B}$ 
and a splitting $j:B\to L$ of the short exact sequence of vector bundles $0\to A \to L \to B \to 0$.
However, different choices yield isomorphic $L_\infty$ algebra structures
on $\tot\big(\sections{\Lambda^\bullet A\dual}\otimes_R\Dpoly{\bullet}\big)$.
Hence, we obtain the following improvement on Corollary~\ref{thm:Naples}:

\begin{theorem}[\cite{arXiv:1901.04602}]
\label{thm:Naples2}
Let $(L,A)$ be a Lie pair. 
\begin{enumerate}
\item The space $\tot\big(\sections{\Lambda^\bullet A\dual}\otimes_R \Dpoly{\bullet}\big)$ admits an $L_\infty$ algebra structure 
with the operator $\dau+\dHH$ as unary bracket. This $L_\infty$ algebra structure is unique up an $L_\infty$ isomorphism 
having the identity map as linear part.
\item The corresponding cohomology group $\hypercohomology_{\CE}(A,\Dpoly{\bullet})$ admits a canonical Gerstenhaber algebra structure.
\end{enumerate}
\end{theorem}

Moreover, when the Lie pair happens to be a matched pair, the transferred $L_\infty$ algebra structure on
$\tot\big(\sections{\Lambda^\bullet A\dual}\otimes_R\Dpoly{\bullet}\big)$ is precisely the dgla structure described in Proposition~\ref{pro:zurich}.
\begin{proposition}
Under the hypotheses of Corollary~\ref{thm:Naples} and the additional assumption that $j(B)$ is a Lie subalgebroid of $L$ --- 
i.e.\ $L=A\bowtie B$ is a matched pair --- the $L_\infty$ algebra $\tot\big(\sections{\Lambda^\bullet A\dual}\otimes_R\Dpoly{\bullet}\big)$ 
and the Gerstenhaber algebra $\hypercohomology^\bullet_{\CE}(A,\Dpoly{\bullet})$ of Corollary~\ref{thm:Naples} coincide respectively 
with the dgla $\tot\big(\sections{\wedge^\bullet A\dual} \otimes_R \enveloping{B}^{\bullet+1}\big)$
and the Gerstenhaber algebra $\hypercohomology^\bullet_{\CE}(A,\enveloping{B}^{\bullet+1})$ of Proposition~\ref{pro:zurich}.
\end{proposition}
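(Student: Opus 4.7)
The plan is to exhibit a specific choice of splitting and torsion-free $L$-connection adapted to the matched pair structure, and to verify that for this choice the Dolgushev--Fedosov contraction of Theorem~\ref{thm:contractionDpol} is in fact a \emph{contraction of dglas}, not merely one of cochain complexes. Once this multiplicative refinement is in place, the homotopy transfer encoded in Lemma~\ref{lem:ext} becomes trivial in a precise sense: the transferred $L_\infty$ structure on the retract $\tot\big(\sections{\Lambda^\bullet A\dual}\otimes_R\Dpoly{\bullet}\big)$ has no nonzero higher Taylor coefficients and reduces to the dgla structure already provided by Proposition~\ref{pro:zurich} on $\tot\big(\sections{\Lambda^\bullet A\dual}\otimes_R\enveloping{B}^{\otimes\bullet+1}\big)$, and the matching of Gerstenhaber structures on cohomology then follows automatically.

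First I would take the splitting $j:B\to L$ furnished by the matched pair, so that $j(B)$ is a Lie subalgebroid of $L$ and $B$ inherits a Lie algebroid structure. Picking any torsion-free $B$-connection $\nabla^{B}$ on $B$, I extend it to an $L$-connection $\nabla$ on $B$ by setting $\nabla_{i(a)} := \nabla^{\Bott}_{a}$ and $\nabla_{j(b)} := \nabla^{B}_{b}$; this $\nabla$ is torsion-free as an $L$-connection and extends the Bott $A$-connection. Under the matched-pair PBW decomposition $\enveloping{L}\cong\enveloping{A}\otimes_R\enveloping{B}$, one obtains canonical $R$-module identifications $\Dpoly{k}\cong\enveloping{B}^{\otimes k+1}$ compatible with the Bott $A$-module structures. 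Likewise, under the natural fiberwise PBW map, the vertical polydifferential operators $\verticalDpoly{\bullet}$ acquire a filtration adapted to the matched pair.

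The main step, and the principal obstacle, is to verify that $\etendu{\sigma}$ preserves not only the differential but also the cup product and the Gerstenhaber bracket $\gerstenhaber{\argument}{\argument}$. Multiplicativity of $\etendu{\sigma}$ with respect to $\otimes_R$ is immediate because the underlying $\sigma$ is a morphism of cdgas. The compatibility with the bracket requires a careful analysis of the Fedosov vertical vector field $X^\nabla$ produced iteratively in Theorem~\ref{strawberry}: one shows that, for the adapted connection above, $X^\nabla$ lies in a matched-pair-compatible subspace of $\sections{L\dual\otimes\hat{S}^{\geqslant 2}B\dual\otimes B}$ which, in view of formula~\eqref{hazmat} for the Gerstenhaber bracket on $\enveloping{\cF}^{\otimes\bullet+1}$, descends through $\etendu{\sigma}$ precisely to the bracket on $\enveloping{B}^{\otimes\bullet+1}$ of Proposition~\ref{pro:zurich}. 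Granted this multiplicativity of $\etendu{\sigma}$, an elementary perturbation argument shows that $\etendu{\perturbed{\tau}}$ is also a dgla morphism, and the perturbed homotopy $\etendu{\perturbed{h}}$ satisfies the side conditions needed to conclude, via Lemma~\ref{lem:ext} applied to a contraction of dglas, that the transferred $L_\infty$ structure on the retract is the given dgla structure. The identification of Gerstenhaber algebras on cohomology follows at once, since both are induced by the same dgla.
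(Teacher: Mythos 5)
The paper states this proposition without proof (it is imported from \cite{1605.09656,BSX:17}), so there is no in-text argument to compare yours against; judged on its own terms, your proposal fails at its central step. You claim that $\etendu{\sigma}$ intertwines the Gerstenhaber brackets. It does not, even in the matched-pair case with your (correctly chosen) adapted splitting and torsion-free extension of $\nabla^{\Bott}\oplus\nabla^{B}$. Test it already in $\enveloping{\cF}=\enveloping{\cF}^{\otimes 0+1}$, where by \eqref{hazmat} the bracket is the commutator of fiberwise differential operators: take $u=\hhb{i}$ and $v=\big(p^\top(\alpha)\otimes\chi_k\big)\,\hhb{j}$ with $\alpha\in\sections{A\dual}$. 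Then $\gerstenhaber{u}{v}=\delta_{ik}\,\big(p^\top(\alpha)\otimes 1\big)\,\hhb{j}$, so $\etendu{\sigma}\gerstenhaber{u}{v}=\delta_{ik}\,\alpha\otimes\partial_j\neq 0$, whereas $\etendu{\sigma}(v)=0$ because $\sigma$ annihilates every term of positive $\hat{S}B\dual$-degree, whence $\gerstenhaber{\etendu{\sigma}u}{\etendu{\sigma}v}=0$. So $\etendu{\sigma}$ is not a morphism of graded Lie algebras for \emph{any} bracket on the target. Note also that the bracket \eqref{hazmat} on $\enveloping{\cF}^{\otimes\bullet+1}$ is defined purely from the Hopf-algebroid structure and does not involve $X^\nabla$ at all; your proposed ``careful analysis of $X^\nabla$'' is aimed at the wrong object, since $X^\nabla$ enters only the differential $\gerstenhaber{Q+m}{\argument}$ and hence only the perturbed data $\etendu{\perturbed{\tau}}$, $\etendu{\perturbed{h}}$.

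With that step gone, the rest does not stand: the assertion that ``an elementary perturbation argument shows that $\etendu{\perturbed{\tau}}$ is also a dgla morphism'' follows from nothing you have established (a one-sided inverse of a dgla morphism need not be one). The condition that actually collapses the tree formulas of Lemma~\ref{lem:ext} is the dual one: show that the image of $\etendu{\perturbed{\tau}}$ is closed under $\gerstenhaber{\argument}{\argument}$, i.e.\ that $\etendu{\perturbed{\tau}}$ is a strict dgla embedding. Then $\etendu{\perturbed{h}}\gerstenhaber{\etendu{\perturbed{\tau}}x}{\etendu{\perturbed{\tau}}y}=0$ by the side condition $\etendu{\perturbed{h}}\circ\etendu{\perturbed{\tau}}=0$, every transfer tree with an internal edge vanishes, and the binary transferred bracket is recovered from $\etendu{\sigma}\circ\etendu{\perturbed{\tau}}=\id$. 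Identifying the bracket so induced on $\sections{\Lambda^\bullet A\dual}\otimes_R\enveloping{B}^{\otimes\bullet+1}$ with that of Proposition~\ref{pro:zurich} --- which, as the paper's remark warns, is twisted by the Bott representation of $B$ on $\sections{\Lambda^\bullet A\dual}$ and is not the naive $\sections{\Lambda^\bullet A\dual}$-linear extension --- is exactly the computation your proposal omits, and it is the actual content of the proposition.
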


\bibliography{references}

\end{document}